\newtheorem{thm}{Theorem}[section]
\newtheorem{prop}[thm]{Proposition}
\newtheorem{lem}[thm]{Lemma}
\newtheorem{cor}[thm]{Corollary}
\theoremstyle{definition}
\newtheorem{Def}[thm]{Definition}
\newtheorem{conj}[thm]{Conjecture}
\newtheorem{exmp}[thm]{Example}
\newtheorem*{que}{Question}
\newtheorem{prob}[thm]{Problem}
\newtheorem{rem}[thm]{Remark}
\newtheorem*{nota}{Notation}
\newtheorem{const}[thm]{Construction}
\newtheorem*{conv}{Notational Convention}
\newcommand{\kk}{\mathbf{k}}
\newcommand{\xx}{\mathbf{x}}
\newcommand{\uu}{\mathbf{u}}
\newcommand{\Aa}{\mathcal {A}}
\newcommand{\CC}{\mathcal {C}}
\newcommand{\ZZ}{\mathcal {Z}}
\newcommand{\JJ}{\mathcal {J}}
\newcommand{\OO}{\mathcal {O}}
\newcommand{\PP}{\mathcal {P}}
\newcommand{\QQ}{\mathcal {Q}}
\newcommand{\TT}{\mathcal {T}}
\newcommand{\Ss}{\mathcal {S}}
\newcommand{\VV}{\mathcal {V}}
\newcommand{\MM}{\mathcal {M}}
\newcommand{\FF}{\mathcal {F}}
\newcommand{\II}{\mathcal {I}}
\newcommand{\EE}{\mathcal {E}}
\newcommand{\Zz}{\mathbb {Z}}
\newcommand{\Cc}{\mathbb {C}}
\newcommand{\Rr}{\mathbb {R}}
\newcommand{\Qq}{\mathbb {Q}}
\def\w{\widetilde}
\def\wh{\widehat}
\def\ha{\hookrightarrow}
\def\xr{\xrightarrow}
\numberwithin{equation}{section}
\begin{document}
\title[Cohomological rigidity of manifolds]{Cohomological rigidity of manifolds with torus actions: I}

\author[F.~Fan, J.~Ma \& X.~Wang]{Feifei Fan, Jun Ma and Xiangjun Wang}

\thanks{The authors are supported by National Natural Science Foundation of China (Grant no. 12271183). 
The first named author was supported by National Natural Science Foundation of China (Grant no. 11801580) and by Chinese Postdoctoral Science Foundation (Grant no. 2021M691098). He is also supported by GuangDong Basic and Applied Basic Research Foundation (Grant no. 2023A1515012217).
The third named author was supported in part by National Natural Science Foundation of China (Grant nos. 11871284 and 11761072).}

\address{Feifei Fan, School of Mathematical Sciences, South China Normal University, Guangzhou, 510631, China.}
\email{fanfeifei@mail.nankai.edu.cn}
\address{Jun Ma, College of Mathematics, Taiyuan University of Technology, Taiyuan 030024, China.}
\email{majun01@tyut.edu.cn}
\address{Xiangjun Wang, School of Mathematical Sciences and LPMC, Nankai University, Tianjin 300071, China.}
\email{xjwang@nankai.edu.cn}

\subjclass[2010]{57R19, 57R91, 57S25, 52B10, 14M25, 13F55}

\keywords{cohomological rigidity, moment-angle manifolds, topological toric manifolds, Gorenstein* complexes, Stanley-Reisner rings}
\maketitle

\begin{abstract}
We study the cohomological rigidity problem of two families of manifolds with torus actions: the so-called moment-angle manifolds, whose study
is linked with combinatorial geometry and combinatorial commutative algebra; and topological toric manifolds, which are topological generalizations of toric varieties. 
In this paper we prove that when a simplicial sphere satisfies certain combinatorial conditions, the corresponding moment-angle manifold and topological toric manifolds are cohomologically rigid, i.e., their homeomorphism classes in their own families are determined by their cohomology rings. In the case of toric varieties, cohomology even determine the isomorphism classes of varieties.
Our main strategy is to show that the combinatorial types of these simplicial spheres are determined by the $\mathrm{Tor}$-algebras of their face rings.
This turns out to be a solution to a known problem in combinatorial commutative algebra for a class of spheres.
\end{abstract}

\tableofcontents

\section{Introduction}
\renewcommand{\thethm}{\arabic{thm}}
A central problem in topology is to classify spaces by equivalent relations such as a homeomorphism or a homotopy equivalence. The \emph{cohomological rigidity problem} for manifolds asks whether two manifolds are homeomorphic (or even diffeomorphic) or not if their cohomology rings are isomorphic as graded rings. Although answering this question is very difficult even for the simplest case: spheres, and the answer is no in the general case, if we put further conditions on one or both manifolds we can exploit this additional structure in order to show that the desired homeomorphism or diffeomorphism must exist.

In this paper, we study the cohomological rigidity problem for two families of manifolds with torus actions: moment-angle manifolds and topological toric manifolds.
They play key roles in the emerging field of toric topology, which has rich connections with algebraic and symplectic geometry, commutative algebra and combinatorics.
\cite{MS08} and \cite{CMS11} are good references for related results and problems about their cohomological rigidity.

Associated with every finite simplicial complex $K$, there is a CW-complex $\ZZ_K$, called a
\emph{moment-angle complex}, whose topology is determined by the combinatorial type of $K$ (see Section \ref{subsec:m-a complex}); if $K$ is a triangulation of a sphere or more generally a homology sphere,
$\ZZ_K$ is a compact manifold, called a \emph{moment-angle manifold}. In particular, if $K$ is a polytopal sphere, or more generally the underlying sphere
of a complete simplicial fan, then $\ZZ_K$ admits a smooth structure.

Suppose $K$ has $m$ vertices and $\kk[m]=\kk[x_1,\dots,x_m]$ ($\deg x_i=2$) is the polynomial algebra over a commutative ring $\kk$.
Let $\kk[K]$ be the \emph{face ring} of $K$.
Then the cohomology ring of the moment-angle complex $\ZZ_K$ can be calculated as follows.
\begin{thm}[{{\cite[Proposition 4.5.5]{BP15}}}]
The following isomorphism of algebras holds:
\[
H^*(\ZZ_K;\kk)\cong\mathrm{Tor}^{*,*}_{\kk[m]}(\kk[K],\kk).
\]
\end{thm}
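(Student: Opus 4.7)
The plan is to identify both sides with the cohomology of the same explicit differential graded algebra, namely the Koszul-type complex
\[
R^*(K) := \bigl(\Lambda[u_1,\dots,u_m]\otimes\kk[K],\,d\bigr),\qquad du_i=x_i,\;dx_i=0,
\]
where $\deg u_i=(-1,2)$ and $\deg x_i=(0,2)$. Working bigrading-by-bigrading, the first step is algebraic: the exterior algebra $\Lambda[u_1,\dots,u_m]\otimes\kk[m]$ is the standard Koszul resolution of $\kk$ over the polynomial ring $\kk[m]$, so tensoring with $\kk[K]$ and computing homology yields $\mathrm{Tor}^{*,*}_{\kk[m]}(\kk[K],\kk)\cong H^*(R^*(K))$ as bigraded $\kk$-algebras.

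The second step is topological: I would identify $H^*(R^*(K);\kk)$ with $H^*(\ZZ_K;\kk)$ via the canonical $T^m$-action on $\ZZ_K$. The Borel construction $ET^m\times_{T^m}\ZZ_K$ is homotopy equivalent to the Davis--Januszkiewicz space $DJ(K)=(BT^m,\mathrm{pt})^K$, a polyhedral product of classifying spaces whose cellular cohomology is easily computed to be $\kk[K]$, with the classifying map $DJ(K)\to BT^m$ inducing the canonical surjection $\kk[m]\twoheadrightarrow\kk[K]$. Applying the Eilenberg--Moore spectral sequence to the resulting fibration
\[
\ZZ_K\longrightarrow ET^m\times_{T^m}\ZZ_K\longrightarrow BT^m
\]
produces
\[
E_2^{*,*}=\mathrm{Tor}^{*,*}_{H^*(BT^m;\kk)}\bigl(H^*(DJ(K);\kk),\kk\bigr)=\mathrm{Tor}^{*,*}_{\kk[m]}(\kk[K],\kk)\;\Longrightarrow\;H^*(\ZZ_K;\kk).
\]

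The main obstacle is the usual one for Eilenberg--Moore arguments: collapse at $E_2$ and, more delicately, the multiplicative extension problem, since isomorphism of associated graded algebras does not automatically give an algebra isomorphism. My preferred way around this is to avoid the spectral sequence entirely and instead produce an explicit quasi-isomorphism of DGAs from $R^*(K)$ to a cellular cochain model of $\ZZ_K$. Concretely, one uses the polyhedral product decomposition $\ZZ_K=(D^2,S^1)^K$ to put a natural $T^m$-equivariant CW-structure on $\ZZ_K$ whose cochain complex decomposes, by the $T^m$-action, into summands indexed by subsets $J\subseteq[m]$; assigning $u_j$ to the fundamental cochain of $S^1$ in the $j$-th factor and $x_j$ to a copy of $D^2$ gives a multiplicative map $R^*(K)\to C^*_{\mathrm{CW}}(\ZZ_K;\kk)$, and one checks by a subcomplex-by-subcomplex computation (using that the full subcomplex $K_J$ determines the $J$-summand) that this map is a quasi-isomorphism of algebras. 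Combined with the first step, this yields the stated algebra isomorphism $H^*(\ZZ_K;\kk)\cong\mathrm{Tor}^{*,*}_{\kk[m]}(\kk[K],\kk)$.
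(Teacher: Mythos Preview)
The paper does not give its own proof of this statement: it is quoted as \cite[Theorem 4.5.4]{BP15} and used as a black box throughout. Your sketch follows precisely the standard argument from that reference---identify $\mathrm{Tor}^{*,*}_{\kk[m]}(\kk[K],\kk)$ with the cohomology of the Koszul algebra $(\Lambda[u_1,\dots,u_m]\otimes\kk[K],d)$, then either run the Eilenberg--Moore spectral sequence for the Borel fibration $\ZZ_K\to DJ(K)\to BT^m$ or (to settle the multiplicative extension problem) construct an explicit DGA quasi-isomorphism to a cellular cochain model of $\ZZ_K$. This is exactly the route taken in \cite{BP15}, so there is nothing to compare against in the present paper; your outline is correct and standard.
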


The cohomology of $\ZZ_K$ therefore acquires a bigrading, and we may ask the following question about the cohomological rigidity for moment-angle manifolds.
\begin{prob}\label{prob:rigidity for m-a}
Suppose $\mathcal {Z}_{K_1}$ and $\mathcal {Z}_{K_2}$ are two moment-angle manifolds such that
\[H^*(\mathcal {Z}_{K_1})\cong H^*(\mathcal {Z}_{K_2})\] as graded (or bigraded) rings.
Are $\mathcal {Z}_{K_1}$ and $\mathcal {Z}_{K_1}$ homeomorphic?
\end{prob}
Although the cohomology ring is known to be a weak invariant even under homotopy equivalence, no example
providing the negative answer to this problem, in both graded and bigraded versions, has been found yet. On the contrary, many results support the affirmative answer to the problem (see for example \cite{CK11,Bos14,FMW15,CFW20}).

Another important class of spaces in toric topology is formed by the so-called \emph{topological toric manifolds},
which can be seen as the topological generalization of compact smooth toric varieties. For topological toric manifolds, we can also ask the same question:
\begin{prob}\label{prob:rigidity for tpm}
Let $M_1$ and $M_2$ be two topological toric manifolds with isomorphic cohomology rings. Are they homeomorphic?
\end{prob}
The problem is solved positively for some particular families of topological toric manifolds, such as some special classes of Bott manifolds \cite{CMS10,CM12,Cho15},
quasitoric manifolds over a product of two simplices \cite{CPS12}, and $6$-dimensional quasitoric manifolds over polytopes from the Pogorelov class \cite{BEMPP17}.
The cohomological rigidity problem is also open for general topological toric manifolds.

This paper is divided into two parts. In the first part of this paper we focus our attention on moment-angle manifolds corresponding to a special class of homology spheres and topological toric manifolds related to such sort of simplicial fans.
This class consists of homology spheres which are flag and satisfy the \emph{separable circuit condition} (see Definition \ref{def:SCC}).
The main results of this paper are the following theorems.
\begin{thm}[Theorem \ref{thm:B-rigidity}, Corollary \ref{cor:rigidity of m-a manifold}]\label{thm:B}
Let $\ZZ_K$ and $\ZZ_{K'}$ be two moment-angle manifolds. Suppose the cohomology of $\ZZ_K$ and $\ZZ_{K'}$ are isomorphic as bigraded rings.
Then if $K$ is flag and satisfies the separable circuit condition, there is a homeomorphism $\ZZ_K\cong \ZZ_{K'}$, which is induced by a combinatorial equivalence $K\approx K'$.
\end{thm}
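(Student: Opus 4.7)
The plan is to reduce this theorem to a purely combinatorial statement---a ``B-rigidity'' result---asserting that for $K$ flag and satisfying the SCC, the bigraded ring $\mathrm{Tor}_{\kk[m]}(\kk[K],\kk)$ determines $K$ up to combinatorial equivalence. Granting this, combining the hypothesis with the Buchstaber--Panov theorem quoted in the excerpt forces $K \cong K'$ combinatorially, and the standard functoriality of the moment-angle construction then produces a (torus-equivariant) homeomorphism $\ZZ_K \cong \ZZ_{K'}$. So the real content of the theorem is the B-rigidity claim.

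To attack B-rigidity I would rely on Hochster's decomposition
\[
\mathrm{Tor}^{-i,2j}_{\kk[m]}(\kk[K],\kk) \;\cong\; \bigoplus_{J \subseteq [m],\;|J|=j}\widetilde{H}^{j-i-1}(K_J;\kk),
\]
together with its compatibility with the multiplicative structure. The integer $m$, the $f$-vector of $K$, and all bigraded Betti numbers $\beta^{-i,2j}$ are bigraded ring invariants, so one immediately extracts $m=m'$, and from the vanishing $\mathrm{Tor}^{-1,2j}=0$ for $j \geqslant 3$ deduces that $K'$ is also flag. The piece $\mathrm{Tor}^{-1,4}$ admits a natural basis indexed by missing edges, inducing a bijection between the missing-edge sets of $K$ and $K'$. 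The remaining, and main, task is to show that this bijection is compatible with a labeling of the two vertex sets.

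The heart of the argument, and the place where the SCC must be used essentially, is recovering the \emph{incidence pattern} of missing edges from the Tor-algebra---one must detect cohomologically whether two missing edges $e,e'$ share a common vertex. I would analyze cup products $u_e \cdot u_{e'} \in \mathrm{Tor}^{-2,2j}$, whose bigrading records $|e \cup e'|$ (so $j=3$ exactly when $|e \cap e'|=1$, versus $j=4$ when $e$ and $e'$ are disjoint); the SCC is designed precisely to guarantee that the relevant summand of Hochster's decomposition contains a nontrivial witness class, allowing this intersection pattern to be read off. From the resulting incidence graph on the missing edges, a line-graph-style (Whitney-type) reconstruction recovers the vertex set and the set of non-edges up to relabeling, and by flagness this determines $K$.

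The principal obstacle I anticipate is precisely this cohomological detection step. General flag complexes are known \emph{not} to be B-rigid, so the argument must break without some extra hypothesis, and pinning down exactly where the SCC forces the requisite separation---while remaining weak enough to hold for the Gorenstein${}^*$ examples highlighted in the introduction---is the delicate point. Once this is done, the combinatorial equivalence $K \cong K'$ passes through the moment-angle functor to deliver the desired homeomorphism.
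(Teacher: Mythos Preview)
Your reduction to a B-rigidity statement and the use of Hochster's formula match the paper's strategy, but the core detection step contains a genuine error. In the Baskakov--Hochster ring the product
\[
\widetilde H^0(K_e)\otimes\widetilde H^0(K_{e'})\longrightarrow\widetilde H^1(K_{e\cup e'})
\]
is defined \emph{only} when $e\cap e'=\varnothing$; when two missing edges share a vertex the cup product $u_e\cdot u_{e'}$ is identically zero. Equivalently, by Theorem~\ref{thm:Hochster} one has $\mathrm{Tor}^{-i,2J}(K)=0$ whenever $J$ is not a $(0,1)$-vector, and the multidegree of $u_e\cdot u_{e'}$ has coordinate $2$ at the shared vertex. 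Thus the bigrading of the product cannot distinguish ``$|e\cap e'|=1$'' from ``$e\cap e'=\varnothing$ but $K_{e\cup e'}$ is not a $4$-circuit'': in both cases the product simply vanishes, and your Whitney/line-graph reconstruction cannot get started. The SCC cannot rescue this, since no combinatorial hypothesis on $K$ can make a structurally zero product nonzero. There is a second gap earlier: you assert that $\mathrm{Tor}^{-1,4}$ yields a \emph{bijection} of missing-edge sets, but a bigraded isomorphism $\phi$ is only $\kk$-linear on this piece and may a priori send a pure Hochster class $\widetilde\omega$ to a nontrivial linear combination $\sum_j\widetilde\omega'_j$. Ruling this out is itself one of the substantial steps in the paper (Proposition~\ref{prop:scc}(a)), proved via an annihilator-dimension inequality (Lemma~\ref{lem:ann}) that uses the SCC essentially.

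The paper's actual route is quite different from line-graph reconstruction. Vertices of $K$ are recovered not from incidences of missing edges but from their \emph{links}: each $\mathrm{lk}_iK$ is an $(n{-}1)$-dimensional Gorenstein$^*$ full subcomplex, and the key Lemma~\ref{lem:link} shows that $\phi$ carries the top class $[\mathrm{lk}_iK]\in\widetilde H^{n-1}(\mathrm{lk}_iK)$ to the top class of some link in $K'$. Since the SCC forces $K$ (and hence $K'$) not to be a suspension (Proposition~\ref{prop:nsc}, Lemma~\ref{lem:sus}), distinct vertices have distinct links, yielding a vertex bijection $\psi:\mathcal V(K)\to\mathcal V(K')$. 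That $\psi$ is simplicial is then checked using links of edges together with the no-suspension-of-codimension-one consequence of the SCC. Incidence information about missing edges does enter, but through $H^4(\mathcal Z_K)\cong\bigoplus_{|I|=3}\widetilde H^0(K_I)$ and Proposition~\ref{prop:scc}(b), not through products of degree-$3$ classes.
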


\begin{thm}[Theorem \ref{thm:rigidity of toric manifold}]\label{thm:C}
Let $M$ and $M'$ be two topological toric manifolds.
If the underlying sphere of the simplicial fan corresponding to $M$ is flag and satisfies the separable circuit condition, then the following two conditions are equivalent:
\begin{enumerate}[(i)]
\item $H^*(M)\cong H^*(M')$ as graded rings.
\item $M$ and $M'$ are weakly equivariantly homeomorphic.
\end{enumerate}
\end{thm}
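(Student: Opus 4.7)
The implication $(ii)\Rightarrow(i)$ is immediate: a weakly equivariant homeomorphism is in particular a homeomorphism, so it induces an isomorphism of graded cohomology rings.

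For the converse $(i)\Rightarrow(ii)$, the plan is to reduce to the $B$-rigidity statement of Theorem~\ref{thm:B-rigidity} in two steps. The first step is to recover the combinatorial type of $K$ from $H^*(M)$. Write $H^*(M)\cong\kk[K]/(\theta_1,\dots,\theta_n)$ for a degree-$2$ linear system of parameters; this sequence is $\kk[K]$-regular because $K$ is Cohen--Macaulay (indeed Gorenstein$^*$). The Cartan--Eilenberg change-of-rings spectral sequence, applied to the factorization $\kk[m]\twoheadrightarrow\kk[m]/(\theta)\twoheadrightarrow\kk$, therefore collapses and yields a canonical bigraded algebra isomorphism
\[
H^*(\ZZ_K)\;\cong\;\mathrm{Tor}^{*,*}_{\mathrm{Sym}(H^2(M))}\bigl(H^*(M),\,\kk\bigr).
\]
Because the right-hand side is built solely from $H^*(M)$ viewed as a graded algebra generated in degree $2$, any graded ring isomorphism $H^*(M)\cong H^*(M')$ automatically induces a bigraded ring isomorphism $H^*(\ZZ_K)\cong H^*(\ZZ_{K'})$. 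Applying Theorem~\ref{thm:B-rigidity}, whose hypotheses hold for $K$ by assumption, then produces a combinatorial equivalence $K\cong K'$.

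In the second step the characteristic data must be matched. After identifying $K$ with $K'$ the moment-angle manifolds are canonically identified $\ZZ_K\cong\ZZ_{K'}$, and $M,M'$ appear as quotients $\ZZ_K/K(\lambda)$ and $\ZZ_K/K(\lambda')$ by free actions of subtori determined by characteristic functions $\lambda,\lambda'$; equivalently, the ideals $(\theta)$ and $(\theta')$ inside $\kk[K]$ record $\lambda$ and $\lambda'$ up to a common $\mathrm{GL}_n(\Zz)$-base change. The plan is to lift the ring isomorphism $\phi\colon H^*(M)\to H^*(M')$ to a graded automorphism $\tilde\phi$ of $\kk[K]$ that permutes the vertex generators $x_1,\dots,x_m$ up to nonzero scalars (possibly after composing with a combinatorial symmetry of $K$). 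Such a $\tilde\phi$ sends $(\theta)$ to $(\theta')$, which is precisely the condition $\lambda\sim\lambda'$; the standard classification of topological toric manifolds by their $(K,\lambda)$-data then delivers a weakly equivariant homeomorphism between $M$ and $M'$.

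The main obstacle is this lifting step: a priori an algebra isomorphism between Artinian reductions need not respect the vertex decomposition of $\kk[K]$. I expect this to follow from an intrinsic description of the vertex classes $x_1,\dots,x_m$ inside the bigraded $\mathrm{Tor}$-algebra $H^*(\ZZ_K)$ --- the same characterization that drives the proof of Theorem~\ref{thm:B-rigidity} for flag complexes satisfying the separable circuit condition. Once the vertex classes are shown to be preserved by the induced isomorphism $H^*(\ZZ_K)\cong H^*(\ZZ_{K'})$, the lift $\tilde\phi$ is essentially forced and the second step concludes.
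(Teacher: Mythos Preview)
Your first step is correct and matches the paper: the passage from $H^*(M)$ to the bigraded $\mathrm{Tor}$-algebra is exactly Lemma~\ref{lem:cps}, and Theorem~\ref{thm:B-rigidity} then gives $K\cong K'$.

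The second step, however, is only a sketch, and you explicitly flag the lifting as ``the main obstacle'' without resolving it. There are two concrete issues. First, you work over a field $\kk$ and speak of matching generators ``up to nonzero scalars,'' but the characteristic matrices live over $\Zz$; to conclude that $(P_K,\mathit{\Lambda})$ and $(P_{K'},\mathit{\Lambda}')$ are equivalent you need $h(\bar x_i)=\pm\bar x_j$ integrally, not merely up to a unit in a field. Second, the paper does not lift $h$ to an automorphism of $\kk[K]$ at all; instead it shows directly that $h$ already sends $\bar x_i$ to $\pm\bar x_{j}$ inside $\Zz[K]/\JJ$. The mechanism is specific: one lifts $h$ to a chain map between the Taylor resolutions of $\Zz[K]/\JJ$ and $\Zz[K']/\JJ'$, applies Proposition~\ref{prop:scc}\,(a) modulo every prime to force $\phi_1(e_\omega)=\pm e_{\omega'}$ over $\Zz$, and then uses that $\Zz[m]/\JJ\cong\Zz[m-n]$ is a UFD to factor $\phi_0(\xx_\omega)=\pm\xx_{\omega'}$ into degree-$2$ irreducibles, yielding $\phi_0(\bar x_i)=\pm\bar x_j$. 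A further point you do not address: one must check that $\bar x_i\neq\pm\bar x_j$ for $i\neq j$ in both quotients, which the paper handles via Lemma~\ref{lem:suspension coloring} together with the fact (Proposition~\ref{prop:nsc}) that the SCC forbids $K$ from being a suspension.

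In short, your outline is right, but the heart of the argument --- the UFD factorization step over $\Zz$ driven by Proposition~\ref{prop:scc} and the Taylor resolution --- is missing, and your hoped-for ``intrinsic description of vertex classes in $\mathrm{Tor}$'' is not what actually closes the gap.
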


Theorem \ref{thm:B} and Theorem \ref{thm:C} are generalizations of cohomological rigity results in \cite{FMW15} and \cite{BEMPP17}, respectively, for simplicial spheres of dimension $2$ to all dimensions.

By using a result of Masuda \cite{M08}, we can get a stronger version of Theorem \ref{thm:C} for toric varieties.
\begin{thm}[Theorem \ref{thm:variety}]
Let $X$ and $X'$ be two compact smooth toric varieties. Assume the underlying simplicial sphere of the simplcial fan corresponding to $X$ is flag and satisfies the separable circuit condition. Then  $X$ and $X'$ are isomorphic as varieties if and only if their cohomology rings are isomorphic.
\end{thm}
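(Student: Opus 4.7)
The proof plan builds directly on Theorem \ref{thm:C} combined with Masuda's result in \cite{M08}. The ``only if'' direction is immediate, since an isomorphism of varieties induces an isomorphism of cohomology rings.

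For the ``if'' direction, I would first note that every compact smooth toric variety is canonically a topological toric manifold: restricting the algebraic $(\Cc^\ast)^n$-action to the compact subtorus $T^n$ provides the required locally standard action, and the simplicial fan associated to the variety is the same as the fan associated to the topological toric manifold. Thus the hypothesis that the underlying simplicial sphere of the fan of $X$ is flag and satisfies the separable circuit condition is exactly the hypothesis of Theorem \ref{thm:C}, applied to $M = X$ and $M' = X'$. Given an isomorphism $H^*(X) \cong H^*(X')$ of graded rings, Theorem \ref{thm:C} therefore produces a weakly equivariant homeomorphism between $X$ and $X'$ viewed as topological toric manifolds. In particular the fans of $X$ and $X'$ correspond under a lattice automorphism.

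To upgrade this topological equivalence to an algebraic one I would invoke Masuda's theorem from \cite{M08}: two compact smooth toric varieties whose equivariant cohomology rings are weakly isomorphic as graded $H^*(BT)$-algebras (equivalently, which are weakly equivariantly homeomorphic) must already be isomorphic as toric varieties. The weakly equivariant homeomorphism provided by Theorem \ref{thm:C} supplies precisely this equivariant cohomology isomorphism, so Masuda's theorem concludes that $X \cong X'$ as varieties.

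The main obstacle is verifying that the weakly equivariant homeomorphism delivered by Theorem \ref{thm:C} transports the full lattice-theoretic data of the fans, not just the underlying simplicial sphere. Concretely, one needs to check that the characteristic function of the topological toric manifold (the collection of primitive ray generators in $\Zz^n$) is preserved, so that the isomorphism of fans occurring as algebraic objects is genuinely witnessed, and hence the hypothesis of Masuda's theorem is actually met. Once this compatibility is confirmed, the remainder of the argument is a straightforward concatenation of Theorem \ref{thm:C} with the cited result of Masuda.
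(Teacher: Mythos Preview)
Your proposal is correct and follows essentially the same approach as the paper: apply Theorem \ref{thm:C} (Theorem \ref{thm:rigidity of toric manifold}) to obtain a weakly equivariant homeomorphism, then invoke Masuda's result \cite{M08} to upgrade this to an isomorphism of varieties.

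The only remark is that the ``obstacle'' you flag at the end is not actually an obstacle. A weakly equivariant homeomorphism $f:X\to X'$ together with its automorphism $\theta:T^n\to T^n$ automatically induces a weak isomorphism of equivariant cohomology algebras in Masuda's sense: the map on Borel constructions $ET\times_T X \to ET\times_T X'$ (twisted by $\theta$) gives $\Phi:H^*_T(X')\to H^*_T(X)$ satisfying $\Phi(u\alpha)=\gamma^*(u)\Phi(\alpha)$ with $\gamma=\theta$. There is no separate verification of ``lattice-theoretic data'' needed; this is exactly what the paper means when it says an equivariant diffeomorphism ``obviously induces a weakly isomorphism of cohomology algebras.''
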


In Section \ref{sec:n=3} we further discuss the special case of simplicial $2$-spheres.
This section can be regarded as an introduction to the second part  \cite{FMW16} of this paper.
\subsection*{Acknowledgements} We thank the Institute for Mathematical Science, National University of Singapore,
and the organizers of the Program on Combinatorial and Toric Homotopy in summer 2015, where the first and the second authors got financial support, and special thanks to Victor Buchstaber and Nikolay Erokhovets for delightful conversations and insightful comments there.

Thanks to Suyoung Choi for enlightening conversations about the cohomological rigidity problem of quasitoric manifolds at Fudan University. Thanks also to the organizers of the Fudan Topology Workshop in January 2016 for giving us an opportunity to discuss.

We also thank the Departments of Mathematics, Princeton University and Rider University, and the organizers of the Princeton-Rider Workshop on the Homotopy Theory of Polyhedral Products in May-June 2017, for giving an opportunity to the first author to present our work and for providing the first and the second authors financial support, and special thanks to Fr\'ed\'eric Bosio for helpful comments and for introducing us to the important result of \cite{B17} there, which is closely related to the {(bigraded) $B$-rigidity} result of our work.

Many thanks to Nikolay Erokhovets for providing many important references and remarks, as well as proofreading early versions for misprints.

\section{Preliminaries}
\renewcommand{\thethm}{\thesection.\arabic{thm}}
\subsection{Notations and conventions}Throughout this paper $\kk$ is a commutative ring with unit.
Let $K$ be an abstract simplicial complex. We denote the geometric realisation of $K$ by $|K|$ and the vertex set of $K$ by $\VV(K)$; if the cardinality $|\VV(K)|=m$, we often identify $\VV(K)$ with the index set $[m]=\{1,\dots,m\}$.
By $\Delta^{m-1}$ we denote the simplicial complex $2^{[m]}$ consisting of all subsets of $[m]$, and by $\partial\Delta^{m-1}$ the boundary complex of $\Delta^{m-1}$.

A subset $I\subset\VV(K)$ is a \emph{missing face} of $K$ if $I\not\in K$ but $J\in K$ for all proper subsets $J\subset I$.
Denote by $MF(K)$ the set of all missing faces of $K$. $K$ is called a \emph{flag complex} if each of its missing faces consists of two vertices.

The \emph{link} and the \emph{star} of a face $\sigma\in K$ are the subcomplexes
 \[\begin{split}
 \mathrm{lk}_K\sigma:=&\{\tau\in K:\tau\cup\sigma\in K,\tau\cap\sigma=\emptyset\};\\
 \mathrm{st}_K\sigma:=&\{\tau\in K:\tau\cup\sigma\in K\}.
 \end{split}\]
For a subset $I\subset\VV(K)$, the \emph{full subcomplex} $K_I\subset K$ is defined to be
\[K_I:=\{\sigma\in K:\sigma\subset I\}.\]
Set $\mathrm{core}\,\VV(K):=\{i\in\VV(K):\mathrm{st}_K\{i\}\neq K\}$. The \emph{core of $K$} is the subcomplex $\mathrm{core}\,K:=K_{\mathrm{core}\,\VV(K)}$.

 The \emph{join} of two simplicial complexes $K$ and $K'$, where the vertex set $\VV(K)$ is disjoint from $\VV(K')$, is the simplicial complex
\[K*K':=\{\sigma\cup \sigma':\sigma\in K, \sigma'\in K'\}.\]
In particular, $\Delta^0*K$ is called the \emph{cone} over $K$, and  $\partial\Delta^1*K$ is called the \emph{suspension} of $K$.

A simplicial complex is said to be \emph{pure} if all its facets have the same dimension.
Let $K$, $K'$ be two pure simplicial complexes of the same dimension.
Choose two facets $\sigma\in K,\ \sigma'\in K'$ and fix an identification of $\sigma$ and $\sigma'$ (by identifying their vertices).
Then the simplicial complex \[K\#_{\sigma,\sigma'}K':=(K\cup K')\setminus\{\sigma=\sigma'\}\]
is called a \emph{connected sum} of $K$ and $K'$.
Its combinatorial type depends on the way of choosing the two facets and identifying their vertices.
Let $\mathcal {C}(K\#K')$ denote the set of connected sums of $K$ and $K'$.
If $\mathcal {C}(K\#K')$ has only one combinatorial type we also use the abbreviation $K\# K'$ to denote a connected sum.

A simplicial complex $K$ is called a \emph{triangulated manifold} (or \emph{simplicial manifold}) if the geometric realization $|K|$ is a topological manifold.
More generally, $K$ is a \emph{$\kk$-homology $n$-manifold} if the
link of each $i$-face of $K$, $0\leqslant i\leqslant n$, has the $\kk$-homology of an $(n-i-1)$-sphere.
$K$ is a \emph{$\kk$-homology $n$-sphere} if it is a $\kk$-homology $n$-manifold having the $\kk$-homology of an $n$-sphere, and if $\kk=\Zz$, it is simply called a \emph{homology sphere}. (Remark: Usually, the terminology ``homology sphere"  means a manifold having the homology of a sphere. Here we take it in a more relaxed sense than its usual meaning.) {By universal coefficient theorem}, any $\kk$-homology manifold or sphere is a rational homology manifold or sphere.

A $\kk$-homology sphere is called \emph{reducible} if it is a connected sum of two $\kk$-homology spheres; otherwise it is called \emph{irreducible}. It is easy to see that an $(n-1)$-dimensional $\kk$-homology sphere $K$ is reducible if and only if there is a missing face $I\in MF(K)$ such that $|I|=n$.
	
A \emph{fan} is a finite collection $\Sigma$ of strongly convex cones in $\mathbb{R}^n$, which means that the origin is the apex, such that every face
of a cone in $\Sigma$ is also a cone in $\Sigma$ and the intersection of any two cones in $\Sigma$ is a face of each.
A fan $\Sigma$ is \emph{simplicial} (resp. \emph{nonsingular}) if each of its cones is generated by a part
of a basis of $\mathbb{R}^n$ (resp. $\mathbb{Z}^n$).
A fan $\Sigma$ is \emph{complete} if the union of its cones is the whole $\mathbb{R}^n$.

A \emph{starshaped sphere} is a triangulated sphere isomorphic to the underlying complex of a complete simplicial fan.
A \emph{polytopal sphere} is a triangulated sphere isomorphic to the boundary complex of a simplicial polytope.
Clearly a polytopal sphere is always a starshaped sphere.

\subsection{Face rings and $\mathrm{Tor}$-algebras}\label{subsec:Tor}
Let $K$ be a simplicial complex with vertex  set $[m]$. The \emph{face ring} (or \emph{Stanley-Reisner ring}) of K is defined as the quotient of the polynomial ring $\kk[x_1,\dots\,x_m]$ by the square-free monomial ideal
generated by non-faces of K:
\[\kk[{K}]:=\kk[x_1,\dots\,x_m]/(x_{i_1}\cdots x_{i_s}:\{i_1,\dots,i_s\}\not\in K).\]
$\kk[K]$ can be graded or multigraded by setting $\deg x_i=2$ or $\mathrm{mdeg}\,x_i=2\boldsymbol{e}_i$,
where $\boldsymbol{e}_i\in\Zz^m$ is the $i$th unit vector.

The \emph{Koszul complex} (or the \emph{Koszul algebra}) of the face ring $\kk[K]$ is defined as
the differential $\Zz\oplus\Zz^m$-graded algebra $(\Lambda[y_1,\dots,y_m]\otimes\kk[K],d)$,
where $\Lambda[y_1,\dots,y_m]$ is the exterior algebra on $m$ generators over $\kk$, and the multigrading and differential are given by
\begin{gather*}
\mathrm{mdeg}\,y_i=(-1,2\boldsymbol{e}_i),\ \mathrm{mdeg}\,x_i=(0,2\boldsymbol{e}_i);\\
dy_i=x_i,\quad dx_i=0.
\end{gather*}
It is known that \[H^*(\Lambda[y_1,\dots,y_m]\otimes\kk[K],d)=\mathrm{Tor}_{\kk[x_1,\dots,x_m]}(\kk[K],\kk).\]
Then the Tor-algebra $\mathrm{Tor}_{\kk[x_1,\dots,x_m]}(\kk[K],\kk)$ is canonically an $\Zz\oplus\Zz^m$-graded algebra.
It can also be seen as a bigraded algebra by setting $\mathrm{bideg}\,y_i=(-1,2)$ and $\mathrm{bideg}\,x_i=(0,2)$.
We refer to $\mathrm{Tor}_{\kk[x_1,\dots,x_m]}(\kk[K],\kk)$ as the \emph{$\mathrm{Tor}$-algebra} of $K$ and simply denote it by $\mathrm{Tor}(K;\kk)$, or $\mathrm{Tor}(K)$ if the coefficient ring is clear.

Another way to calculate $\mathrm{Tor}(K)$ is by using the Taylor resolution for $\kk[K]$.
This method was introduced  by Yuzvinsky \cite{Y99}, and used by Wang and Zheng \cite{WZ13} to calculate the cohomology rings of moment-angle complexes.

Namely, let
$\mathbb{P}=MF(K)$, and let $\Lambda[\mathbb{P}]$ be the exterior algebra over $\kk$ generated by $\mathbb{P}$.
Then the tensor product $\TT=\Lambda[\mathbb{P}]\otimes\kk[x_1,\dots,x_m]$ can be endowed with a differential as follows.
For a generator $\uu\otimes f\in \Lambda[\mathbb{P}]\otimes\kk[x_1,\dots,x_m]$ with $\uu=\sigma_{k_1}\sigma_{k_2}\cdots{\sigma_{k_q}}$ (${\sigma_{k_i}}\in\mathbb{P}$),
let $S_{\uu}=\sigma_{k_1}\cup\sigma_{k_2}\cup\cdots\cup\sigma_{k_q}$, and define
\begin{align*}
\partial_i\uu:=\sigma_{k_1}\cdots&\wh \sigma_{k_i}\cdots\sigma_{k_q}=\sigma_{k_1}\cdots\sigma_{k_{i-1}}\sigma_{k_{i+1}}\cdots\sigma_{k_q};\\
d(\uu\otimes f)&:={\sum_{i=1}^q(-1)^{i-1}}\partial_i\uu\otimes \xx_{S_\uu\setminus S_{\partial_i\uu}}\cdot f.
\end{align*}
Here $\xx_I\in\kk[x_1,\dots,x_m]$ denotes the monomial $\prod_{i\in I}x_i$ for $I\subset[m]$.
It can be shown that $(\TT, d)$ is a $\kk[x_1,\dots,x_m]$-free resolution of $\kk[K]$, which is known as the \emph{Taylor resolution} (cf. \cite[\S 4.3.2]{MS05}). It follows that $H(\TT\otimes_{\kk[x_1,\dots,x_m]}\kk,d)=\mathrm{Tor}(K)$.

An easy calculation shows that $(\TT\otimes_{\kk[x_1,\dots,x_m]}\kk, d)$ is just the cochain complex
\begin{equation}\label{eq:differential}
(\Lambda[\mathbb{P}],d),\quad d(\uu)=\sum_{i}(-1)^{i-1}\varepsilon_i\cdot\partial_i\uu,
\end{equation}
where $\varepsilon_i=1$ if $S_{\uu}=S_{\partial_i(\uu)}$ and zero otherwise.
Define a new product ($\times$-product) structure on $\Lambda[\mathbb{P}]$ by
\[\uu_1\times \uu_2=
\begin{cases}
\uu_1\cdot \uu_2 \quad &\text{if } S_{\uu_1}\cap S_{\uu_2}=\emptyset,\\
0 &\text{otherwise,}
\end{cases}\]
where {\LARGE$\cdot$} denotes the ordinary product in the exterior algebra $\Lambda[\mathbb{P}]$. Then $(\Lambda[\mathbb{P}],\times,d)$ becomes a differential $\Zz\oplus\Zz^m$-graded (bigraded) algebra by setting
\[\mathrm{mdeg}\,\uu=(-1,2S_\uu)\ (\mathrm{bideg}\,\uu=(-1,2|S_\uu|))\ \text{for a monomial }\uu\in \Lambda[\mathbb{P}].\]
Here we identify $S_\uu$ with the vector $\sum_{i\in S_\uu}\boldsymbol{e}_i\in\Zz^m$.
This makes $\mathrm{Tor}(K)$ into a $\Zz\oplus\Zz^m$-graded (or bigraded) algebra which agrees with the one induced by the Koszul algebra of the face ring. From \eqref{eq:differential} we can readily get the following
\begin{lem}\label{lem:missing face and tor}
Let $\kk$ be a field. Then the number of missing faces of $K$ with $j$ vertices is equal to $\dim_\kk\mathrm{Tor}^{-1,\,2j}(K)$.
\end{lem}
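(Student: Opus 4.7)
The plan is to use the explicit cochain complex $(\Lambda[\mathbb{P}],d)$ from \eqref{eq:differential}, whose cohomology computes $\mathrm{Tor}(K)$, and then argue that in bidegree $(-1,2j)$ both the differential out of and the differential into this piece vanish for degree reasons intrinsic to minimal non-faces. Since $\mathrm{bideg}\,\sigma=(-1,2|\sigma|)$ for $\sigma\in\mathbb{P}=MF(K)$, and the bidegrees add under the exterior product, the bidegree $(-1,2j)$ component of $\Lambda[\mathbb{P}]$ has $\kk$-basis the single generators $\sigma\in MF(K)$ with $|\sigma|=j$. So its $\kk$-dimension is already the desired number, and the problem reduces to showing that no cancellation takes place in cohomology.

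Next I would compute the incoming and outgoing differentials for bidegree $(-1,2j)$. For a single generator $\uu=\sigma$ the only candidate for $\partial_i\uu$ is the empty word, and the coefficient $\varepsilon_i$ is $1$ only when $S_\uu=S_{\partial_i\uu}$; here $S_\sigma=\sigma\neq\varnothing$, so $\varepsilon_i=0$ and $d(\sigma)=0$. Hence every element of bidegree $(-1,2j)$ is a cocycle. For coboundaries, I would examine bidegree $(-2,2j)$, spanned by products $\sigma_a\sigma_b$ with $|\sigma_a|+|\sigma_b|=j$; by the same $\varepsilon_i$ rule, a nonzero term in $d(\sigma_a\sigma_b)$ would force $\sigma_a\subset\sigma_b$ or $\sigma_b\subset\sigma_a$. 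But elements of $MF(K)$ are \emph{minimal} non-faces, so no such inclusion is possible unless $\sigma_a=\sigma_b$, in which case the product already vanishes in the exterior algebra. Therefore $d$ is identically zero on bidegree $(-2,2j)$, and there are no nontrivial coboundaries entering bidegree $(-1,2j)$.

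Combining the two observations, $\mathrm{Tor}^{-1,2j}(K)$ coincides with the entire $(-1,2j)$ piece of $\Lambda[\mathbb{P}]$, whose $\kk$-dimension is the number of missing faces of $K$ with exactly $j$ vertices. I do not anticipate a real obstacle here: the only point to be careful about is the sign and $\varepsilon$ bookkeeping in \eqref{eq:differential}, and the minimality built into the definition of $MF(K)$ is precisely what kills the relevant $\varepsilon_i$'s.
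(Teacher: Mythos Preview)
Your argument is correct and is exactly the elaboration the paper intends when it says the lemma follows ``readily'' from \eqref{eq:differential}: show that the differential vanishes both into and out of homological degree $-1$ using the $\varepsilon_i$ rule and minimality of missing faces.

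One small imprecision worth flagging: the internal degree of a basis element $\uu=\sigma_{k_1}\cdots\sigma_{k_q}$ in the Taylor complex is $2|S_\uu|=2|\sigma_{k_1}\cup\cdots\cup\sigma_{k_q}|$, not $2\sum_i|\sigma_{k_i}|$; the bidegrees add under the $\times$-product, not under the ordinary exterior product. So the bidegree $(-2,2j)$ piece is spanned by pairs $\sigma_a\sigma_b$ with $|\sigma_a\cup\sigma_b|=j$ rather than $|\sigma_a|+|\sigma_b|=j$. This does not affect your proof, since your minimality argument actually shows $d(\sigma_a\sigma_b)=0$ for \emph{every} pair of distinct missing faces, and hence the differential from homological degree $-2$ to $-1$ vanishes identically regardless of internal degree.
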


\subsection{Gorenstein complexes}
In the subject of combinatorial commutative algebra, Gorenstein complexes play an especially significant role because of their nice algebraic-combinatorial properties. We review here the most important aspects
of Gorenstein complexes.

A simplicial complex $K$ of dimension $n-1$ is called a \emph{Cohen-Macaulay} complex over a field $\kk$, if its face ring $\kk[K]$ is a Cohen-Macaulay ring. Namely, there is a regular sequence of length $n$ in $\kk[K]$. 
A Cohen-Macaulay complex $K$ of dimension $n-1$ with $m$ vertices is called \emph{Gorenstein} over a field $\kk$ if $\mathrm{Tor}^{-(m-n)}(K)=\kk$.
{We say that $K$ is a \emph{Gorenstein complex over $\mathbb{Z}$}, or simply a \emph{Gorenstein complex} if $K$ is Gorenstein over any field}. Further, $K$ is called \emph{Gorenstein*} if $K$ is Gorenstein and $\mathrm{core}\,K=K$.

A fundamental result of Avramov and Golod says that the $\mathrm{Tor}$-algebra of a Gorenstein complex {over a field} satisfies Poincar\'e duality.
\begin{thm}[Avramov-Golod \cite{AG71}, {\cite[Theorem 3.4.4]{BP15}}]\label{thm:Gorenstein Tor-alg}
	A simplicial complex $K$ is Gorenstein over a field $\kk$
	if and only if  $\mathrm{Tor}(K)$ is a Poincar\'e duality $\kk$-algebra with respect to the homology degree, i.e., there exists an integer $d\geqslant0$ such that the $\kk$-homomorphisms 
	\[
	\mathrm{Tor}^{-i}(K)\to \mathrm{Hom}_\kk(\mathrm{Tor}^{i-d}(K), \mathrm{Tor}^{-d}(K)),\quad a\mapsto\phi_a \text{ with } \phi_a(b)=ab,
	\]
	are isomorphisms for $0\leqslant i\leqslant d$.
\end{thm}
 {
{Gorenstein* complexes also have combinatorial-topological characterizations.}
{\begin{thm}[{\cite[Theorem II.5.1]{S96}} and {\cite[Lemma 3.3]{FW21}} or {\cite[\S 59.1]{Rin75}}]\label{thm:Gorenstein}
Suppose $\kk$ is a field or $\Zz$. Let $K$ be a simplicial complex with vertex set $[m]$. Then the following assersions are equivalent
\begin{enumerate}[(a)]
	\item  $K$ is Gorenstein* over $\kk$.
	\item $K$ is a $\kk$-homology sphere.
	\item $\w H^{i}(K_J;\kk)\cong\w H_{n-i-1}(K_{[m]\setminus J};\kk)$ for all $J\subset[m]$ and $-1\leqslant i\leqslant n$, where $n=\dim K$. Here we assume $\w H^{-1}(K_\emptyset)=\kk$.
\end{enumerate}
\end{thm}}

\subsection{Moment-angle complexes and manifolds}\label{subsec:m-a complex}
Moment-angle complexes play a key role in toric topology. They were first introduced by Davis and Januszkiewicz \cite{DJ91} and extensively studied in
detail and named by Buchstaber and Panov \cite{BP00}, {where a general construction from an arbitrary topological pair $(X,A)$ was also considered (following a suggestion of N. Strickland). Further generalizations of moment-angle complexes to a set
of pairs of spaces, now known as polyhedral products, were studied in the work of Grbi\'c and Theriault \cite{GT07} as well as Bahri, Bendersky, Cohen and Gitler \cite{BBCG1}, where the term `polyhedral product' (due to W. Browder) was introduced.} Let us recall their construction in the most general way.

Given a collection of CW pairs $(\underline{X},\underline{A})=\{(X_i,A_i)\}_{i=1}^m$ together with a simplicial complex $K$ with vertex set $[m]$. The \emph{polyhedral product} determined by $(\underline{X},\underline{A})$ and $K$ is defined to be the CW complex:
\[\mathcal {Z}_K(\underline{X},\underline{A})=\bigcup_{\sigma\in K}B_\sigma,\]
where \[B_\sigma=\prod_{i=1}^m Y_i\quad \text{and}\quad Y_i=\begin{cases}X_i\quad \text{if}\quad i\in\sigma,\\
A_i\quad \text{if}\quad i\not\in\sigma.\end{cases}\]

If $(\underline{X},\underline{A})=(D^2, S^1)$, i.e. the pairs $(X_i,A_i)$ are identically $(D^2, S^1)$,  then the space $\mathcal {Z}_K(D^2,S^1)$ is referred to as a
\emph{moment-angle complex} and simply denoted by $\mathcal {Z}_K$; in the case $(\underline{X},\underline{A})=(D^1, S^0)$, the space $\Rr\ZZ_K=\ZZ_K(D^1,S^0)$ is referred to as a \emph{real moment-angle complex}.

Note that $\ZZ_K\subset (D^2)^m$, the standard coordinatewise action of the $m$-torus $T^m=\mathbb{R}^m/\mathbb{Z}^m$ on $(D^2)^m$ induces
a \emph{canonical} $T^m$-action on $\ZZ_K$.

A result of Cai \cite[Corollary 2.10]{C17} says that $\ZZ_K$ is a closed orientable topological manifold of dimension $m+n$ if and only if $K$ is a homology $(n-1)$-sphere. In this case,  we call $\ZZ_K$ a \emph{moment-angle manifold}.
In particular, if $K$ is a polytopal sphere (see \cite[Corollary 6.2.5]{BP15}) or a starshaped sphere (\cite{PU12}), then there exists a smooth structure on $\ZZ_K$ compatible with its canonical torus action. In general, the question whether $\ZZ_K$ has a smooth tructure is open.

\subsection{Cohomology of moment-angle complexes}\label{subsec:cohomology of m-a}
The cohomology ring of $\ZZ_K$ was studied rationally by Buchstaber and Panov
in \cite{BP00} {and integrally by Franz in \cite{M03,M06} as well as others} \cite{B02,BBP04,P08}.
Throughout this subsection, the coefficient ring $\kk$ will be omitted from the notation.

\begin{thm}[{\cite[Theorem 4.5.4]{BP15}}]\label{thm:cohomology of m-a}
The following isomorphism of algebras holds:
\[
H^*(\ZZ_K)\cong\mathrm{Tor}(K),\quad H^p(\ZZ_K)\cong\bigoplus_{-i+2|J|=p}\mathrm{Tor}^{-i,2J}(K),
\]
where $J=(j_1,\dots,j_m)\in\Zz^m$ and $|J|=j_1+\cdots+j_m$.
\end{thm}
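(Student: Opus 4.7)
The plan is to build a small cochain model for $\ZZ_K$ directly from a cellular decomposition, and then to recognize this model as a Koszul-type resolution that computes the $\mathrm{Tor}$-algebra.

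First I would equip the disk $D^2$ with the CW structure having three cells: the basepoint $1 \in S^1$, the open arc $T = S^1 \setminus \{1\}$, and the open 2-cell $D = \mathrm{int}(D^2)$. Taking products, $(D^2)^m$ acquires a CW structure whose cells are indexed by pairs of disjoint subsets $(I,J) \subset [m]$: the factor in position $i$ is the $1$-cell $T$ if $i \in I$, the $2$-cell $D$ if $i \in J$, and the basepoint otherwise. By the definition of $\ZZ_K$ as $\bigcup_{\sigma \in K} B_\sigma$, such a product cell lies in $\ZZ_K$ exactly when $J$ is a face of $K$. Writing $y_i$ for the cochain dual to $T$ in the $i$th factor and $x_i$ for the one dual to $D$, the cellular cochain complex $C^*((D^2)^m)$ is spanned by square-free monomials $y_I x_J$ with $I \cap J = \varnothing$, and $C^*(\ZZ_K)$ is the quotient by those with $J \notin K$. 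A short computation (the coboundary in each $D^2$ factor satisfies $dy_i = x_i$, $dx_i = 0$) makes this into a bigraded dg-subalgebra of $\Lambda[y_1,\dots,y_m]\otimes \kk[x_1,\dots,x_m]$, with $\mathrm{bideg}\,y_i = (-1,2)$ and $\mathrm{bideg}\,x_i = (0,2)$.

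The next step is to compare this cellular model with the full Koszul algebra $R^*(K) := (\Lambda[y_1,\dots,y_m]\otimes \kk[K],\,d)$, $dy_i = x_i$, $dx_i = 0$, which by construction computes $\mathrm{Tor}_{\kk[m]}(\kk[K],\kk)$ since $\Lambda[y]\otimes\kk[m]$ is a $\kk[m]$-free resolution of $\kk$. The inclusion of the cellular model into $R^*(K)$ is a dg-algebra map, and I would show it is a quasi-isomorphism by filtering both sides by the number of squared variables $x_i^{k_i}$ with $k_i \geqslant 2$ and comparing the associated spectral sequences; alternatively, construct an explicit deformation retraction of $R^*(K)$ onto the square-free part using the standard Koszul contracting homotopy on each monomial $x_i^{k}$. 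This gives the additive and multiplicative isomorphism
\[
H^*(\ZZ_K;\kk) \;\cong\; H^*(\Lambda[y_1,\dots,y_m]\otimes \kk[K],d) \;\cong\; \mathrm{Tor}_{\kk[m]}(\kk[K],\kk),
\]
and the bigrading on the right, with $H^p(\ZZ_K) \cong \bigoplus_{-i+2|J|=p}\mathrm{Tor}^{-i,2J}(K)$, is read off from the assignments $\mathrm{bideg}\,y_i=(-1,2)$, $\mathrm{bideg}\,x_i=(0,2)$ because the topological degree of a cell is $|I| + 2|J|$ and the homological degree of $y_I x_J$ in the Koszul complex is $-|I|$.

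An essentially equivalent alternative, and the one pursued in \cite{BP15}, is to use the Borel fibration $\ZZ_K \to ET^m\times_{T^m}\ZZ_K \to BT^m$, identify the total space up to homotopy with the Davis--Januszkiewicz space $DJ(K) = \ZZ_K(\Cc P^\infty,*)$, check directly that $H^*(DJ(K);\kk) \cong \kk[K]$ as $H^*(BT^m)$-algebras, and run the Eilenberg--Moore spectral sequence
\[
E_2 \;=\; \mathrm{Tor}_{H^*(BT^m;\kk)}\!\bigl(H^*(DJ(K);\kk),\kk\bigr) \;\Longrightarrow\; H^*(\ZZ_K;\kk).
\]
Degeneration at $E_2$ follows because the Koszul resolution of $\kk$ over $\kk[m]$ already has internal degree matching the topological grading, so there is no room for higher differentials. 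The main obstacle in either approach is the same: promoting the additive isomorphism to a multiplicative one. In the cellular approach this reduces to checking that the Alexander--Whitney-type cup product on $C^*(\ZZ_K)$ agrees, after the quasi-isomorphism, with the shuffle product on the Koszul complex; this is where one really uses that the CW structure on $(D^2)^m$ is a product structure so that cellular cup products are computed factor-by-factor.
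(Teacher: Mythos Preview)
The paper does not give its own proof of this theorem: it is quoted verbatim from \cite[Theorem 4.5.4]{BP15} and used as a black box. Your outline is correct and is precisely the argument carried out in that reference --- the cellular model $R^*(K)\subset\Lambda[y_1,\dots,y_m]\otimes\kk[K]$ together with the Eilenberg--Moore/Borel fibration alternative are exactly the two routes presented in \cite{BP15}, so there is nothing to compare.
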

Viewing a subset $J\subset[m]$ as a $(0,1)$-vector in $\Zz^m$ whose $j$th coordinate
is $1$ if $j\in J$ and $0$ otherwise, there is the following well-known
Hochster's formula. (This formula was obtained by Hochster \cite{H75} for field coefficients, and generalized to arbitrary coefficients by Panov \cite{P08}.)
\begin{thm}[{\cite[Theorem 3.2.9]{BP15}}]\label{thm:Hochster}
$\mathrm{Tor}^{-i,2J}(K)=0$ if $J$ is not a $(0,1)$-vector, and for any subset $J\subset[m]$ we have
\[\mathrm{Tor}^{-i,2J}(K)\cong\w H^{|J|-i-1}(K_J).\]
Here we assume $\w H^{-1}(K_\emptyset)=\kk$.
\end{thm}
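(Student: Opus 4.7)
My plan is to compute $\mathrm{Tor}_{\kk[x_1,\dots,x_m]}(\kk[K], \kk)$ via the Koszul complex $(\Lambda[y_1,\dots,y_m] \otimes \kk[K], d)$ recalled above, exploiting its refined $\Zz \oplus \Zz^m$-grading to reduce the whole computation to one multidegree at a time. Fix $\mathbf{a} = (a_1,\dots,a_m) \in \Zz^m_{\geqslant 0}$. A $\kk$-basis of the $(\ast, 2\mathbf{a})$-component consists of the elements $y_\sigma \otimes \xx^{\mathbf{b}}$ with $\sigma \subset [m]$, $\mathbf{b} \in \Zz^m_{\geqslant 0}$, $\chi_\sigma + \mathbf{b} = \mathbf{a}$ (where $\chi_\sigma$ is the characteristic vector of $\sigma$), and $\mathrm{supp}(\mathbf{b}) \in K$. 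The Koszul differential preserves this piece, so it suffices to analyze each $\mathbf{a}$ separately.

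For the vanishing statement when $\mathbf{a}$ is not a $(0,1)$-vector, pick a coordinate $k$ with $a_k \geqslant 2$. Then $k \in \mathrm{supp}(\mathbf{b})$ for every allowed basis element (whether or not $k \in \sigma$); in particular $\{k\} \in K$ and $\mathrm{supp}(\mathbf{b} - \mathbf{e}_k) \subseteq \mathrm{supp}(\mathbf{b}) \in K$. I would then define a $\kk$-linear contracting homotopy $h$ on the multidegree $2\mathbf{a}$ component by sending $y_\sigma \otimes \xx^{\mathbf{b}}$ to $\pm y_{\sigma \cup \{k\}} \otimes \xx^{\mathbf{b} - \mathbf{e}_k}$ when $k \notin \sigma$ and to $0$ otherwise. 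A direct application of the Leibniz rule together with $dy_i = x_i$ should give $dh + hd = \mathrm{id}$, forcing this subcomplex to be acyclic.

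For the identification when $\mathbf{a} = J$ is the characteristic vector of a subset $J \subset [m]$, every basis element in multidegree $(\ast, 2J)$ must have $\sigma \subset J$ and $\mathbf{b} = \chi_{J \setminus \sigma}$ with $J \setminus \sigma \in K_J$. Writing $\tau = J \setminus \sigma$, the basis element $y_{J \setminus \tau} \otimes \xx^{\tau}$ sits in Koszul homological degree $-(|J| - |\tau|)$. I would send it (with a sign fixed by a linear order on $[m]$) to the simplicial cochain dual to the face $\tau \in K_J$ of dimension $|\tau| - 1$, so that Koszul degree $-i$ matches cochain dimension $|J| - i - 1$. The Koszul differential
\[
d(y_{J \setminus \tau} \otimes \xx^{\tau}) = \sum_{k \in J \setminus \tau} (\pm)\, y_{J \setminus (\tau \cup \{k\})} \otimes \xx^{\tau \cup \{k\}}
\]
is automatically restricted to $k$ with $\tau \cup \{k\} \in K_J$ (otherwise $\xx^{\tau \cup \{k\}} = 0$ in $\kk[K]$), and translates verbatim to the simplicial coboundary of $K_J$. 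The boundary case $\tau = \varnothing$ supplies the augmentation and accounts for the convention $\w H^{-1}(K_\varnothing) = \kk$ when $J = \varnothing$.

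The main technical obstacle is keeping the sign conventions consistent across the Koszul differential, the homotopy $h$, and the cochain identification, so that the resulting isomorphism is natural and assembles into an algebra isomorphism once all multidegrees are combined. Once an ordering of $[m]$ is fixed, the remainder of the argument is a coordinate-wise bookkeeping exercise against the monomial relations defining $\II_K$.
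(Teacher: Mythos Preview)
Your argument is correct and is essentially the standard proof of Hochster's formula via the multigraded Koszul complex, as found for instance in the cited reference \cite[Theorem 3.2.9]{BP15}. Note, however, that the present paper does not supply its own proof of this theorem: it is quoted as a known result with a citation, so there is nothing in the paper to compare against beyond the reference itself. Your contracting homotopy for non-squarefree multidegrees and your bijection between the squarefree Koszul strand and the augmented simplicial cochain complex of $K_J$ are exactly the ingredients of the textbook proof.
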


So $\mathrm{Tor}(K)$ is isomorphic to $\bigoplus_{J\subset[m]}\w H^*(K_J)$  as $\kk$-modules,
and this isomorphism endows the direct sum $\bigoplus_{J\subset[m]}\w H^*(K_J)$ with a $\kk$-algebra structure.

On the other hand, Baskakov \cite{B02} directly defined a natural multiplication structure on $\bigoplus_{J\subset[m]}\w H^*(K_J)$ as follows. For an oriented $(p-1)$-face $\sigma\in K$, whose orientation is induced by an ordering of its vertices, denote by $\sigma^*\in\widetilde C^{p-1}(K)$ the basis cochain dual to $\sigma$; it takes value $1$ on $\sigma$ and vanishes on all other faces. Consider the product in the simplicial cochains of
	full subcomplexes given by 
\begin{equation}\label{eq:BHR}
	\begin{split}
		\w C^{p-1}(K_I)\otimes \w C^{q-1}(K_J)&\to \w C^{p+q-1}(K_{I\cup J}),\\
		\sigma^*\otimes\tau^*&\mapsto \begin{cases}
			(\sigma\cup \tau)^*\ \text{ if } I\cap J=\emptyset \text{ and } \sigma\cup\tau\in K_{I\cup J};\\
			0\ \text{ otherwise. }
		\end{cases}
	\end{split}
\end{equation}
Here $\sigma\cup \tau$ means the juxtaposition of $\sigma$ and $\tau$. We refer to $\bigoplus_{J\subset[m]}\w H^*(K_J)$ with the product given by \eqref{eq:BHR} as the \emph{Baskakov-Hochster ring} (BHR for abbreviation) of $K$, and refer to the first line and the second line of \eqref{eq:BHR} as the \emph{degree rule} and \emph{intersection rule} of BHR, respectively.
Moreover, we use $p_J$ to denote the projection map
\begin{equation}\label{eq:projection}
p_J:\bigoplus_{J\subset[m]}\w H^*(K_J)\to \w H^*(K_J).
\end{equation}

\begin{thm}[Baskakov's formula, {\cite[Theorem 4.5.8]{BP15}}]\label{thm:union product}
There is a ring isomorphism (up to sign for products).
\begin{equation}\label{eq:Hochster ring}
H^*(\ZZ_K)\cong \bigoplus_{J\subset[m]}\w H^*(K_J),\quad H^p(\ZZ_K)\cong\bigoplus_{J\subset[m]}\w H^{p-|J|-1}(K_J),
\end{equation}
where the right side of the isomorphism is the BHR of $K$.
\end{thm}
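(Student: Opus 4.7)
The plan is to derive the isomorphism from the Koszul complex description of $\mathrm{Tor}(K)$ combined with Theorem~\ref{thm:cohomology of m-a} and Hochster's formula. Since Theorem~\ref{thm:cohomology of m-a} already gives $H^*(\ZZ_K)\cong\mathrm{Tor}(K)$ as graded rings, and Theorem~\ref{thm:Hochster} gives the additive isomorphism $\mathrm{Tor}(K)\cong\bigoplus_{J\subset[m]}\w H^{*-|J|-1}(K_J)$ with the correct degree shift, the whole content lies in matching the product inherited from $\mathrm{Tor}(K)$ with the product defined by Baskakov's simplicial rule on $\bigoplus_{J\subset[m]}\w H^*(K_J)$. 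I would compare both structures at the cochain level inside the Koszul complex.

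Write $R=(\Lambda[y_1,\dots,y_m]\otimes\kk[K],d)$ with the multigrading recalled in \S\ref{subsec:Tor}. For each subset $J\subset[m]$, let $R_J$ denote the multidegree-$(*,2J)$ summand of $R$; a $\kk$-basis of $R_J$ is given by the monomials $y_I\otimes\xx_{J\setminus I}$ with $I\subset J$ and $J\setminus I\in K$, and $d$ preserves $R_J$. After fixing an ordering of $J$, the rule sending $y_I\otimes\xx_{J\setminus I}$ to the cochain dual to the oriented face $J\setminus I\in K_J$ defines a $\kk$-linear isomorphism of $R_J$, with cohomological degree shifted by $|J|+1$, onto the augmented reduced cochain complex $\w C^{*-1}(K_J)$, intertwining the Koszul differential with the simplicial coboundary up to signs determined by the chosen orientation. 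Passing to cohomology recovers Hochster's formula on the nose and fixes a preferred additive decomposition of $\mathrm{Tor}(K)$ that we can use to compare products.

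For the ring structure I would separate two cases. If $J_1\cap J_2\neq\varnothing$, then any product of elements from $R_{J_1}$ and $R_{J_2}$ lies in multidegree $(*,2J_1+2J_2)$, whose $k$-th coordinate equals $4$ for every $k\in J_1\cap J_2$; by the vanishing half of Hochster's formula, $\mathrm{Tor}(K)$ is zero in such multidegrees, so the induced product in cohomology is zero, which matches Baskakov's rule of vanishing on overlapping supports. If $J_1\cap J_2=\varnothing$, the Koszul product of two basis monomials $y_{I_1}\otimes\xx_{J_1\setminus I_1}$ and $y_{I_2}\otimes\xx_{J_2\setminus I_2}$ lies in $R_{J_1\cup J_2}$ and equals, up to a sign from reordering the $y$-factors, $y_{I_1\cup I_2}\otimes\xx_{(J_1\cup J_2)\setminus(I_1\cup I_2)}$ whenever $(J_1\setminus I_1)\cup(J_2\setminus I_2)$ is a face of $K$, and is zero otherwise (since then the monomial $\xx_{(J_1\cup J_2)\setminus(I_1\cup I_2)}$ lies in $\II_K$). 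Translating this back through the cochain identification of the previous paragraph turns it into the simplicial cup product $\w C^*(K_{J_1})\otimes\w C^*(K_{J_2})\to\w C^*(K_{J_1}*K_{J_2})$ followed by restriction along $K_{J_1\cup J_2}\hookrightarrow K_{J_1}*K_{J_2}$, which is precisely Baskakov's product.

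The main technical obstacle I expect is sign bookkeeping. The Koszul complex carries signs coming from the ordering of the $y$-variables in $\Lambda[y_1,\dots,y_m]$ and from the Koszul sign rule on multiplication, whereas the simplicial coboundary and cup product carry the usual alternating simplicial signs; matching these conventions vertex by vertex, consistently across the cochain identifications of $R_J$ with $\w C^{*-1}(K_J)$ for all $J$ simultaneously, is delicate, and is precisely what forces the statement to read ``up to a sign for products.'' Apart from this combinatorial sign calculation, the remaining verifications are routine inspection on the basis monomials described above.
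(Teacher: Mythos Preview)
Your proposal is essentially the standard argument and is correct in outline; it is precisely the proof given in \cite[Theorem~4.5.8 and Proposition~3.2.10]{BP15}. Note, however, that the present paper does not prove Theorem~\ref{thm:union product} at all: it is quoted from \cite{BP15} as background, and the only comment the paper adds is the remark on signs immediately following the statement. So there is no ``paper's own proof'' to compare against---your sketch simply reconstructs the cited proof from the Koszul model, which is exactly what one would expect.
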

\begin{rem}\label{rem:sign}
The sign for products was omitted in Baskakov's original paper \cite{B02}. This was pointed out by Bosio and Meersseman \cite[Remark 10.7]{BM06}, as well as Buchstaber and Panov \cite[Proposition 3.2.10]{BP15}. Bosio and Meersseman \cite[Theorem 10.1]{BM06} gave the explicit formula of this sign in the sence of Alexander duality, i.e., in the form of the intersection product of the homology of the full subcomplexes. See \cite[Theorem 2.15]{FW15} for the formula of this sign in the cohomology form.
\end{rem}

The following lemma is useful for later proofs of our results, and we include its proof in Appendix \ref{appdx:BHR2} for the reader's convenience.
\begin{lem}\label{lem:hoch}
Given $\alpha\in \w {H}^i(K_I)$, $\beta\in \w {H}^j(K_J)$ with $I\cap J=\emptyset$. Suppose $\alpha=\phi^*(\alpha')$ for some $\alpha\in \w {H}^i(K_{I'})$, $I\subset I'$ and $\phi:K_I\ha K_{I'}$. Let $J'=(I\cup J)\setminus I'$, $\psi:K_{J'}\ha K_J$ and $\beta'=\psi^*(\beta)$.
Then in the BHR of $K$, $\alpha\beta=\alpha'\beta'$.
\end{lem}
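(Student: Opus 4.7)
The plan is to realise both products as pull-backs of a single cohomology class along two different maps into a common topological join, and then to exhibit a homotopy between those two maps. Recall that for disjoint $S,T\subset[m]$ and classes $u\in\widetilde{H}^*(K_S)$, $v\in\widetilde{H}^*(K_T)$, the Baskakov product is $u\cdot v = \iota_{S,T}^*(u\times v)$, where $\iota_{S,T}\colon K_{S\cup T}\hookrightarrow K_S*K_T$ is the canonical inclusion and $u\times v$ denotes the external (join) product. Under the hypotheses one has $I'\cup J'=I\cup J$, so $\alpha\cdot\beta$ and $\alpha'\cdot\beta'$ both lie in $\widetilde{H}^{i+j+1}(K_{I\cup J})$.

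Although the intersection $I'\cap J$ may be non-empty (so that $K_{I'}*K_J$ is not literally a simplicial complex on a disjoint vertex set), the topological join $X:=|K_{I'}|*|K_J|$ is perfectly well-defined and carries a K\"unneth class $\alpha'\times\beta\in H^{i+j+1}(X)$. I would introduce two maps $g_1, g_2\colon |K_{I\cup J}|\to X$ as the compositions
\[
g_1\colon |K_{I\cup J}|\hookrightarrow |K_I|*|K_J|\xrightarrow{\phi*\mathrm{id}} X, \qquad g_2\colon |K_{I\cup J}|\hookrightarrow |K_{I'}|*|K_{J'}|\xrightarrow{\mathrm{id}*\psi} X.
\]
Naturality of the external product immediately yields $g_1^*(\alpha'\times\beta)=\alpha\cdot\beta$ (using $\phi^*\alpha'=\alpha$) and $g_2^*(\alpha'\times\beta)=\alpha'\cdot\beta'$ (using $\psi^*\beta=\beta'$), so it suffices to show $g_1\simeq g_2$.

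To produce the homotopy, set $V:=I'\cap J$ and use the partition $I\cup J=I\sqcup V\sqcup J'$ to decompose any point $p\in|\sigma|$ (for $\sigma\in K_{I\cup J}$) as $p=p_I+p_V+p_{J'}$. Define $H_t(p)$ as the point of the join $X$ whose $|K_{I'}|$-component equals $p_I+t\,p_V$ (viewed in $|K_{I'}|$ via $V\subset I'$) and whose $|K_J|$-component equals $(1-t)p_V+p_{J'}$ (viewed in $|K_J|$ via $V\subset J$). At $t=0$ the entire $V$-weight sits on the $|K_J|$-side, recovering $g_1$; at $t=1$ it has been slid onto the $|K_{I'}|$-side, recovering $g_2$. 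The main obstacle is verifying that $H_t(p)$ genuinely lies in $X$ for every $t$: one must check that the supports $\sigma\cap I'$ and $\sigma\cap J$ are faces of $\sigma$ and hence simplices of $K_{I'}$ and $K_J$ respectively, which is immediate since any subset of $\sigma\in K$ is again in $K$. Continuity of $H_t$ follows from its piecewise-linearity on each simplex of $K_{I\cup J}$, completing the argument.
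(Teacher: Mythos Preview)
Your argument is correct and complete: the two maps $g_1,g_2$ really are homotopic via the straight-line homotopy you describe, since for every $\sigma\in K_{I\cup J}$ the supports $\sigma\cap I'$ and $\sigma\cap J$ are faces of $\sigma$ and hence simplices of $K_{I'}$ and $K_J$, so $H_t$ lands in $X=|K_{I'}|*|K_J|$ for all $t$. Naturality of the join (external) product then gives $\alpha\cdot\beta=g_1^*(\alpha'\times\beta)=g_2^*(\alpha'\times\beta)=\alpha'\cdot\beta'$.

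The paper's proof is quite different in flavour. It introduces an auxiliary (non-commutative) $\star$-product on $\bigoplus_J\widetilde H^*(K_J)$, defined by $u\star v:=u\cdot(\varphi^{J\setminus(I\cap J)}_J)^*(v)$, and shows (via Cai's description of $H^*(\mathbb{R}\mathcal{Z}_K)$) that this second Baskakov--Hochster ring is isomorphic to the cohomology of the \emph{real} moment-angle complex, hence is graded-commutative. The lemma then drops out from the identities $\alpha'\star\beta=\alpha'\cdot\beta'$ and $\beta\star\alpha'=\beta\cdot\alpha$ together with $\alpha'\star\beta=\pm\beta\star\alpha'$. Your route is more self-contained and elementary --- it never leaves the BHR definition and needs no appeal to $\mathbb{R}\mathcal{Z}_K$ --- while the paper's route pays off a larger investment (Proposition~\ref{prop:bhr}) by packaging the phenomenon as an algebraic structure that can be reused elsewhere.
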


\subsection{Quasitoric manifolds and topological toric manifolds}\label{subsec:toric manifold}
In the pioneering work \cite{DJ91} Davis and Januszkiewicz suggested a topological generalization of \emph{smooth projective toric varieties}, which became
known as \emph{quasitoric manifolds}. A quasitoric manifold is a close $2n$-dimensional manifold $M$ with a locally standard
action of $T^n$ (i.e. it locally looks like the standard coordinatewise action of $T^n$ on $\Cc^n$) such that the quotient $M/T^n$ can be identified with a simple $n$-polytope $P$. Let us review their construction.

Let $P$ be a simple $n$-polytope, $\FF=\{F_1,\dots,F_m\}$ the set of facets of $P$. Given a map $\lambda: \FF\to\mathbb{Z}^n$, write $\lambda(F_i)$
in the standard basis of $\mathbb{Z}^n$:
\[\lambda(F_i)=\boldsymbol{\lambda}_i=(\lambda_{1i},\dots,\lambda_{ni})^T\in\mathbb{Z}^n,\quad 1\leqslant i\leqslant m.\]
If the matrix
\[\mathit{\Lambda}=
\begin{pmatrix}
\lambda_{11}&\ldots&\lambda_{1m}\\
\vdots&\ddots&\vdots\\
\lambda_{n1}&\ldots&\lambda_{nm}
\end{pmatrix}
\]
has the following property:
\begin{equation}\label{eq:matrix}
\det(\boldsymbol{\lambda}_{i_1},\dots,\boldsymbol{\lambda}_{i_n})=\pm1\quad \text{whenever }F_{i_1}\cap\dots\cap F_{i_n}\neq\emptyset\text{ in }P,
\end{equation}
then $\lambda$ is called a \emph{characteristic function} for $P$, and $\mathit{\Lambda}$ is called a \emph{characteristic matrix}.

Let $(P,\mathit{\Lambda})$ be a \emph{characteristic pair} consisting of a simple polytope $P$ and its characteristic matrix $\mathit{\Lambda}$.
Denote by $T_i:=\{(e^{2\pi\lambda_{1i}t},\dots, e^{2\pi\lambda_{ni}t})\in T^n\}$ the circle subgroup of $T^n=\mathbb{R}^n/\mathbb{Z}^n$.
For each point $x\in P$, define a subtorus
\[T(x):=\prod_{i:\,x\in F_i}T_i\subset T^n.\]
Then the \emph{quasitoric manifold} $M(P,\mathit{\Lambda})$ is defined to be
\begin{equation}\label{eq:quasitoric}
M(P,\mathit{\Lambda}):=P\times T^n/\sim,
\end{equation}
where the equivalence relation $\sim$ is given by $(x,g)\sim(x',g')$ if and only if $x=x'$ and $g^{-1}g'\in T(x)$.
The action of $T^n$ on $P\times T^n$ by the right translations descends to a {$T^n$-action} on $M(P,\mathit{\Lambda})$, and the orbit space of this action is just $P$.

Two characteristic pairs $(P,\mathit{\Lambda})$ and $(P',\mathit{\Lambda}')$ are said to be \emph{equivalent} if $P$ and $P'$ are combinatorially equivalent,
and $\mathit{\Lambda}=A\cdot\mathit{\Lambda}'\cdot B$, where $A\in GL(n,\mathbb{Z})$ and $B$ is a diagonal $(m\times m)$-matrix with $\pm 1$ on the diagonal.
Here we identify the sets of facets of $P$ and $P'$ so that the characteristic functions are defined on the same set.

We say that two $T^n$-manifolds $M$ and $M'$ are \emph{weakly equivariantly homeomorphic} if there is a homeomorphism $f:M\to M'$ and an automorphism $\theta: T^n\to T^n$ such that $f(t\cdot x)=\theta(t)\cdot f(x)$ for any $x\in M$ and $t\in T^n$.

\begin{prop}[{\cite[Proposition 1.8]{DJ91}}]\label{prop:equiv hoeom}
There is a one-to-one correspondence between weakly equivariant homeomorphism classes of quasitoric
manifolds and equivalence classes of characteristic pairs.
\end{prop}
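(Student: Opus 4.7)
The plan is to establish the one-to-one correspondence by exhibiting maps in both directions. The easier direction is to show that equivalent characteristic pairs yield weakly equivariantly homeomorphic quasitoric manifolds; the harder direction recovers the characteristic pair, up to equivalence, from the equivariant homeomorphism type. Throughout I denote by $\pi\colon M(P,\mathit{\Lambda})\to P$ the orbit map.

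For the forward direction, suppose $(P,\mathit{\Lambda})$ and $(P',\mathit{\Lambda}')$ are equivalent via a combinatorial equivalence identifying the facet sets of $P$ and $P'$, together with $A\in GL(n,\Zz)$ and a diagonal sign matrix $B=\mathrm{diag}(\epsilon_1,\dots,\epsilon_m)$ satisfying $\mathit{\Lambda}=A\cdot\mathit{\Lambda}'\cdot B$. Let $\theta_A\colon T^n\to T^n$ be the Lie group automorphism induced by $A$. I would define $F\colon M(P,\mathit{\Lambda})\to M(P',\mathit{\Lambda}')$ by $[x,g]\mapsto [x,\theta_A^{-1}(g)]$ on representatives in $P\times T^n$. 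The key point is that each circle $T_i$ depends on $\boldsymbol{\lambda}_i$ only up to sign, so the columnwise sign change encoded by $B$ does not alter any subtorus $T(x)$, while $\theta_A$ sends the subtorus computed from $\mathit{\Lambda}$ at $x$ to the corresponding subtorus computed from $\mathit{\Lambda}'$. Hence $F$ descends to a weakly equivariant homeomorphism with twist $\theta_A$.

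For the reverse direction, let $F\colon M(P,\mathit{\Lambda})\to M(P',\mathit{\Lambda}')$ be a weakly equivariant homeomorphism with twist $\theta\colon T^n\to T^n$. Then $F$ descends to a homeomorphism $\bar F\colon P\to P'$ of orbit spaces. I would first verify that $\bar F$ is a combinatorial equivalence of polytopes, using the local standardness of the $T^n$-action: the interior of a codimension-$k$ face of $P$ is exactly the set of points whose isotropy in $T^n$ has dimension $k$, and this stratification is preserved by $\bar F$. Next, for each facet $F_i\subset P$, the circle $T_i\subset T^n$ is the isotropy of a generic point of $\pi^{-1}(F_i)$, so $\theta$-equivariance of $F$ forces $\theta(T_i)=T'_{\bar F(F_i)}$. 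Writing $A\in GL(n,\Zz)$ for the matrix of $\theta$ in the standard basis, and using that a circle subgroup of $T^n$ determines its primitive lattice generator up to sign, we obtain $A\boldsymbol{\lambda}'_{\bar F(F_i)}=\epsilon_i\boldsymbol{\lambda}_i$ for some $\epsilon_i\in\{\pm1\}$. Setting $B=\mathrm{diag}(\epsilon_1,\dots,\epsilon_m)$ and identifying facets via $\bar F$ yields $\mathit{\Lambda}=A\cdot\mathit{\Lambda}'\cdot B$, so the pairs are equivalent.

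The main obstacle is the first step of the reverse direction: rigorously identifying $P=M/T^n$ as a simple polytope \emph{with its full face structure} from purely topological data, and showing that a weakly equivariant homeomorphism induces a combinatorial, not merely topological, equivalence of polytopes. For this one has to work in the local chart around a fixed point provided by local standardness, match the corner of the simple polytope with the orbit space of the standard $T^n$-representation on $\Cc^n$, and conclude that the orbit-type stratification of $M/T^n$ coincides with the face stratification of $P$ in every codimension. Once this is in hand, the remainder of the argument is an exercise in lattice linear algebra.
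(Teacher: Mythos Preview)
The paper does not prove this proposition; it is simply quoted from \cite{DJ91} with the citation \cite[Proposition 1.8]{DJ91} and no argument is supplied. Your sketch is essentially the standard Davis--Januszkiewicz argument: build the homeomorphism from the data $(A,B)$ in one direction, and in the other recover the face lattice of $P$ from the orbit-type stratification of $M/T^n$ and read off the characteristic vectors (up to sign) from the isotropy circles over the facets. One small point worth tightening: in the forward direction you implicitly used a homeomorphism $P\to P'$ compatible with the identification of facet sets, not just a bijection of facets; a combinatorial equivalence of simple polytopes does provide such a face-preserving homeomorphism, but you should say so explicitly before writing $[x,g]\mapsto[x,\theta_A^{-1}(g)]$. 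With that adjustment the outline is correct, and your identification of the delicate step (that the induced map on orbit spaces is a \emph{combinatorial} equivalence, via local standardness) is exactly where the substance lies.
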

In \cite{DJ91}, Davis and Januszkiewicz also generalized the construction of quasitoric manifolds to the cases where the base space is more general than a simple polytope. We {explain} such a generalization below.

Let $K$ be an $(n-1)$-dimensional simplicial complex on $[m]$ and let $K'$ denote its barycentric subdivision.
For each face $\sigma\in K$ (including $\emptyset$), let $F_\sigma$ denote the geometric realization of the poset
$K_{\geqslant\sigma}=\{\tau\in K:\tau\geqslant\sigma\}$.
Hence, for $\sigma\neq\emptyset$, $F_\sigma$ is the subcomplex of $K'$ consisting of all
simplices of the form $\sigma_0<\sigma_1<\cdots<\sigma_k$ with $\sigma\leqslant\sigma_0$, which is combinatorially equivalent to the cone over the barycentric subdivision of $\mathrm{lk}_K\sigma$, and $F_\emptyset$ is the cone over $K'$.
The polyhedron $P_K=F_\emptyset$ together with its decomposition into ``faces" $\{F_\sigma\}_{\sigma\in K}$ will be called a \emph{simple polyhedral complex}.
In particular, there are $m$ facets $F_i,\dots,F_m$ of $P_K$, where $F_i$ is the geometric realization of the star of the $i$th vertex of $K$ in $K'$.

Suppose $\lambda:[m]\to \Zz^n$, $i\mapsto\boldsymbol{\lambda}_i$ is a characteristic function, i.e. it satisfies the condition that for each $(k-1)$-face $\sigma=\{i_1,\dots,i_k\}\in K$, the lattice generated by $\{\boldsymbol{\lambda}_{i_1},\dots,\boldsymbol{\lambda}_{i_k}\}$ is a direct summand of $\Zz^n$ of rank $k$. Let $\mathit{\Lambda}$ be the characteristic matrix corresponding to $\lambda$.
Then define $M(P_K,\mathit{\Lambda}):=P_K\times T^m/\sim$, where the equivalence relation is defined exactly as in \eqref{eq:quasitoric}.

The case that $K$ is a starshaped $(n-1)$-sphere in the construction  above was considered in \cite{IFM13}.
It turns out that in this case $M(P_K,\mathit{\Lambda})$ is a compact smooth $2n$-dimensional manifold.
This is because it can also be obtained as a quotient of the moment-angle manifold $\ZZ_K$ by a freely and smoothly acting subtorus (see \cite[Proposition 7.3.12]{BP15} for the polytopal case).

{Roughly speaking, $M(P_K,\mathit{\Lambda})$ is a \emph{topological toric manifold} introduced in \cite{IFM13}.  
Indeed, by the definition in \cite{IFM13}, a topological toric manifold  
is a compact smooth $2n$-dimensional manifold with an effective smooth
action of  $(\mathbb{C}^\times)^n$ having an open dense orbit and covered by finitely many invariant open subsets each equivariantly diffeomorphic to a sum of complex $1$-dimensional
smooth representation spaces of $(\mathbb{C}^\times)^n$. A topological toric manifold  carries more information of smooth representation of $(\mathbb{C}^\times)^n$ than $M(P_K,\mathit{\Lambda})$. But in this paper, we only care about the topological information of a topological toric manifold, which is the same as $M(P_K,\mathit{\Lambda})$.}  

If $\mathit{\Lambda}$ is defined by the primitive vectors of the rays ($1$-dimensional cones) of a nonsingular complete fan $\Sigma$ and $K$ is the underlying sphere of $\Sigma$, then $M(P_K,\mathit{\Lambda})$ is homeomorphic to the compact smooth toric variety corresponding to $\Sigma$ (see for example \cite{Fra10}). So compact smooth toric varieties are contained in the family of topological toric manifolds. The construction of $M(P_K,\mathit{\Lambda})$ implies that Proposition \ref{prop:equiv hoeom} is also valid for topological toric manifolds. (See \cite[Corollary 8.2]{IFM13} for the equivariant homeomorphism case, and as we know, the extension to the weakly equivariant homeomorphism case corresponds to an automorphism of the torus $T^n$, which is uniquely determined by the matrix $A\in GL(n,\Zz)$ in the definition of the equivalence of characteristic pairs.)

The cohomology ring of a topological toric manifold has a simple expression, which is well-known as 
Jurkiewicz-Danilov Theorem for the case of compact smooth toric varieties (see for example \cite[Theorem 12.4.4]{CLS11}).
\begin{thm}[{\cite[Proposition 8.3]{IFM13}}]\label{thm:topo toric}
Let $M=M(P_K,\mathit{\Lambda})$ be a topological toric $2n$-manifold with $\mathit{\Lambda}=(\lambda_{ij})$, $1\leqslant i\leqslant n$, $1\leqslant j\leqslant m$.
Then the cohomology ring of $M$ is given by $H^*(M)=\mathbb{Z}[K]/\JJ$, where $\JJ$ is the ideal generated by linear forms
$\lambda_{i1}x_1+\cdots+\lambda_{im}x_m,\ 1\leqslant i\leqslant m$.
\end{thm}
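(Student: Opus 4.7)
My plan is to apply the Borel construction and reduce the computation of $H^*(M)$ to a known computation of the equivariant cohomology of the moment-angle manifold $\ZZ_K$. The characteristic matrix $\mathit{\Lambda}$ determines a surjective Lie group homomorphism $T^m \to T^n$; its kernel $H$ is a compact $(m-n)$-torus acting freely on $\ZZ_K$, and Proposition 7.3.12 of \cite{BP15} (cited in the excerpt) identifies $M$ with the quotient $\ZZ_K/H$. Because $H$ acts freely, the Borel construction for $T^m$ on $\ZZ_K$ factors as a Borel construction for the residual $T^n = T^m/H$ acting on $M$:
\[ET^n \times_{T^n} M \;\simeq\; ET^m \times_{T^m} \ZZ_K,\]
which yields a ring isomorphism $H^*_{T^n}(M;\Zz) \cong H^*_{T^m}(\ZZ_K;\Zz)$.

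Next, I identify this equivariant cohomology with the face ring. The Davis--Januszkiewicz space $DJ(K) = \ZZ_K(\Cc P^\infty, \mathrm{pt})$ is homotopy equivalent to $ET^m \times_{T^m} \ZZ_K$, and its cohomology, computed from its polyhedral product structure as a colimit of subproducts of $\Cc P^\infty$, is exactly $\Zz[K]$. Thus $H^*_{T^n}(M;\Zz) \cong \Zz[K]$. Now consider the Serre spectral sequence of the Borel fibration $M \hookrightarrow ET^n \times_{T^n} M \to BT^n$. I claim $M$ is equivariantly formal: a generic covector relative to the rays of the fan underlying $K$ induces a perfect Morse function on $M$ whose critical points are the isolated $T^n$-fixed points, indexed by the maximal simplices of $K$, and all Morse indices are even. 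This gives an even-dimensional CW decomposition, hence collapse of the spectral sequence at $E_2$ and freeness of $H^*_{T^n}(M)$ over $H^*(BT^n)$. Therefore
\[H^*(M;\Zz) \;\cong\; H^*_{T^n}(M;\Zz)\otimes_{H^*(BT^n;\Zz)} \Zz \;\cong\; \Zz[K]/\JJ,\]
where $\JJ$ is the image of the augmentation ideal of $H^*(BT^n)$.

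Finally, I compute $\JJ$ explicitly. Under the identification $H^2(BT^k) \cong \mathrm{Hom}(T^k,S^1)\otimes\Zz$ for $k=m,n$, the induced map $H^2(BT^n) \to H^2(BT^m)$ is the transpose of the Lie algebra map $\mathit{\Lambda}: \Zz^m \to \Zz^n$. Hence if $t_1,\dots,t_n$ is the dual basis in $H^2(BT^n)$, then $t_i \mapsto \lambda_{i1}x_1 + \cdots + \lambda_{im}x_m$ in $H^2(BT^m) = \Zz[x_1,\dots,x_m]$, and these $n$ linear forms generate $\JJ$ after projection to $\Zz[K]$.

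The principal obstacle is the verification of equivariant formality in the generality of starshaped spheres, since classical references treat the polytopal/quasitoric case. One must upgrade the standard perfect Morse function argument (which on a simple polytope uses a generic linear functional to index vertices by even Morse indices) to the fan setting. The workaround is to choose a vector $v\in\Rr^n$ in general position with respect to the primitive generators of all rays of the fan; the resulting height function on the orbit space $P_K$ assigns to each vertex of $P_K$ (equivalently, each maximal cone of the fan) a well-defined even index, producing the required even CW-decomposition of $M$.
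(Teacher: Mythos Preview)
The paper does not prove this theorem; it simply quotes it from \cite{IFM13} (Ishida--Fukukawa--Masuda) as background, so there is no ``paper's own proof'' to compare against. Your sketch is essentially the standard Davis--Januszkiewicz/Buchstaber--Panov argument, correctly adapted to the starshaped setting, and is the route taken in the cited literature.

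One remark on the formality step: rather than constructing a Morse function on $M$ (which, as you note, is delicate when $K$ is only starshaped and $P_K$ is not an actual polytope), it is cleaner to argue algebraically. Since $K$ is the underlying sphere of a complete simplicial fan, $\Zz[K]$ is Cohen--Macaulay, and the $n$ linear forms $\theta_i = \lambda_{i1}x_1+\cdots+\lambda_{im}x_m$ coming from the characteristic matrix form a regular sequence (this is exactly the nonsingularity condition on $\mathit{\Lambda}$). Hence $H^*_{T^n}(M)\cong\Zz[K]$ is free over $H^*(BT^n)=\Zz[\theta_1,\dots,\theta_n]$, which gives equivariant formality and the collapse of the Serre spectral sequence without any geometric Morse-theoretic input. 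This bypasses the obstacle you flagged entirely.
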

Adapting the proof of \cite[Lemma III.2.4]{S96} to integral coefficients (the proof only uses the fact that in the characteristic matrix, the column vectors corresponding to any facet of $K$ span $\kk^n$, which is just the condition of \eqref{eq:matrix} for $\kk=\Zz$), one easily sees that in Theorem \ref{thm:topo toric}, $\mathbb{Z}[K]/\JJ$ is spanned by the face monomials $\xx_\sigma$, $\sigma\in K$. In particular, since $K$ is Gorenstein*, $(\mathbb{Z}[K]/\JJ)_{2n}=\mathbb{Z}$ (see for example \cite[Lemma 5.6.4]{BH98}) is spanned by $\xx_F$ for any facet $F$ of $K$. On the other hand, since any face $\sigma$ of $K$ is contained in some facet $F$, $\xx_\sigma\xx_{F\setminus\sigma}=\xx_F\neq 0$, it follows that any face monomial $\xx_\sigma$ is not zero in $\mathbb{Z}[K]/\JJ$. This fact will be used later.

\begin{rem}
	The result of \cite[Lemma 5.6.4]{BH98} is stated for field coefficient, but it can be easily extended to the integral coefficient case as follows. Since $K$ is Cohen-Macaulay, the condition of \eqref{eq:matrix} ensures that $\{\theta_i=\lambda_{i1}x_1+\cdots+\lambda_{im}x_m\}_{i=1}^n$ is a regular sequence in $\Zz[K]$ (cf. \cite[Lemma 7.3.27]{BP15}), i.e., $\Zz[K]$ is a free $\Zz[\theta_1,\dots,\theta_n]$-module. Hence $\Zz[K]/\JJ$ is a free $\Zz$-module of rank $\dim_\Qq(\Qq[K]/\JJ)$. Note that $\Qq[K]/\JJ=(\Zz[K]/\JJ)\otimes\Qq$ because $\Qq$ is flat as a $\Zz$-module.
\end{rem}

A result in \cite{CPS10} says that the rational cohomology of a topological toric manifold $M(P_K,\mathit{\Lambda})$ determines the rational $\mathrm{Tor}$-algebra of $K$. In \cite{BEMPP17}, this result was extended to the case when the coefficient ring is $\Zz$.
\begin{lem}[{\cite[Lemma 3.7]{CPS10}} and {\cite[Proposition 3.5]{BEMPP17}}]\label{lem:cps}
Let $M=M(P_K,\mathit{\Lambda})$ and $M'=M(P_{K'},\mathit{\Lambda}')$ be two $2n$-dimensional topological toric manifolds.
If there is a graded ring isomorphism $H^*(M)\cong H^*(M')$, then $|\VV(K)|=|\VV(K')|$ and there is an isomorphism of bigraded algebras
$\mathrm{Tor}(K)\cong \mathrm{Tor}(K')$.
\end{lem}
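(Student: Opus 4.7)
The plan is to invoke the presentation $H^*(M) = \kk[K]/\JJ$ from Theorem 2.17 and then exhibit $\mathrm{Tor}(K)$ as an intrinsic invariant of the graded ring $H^*(M)$ via a change-of-rings identity for derived tensor products.

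First I would recover $m = |\VV(K)|$ from $H^*(M)$ alone. The top nonzero degree of $H^*(M)$ equals $2n$ by Poincar\'e duality, so $n = \tfrac{1}{2}\dim M$ is read off. Since $H^*(M)$ is generated in degree $2$ and equals $\kk[x_1,\dots,x_m]/(\II_K,\theta_1,\dots,\theta_n)$, and the characteristic condition forces any $n$ of the $\boldsymbol{\lambda}_{i_1},\dots,\boldsymbol{\lambda}_{i_n}$ indexed by a facet of $K$ to be a basis of $\kk^n$, the $\theta_i$ are $\kk$-linearly independent. Therefore $\dim_\kk H^2(M) = m-n$, so $m$ is determined by the graded ring $H^*(M)$; any isomorphism $H^*(M)\cong H^*(M')$ then immediately yields $|\VV(K)| = |\VV(K')|$.

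For the Tor-algebras, set $R = \kk[x_1,\dots,x_m]$ and $R' = R/(\theta_1,\dots,\theta_n)$. The linear forms $\theta_1,\dots,\theta_n$ form a regular sequence in $\kk[K]$ (standard for topological toric manifolds: the characteristic condition makes $(\theta_i)$ a linear system of parameters, and $\kk[K]$ is Cohen--Macaulay since $K$ is Gorenstein$^*$ by Theorem 2.12). Hence $\kk[K] \otimes^{L}_R R' \simeq H^*(M)$ concentrated in homological degree zero, and completing $\theta_1,\dots,\theta_n$ to a $\kk$-basis of $R^2$ identifies $R' \cong \kk[v_1,\dots,v_{m-n}]$ as a graded polynomial algebra. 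Associativity of the derived tensor product then gives
\[
\kk[K] \otimes^{L}_R \kk \;\simeq\; \bigl(\kk[K]\otimes^{L}_R R'\bigr) \otimes^{L}_{R'} \kk \;\simeq\; H^*(M) \otimes^{L}_{R'} \kk,
\]
so taking cohomology (and tracking the internal grading in which each $v_j$ has degree $2$) yields an isomorphism of bigraded $\kk$-algebras
\[
\mathrm{Tor}(K) \;\cong\; \mathrm{Tor}^{*,*}_{\kk[v_1,\dots,v_{m-n}]}\bigl(H^*(M),\,\kk\bigr).
\]
Crucially, the $R'$-action on $H^*(M)$ here coincides with the canonical surjection $\mathrm{Sym}\bigl(H^2(M)\bigr)\twoheadrightarrow H^*(M)$, which is intrinsic to the graded ring $H^*(M)$.

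To conclude, a graded ring isomorphism $\phi\colon H^*(M) \cong H^*(M')$ restricts to an iso $\phi_2$ on degree-$2$ parts, extends to an iso $\mathrm{Sym}(\phi_2)$ between the two copies of $\kk[v_1,\dots,v_{m-n}]$, and is compatible with the canonical surjections; this furnishes a bigraded isomorphism of the right-hand sides of the last display, hence $\mathrm{Tor}(K) \cong \mathrm{Tor}(K')$. The main obstacle will be bookkeeping the bigrading through the derived-tensor manipulations: one must verify that the Koszul factor $\Lambda[y_1,\dots,y_n]$ resolving $R'$ over $R$ cancels cleanly against the corresponding piece of the Koszul model $\Lambda[y_1,\dots,y_m]\otimes\kk[K]$ computing $\mathrm{Tor}(K)$, so that both the homological and internal degrees match on the two sides. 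This is a direct calculation using the regularity of $\theta$ and is the only nontrivial step in the argument.
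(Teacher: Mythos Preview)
The paper does not prove this lemma; it is quoted from \cite[Lemma 3.7]{CPS10} and used as a black box. Your argument is correct and is essentially the standard one. The two ingredients you isolate---recovering $m$ from $n+\dim_\kk H^2(M)$, and the change-of-rings identity
\[
\mathrm{Tor}_{\kk[m]}(\kk[K],\kk)\;\cong\;\mathrm{Tor}_{\mathrm{Sym}(H^2(M))}(H^*(M),\kk)
\]
coming from regularity of the linear system of parameters $\theta_1,\dots,\theta_n$ on the Cohen--Macaulay ring $\kk[K]$---are exactly what is behind the cited result. Two small points worth tightening: first, your sentence about the characteristic condition really shows that some $n\times n$ minor of $\mathit{\Lambda}$ is invertible, whence the \emph{rows} $\theta_i$ are linearly independent; second, the multiplicative (algebra) structure on both sides is preserved because the Koszul resolutions involved are commutative DGAs and the relevant quasi-isomorphisms are DGA maps, so the derived-tensor manipulation upgrades from modules to algebras without extra work. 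With those remarks the ``bookkeeping'' you flag as the main obstacle is genuinely routine.
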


\subsection{$B$-rigidity and $BB$-rigidity of simplicial complexes}\label{subsec:B-rigidity}
It is well-known that the combinatorics of a simplicial complex and its face ring are determined by each other.
\begin{thm}[Bruns-Gubeladze \cite{BG96}]
Let $\kk$ be a field, and $K$ and $K'$ be two simplicial complexes. Suppose $\kk[K]$ and $\kk[K']$ are isomorphic as $\kk$-algebras.
Then there exists a bijective map $\VV(K)\to\VV(K')$ which induces an isomorphism between $K$ and $K'$.
\end{thm}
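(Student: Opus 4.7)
The plan is to reconstruct the combinatorial type of $K$ purely from the $\kk$-algebra structure of $\kk[K]$. I proceed in three stages: reduce to a graded isomorphism, recognize the coordinate lines $\kk x_i$ intrinsically in the degree-$2$ piece, and then read off faces from non-vanishing of monomials.

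First, I reduce to the graded case. The ring $\kk[K]$ is reduced with minimal primes $P_\sigma = (x_i : i \notin \sigma)$, one per facet $\sigma\in K$. The augmentation ideal $\mathfrak m_0 = (x_1,\dots,x_m)$ is characterized intrinsically as the unique maximal ideal of $\kk[K]$ containing every minimal prime, so any $\kk$-algebra isomorphism $\phi\colon \kk[K] \xrightarrow{\sim} \kk[K']$ must send $\mathfrak m_0$ to $\mathfrak m_0'$ and respect the $\mathfrak m_0$-adic filtration. Since $\kk[K]$ is positively graded with $\mathfrak m_0^k = \bigoplus_{d \ge k} \kk[K]_{2d}$, the associated graded ring $\operatorname{gr}_{\mathfrak m_0}\kk[K]$ is canonically identified with $\kk[K]$ endowed with its original grading. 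Passing to $\operatorname{gr}\phi$, I may assume $\phi$ is graded; restricting to degree $2$ then gives $m = \dim \kk[K]_2 = \dim \kk[K']_2 = m'$.

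The central step is to identify the coordinate lines $\{L_i = \kk x_i\}$ inside $V := \kk[K]_2$ purely from the algebra. For each facet $\sigma$ the image
\[
W_\sigma \;:=\; (P_\sigma + \mathfrak m_0^2)/\mathfrak m_0^2 \;=\; \operatorname{span}\{x_j : j\notin\sigma\}
\]
is an intrinsic subspace of $V$, so the full subspace configuration $\{W_\sigma\}_\sigma$ is determined by $\kk[K]$. I would characterize a line $L \subset V$ as a coordinate line by combining (i) its incidence pattern $\{\sigma : L \subset W_\sigma\}$, which must coincide with $\{\sigma : i\notin\sigma\}$ for some vertex $i$, and (ii) multiplicative data of $L$, such as $\operatorname{Ann}(L) \cap V$ and the rank of the product map $L \otimes V \to \kk[K]_4$. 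Modulo the linear symmetries permuting ``indistinguishable'' vertices of $K$ (vertices with the same facet-containment profile, which genuinely give rise to graded automorphisms of $\kk[K]$), this characterization produces an intrinsic partition of the set of coordinate lines and hence a bijection $f\colon \VV(K) \to \VV(K')$.

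It remains to see that $f$ is a simplicial isomorphism. By the definition of the face ring, a subset $\tau \subseteq \VV(K)$ lies in $K$ iff the monomial $\prod_{i\in\tau} x_i$ is non-zero in $\kk[K]$; this non-vanishing condition transfers across $\phi$ after absorbing nonzero scalars on each coordinate line, so $\tau \in K \iff f(\tau) \in K'$. The main obstacle I expect lies in step (ii) above: purely incidence-theoretic conditions on a line $L \subset V$ relative to $\{W_\sigma\}$ admit spurious solutions (for instance $\kk(x_i+x_j)$ when $i$ and $j$ have the same star in $K$), and a genuinely multiplicative criterion is needed to rule these out. This is precisely where the algebraic content of the face ring beyond its Hilbert series has to be exploited, and it is the technical heart of the reduction to a combinatorial equivalence.
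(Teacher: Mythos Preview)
The paper does not prove this theorem; it is simply quoted from \cite{BG96} as a known background fact motivating the $B$-rigidity question. So there is no in-paper argument to compare against, and I comment only on your sketch.

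Your reduction to a graded isomorphism is essentially right, though the characterisation of $\mathfrak m_0$ as ``the unique maximal ideal containing every minimal prime'' fails once $K$ has a cone vertex $v$: then $x_v$ lies in no minimal prime and there is a positive-dimensional family of such maximal ideals. This is repairable (for instance by first peeling off the polynomial factor detected by $\kk[K]/\sum_\sigma P_\sigma$), but it needs a sentence.

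The substantive gap is step (ii), and the obstacle you anticipate is not a missing criterion but a genuine non-uniqueness: whenever two vertices $i,j$ satisfy $\mathrm{star}(i)=\mathrm{star}(j)$, every invertible linear substitution among $x_i,x_j$ extends to a graded automorphism of $\kk[K]$, so no ring-theoretic invariant can single out $\kk x_i$ from $\kk(x_i+x_j)$. Your plan to ``rule out'' such lines by a multiplicative test therefore cannot succeed in principle. The Bruns--Gubeladze argument sidesteps this entirely: rather than reconstructing the variables, one shows that the bijection $\Phi$ on facets coming from the minimal primes, together with the linear isomorphism $\phi_2$ carrying each $W_\sigma=(P_\sigma)_2$ to $W'_{\Phi(\sigma)}$, already forces the two facet set-systems to be combinatorially isomorphic. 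Concretely, grouping vertices by identical star gives blocks whose spans are intrinsically visible in the $W_\sigma$-arrangement; $\phi_2$ matches blocks to blocks and preserves block--facet incidences, and any vertex bijection refining the block bijection then sends facets to facets. So the correct move is to weaken the target of step (ii) from ``recover the coordinate lines'' to ``recover the blocks of interchangeable vertices'', after which the passage to a simplicial isomorphism is immediate and your step (iii) is not needed in the form stated.
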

Note that the $\mathrm{Tor}$-algebra of $K$ is also determined by its face ring. In the converse direction, Buchstaber asked the following question in his lecture note \cite{B08}.
\begin{prob}
{Let $K$ and $K'$ be simplicial complexes. Suppose their $\mathrm{Tor}$-algebras are isomorphic as bigraded algebras. When there exists a combinatorial equivalence $\mathrm{core}\,K\approx \mathrm{core}\,K'$? (The reason why we consider the cores of simplicial complexes is that the $\mathrm{Tor}$-algebras of a simplicial complex and the cone over it are always the same.) A more topological question of when the cohomology ring isomorphism $H^*(\ZZ_K)\cong H^*(\ZZ_{K'})$ implies the  combinatorial equivalence $\mathrm{core}\,K\approx \mathrm{core}\,K'$ was asked in \cite{CPS10}. These lead to the following two notions of rigidity for simplicial complexes.} 
\end{prob}

\begin{Def}
{A simplicial complex $K$ is said to be \emph{bigraded $B$-rigid} (\emph{$BB$-rigid} for short) if any bigraded isomorphism $\mathrm{Tor}(K;\Zz)\cong \mathrm{Tor}(K';\Zz)$ implies a combinatorial
	equivalence  $\mathrm{core} K\approx \mathrm{core} K'$.} 
\end{Def}

\begin{Def}
	{A simplicial complex $K$ is said to be \emph{$B$-rigid} if any cohomology ring isomorphism $H^*(\ZZ_K)\cong H^*(\ZZ_{K'})$ of moment-angle complexes implies a combinatorial equivalence $\mathrm{core}\,K\approx \mathrm{core}\,K'$.}
\end{Def}

\begin{rem}
	Clearly, $B$-rigidity implies $BB$-rigidity. But it is not known whether $B$-rigidity is equivalent to
	$BB$-rigidity, and this is unlikely to be true in general.
\end{rem}
There is also another rigidity problem for simplicial complexes called \emph{combinatorial rigidity}. A simplicial complex $K$ is combinatorially rigid (over a field $\kk$) if the combinatorial equivalence class of $\mathrm{core}\,K$ is determined by the bigraded Betti numbers $\beta^{-i,2j}(K)=\dim_\kk\mathrm{Tor}^{-i,2j}(K)$. It is clear that combinatorial rigidity implies $BB$-rigidity. But the reverse implication does not hold. (A counterexample was found by Choi in \cite{Cho13}.)

In this paper, we mainly consider the {$BB$-rigidity} problem. In fact, {$BB$-rigid} simplicial complexes are rare. For example, letting $K_1$ and $K_2$ be simplicial complexes and $K_1\cup_\sigma K_2$ be the simplicial complex obtained from $K_1$ and $K_2$ by gluing along a common simplex $\sigma$, the $\mathrm{Tor}$-algebra of $K_1\cup_\sigma K_2$ does not depend on the choice of $\sigma$ (\cite[{Proposition 4.6}]{FW21}), but the combinatorial type of $K_1\cup_\sigma K_2$ generally does.

Intuitively, the more algebraic information $\mathrm{Tor}(K)$ has, the more likely $K$ is {$BB$-rigid}, so the most interesting case is the class of Gorenstein* complexes since their $\mathrm{Tor}$-algebras have Poincar\'e duality. However, even in this class the {$BB$-rigidity} does not hold in most cases.
For example, let $K_1$ and $K_2$ be two homology spheres of the same dimension. Then the set $\CC(K_1\# K_2)$ in general contains more than one combinatorial type, but the $\mathrm{Tor}$-algebras of all simplicial complexes in $\CC(K_1\# K_2)$ are the same (see \cite[{Theorem 4.2}]{FW21}).

Examples of irreducible non-{$BB$-rigid} simplicial spheres can be found in \cite{FCMW16} (see also \cite[Proposition 2.4]{B17}): there are three different irreducible polytopal $3$-spheres with $8$ vertices that have the same $\mathrm{Tor}$-algebras; Iriye  \cite{Iri16} showed that their corresponding moment-angle manifolds are all homeomorphic to a connected sum of sphere products, with a summand being the product of
three spheres, whereas all  previously known examples of this kind have every summand being
a product of two spheres. 

Another interesting example is that there are non-{$BB$-rigid} polytopal $4$-spheres with $8$ vertices, whose combinatorial types are determined by the cohomology of quasitoric manifolds over them \cite{CP19}. 

All non-{$BB$-rigid} examples  above are not flag.
Flag non-{$BB$-rigid} examples can be constructed by \emph{puzzle-move} operations in the next subsection (see Section \ref{sec:SCC} for more details).

On the other hand, the {$BB$-rigidity} problem is solved positively for some particular classes of simplicial spheres, such as joins of boundary complexes of finite simplices and the connected sum of such a complex and the boundary of a simplex \cite[Lemma 5.2 and Theorem 6.3]{CPS10}, as well as some highly symmetric simplicial $2$-spheres \cite{CK11}. (In fact, \cite{CPS10} and \cite{CK11} proved the stronger results that these simplicial spheres are combinatorially rigid.)
Another family of {$BB$-rigid} $2$-spheres was obtained by the authors in \cite{FMW15}, and one of the purposes of this paper is to generalize this result to all dimensions.

Since the topology of a moment-angel complex $\mathcal {Z}_{K}$ is uniquely determined by the combinatorics of $K$, if $K$ is a {$BB$-rigid} homology sphere, then the moment-angle manifold $\ZZ_K$ is cohomologically rigid in the bigraded sense. This is our strategy to deal with Problem \ref{prob:rigidity for m-a}.

\subsection{Puzzle-moves and puzzle-rigidity}\label{subsec:puzzle}
 In \cite{B17}, Bosio defined two transformations of simple polytopes, which preserve the diffeomorphism classes of the corresponding moment-angle manifolds. Here we introduce one of these constructions in the dual simplicial version.
\begin{Def}
Two vertices of a simplicial complex $K$ are called \emph{wedge equivalent} if the transposition of
	these two vertices induces an automorphism of $K$. In particular, we say a vertex is wedge equivalent to itself.
\end{Def}

\begin{Def}\label{def:wedge}
	Let $K$ be a simplicial complex on vertices $[m]$. Choose a fixed vertex $i$ in $K$ and define the \emph{simplicial wedge} $K(i)$ of $K$ on $i$ to be the simplicial complex with $m+1$ vertices labeled by $\{1,\dots,i-1,i,i',i+1,\dots,m\}$:
		\[K(i)=(\{i,i'\}*\mathrm{lk}_{K}\{i\})\cup(\{i\}*K_{[m]\setminus\{i\}})\cup(\{i'\}*K_{[m]\setminus\{i\}}).\]
	The two vertices $i$ and $i'$ are called the \emph{main vertices} of the simplicial wedge $K(i)$.
\end{Def}
For a homology sphere $K$, it is easy to see that the main vertices of a simplicial wedge of $K$ are wedge equivalent, and the two vertices of $\partial\Delta^1$ in the suspension $\partial\Delta^1*K$ are wedge equivalent. Conversely, if two vertices of a homology sphere are wedge equivalent, then they  have to be one of the above cases. To see this, suppose that $i$ and $i'$ are wedge equivalent in $K$. By definition, if $\sigma\in K$ is a facet such that $i\in\sigma$ and $i'\not\in\sigma$, then $\{i'\}\cup\sigma\setminus\{i\}$ is also a facet of $K$, and vice versa. Thus, if $\{i,i'\}\not\in K$, then $\mathrm{lk}_K\{i\}=\mathrm{lk}_K\{i'\}$. Since $K$ and the subcomplex $\partial\Delta^1*\mathrm{lk}_K\{i\}$ in $K$ are both homology spheres of the same dimension, we have $K=\partial\Delta^1*\mathrm{lk}_K\{i\}$. Note that, by Alexander duality, a homology sphere can not contain a proper subset which is also a homology sphere of the same dimension. On the other hand, if $\{i,i'\}\in K$, then for any $\sigma\in\mathrm{lk}_K\{i\}$ with $i'\not\in\sigma$, we have $\sigma\in \mathrm{lk}_K\{i'\}$ since $i$ and $i'$ are wedge equivalent. It follows that the simplicial wedge $\mathrm{lk}_K\{i\}(i')$ of $\mathrm{lk}_K\{i\}$ is a subcomplex of $K$, and then $K$ is isomorphic to $\mathrm{lk}_K\{i\}(i')$ since they are both homology spheres of the same dimension.

Let $P$ and $P'$ be two simple $n$-polytopes in $\mathbb{R}^n$. Suppose that $H$ is a hyperplane of $\mathbb{R}^n$ such that $H\cap P,\,H\cap P'\neq\emptyset$ and the vertices of $P$ and $P'$ are not in the intersections. Let $K$ and $K'$ be the polytopal $(n-1)$-spheres corresponding to the dual simplicial polytopes of $P$ and $P'$ respectively, and let $L$ and $L'$ be the polytopal $(n-2)$-spheres corresponding to the dual simplicial polytopes of the simple $(n-1)$-polytopes $H\cap P$ and $H\cap P'$ respectively. We may embed $L$ in $K$ (resp. $L'$ in $K'$) as follows. Note that a facet of $H\cap P$ is of the form $H\cap F$, where $F$ is a facet of $P$. Hence a simplex $\sigma=\{i_1,\dots,i_k\}$ in $L$ corresponds to the nonempty intersection $H\cap F_{i_1}\cap\cdots\cap F_{i_k}$, where $F_{i_s}$ are facets of $P$, and then $\sigma$ can be viewed as the simplex in $K$ dual to $F_{i_1}\cap\cdots\cap F_{i_k}$. Clearly, $L$ (resp. $L'$) separates $K$ (resp. $K'$) into two simplicial disks $K=K_+\cup_{L}K_-$ (resp. $K'=K'_+\cup_{L'}K'_-$).
We assume that there are isomorphisms $i_+:K_+\to K'_+$ and $i_-:K_-\to K'_-$. By abuse of notation, we write their restrictions to $L$ also as $i_+$ and $i_-$.

{Consider the automorphism $\phi=(i_-)^{-1}\circ i_+$ of $L$.
If $\phi$ fixes wedge equivalent classes of vertices, i.e., for any vertex $i$ of $L$, $\phi(i)$ is wedge equivalent to $i$,}
then we say that we pass from $K$ to $K'$ by a \emph{puzzle-move}, denoted $(K,L,\phi)$, and $K'$ is denoted $K_{L,\phi}$.
Two {polytopal} spheres are called \emph{puzzle-equivalent} if one can be obtained from the other by a sequence of puzzle-moves.
We say that a {polytopal} sphere is \emph{puzzle-rigid} if it is not puzzle-equivalent to other polytopal spheres.

\begin{thm} \cite[Theorem 3.5]{B17}\label{thm:puzzle}
{Suppose that $K$ and $K'$ are puzzle-equivalent polytopal spheres. Then there is a graded diffeomorphism $\ZZ_K\cong \ZZ_{K'}$, i.e., a diffeomorphism inducing a bigraded ring isomorphism $\mathrm{Tor}(K)\cong \mathrm{Tor}(K')$.}
\end{thm}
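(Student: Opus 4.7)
By the transitivity of bigraded ring isomorphism, it suffices to handle a single puzzle-move $(K,L,\phi)$ with $K'=K_{L,\phi}$; my plan is then to build an explicit bigraded ring isomorphism between the Baskakov--Hochster rings of $K$ and $K'$ via Theorem~\ref{thm:union product}. I would first partition the vertex set as $\VV(K)=V_1\sqcup V_2\sqcup V_3$ with $V_1=\VV(K_+)\setminus\VV(L)$, $V_2=\VV(L)$, $V_3=\VV(K_-)\setminus\VV(L)$ (and analogously for $K'$), and fix a bijection $\beta\colon\VV(K)\to\VV(K')$ by setting $\beta=i_+$ on $V_1\cup V_2$ and $\beta=i_-$ on $V_3$. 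This induces a bijection between the summands $\w H^*(K_J)$ and $\w H^*(K'_{\beta(J)})$; the task is to promote it to a bigraded ring isomorphism $\Psi$.

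For each $J\subset\VV(K)$ with $J_i:=J\cap V_i$, I would apply the reduced Mayer--Vietoris sequence to the decomposition $K_J=(K_+)_{J_1\cup J_2}\cup_{L_{J_2}}(K_-)_{J_2\cup J_3}$, and analogously for $K'_{\beta(J)}$. The isomorphisms $i_\pm$ identify the $K_+$- and $K_-$-terms of the two sequences under $\beta$; the only potential discrepancy is at the overlap, where $i_+|_L$ and $i_-|_L$ differ precisely by $\phi$. Here the puzzle-move hypothesis is decisive: writing $J_2=J_2'\sqcup J_2''$ with $J_2'\subset\VV(\partial\Delta^k)$ and $J_2''\subset\VV(\Gamma)$, one has $L_{J_2}=(\partial\Delta^k)_{J_2'}*\Gamma_{J_2''}$. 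If $|J_2'|\leqslant k$, then $(\partial\Delta^k)_{J_2'}$ is a simplex and $L_{J_2}$ is contractible, so the Mayer--Vietoris sequence degenerates to a direct sum $\w H^*(K_J)\cong \w H^*((K_+)_{J_1\cup J_2})\oplus \w H^*((K_-)_{J_2\cup J_3})$ that matches on both sides, and $\Psi_J$ is defined by $(i_+)^*\oplus(i_-)^*$. If $|J_2'|=k+1$, then $\VV(\partial\Delta^k)\subset J_2$, so $\phi(J_2)=J_2$ setwise (since $\phi$ preserves $\VV(\partial\Delta^k)$ as a set and $\VV(\Gamma)$ pointwise), and $\phi$ restricts to a self-homeomorphism of $L_{J_2}$ acting on $\w H^*(L_{J_2})$ by $\mathrm{sign}(\phi|_{\VV(\partial\Delta^k)})$. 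A diagram chase between the two Mayer--Vietoris sequences, which agree on the $A$- and $B$-terms and differ by this sign on the overlap, then yields $\Psi_J$.

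Finally, I would assemble the $\Psi_J$ into a bigraded map $\Psi\colon\mathrm{Tor}(K)\to\mathrm{Tor}(K')$ and check multiplicativity against the Baskakov product, which is induced by the inclusions $K_{I\cup J}\hookrightarrow K_I*K_J$. Lemma~\ref{lem:hoch} permits computing a product after lifting both factors to carefully chosen supersets, which localises the check to compatibilities already built into the construction of $\Psi_J$ through the $i_\pm$. The principal obstacle I foresee is the consistent bookkeeping of the signs $\mathrm{sign}(\phi|_{\VV(\partial\Delta^k)})$: these decorate only those summands indexed by sets containing $\VV(\partial\Delta^k)$, so a product of two such classes picks up the sign twice (which is harmless), while products mixing the two cases require a uniform normalization. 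I expect to resolve this by multiplying $\Psi_J$ in the second case by a $\pm 1$ depending only on whether $\VV(\partial\Delta^k)\subset J$, after which the signs cancel on both sides of every product and $\Psi$ becomes a genuine bigraded ring isomorphism.
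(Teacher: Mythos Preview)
The paper does not give its own proof of this theorem; it is quoted verbatim from Bosio \cite[Theorem 3.5]{B17} and used as a black box (the remark following it merely observes that Bosio's argument extends to Gorenstein$^*$ complexes). So there is no in-paper proof to compare against, and I can only assess your sketch on its own merits.

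Your Baskakov--Hochster/Mayer--Vietoris strategy is reasonable in spirit, but there is a genuine gap in the case $|J_2'|\leqslant k$. You assert that $\Psi_J$ is $(i_+)^*\oplus(i_-)^*$, but $i_-$ carries $(K_-)_{J_2\cup J_3}$ to $(K'_-)_{i_-(J_2)\cup i_-(J_3)}$, and when $\phi(J_2')\neq J_2'$ this is \emph{not} $(K'_-)_{\beta(J_2)\cup\beta(J_3)}$: your bijection $\beta$ uses $i_+$ on $V_2$, so $\beta(J_2)=i_+(J_2)$, which differs from $i_-(J_2)$ precisely on the $\partial\Delta^k$ part. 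Concretely, for $k=1$ with $\phi$ swapping the two vertices $a,b$ of $\partial\Delta^1$ and $J_2'=\{a\}$, after transporting back via $i_-^{-1}$ you are asking for an isomorphism $\w H^*\big((K_-)_{\{a\}\cup J_2''\cup J_3}\big)\cong\w H^*\big((K_-)_{\{b\}\cup J_2''\cup J_3}\big)$, and there is no reason for this to hold: $a$ and $b$ can have quite different links in $K_-$. Hence the isomorphism you seek cannot be ``diagonal'' with respect to $\beta$; it must genuinely mix Hochster summands indexed by different $J$'s of the same cardinality, which makes the multiplicativity check substantially harder than your final paragraph suggests.

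The case $|J_2'|=k+1$ also needs more care than a one-line diagram chase. The two Mayer--Vietoris ladders do not commute as drawn: the restriction from the $(K_+)$-summand to the overlap is intertwined by $i_+$, while the restriction from the $(K_-)$-summand is intertwined by $i_-$, and on $L_{J_2}$ these differ by $\phi^*=\pm1$. Twisting the $(K_-)$-vertical map by $\mathrm{sign}(\phi|_{\VV(\partial\Delta^k)})$ restores commutativity, but then the sign lives on the $(K_-)$-summand of $\w H^*(K_J)$ rather than on the overlap, and this has to be tracked through every product in the BHR. Your proposed fix of decorating $\Psi_J$ by a global $\pm1$ depending only on whether $\VV(\partial\Delta^k)\subset J$ does not obviously interact correctly with products between the two cases once the first gap forces $\Psi$ to be non-diagonal. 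In short, the architecture is plausible but both the definition of $\Psi$ and its multiplicativity require substantially more than what is sketched here.
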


Theorem \ref{thm:puzzle} implies that for {polytopal} spheres,  {$BB$-rigidity} implies puzzle-rigidity.
{Note that {$BB$-rigidity} and puzzle-rigidity are not equivalent in general (see \cite[\S 4]{Bos15} and \cite[\S 3.3]{B17}). In \cite{B17}, Bosio conjectured that if there is a graded diffeomorphism  $\ZZ_K\cong \ZZ_{K'}$ for simplicial $2$-spheres $K,\,K'$, then $K$ and $K'$ are puzzle equivalent (\cite[Conjecture 1]{B17}). Here we give a stronger conjecture:}
\begin{conj}\label{conj:puzzle}
	{Puzzle-rigidity and $BB$-rigidity are equivalent for simplicial $2$-spheres.}
\end{conj}
See Section \ref{sec:n=3} for further discussion of this problem.

\section{{$BB$-rigidity} of a class of homology spheres}\label{sec:SCC}
In \cite{FMW15}, we proved that the $B$-rigidity holds for a class of simplicial $2$-spheres, whose dual class is also known as the \emph{Pogorelov class} (see \cite{BEMPP17}). It is characterized by the flagness condition and the ``no $\square$-condition" described below.

\begin{Def}
A $1$-dimensional simplicial complex is called a \emph{$k$-circuit} if it is a triangulation of $S^1$ and contains $k$ vertices.
A simplicial complex $K$ is said to satisfy the \emph{no $\square$-condition} if there is no full subcomplex $K_I\subset K$ such that $K_I$ is a $4$-circuit.
\end{Def}
\begin{thm}[\cite{FMW15}, {see Remark \ref{rem:n=3}}]\label{thm:B-rigidity of 2-spheres}
Suppose $K$ is a flag simplicial $2$-sphere satisfying the no $\square$-condition. Then $K$ is $B$-rigid.
\end{thm}
A natural question  arises of whether these conditions can be applied to higher dimensional simplicial spheres to get more $B$-rigid or {$BB$-rigid} examples.
However the following result makes this question meaningless.
\begin{thm}[{\cite[Proposition 12.2.2]{CLS11}}]\label{thm:no 4-circuit for higher}
Suppose a flag homology sphere of dimension $n$ satisfies the no $\square$-condition. Then $n\leqslant3$.
\end{thm}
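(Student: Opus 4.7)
My plan is to combine a link-based reduction with a focused analysis at a small base dimension.

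First, I would verify that both hypotheses of flagness and the no $\square$-condition descend to the link of any face. That links of flag complexes are flag is standard. For the no $\square$-condition: if $\sigma \in K$ and four vertices $\{a,b,c,d\} \subset \VV(\mathrm{lk}_\sigma K)$ form an induced $4$-circuit in $\mathrm{lk}_\sigma K$, then for any $x, y \in \VV(\mathrm{lk}_\sigma K)$, flagness of $K$ implies $\{x,y\} \in \mathrm{lk}_\sigma K$ iff $\{x,y\} \in K$ (since $\sigma\cup\{x,y\}$ is a face of the flag complex $K$ precisely when $\{x,y\}$ is an edge). Hence $\{a,b,c,d\}$ is an induced $4$-circuit in $K$, contradicting no $\square$. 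Combined with the classical fact that links of faces in Gorenstein$^*$ complexes are Gorenstein$^*$ (Reisner/Stanley), this shows that for any face $\sigma\in K$, $\mathrm{lk}_\sigma K$ is again flag Gorenstein$^*$ satisfying no $\square$, of dimension $n-|\sigma|$.

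Second, iterating this over vertex links, I would reduce to $n=4$: if $K$ has dimension $n \geq 5$, then $\mathrm{lk}_v K$ for any vertex $v$ is flag Gorenstein$^*$ of dimension $n-1 \geq 4$ with no $\square$, so by induction on $n$ it suffices to prove the statement in dimension $4$.

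Third, for $n=4$, I would exploit the strong local constraints produced by the link reduction. Every $2$-face link is a flag $1$-sphere with no $\square$, i.e.\ a cycle of length $\geq 5$; every edge link is a flag Gorenstein$^*$ $2$-sphere with no $\square$, i.e.\ a Pogorelov $2$-sphere (so has at least $12$ vertices); every vertex link is a flag Gorenstein$^*$ $3$-sphere with no $\square$ whose own vertex links are Pogorelov. I would then pursue two parallel strategies: (a) a combinatorial argument using these local degree lower bounds together with the Dehn--Sommerville equations for $4$-spheres to derive infeasible $f$-vector inequalities; and (b) an algebraic argument using Hochster's formula to translate the no $\square$-condition into the vanishing $\mathrm{Tor}^{-2,\,8}(K)=0$ (since for $|I|=4$, $\widetilde H^{1}(K_I)\neq 0$ iff $K_I$ is a $4$-circuit), so that the product of any two bidegree $(-1,4)$ classes in $\mathrm{Tor}(K)$ vanishes, and then apply Poincar\'e duality of the Tor-algebra (from the Gorenstein$^*$ hypothesis) to bound the socle degree.

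The hardest step will be the base case $n=4$: the link reduction is routine, but proving that no flag Gorenstein$^*$ $4$-sphere satisfies the no $\square$-condition requires a genuinely global argument, since locally every flag $3$-sphere like the $600$-cell and every Pogorelov $2$-sphere like the icosahedron is consistent with the hypotheses. I expect the algebraic route, via Poincar\'e duality on $\mathrm{Tor}(K)$ combined with the vanishing of degree-$(-2,8)$ products, to be the cleanest way to force the dimension bound $n\leq 3$.
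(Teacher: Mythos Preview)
The paper does not supply its own proof of this theorem; it merely cites \cite{CLS11} and remarks that the argument there extends to the Gorenstein$^*$ case. So the task is simply to assess whether your plan is viable.

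Your link reduction is correct: for flag $K$ the link of any face is a full subcomplex, so an induced $4$-circuit in a link is already an induced $4$-circuit in $K$, and links in Gorenstein$^*$ complexes are Gorenstein$^*$. Reducing to the base case $n=4$ is therefore fine.

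Of your two proposed strategies for $n=4$, strategy (a) works and is short; your stated preference for (b) is misplaced. Here is the missing computation for (a). For each edge $e$ of $K$, the link $\mathrm{lk}_e K$ is a flag Gorenstein$^*$ $2$-complex satisfying no $\square$. In such a $2$-complex every vertex link is a cycle of length $\geq 5$, so the minimum degree is $\geq 5$; combined with $f_1 = 3f_0 - 6$ (from $\chi=2$ and the pseudomanifold relation $2f_1 = 3f_2$) this forces $f_0(\mathrm{lk}_e K) \geq 12$. Hence every edge of $K$ lies in at least $12$ triangles, giving $3f_2(K) \geq 12 f_1(K)$, i.e.\ $f_2 \geq 4f_1$. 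But the Dehn--Sommerville relation $h_2 = h_3$ for a Gorenstein$^*$ complex of dimension $4$ reads
\[
f_2 \;=\; 4f_1 - 10f_0 + 20,
\]
and combining the two gives $f_0 \leq 2$, which is absurd.

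Strategy (b), by contrast, does not close as you have sketched it. You are right that for flag $K$ the no-$\square$ condition is equivalent to $\mathrm{Tor}^{-2,8}(K) = 0$, and hence $(H^3(\ZZ_K))^2 = 0$. But $\mathrm{Tor}^{-2,*}(K)$ need not vanish in other bidegrees (for instance $\mathrm{Tor}^{-2,6}(K) = \bigoplus_{|J|=3}\w H^0(K_J)$ is typically large), so the Tor-algebra is not forced to be short. Feeding $(H^3)^2 = 0$ into Lemma~\ref{lem:nil of gorenstein} only yields that in any nonzero $(n{+}1)$-fold product of positive-degree classes at most one factor lies in $H^3$ and the rest have degree $\geq 4$, whence $m+n+1 \geq 3 + 4n$, i.e.\ $m \geq 3n+2$. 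That is a lower bound on the number of vertices, not an upper bound on $n$, and Poincar\'e duality on $\mathrm{Tor}(K)$ together with $\mathrm{Tor}^{-2,8}=0$ does not obviously improve it. Drop (b) and carry out (a).
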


Even for the case $n=3$, the examples satisfying the conditions in Theorem \ref{thm:no 4-circuit for higher} are rather rare.
(The only known example is the $600$-cell.)
So we should change our strategy in higher dimensions.

In fact, we notice that only a special combinatorial property, which is essentially weaker than the no $\square$-condition for higher dimensional spheres, is needed in the proof of Theorem \ref{thm:B-rigidity of 2-spheres}. It is the following condition.

\begin{Def}\label{def:SCC}
A flag complex $K$ with $m\geqslant3$ vertices and $K=\mathrm{core}\,K$ is said to satisfy the \emph{separable circuit condition} (SCC for short) if for any three different vertices $i_1,i_2,i_k$ with $\{i_1,i_2\}\not\in K$,
there exists a full subcomplex $K_I$ with $|K_I|\cong S^1$ such that $i_1,i_2\in I$, $i_k\notin I$ and $\w H_0(K_{J})\neq0$ for $J=\{i_k\}\cup I\setminus\{i_1,i_2\}$.
\end{Def}
\begin{exmp}
The dual polytopal $2$-spheres of fullerenes satisfy the SCC. (A \emph{fullerene} is a simple $3$-polytope with only pentagonal and hexagonal facets.)
This can be deduced from Proposition \ref{prop:4-circuit} below {and the fact that the boundary spheres of the polytopes dual
	to fullerenes are flag and satisfy the no $\square$-condition (see \cite{Dos98,Dos03} or \cite[Corollary 3.16]{BE15}).}
\end{exmp}

\begin{exmp}\label{exmp:join}
If two flag homology spheres $K$ and $K'$ both satisfy the SCC, then so does $K*K'$. Indeed, if $\{i_1,i_2\}\in MF(K*K')$, then either $\{i_1,i_2\}\in MF(K)$ or $\{i_1,i_2\}\in MF(K')$. So we may assume that $\{i_1,i_2\}\in MF(K)$.
The case $i_k\in\VV(K)$ is trivial, by the SCC on $K$.
For $i_k\in\VV(K')$, we can choose a vertex $j_1\in\VV(K')$ such that $\{i_k,j_1\}\in MF(K')$. By Proposition \ref{prop:nsc} below, $K'$ is not a suspension. Hence $\mathrm{lk}_{K'}\{i_k\}\neq\mathrm{lk}_{K'}\{j_1\}$ since otherwise the subcomplex $\partial\Delta^1*\mathrm{lk}_{K'}\{i_k\}$ of $K'$, $\VV(\Delta^1)=\{i_k,j_1\}$, is a homology sphere of the same dimension as $K'$, and so $K'=\partial\Delta^1*\mathrm{lk}_{K'}\{i_k\}$, a contradiction. Note that $\mathrm{lk}_{K'}\{i_k\}$ is a full subcomplex of $K'$ and it can not be a proper subcomplex of $\mathrm{lk}_{K'}\{j_1\}$ since they are both homology spheres of the same dimension, so there exists $j_2\in\VV(\mathrm{lk}_{K'}\{i_k\})\setminus\VV(\mathrm{lk}_{K'}\{j_1\})$, and then $\{j_1,j_2\}\in MF(K')$.
It is easily verified that $I=\{i_1,i_2,j_1,j_2\}$ is the desired subset.
\end{exmp}

See Appendix \ref{app:scc} for more examples of flag spheres satisfying the SCC.

\begin{prop}\label{prop:nsc}
Let $K$ be a flag $\kk$-homology sphere of dimension $n\geqslant2$. Assume $K$ satisfies the SCC. Then for any missing face $I\in MF(K)$, say $I=\{i_1,i_2\}$, and any $K_J\subset\mathrm{lk}_K\{i_1\} \cap\mathrm{lk}_K\{i_2\}$, we have $\w H^{n-2}(K_J;\kk)=0$.
Moreover, $K$ is not a suspension.
\end{prop}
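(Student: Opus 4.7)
The plan has two ingredients: a reduction of the vanishing statement to a combinatorial connectivity statement via Alexander duality in the link $\mathrm{lk}_{i_1}K$, and an application of the SCC to reach a contradiction; the \emph{moreover} clause will then follow from the first assertion by exhibiting a violating $K_J$ in any suspension. Since $K$ is flag and Gorenstein$^*$, the link $\mathrm{lk}_{i_1}K$ is a flag Gorenstein$^*$ complex of dimension $n-1$ (Theorem \ref{thm:Gorenstein}), and flagness yields the identification $\mathrm{lk}_{i_1}K=K_{N_1}$ on the neighborhood $N_1\subset[m]$ of $i_1$. The hypothesis $K_J\subset\mathrm{lk}_{i_1}K\cap\mathrm{lk}_{i_2}K$ translates to $J\subset N_1\cap N_2$. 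Combinatorial Alexander duality for the generalized $(n-1)$-sphere $K_{N_1}$ applied to the subset $J$ gives
\[
\widetilde H^{n-2}(K_J;\kk)\cong\widetilde H^0(K_{N_1\setminus J};\kk),
\]
so it suffices to prove that $K_{N_1\setminus J}$ is connected.

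Assume for contradiction that $K_{N_1\setminus J}$ is disconnected; pick $v,w\in N_1\setminus J$ in distinct components, so that $\{v,w\}\in MF(K)$ by flagness. Apply the SCC to $\{v,w\}$ with third vertex $i_1$: one obtains a full subcomplex $K_C$ with $|K_C|\cong S^1$, $v,w\in C$, $i_1\notin C$, such that $K_{(C\setminus\{v,w\})\cup\{i_1\}}$ is disconnected. Since $K_C$ is a full-subcomplex cycle and $\{v,w\}\notin K$, the set $C\setminus\{v,w\}$ splits as the disjoint union of the two arcs $A,B$ of the cycle joining $v$ and $w$, and disconnectedness after adjoining $i_1$ forces, up to relabeling, $B\cap N_1=\varnothing$. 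On the other hand, the path $v-a_1-\cdots-a_p-w$ through the arc $A$ lies in $K$; if every interior vertex $a_j$ were in $N_1\setminus J$, it would connect $v$ and $w$ inside $K_{N_1\setminus J}$, contradicting the choice. So some $a_j$ is either outside $N_1$ (producing a new missing face $\{i_1,a_j\}\in MF(K)$) or lies in $J\subset N_1\cap N_2$. This is the main obstacle of the proof: I intend to rule out both cases by choosing $K_C$ to minimize the number of interior vertices of $A$ lying outside $N_1\setminus J$, and then re-applying the SCC either to $\{i_1,a_j\}$ (first case) or to $\{v,a_j\}$ with third vertex $i_2$, exploiting $a_j\in N_2$ and the symmetric Alexander duality in $\mathrm{lk}_{i_2}K$ (second case), to produce a strictly shorter admissible arc and contradict minimality.

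Finally, for the \emph{moreover} clause, suppose $K=K'*\{i_1,i_2\}$ is a suspension. Then $\{i_1,i_2\}\in MF(K)$, and $K'=\mathrm{lk}_{i_1}K=\mathrm{lk}_{i_2}K$ is a flag Gorenstein$^*$ complex of dimension $n-1\geq 1$ with $\mathrm{core}\,K'=K'$; in particular no vertex of $K'$ is adjacent to all others in $K'$, so $K'$ admits at least one non-edge $\{v_1,v_2\}$. Setting $J=V(K')\setminus\{v_1,v_2\}\subset N_1\cap N_2$, Alexander duality in the Gorenstein$^*$ complex $K'$ yields
\[
\widetilde H^{n-2}(K_J;\kk)\cong\widetilde H^0(K'_{\{v_1,v_2\}};\kk)=\widetilde H^0(S^0;\kk)=\kk\neq 0,
\]
since $\{v_1,v_2\}\notin K'$ forces $K'_{\{v_1,v_2\}}$ to be two isolated vertices. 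This contradicts the first part of the proposition, so $K$ cannot be a suspension.
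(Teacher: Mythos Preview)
Your argument for the first assertion is incomplete. You correctly reduce, via Alexander duality in $\mathrm{lk}_{i_1}K$, to the connectedness of $K_{N_1\setminus J}$, and you correctly apply the SCC to the missing face $\{v,w\}$ with third vertex $i_1$. But at the decisive step you write ``I intend to rule out both cases by choosing $K_C$ to minimize\ldots'' and stop. That is a plan, not a proof, and the plan itself is shaky: in your Case~2 you propose to apply the SCC to $\{v,a_j\}$, but nothing guarantees that this is a missing face (both $v$ and $a_j$ lie in $N_1$, and they may well be adjacent); in Case~1 you propose to apply the SCC to $\{i_1,a_j\}$, but the resulting circle need not produce a strictly shorter admissible arc between $v$ and $w$ --- the new circle avoids $i_1$ but lives in an entirely different configuration. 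Without a concrete mechanism linking the new SCC output back to the minimality hypothesis on $A$, the induction does not close.

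The paper avoids this difficulty by applying Alexander duality \emph{in $K$} rather than in the link. From $\widetilde H^{n-2}(K_J)\neq 0$ and $K_{I\cup J}=K_I*K_J$ one gets $\widetilde H^{n-1}(K_{I\cup J})\neq 0$, hence $\widetilde H_0(K_S)\neq 0$ for $S=\mathcal V(K)\setminus(I\cup J)$. Now pick $s_1,s_2$ in distinct components of $K_S$ and apply the SCC to $\{s_1,s_2\}$ with third vertex $i_1$. The point is that each arc of the resulting circle $K_U$ must meet $\{i_2\}\cup J$ (otherwise that arc would be a path in $K_S$ joining $s_1$ to $s_2$), and since $J\subset\mathrm{lk}_{i_2}K$ one deduces $i_2\notin U$ and hence each arc meets $J$ itself. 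But $J\subset\mathrm{lk}_{i_1}K$, so adjoining $i_1$ connects both arcs --- contradicting the SCC conclusion directly, with no minimality or iteration needed. The moral: duality in the ambient sphere places $s_1,s_2$ in the complement of \emph{both} $i_1,i_2$ and $J$, which is exactly what makes the arc-meets-$J$ argument go through in one shot.

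Your \emph{moreover} argument is correct, though the paper gives a slightly more direct witness: for any vertex $j$ of $L=K'$, the link $\mathrm{lk}_jL$ is an $(n-2)$-dimensional Gorenstein$^*$ full subcomplex sitting in $\mathrm{lk}_{i_1}K\cap\mathrm{lk}_{i_2}K$, with top cohomology $\kk$.
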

\begin{proof}
The coefficient $\kk$ is omitted throughout the proof.
Suppose on the contrary that $\w H^{n-2}(K_J)\neq0$ for some $K_J\subset\mathrm{lk}_K\{i_1\} \cap\mathrm{lk}_K\{i_2\}$.
Since $K$ is flag, $K_{I\cup J}=K_I*K_J$, so that $\w H^{n-1}(K_{I\cup J})\neq0$. Set $S=\VV(K)\setminus(I\cup J)$.
Then $\w H_0(K_S)\neq0$ by {Theorem \ref{thm:Gorenstein}}.
Taking two vertices $s_1$ and $s_2$ to be in different components of $K_S$,
the assumption that $K$ satisfies the SCC shows that there exists a subset $U\subset\VV(K)$ with $|K_U|\cong S^1$, such that $s_1,s_2\in U$, $i_1\notin U$ and $\w H_0(K_{V})\neq0$, where $V=\{i_1\}\cup U\setminus\{s_1,s_2\}$.

The subcomplex $K_{U\setminus\{s_1,s_2\}}$ clearly has two components, say $K_{U_1}$ and $K_{U_2}$. We claim that $U_i\cap J\neq\emptyset$ for $i=1,2$.
To see this, note that $K_{U_i\cup\{s_1,s_2\}}$ is a path connecting $s_1$ and $s_2$, so $U_i\cap (\{i_2\}\cup J)\neq\emptyset$ for $i=1,2$, since $s_1$ and $s_2$ are in different components of $K_S$ and $i_1\not\in U$.
Moreover, {we have $i_2\not\in U_i$ for $i=1,2$, since otherwise, say $i_2\in U_1$, the fact that $U_i\cap (\{i_2\}\cup J)\neq\emptyset$ for $i=1,2$ would give $U_2\cap  J\neq\emptyset$. But  $K_J\subset\mathrm{lk}_K\{i_2\}$ and $i_2\in U_1$ would imply that $K_{U_1\cup U_2}$ is path connected, a controdiction.} So the claim is true. However since $K_J\subset\mathrm{lk}_K\{i_1\}$, we would have $\w H_0(K_{V})=0$, which contradicts the assumption in the first paragraph.
Hence we prove the first statement of the proposition.

For the second statement, note that if $K=K_I*L$ with $I=\{i_1,i_2\}\in MF(K)$, then $L$ would be a $\kk$-homology $(n-1)$-sphere, and for any $j\in \VV(L)$, $\mathrm{lk}_L\{j\}$ would be a $\kk$-homology $(n-2)$-sphere, so that $\w H^{n-2}(\mathrm{lk}_L{j})\neq0$.
However the assumption that $K$ is flag implies that $\mathrm{lk}_L\{j\}\subset \mathrm{lk}_K\{i_1\} \cap\mathrm{lk}_K\{i_2\}$ is a full subcomplex of $K$, contradicting the first statement of the proposition.
\end{proof}

Proposition \ref{prop:nsc} is equivalent to the following necessary condition for a flag homology $n$-sphere $K$ to satisfy the SCC: There must be no full subcomplex $K_I\subset K$ such that $K_I$ is a suspension and $\w H^{n-1}(K_I)\neq0$. We refer to this condition as the \emph{no suspension of codimension one condition} (NSC for short).
Since the NSC is much easier to be verified (for computer algorithms) than the SCC, we expect that NSC $\Rightarrow$ SCC. So we propose the following conjecture.

\begin{conj}\label{conj:SCC-NSC}
The SCC and NSC are equivalent for all flag homology spheres.
\end{conj}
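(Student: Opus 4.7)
The plan is to prove the equivalence in both directions. The direction SCC $\Rightarrow$ NSC is already recorded in Remark \ref{rem:NSC} as an immediate consequence of Proposition \ref{prop:nsc}, so the substance of the conjecture lies in the converse, NSC $\Rightarrow$ SCC. I would attack this by contrapositive: assuming that distinct vertices $(i_1,i_2,i_k)$ with $\{i_1,i_2\}\in MF(K)$ witness the failure of SCC, I would try to produce a full suspension subcomplex of $K$ with non-vanishing top cohomology, thereby contradicting NSC.

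A first reduction is via Alexander duality, which is available because any flag Gorenstein$^*$ $n$-complex is a generalized $\kk$-homology $n$-sphere by Theorem \ref{thm:Gorenstein}: for $J=\{i_k\}\cup I\setminus\{i_1,i_2\}$ arising from a full circuit $K_I\cong S^1$ through $i_1,i_2$ avoiding $i_k$, the vanishing $\tilde H_0(K_J;\kk)=0$ is equivalent to $\tilde H^{n-1}(K_{V(K)\setminus J};\kk)=0$, and $V(K)\setminus J$ always contains $\{i_1,i_2\}$. The natural suspension candidate to examine is $K_W:=\{i_1,i_2\}*L$, where $L:=K_{(V(\mathrm{lk}_{i_1}K)\cap V(\mathrm{lk}_{i_2}K))\setminus\{i_k\}}$ is the full subcomplex on the common neighbours of $i_1$ and $i_2$ other than $i_k$. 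Flagness of $K$ together with $\{i_1,i_2\}\in MF(K)$ forces $K_W$ literally to equal this join, so $K_W$ is a full suspension subcomplex of $K$, and the suspension isomorphism gives $\tilde H^{n-1}(K_W;\kk)\cong\tilde H^{n-2}(L;\kk)$. It therefore suffices to produce a non-zero class in $\tilde H^{n-2}(L;\kk)$.

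The hard part will be producing this class from the failure of SCC. I plan to induct on the dimension $n$. For the base case $n=2$, the NSC reduces to the no-$\square$-condition of \cite{FMW15} --- the only suspensions of dimension $\leq 2$ with non-vanishing top cohomology are the $4$-circuits --- and the methods underlying Theorem \ref{B-rigidity of 2-spheres} should then deliver SCC. For the inductive step, links of flag Gorenstein$^*$ complexes are again flag Gorenstein$^*$ of smaller dimension, and I would try to descend into a carefully chosen vertex link (for example $\mathrm{lk}_{i_k}K$) where the inductive hypothesis applies, using Example \ref{exmp:join} as a template for how SCC propagates under natural constructions. The main obstacle I foresee is the transfer step --- realising a non-zero class in $\tilde H^{n-2}(L;\kk)$ as an explicit full $S^1$-subcomplex of $K$ through $i_1,i_2$, so as to close the loop with the hypothesised failure of SCC. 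This is likely to require an auxiliary combinatorial lemma, possibly in the spirit of Bosio's separation arguments in \cite{B17}, asserting that non-trivial classes in $\tilde H^{n-2}$ of such subcomplexes are representable by full circuits of the correct topological type; if this direct route proves too rigid, a Mayer--Vietoris argument comparing $K_W$ with $K_{W\cup\{v\}}$ as one adds back the vertices of $V(K)\setminus W$ one at a time, tracking when top cohomology first vanishes, is a fallback worth trying.
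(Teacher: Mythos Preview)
The statement you are attempting to prove is Conjecture~\ref{conj:SCC-NSC}, which the paper explicitly leaves open: there is no proof of it in the paper to compare against. The paper establishes only the implication SCC $\Rightarrow$ NSC (Proposition~\ref{prop:nsc} and Remark~\ref{rem:NSC}), exactly as you note, and then proves the converse NSC $\Rightarrow$ SCC \emph{solely in dimension $2$} (Proposition~\ref{prop:4-circuit}, with the more general version in Appendix~\ref{app:n=3}). That $2$-dimensional argument is not an induction base feeding into a general scheme; it is a direct, hands-on combinatorial construction on a flag $2$-sphere, iteratively shrinking a candidate circuit by deleting boundary vertices and tracking orientations, and it makes essential use of planarity. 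The paper offers no strategy for $n\geqslant 3$.

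Your proposal, accordingly, is not a proof but a sketch toward an open problem, and it has real gaps beyond the ones you flag. First, the reduction to showing $\tilde H^{n-2}(L;\kk)\neq 0$ for the specific $L=K_{(V(\mathrm{lk}_{i_1}K)\cap V(\mathrm{lk}_{i_2}K))\setminus\{i_k\}}$ is not justified: the failure of SCC at $(i_1,i_2,i_k)$ is a statement about \emph{all} full circuits through $i_1,i_2$ avoiding $i_k$, and there is no evident mechanism by which this forces non-vanishing cohomology of that particular $L$. Second, your inductive step requires that NSC descend to vertex links, but NSC for $K$ does not obviously imply NSC for $\mathrm{lk}_v K$ (a full suspension subcomplex of the link with top cohomology in degree $n-2$ need not extend to one in $K$ with top cohomology in degree $n-1$). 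Third, even granting the inductive hypothesis in a link, you have not explained how to lift an SCC-witnessing circuit from the link back to one in $K$ with the correct separation property relative to $i_k$. These are not technicalities; they are the heart of why the conjecture is open.
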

It is easy to see that for flag $2$-spheres the NSC implies the no $\square$-condition.
The following result shows that the converse is also true, and therefore Conjecture \ref{conj:SCC-NSC} holds for flag $2$-spheres.

\begin{prop}[\cite{FMW15}, see also {\cite[Lemma A.3]{BEMPP17}}]\label{prop:4-circuit}
Let $K$ be a flag $2$-sphere. If $K$ satisfies the no $\square$-condition, then $K$ satisfies the SCC.
\end{prop}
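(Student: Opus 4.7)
The plan is to verify the SCC directly. Given distinct vertices $i_1,i_2,i_k\in\VV(K)$ with $\{i_1,i_2\}\in MF(K)$, I will produce $I\subset\VV(K)\setminus\{i_k\}$ containing $i_1,i_2$ with $K_I\cong S^1$ and $\w H_0(K_J)\neq 0$, where $J=\{i_k\}\cup I\setminus\{i_1,i_2\}$. Setting $C_k:=\mathrm{lk}_{i_k}K$, I would first reformulate the SCC in convenient geometric terms: because the two arcs $A_\pm$ of $K_I\setminus\{i_1,i_2\}$ have no edges between them (as $K_I$ is an induced cycle) and because $i_k$ is adjacent in $K$ precisely to the vertices of $C_k$, the condition $\w H_0(K_J)\neq 0$ is equivalent to at least one arc $A_\pm$ being disjoint from $C_k$.

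Two consequences of flagness and the no $\square$-condition will drive the argument. First, every induced cycle in $K$ has length $\geqslant 5$: flagness forbids length $3$ and no $\square$ forbids length $4$. In particular, each vertex link $C_v:=\mathrm{lk}_v K$, which is a full subcomplex of $K$ (by flagness) and homeomorphic to $S^1$, is an induced cycle of length $\geqslant 5$. Second, any two vertices $v,w$ of $K$ share at most two common neighbors: the $2$-sphere property gives exactly two when $v\sim w$, and when $v\not\sim w$ a third common neighbor would force an induced $\square$ together with $v,w$. Consequently, for every $v\in\VV(K)$, the set $\mathrm{lk}_v K\cap C_k$ has at most two elements, so $v$ has at least $|\mathrm{lk}_v K|-3\geqslant 2$ neighbors in the interior set $\VV_{\mathrm{int}}:=\VV(K)\setminus(\{i_k\}\cup C_k)$, which is the interior of the disk $D$ bounded by $C_k$ opposite $i_k$.

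I then carry out a case analysis on the position of $i_1,i_2$ relative to $C_k$. If both $i_1,i_2\in C_k$, then since $\{i_1,i_2\}$ is a non-edge they are non-consecutive on $C_k$, so $C_k$ splits into two nontrivial subarcs between them; I take one such subarc for $A_+$ and route $A_-$ as an induced path from $i_1$ to $i_2$ whose interior vertices lie in $\VV_{\mathrm{int}}$, starting and ending at interior neighbors guaranteed by the second consequence above. If exactly one of $i_1,i_2$ lies on $C_k$, I route both arcs through the interior. If neither lies on $C_k$, both are interior vertices of $D$ with $\geqslant 3$ interior neighbors, and I construct an induced cycle through $i_1,i_2$ entirely in the interior of $D$. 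In each case, one arc is disjoint from $C_k$, which is the reformulated SCC.

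The main obstacle is ensuring the constructed cycle is induced (chord-free) and, relatedly, showing that the interior vertex set is connected enough to host the required interior paths. Both issues are controlled by the no $\square$-condition: any chord of a tentative cycle $K_I$ would create a strictly shorter induced cycle through $i_1,i_2$, so iterated chord-removal terminates at an induced cycle of length $\geqslant 5$ while preserving $i_1,i_2$ as cycle vertices; and the common-neighbor bound, combined with the lower bound on interior neighbors and the planar structure of $K$, forces connectedness of the relevant interior subcomplex via a local exchange along the boundary. The case with $i_1,i_2\notin C_k$ will be the most delicate, since there one must navigate the interior of $D$ and avoid creating induced $4$-cycles along the way.
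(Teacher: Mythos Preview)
Your geometric reformulation is correct and useful: with $C_k=\mathrm{lk}_{i_k}K$, the condition $\w H_0(K_J)\neq 0$ is indeed equivalent to one of the two arcs $A_\pm$ of $K_I\setminus\{i_1,i_2\}$ missing $C_k$ entirely. The common-neighbor bound is also fine. However, the chord-removal step has a genuine gap. You claim that ``any chord of a tentative cycle $K_I$ would create a strictly shorter induced cycle through $i_1,i_2$.'' This is false precisely in the case you need it: if each of $A_+,A_-$ is already an induced path, the only possible chords run from a vertex $a\in A_+$ to a vertex $b\in A_-$, and then the two sub-cycles determined by $\{a,b\}$ each contain exactly one of $i_1,i_2$. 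So chord-removal does not give you a shorter cycle through both $i_1$ and $i_2$, and your iteration stalls. This is exactly the obstruction in Case~1 (where $A_+$ lies on $C_k$ and you try to route $A_-$ through the interior), and it recurs in the other cases when you need the combined cycle to be induced. Relatedly, you assert but never prove that the interior disk is connected enough to host one (let alone two internally disjoint) induced interior paths from $i_1$ to $i_2$; the hand-wave ``local exchange along the boundary'' is not an argument.

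The paper's proof (given in the appendix as the more general Proposition for the class~$\QQ$) avoids this difficulty by a different iteration. It first invokes a lemma guaranteeing an induced circle $K_{I_0}$ through $\omega$ inside the flag disk $\Gamma=K\setminus\{i_k\}$. If the SCC fails for $I_0$, both arcs meet $C_k$; the proof then deletes a single boundary vertex $l_0\in C_k$ from one arc, decomposes the resulting disk into maximal full sub-disks, and uses the no~$\square$-condition to show that $i_1,i_2$ land in the same sub-disk (a separating edge would otherwise produce an induced $4$-circuit through $i_k$). One then repeats inside that sub-disk. Since each step removes a vertex of the finite set $C_k$, the process terminates at an induced circle satisfying the SCC. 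If you want to salvage your direct approach, you will need either a replacement for chord-removal that handles cross-arc chords, or an independent argument producing an induced cycle through $i_1,i_2$ with one arc entirely in the interior; neither is supplied in your sketch.
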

We will prove a more general version of Proposition \ref{prop:4-circuit} in Appendix \ref{app:n=3}. Here we digress to introduce the following result about the puzzle-rigidity of polytopal spheres:

\begin{prop}
Let $K$ be a flag polytopal $n$-sphere. If $K$ satisfies the NSC, then $K$ is puzzle-rigid.
\end{prop}
\begin{proof}
	Suppose that $K'=K_{L,\phi}$ for a puzzle-move $(K,\phi,L)$. By the definition of a puzzle-move, it suffices to show that $\phi$ can be extended to an automorphism of $K_+$ or $K_-$ in the decomposition $K=K_+\cup_L K_-$. As in the discussion following Definition \ref{def:wedge}, $L$ is either  a simplicial wedge  $\Gamma(v)$ of an $(n-2)$-sphere $\Gamma$ on some vertex $v$, or $L=(\partial\Delta^1*\cdots *\partial\Delta^1)*\Gamma$, where $\Gamma$ is a sphere of proper dimension and any vertex in $\VV(\Gamma)$ is not  wedge equivalent to any vertex in $\VV(L)$.

	In the first case, note that  $\{v,v'\}\in L$ and for any $\sigma\in \Gamma_{\VV(\Gamma)\setminus\{v\}}$, $\sigma\cup\{v\}$ and $\sigma\cup\{v'\}$ are both in $L$, so the flagness of $K$ implies that $D:=\Delta^1*(L_{\VV(L)\setminus\{v,v'\}})\subset K$.
	Since $D$ is a simplicial $n$-disk with boundary $L$, it follows that in the decomposition $K=K_+\cup_L K_-$, one of $K_+$ and $K_-$, say $K_+$ contains $D$, noting that $|D|\setminus |L|$ is path-connected and that $|K|\setminus|L|$ has two components $|K_+|\setminus |L|$ and $|K_-|\setminus |L|$. Let $Y=|K|\setminus|D|=(|K_+|\setminus |D|)\sqcup (|K_-|\setminus |L|)$. Then applying the Alexander duality to $|K|$, $|D|$ and $Y$ we obtain $\w H^0(Y)=0$, thus $|K_+|\setminus |D|=\emptyset$, i.e. $K_+=D$. Let $I\subset\VV(L)$ be the set of vertices wedge equivalent to $v$. If $I\setminus\{v,v'\}\neq\emptyset$, then for any $u\in I\setminus\{v,v'\}$, we have seen that $\{u,v\}\in L$. Since $u$ is wedge equivalent to $v$ in $L$, $u$, $v$ must be the main vertices of a simplicial wedge. Hence the same argument as above shows that $K_+=\Delta^1*(L_{\VV(L)\setminus \{u,v\}})$. Similarly, $K_+=\Delta^1*(L_{\VV(L)\setminus \{u,v'\}})$, and so $K_+=\Delta^2*(L_{\VV(L)\setminus \{u,v,v'\}})$. Continuing inductively, we eventually obtain $K_+=\Delta^{|I|-1}*(L_{\VV(L)\setminus I})$. Clearly, $\phi$ transposes the vertices in $I$ and $\phi(L_{\VV(L)\setminus I})=L_{\VV(L)\setminus I}$. Thus $\phi$ can be extended to an automorphism of $K_+$ in the obvious way.
	
	For the second case, we may assume that $\Delta^1\not\subset K$ for every join factor $\partial\Delta^1$ of $L$, since otherwise $(\partial\Delta^1*\cdots *\Delta^1*\cdots *\partial\Delta^1)*\Gamma\subset K$ is an $n$-dimensional ball with boundary $L$, and we reduce to the previous argument.
		The flagness of $K$ implies that $K_{\VV(L)}=(\partial\Delta^1*\cdots *\partial\Delta^1)*K_{\VV(\Gamma)}$.
	So if in addition $K$ satisfies the NSC, then we have $\w H^{n-1}(K_{\VV(L)})=0$, and $\w H_0(|K|-|K_{\VV(L)}|)=0$ by Alexander duality. Thus one of $K_+$ and $K_-$, say $K_+$, satisfies $\VV(K_+)=\VV(L)$. We claim that $K_+=D:=(\partial\Delta^1*\cdots *\partial\Delta^1)*(K_+)_{\VV(\Gamma)}$. The inclusion $K_+\subset D$ is obvious. Since $K$ is flag, $D\subset K$. If $\sigma\in D\setminus L$, then either $\sigma\in (K_+)_{\VV(\Gamma)}\setminus\Gamma$ or one of the faces of $\partial\sigma$ belongs to $(K_+)_{\VV(\Gamma)}\setminus\Gamma$. Hence $\sigma\in K_+\setminus L$ since $D\setminus L\subset (K_+\setminus L)\sqcup (K_-\setminus L)$ and $(K_+)_{\VV(\Gamma)}\setminus\Gamma\subset K_+\setminus L$. The claim is proved. Since $\phi$ can only transpose the vertices of each $\partial\Delta^1$ and fixes the vertices in $\VV(\Gamma)$, it follows that $\phi$ can be extended to an automorphism of $K_+$. We conclude that $K$ is puzzle-rigid.
\end{proof}

\begin{thm}\label{thm:B-rigidity}
If $K$ is a flag rational homology sphere satisfying the SCC, then $K$ is {$BB$-rigid}.
\end{thm}

As a direct consequence we get the following (bigraded) cohomological rigidity result for moment-angle manifolds.

\begin{cor}\label{cor:rigidity of m-a manifold}
Let $\ZZ_K$ and $\ZZ_{K'}$ be two moment-angle manifolds. Suppose the cohomology of $\ZZ_K$ and $\ZZ_{K'}$ are isomorphic as bigraded rings.
Then if $K$ is flag and satisfies the SCC, $\ZZ_K$ and $\ZZ_{K'}$ are homeomorphic.
\end{cor}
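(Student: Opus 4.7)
The plan is to construct a bijection $f\colon\VV(K)\to\VV(K')$ carrying missing faces of $K$ to missing faces of $K'$; since both complexes will turn out to be flag, such a bijection automatically induces a combinatorial isomorphism $K\cong K'$. I would begin by extracting numerical invariants from the bigraded isomorphism $\phi\colon\mathrm{Tor}(K;\kk)\to\mathrm{Tor}(K';\kk)$. By Lemma \ref{lem:missing face and tor}, the flagness of $K$ forces $\mathrm{Tor}^{-1,2j}(K)=0$ for $j\neq 2$, so the same vanishing holds for $K'$ and hence $K'$ is flag. The Poincar\'e duality of $\mathrm{Tor}(K)$ transfers through $\phi$, so $K'$ is Gorenstein. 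The position of the top class (in bidegree $(-(m-n),2m)$, where $m=|\VV(K)|$ and $\dim K=n-1$) then forces $|\VV(K')|=m$ and $\dim K'=n-1$, and by assumption $K'=\mathrm{core}\,K'$.

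Next I would pass to the Baskakov--Hochster presentation of Theorem \ref{thm:union product}. Each missing edge $\{i,j\}\in MF(K)$ determines a one-dimensional generator $g_{ij}\in\mathrm{Tor}^{-1,4}(K)\cong\w H^0(K_{\{i,j\}})$, and $\phi$ restricts to a bijection between missing-edge generators of $K$ and those of $K'$. The key observation is that multiplicative structure detects combinatorial overlaps: for disjoint $\{i_1,i_2\},\{j_1,j_2\}\in MF(K)$, the product $g_{i_1i_2}\cdot g_{j_1j_2}\in\mathrm{Tor}^{-2,8}(K)\cong\w H^1(K_{\{i_1,i_2,j_1,j_2\}})$ is nonzero precisely when this four-vertex subcomplex is a $4$-circuit. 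More generally, classes of full subcomplexes $K_I\cong S^1$ live in $\mathrm{Tor}^{-(|I|-2),2|I|}(K)$ and can be expressed as iterated $\times$-products of missing-edge generators (by Lemma \ref{lem:hoch}), so the collection of ``circuit classes'' is algebraically identifiable.

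With this dictionary in place, I would use the SCC to define the vertex bijection. Fix a missing edge $e=\{i_1,i_2\}$ of $K$ and a third vertex $i_k$. The SCC furnishes a full subcomplex $K_I\cong S^1$ with $i_1,i_2\in I$, $i_k\notin I$, whose circuit class $[K_I]\in\mathrm{Tor}(K)$ satisfies: the ``swap class'' obtained by substituting $i_k$ for the edge $e$ in $K_I$ (i.e.\ the class associated to $K_J$ for $J=\{i_k\}\cup I\setminus\{i_1,i_2\}$) is nonzero in $\w H^0$. Encoding both the circuit $K_I$ and the separating substitution as identities in the $\times$-product structure of the BHR, one shows that $\phi$ sends such a configuration at $(e,i_k)$ in $K$ to a configuration of the same type at $(\phi(e),i_k')$ in $K'$ for a uniquely determined $i_k'$. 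Proposition \ref{prop:nsc} (the NSC consequence) is invoked to rule out suspension-like ambiguities that would allow multiple candidate partners.

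The hard part will be verifying that the map $f$ defined by these local SCC matchings is globally well-defined and consistent across different choices of missing edge $e$ and third vertex $i_k$, and that it does not depend on the particular SCC circuit chosen. Here one must show that algebraic compatibility in $\mathrm{Tor}$ (images of circuit classes, products, and separating classes all match) forces the individual vertex labels to match, not merely the unordered missing-edge pairs. Once $f$ is established, both $K$ and $K'$ being flag means that simplices are determined by their edges, so $f(MF(K))=MF(K')$ yields $f(K)=K'$, completing the proof.
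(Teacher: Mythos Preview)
Your overall strategy is the right one: the corollary is an immediate consequence of $B$-rigidity (Theorem~\ref{thm:B-rigidity}), so the real work is to show $K\cong K'$, and your opening paragraph correctly extracts that $K'$ is flag, Gorenstein$^*$, and has the same $m$ and $n$ as $K$. However, two of your key steps contain genuine gaps.

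First, you assert that ``$\phi$ restricts to a bijection between missing-edge generators of $K$ and those of $K'$'' as if this were automatic. It is not. The bigraded isomorphism only gives $\mathrm{Tor}^{-1,4}(K)\cong\mathrm{Tor}^{-1,4}(K')$ as vector spaces; a priori $\phi(\w\omega)$ could be an arbitrary linear combination $\sum_k c_k\,\w\omega'_k$. Showing that $\phi(\w\omega)$ lands in a single $\w H^0(K'_{\omega'})$ is precisely the content of Proposition~\ref{prop:scc}\eqref{st:a} for $|I|=2$, and its proof (Appendix~\ref{app:proof of prop:scc}) requires a delicate annihilator comparison (Lemma~\ref{lem:ann}) that uses the SCC in an essential way. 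The same issue recurs for $|I|=3$ and for circuit classes; none of this follows from the multigrading alone, since $\phi$ only respects the bigrading.

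Second, your proposed mechanism for the vertex bijection --- tracking a third vertex $i_k$ through the algebraic image of an SCC circuit --- is not the paper's approach and is not obviously workable. The difficulty is that a single vertex has no direct algebraic avatar in the BHR; only subsets do. The paper instead identifies a vertex $i$ with the top class $[\mathrm{lk}_iK]\in\w H^{n-1}(\mathrm{lk}_iK)$ of its link, proves (Lemma~\ref{lem:link}) that $\phi$ sends such a class to the link class of some vertex of $K'$, and defines $\psi\colon\VV(K)\to\VV(K')$ that way. Verifying that $\psi$ is simplicial then uses Corollary~\ref{cor:link}, Lemma~\ref{lem:1-1}, and Proposition~\ref{prop:nsc}. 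Your SCC-swap idea would, at best, recover the map $\phi_\MM$ on missing faces, but passing from a bijection $MF(K)\to MF(K')$ to a consistent bijection $\VV(K)\to\VV(K')$ is exactly the problem you flag as ``the hard part'' and leave unresolved; the link argument is what actually closes it.
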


According to Lemma \ref{lem:missing face and tor} a simplicial complex $K$ is flag if and only if \[\mathrm{Tor}^{-1,\,2j}(K)=0\ \text{ for } j\geqslant3.\]
Thus Theorem \ref{thm:B-rigidity} is an immediate consequence of the following theorem. {Note that $\mathrm{Tor}(K;\Qq)\cong \mathrm{Tor}(K;\Zz)\otimes\Qq$.}
\begin{thm}\label{thm:rigid}
Let $K$ be a flag $\kk$-homology ($\kk$ a field) sphere satisfying the SCC, $K'$ be a flag complex satisfying $K'=\mathrm{core}\,K'$.
If there is a graded ring isomorphism $\phi:H^*(\ZZ_K;\kk)\xr{\simeq} H^*(\ZZ_{K'};\kk)$, then $K\approx K'$.
\end{thm}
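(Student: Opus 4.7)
The plan is to reconstruct the combinatorial type of $K$ from the graded ring $H^*(\ZZ_K)$ in three stages: read the missing edges of $K$ off $H^3$, use cup products made available by the SCC to detect which missing edges share a vertex, and then use flagness to extend the resulting vertex bijection to a simplicial isomorphism $K\cong K'$.

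For the first stage, Hochster's formula gives $H^3(\ZZ_K)\cong\bigoplus_J \w H^{2-|J|}(K_J)$, and only $|J|=2$ contributes: $|J|\leqslant 1$ is trivial, $|J|=3$ gives $\w H^{-1}(K_J)=0$ since $K=\mathrm{core}\,K$ forces $K_J$ nonempty, and $|J|\geqslant 4$ lies in negative cohomological degree. When $|J|=2$, flagness of $K$ guarantees $\w H^0(K_{\{i,j\}})\neq 0$ iff $\{i,j\}\in MF(K)$. Hence $H^3(\ZZ_K)$ is concentrated in $\mathrm{Tor}$-bidegree $(-1,4)$ with canonical basis $\{\alpha_e\}_{e\in MF(K)}$, and the analogous statement holds for $K'$. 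Therefore $\phi$ preserves the bidegree $(-1,4)$ in total degree $3$ and restricts to a linear isomorphism between missing-edge spaces, yielding $|MF(K)|=|MF(K')|$.

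For the second stage, I introduce $W_v=\mathrm{span}\{\alpha_e:v\in e\}\subset H^3(\ZZ_K)$ for each vertex $v$, and aim to characterize the collection $\{W_v\}_{v\in\VV(K)}$ ring-theoretically, so that it transports under $\phi$. The SCC is designed to furnish, for each missing edge $\{i_1,i_2\}$ and each third vertex $i_k$, a full subcomplex $K_I\cong S^1$ with $i_1,i_2\in I$, $i_k\notin I$, and $K_J$ disconnected for $J=\{i_k\}\cup I\setminus\{i_1,i_2\}$. Since $\{i_1,i_2\}\notin K$, the inclusion $K_I\hookrightarrow K_{\{i_1,i_2\}}*K_{I\setminus\{i_1,i_2\}}$ is valid, and Baskakov's formula (Theorem~\ref{thm:union product}) together with Lemma~\ref{lem:hoch} then shows that $\alpha_{i_1 i_2}$ cups non-trivially with the $\w H^0$-class on the two arcs of $K_I$ to produce a non-zero element of $\w H^1(K_I)=\kk$. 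Exploiting the disconnected $K_J$ to build an analogous test class involving $i_k$ gives an algebraic witness distinguishing $i_k$ from $\{i_1,i_2\}$; iterating over all triples then yields the bijection $\psi:\VV(K)\to\VV(K')$ preserving missing-edge incidences.

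The third stage is routine: since both $K$ and $K'$ are flag, each is determined by its $1$-skeleton, equivalently by its missing-edge graph; so $\psi$ extends to a simplicial isomorphism $K\cong K'$. The main obstacle is Step 2, where one must convert the SCC data into genuinely ring-theoretic invariants of $H^*(\ZZ_K)$ and verify that they are transported by the merely graded (not a priori bigraded) isomorphism $\phi$. The delicate point is that the BHR product $\alpha_e\cdot\alpha_f$ of two missing-edge classes usually vanishes (being non-zero only when $K_{e\cup f}\cong C_4$), so the pairwise separation of the $W_v$'s typically has to proceed through higher-degree products built from the SCC's $1$-cycle, with Proposition~\ref{prop:nsc} excluding the configurations in which such products collapse.
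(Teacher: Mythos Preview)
Your Stage~1 and Stage~3 are fine, but Stage~2 --- the heart of the argument --- is only gestured at, and the gesture hides the two genuinely hard steps.

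First, you implicitly assume that $\phi$ carries each missing-edge class $\alpha_e$ to (a scalar multiple of) a \emph{single} $\alpha_{e'}$, so that one obtains a bijection $MF(K)\to MF(K')$ rather than merely a linear isomorphism of the spaces they span. Nothing in your proposal establishes this; it is exactly the content of Proposition~\ref{prop:scc}\,\eqref{st:a}, whose proof (Appendix~\ref{app:proof of prop:scc}, via the annihilator-dimension Lemma~\ref{lem:ann}) is a substantial argument that uses the SCC in an essential way. Without it, your plan to ``iterate over all triples'' has no well-defined objects to iterate over, and your subspaces $W_v$ have no reason to be sent to subspaces of the same shape.

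Second, even granting that bijection, reconstructing the vertex set from the collection $\{W_v\}$ amounts to a line-graph reconstruction, which has exceptional cases and needs connectivity hypotheses you have not secured. The paper does \emph{not} proceed this way. It identifies vertices through \emph{links}: the link $L=\mathrm{lk}_v K$ is an $(n{-}1)$-dimensional Gorenstein$^*$ full subcomplex, and Lemma~\ref{lem:link} (prepared by Lemma~\ref{lem:gorenstein subcomplex} and Lemma~\ref{lem:1-1}) shows that $\phi$ sends the top class $[L]\in\w H^{n-1}(L)$ to the top class of some link in $K'$. Since the SCC forces both $K$ and $K'$ to be non-suspensions (Proposition~\ref{prop:nsc} and Lemma~\ref{lem:sus}), distinct vertices have distinct links, and this yields $\psi:\VV(K)\to\VV(K')$. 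Edges are then checked one dimension down via $\mathrm{lk}_\sigma K$ for an edge $\sigma$ (Corollary~\ref{cor:link}), with Proposition~\ref{prop:nsc} supplying the contradiction if $\psi(\sigma)$ were a non-edge. Your proposal contains neither this link-based mechanism nor a substitute for it.
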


\begin{rem}\label{rem:n=3}
In the case $\dim K=2$, Theorem \ref{thm:rigid} remains valid if the flagness condition of $K'$ is omitted. So flag $2$-spheres satisfying the no $\square$-condition are {$B$-rigid by Proposition \ref{prop:4-circuit}}. 

To see this, first note that $H^*(\ZZ_K;\kk)$ is a Poincar\'e duality $\kk$-algbera. So if $H^*(\ZZ_K;\kk)\cong H^*(\ZZ_{K'};\kk)$, then $K'$ is a $\kk$-homology sphere by {Theorem \ref{thm:Gorenstein} and \cite[Theorem 4.6.8]{BP15}}. For a $\kk$-homology sphere of dimension $n-1\geqslant 2$, there is a well-known formula (the rigidity inequality in \cite{Ka87}): $f_1\geqslant nf_0-\binom{n+1}{2}$, where $f_0$ and $f_1$ are the numbers of vertices and edges respectively. In particular, when $n=3$, the inequality above turns into equality. {Hence, for a $\kk$-homology sphere $L$ of dimension $n-1\geqslant 2$}, we have 
\begin{equation}\label{eq:betti inequality}
\begin{split}
{\dim_\kk H^3(\ZZ_L;\kk)}=\binom{f_0}{2}-f_1&\leqslant \frac{f_0^2-(2n+1)f_0}{2}+\binom{n+1}{2}\\
&=2n^2-(2d-1)n+\frac{d^2-d}{2},
\end{split}
\end{equation}
where $d=f_0+n$ is the top degree of {$H^*(\ZZ_L)$}, and the first equality comes from Hochster's formula. {If $n=3$, \eqref{eq:betti inequality} becomes an equality.}
\eqref{eq:betti inequality}, together with the clear fact that $d\geqslant 2n+1$ and the fact that the function $f(n)=2n^2-(2d-1)n$ is strictly decreasing for $n\leqslant \frac{d-1}{2}$,
implies that $K'$ is a simplicial sphere of dimension $\leqslant 2$. By Lemma \ref{lem:nil of gorenstein} below, $|K'|\neq S^1$, so  $K'$ is also a simplicial $2$-sphere, noting that in dimension $\leqslant 2$, $\kk$-homology spheres are actually simplicial spheres.   
Finally, since whether a simplicial $2$-sphere $K$ is flag or not depends only on some algebraic property of $H^*(\ZZ_K)$ (see \cite[Section 6]{FW15} or \cite[Theorem 4.8]{BEMPP17}, {\cite[Proposition 2.2]{Ero20}}), Theorem \ref{thm:rigid} applies {for the two simplicial $2$-spheres $K$ and $K'$}.
\end{rem}

\section{Proof of Theorem \ref{thm:rigid}}
Throughout this section, the coefficient ring $\kk$ is a field, and the cohomology with coefficients in $\kk$ will be implicit.

First let us recall some notions in commutative algebra.
\begin{Def}
Let $A=\bigoplus_{i\geqslant0}A^i$ be a nonnegatively graded
commutative connected $\kk$-algebra. The \emph{nilpotence length} of $A$, denoted by $nil(A)$,
is the greatest integer $n$ (or $\infty$) such that \[(A^+)^{*n}:=A^+\cdots A^+ \text{ ($n$ factors) }\neq0.\]
\end{Def}

\begin{Def}
Let $R$ be a commutative ring. For an element $r\in R$, the \emph{annihilator} of $r$ is defined to be
\[\text{ann}(r):=\{a\in R: a\cdot r=0\}.\]
In particular, if $R=\bigoplus_i R^i$ is a graded commutative ring, then the \emph{annihilator of degree $k$} of $r$ is defined to be
\[\text{ann}_k(r):=\{a\in R^k: a\cdot r=0\}.\]
\end{Def}
\begin{conv}
In what follows we will often not distinguish the cohomology ring $H^*(\ZZ_K)$ of $\ZZ_K$ and the BHR $\bigoplus_{J\subset[m]}\w H^*(K_J)$ of $K$ because of the isomorphism \eqref{eq:Hochster ring} (the proofs throughout this paper do not involve the sign problem in Remark \ref{rem:sign}), so that an element $\xi_J\in \w H^k(K_J)$ can also be seen as an element in $H^{|J|+k+1}(\ZZ_K)$.
In particular, if $K$ is flag, then for any $\omega\in MF(K)$, $\kk\cong\w H^0(K_\omega)\subset H^3(\ZZ_K)$, denote  $\w\omega$ a generator of this subgroup of $H^3(\ZZ_K)$.
\end{conv}
\begin{lem}\label{lem:upper bound of nil}
Let $K$ be a simplicial complex with $m$ vertices, $R=H^*(\ZZ_K)$. Let $[c]$ be a homogenous cohomology class in $R$.
Suppose $[c]\in (R^+)^{*k}$ and $0\neq p_J([c])\in\w H^i(K_J)$ for some $J\subset[m]$; here $p_J$ is the projection map in \eqref{eq:projection}. Then $i\geqslant k-1$.
\end{lem}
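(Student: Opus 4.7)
The plan is to work through the Baskakov--Hochster ring picture from Theorem \ref{thm:union product}, using the fact that the BHR product bumps the simplicial cohomology degree up by exactly one whenever it produces a non-zero output. The inequality $i\geqslant k-1$ is then essentially degree bookkeeping.

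First I would write
\[
[c]=\sum_\lambda [a_{1,\lambda}]\cdots [a_{k,\lambda}],
\]
with each $[a_{s,\lambda}]\in R^+$, and (after splitting into graded pieces) each $[a_{s,\lambda}]$ homogeneous of strictly positive total degree. Decompose each factor into its BHR-components,
\[
[a_{s,\lambda}]=\sum_{L,\,j}\alpha^{(s,\lambda)}_{L,j},\qquad \alpha^{(s,\lambda)}_{L,j}\in\w H^j(K_L).
\]
A non-zero component forces $L\neq\varnothing$ (otherwise the total degree of that piece would be $0$, contradicting $[a_{s,\lambda}]\in R^+$); and since $K$ has no isolated ``empty'' vertex set ($K_L\neq\varnothing$ for $L\neq\varnothing$), one has $\w H^{-1}(K_L)=0$, so every non-zero BHR-component of each $[a_{s,\lambda}]$ lives in simplicial degree $j\geqslant 0$.

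Next I expand the $k$-fold product by multilinearity and apply Baskakov's multiplication rule: the product
\[
\alpha^{(1,\lambda)}_{L_1,j_1}\cdots\alpha^{(k,\lambda)}_{L_k,j_k}
\]
vanishes unless $L_1,\dots,L_k$ are pairwise disjoint, in which case it lies in $\w H^{j_1+\cdots+j_k+(k-1)}(K_{L_1\cup\cdots\cup L_k})$, because each successive application of the BHR product adds $1$ to the simplicial degree. Therefore only terms with $L_1\sqcup\cdots\sqcup L_k=J$ and $j_1+\cdots+j_k=i-(k-1)$ contribute to $p_J([c])\in\w H^i(K_J)$.

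Finally, since $p_J([c])\neq 0$, at least one such summand is non-zero, and by the first step each of its $j_s$ satisfies $j_s\geqslant 0$. Hence $i-(k-1)=j_1+\cdots+j_k\geqslant 0$, i.e.\ $i\geqslant k-1$, as required. The only ``obstacle'' is ensuring the two pieces of degree accounting are correct: that an element of $R^+$ cannot contribute a component in simplicial degree $-1$, and that each Baskakov multiplication shifts the simplicial degree up by exactly one. Both follow immediately from the definitions, so the argument is short and formal.
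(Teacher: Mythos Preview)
Your proof is correct and follows essentially the same approach as the paper: both arguments reduce the claim to the BHR multiplication rule $\w H^p(K_{I_1})\otimes\w H^q(K_{I_2})\to\w H^{p+q+1}(K_{I_1\cup I_2})$, observing that each factor in $R^+$ contributes a BHR component of simplicial degree $\geqslant 0$. The paper compresses this into two lines, while you have written out the degree bookkeeping explicitly, but the substance is the same.
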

\begin{proof}
It is easy to see that the condition $[c]\in (R^+)^{*k}$ implies $p_J([c])\in (R^+)^{*k}$. {Thus the statement of the lemma  follows from the degree rule of BHR.}
\end{proof}

\begin{lem}\label{lem:nil of gorenstein}
Let $K$ be a $\kk$-homology $n$-sphere with $m$ vertices, $R=H^*(\ZZ_K)$.
Then $nil(R)\leqslant n+1$. If $K$ is flag in addition, then $nil(R)=n+1$, and $(R^+)^{*(n+1)}=R^{m+n+1}$.
\end{lem}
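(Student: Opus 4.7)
The plan is to prove the two halves of the lemma in sequence. The upper bound $nil(R)\leqslant n+1$ follows formally from Lemma~\ref{lem:upper bound of nil}: a nonzero class in $(R^+)^{*(n+2)}$ would have to project to a nonzero element in some $\w H^i(K_J)$ with $i\geqslant n+1$, which is impossible since $\dim K_J\leqslant\dim K=n$. This argument uses nothing beyond the dimension of $K$. By Theorem~\ref{thm:Gorenstein}, $K$ is moreover a generalized homology $n$-sphere, so the Baskakov--Hochster ring $R$ is a Poincar\'e duality algebra and $R^{m+n+1}=\w H^n(K)\cong\kk$.

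Assuming $K$ is flag, the inclusion $(R^+)^{*(n+1)}\subseteq R^{m+n+1}$ follows from the same Lemma~\ref{lem:upper bound of nil} combined with combinatorial Alexander duality: a nonzero $[c]\in(R^+)^{*(n+1)}$ has a nonzero projection $p_J([c])\in\w H^n(K_J)\cong\w H_{-1}(K_{[m]\setminus J})$, and the latter vanishes unless $[m]\setminus J=\varnothing$. Hence only the $J=[m]$ summand can contribute, so $[c]\in\w H^n(K)=R^{m+n+1}$.

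To upgrade this to equality, it suffices to exhibit a single nonzero $(n+1)$-fold product, which I will produce by induction on $n$. The base case $n=0$ is immediate ($K=\partial\Delta^1$, $\ZZ_K\cong S^3$, $R^+=R^3\neq0$). For the inductive step, fix any vertex $v$ and set $L_v:=\mathrm{lk}_v K$; flagness identifies $L_v$ with the full subcomplex $K_{N(v)}$ on the neighbor set $N(v)$ of $v$, and $L_v$ is again a flag Gorenstein$^*$ complex of dimension $n-1$. Alexander duality gives $\w H^0(K_{[m]\setminus N(v)})\cong\w H_{n-1}(L_v)\cong\kk$, forcing $K_{[m]\setminus N(v)}$ to decompose as $\{v\}\sqcup M$ with $M$ connected; let $\gamma_v$ generate this one-dimensional $\w H^0$. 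A parallel Mayer--Vietoris computation on $K=\mathrm{st}_v K\cup K_{[m]\setminus\{v\}}$ and on $L_v*K_{[m]\setminus N(v)}=(\{v\}*L_v)\cup(L_v*M)$, using Alexander duality and the join Künneth formula (together with the concentration of $\w H^*(L_v)$ in degree $n-1$ and the connectedness of $M$) to kill all other terms, produces compatible boundary isomorphisms $\w H^{n-1}(L_v)\cong\w H^n(K)$ and $\w H^{n-1}(L_v)\cong\w H^n(L_v*K_{[m]\setminus N(v)})$. Consequently the subcomplex inclusion $K\hookrightarrow L_v*K_{[m]\setminus N(v)}$ is an isomorphism on $\w H^n$, and by Baskakov's formula (Theorem~\ref{thm:union product}) the BHR product $\mu_{L_v}\cdot\gamma_v$ equals $\pm\mu$. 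The induction hypothesis writes $\mu_{L_v}=\beta_1\cdots\beta_n$ in $H^*(\ZZ_{L_v})^+$, a subring of $R$ via the identification $(L_v)_I=K_I$ for $I\subseteq N(v)$. This yields the required factorization $\mu=\pm\beta_1\cdots\beta_n\cdot\gamma_v$ as an $(n+1)$-fold product in $R^+$.

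The main technical obstacle is the Mayer--Vietoris comparison establishing that $K\hookrightarrow L_v*K_{[m]\setminus N(v)}$ is an isomorphism on $\w H^n$: this demands a careful Künneth analysis of the join $L_v*M$ which exploits the Gorenstein$^*$ structure of $L_v$, together with a parallel application of Alexander duality on the $K$-side. Everything else---the dimensional arguments, the identification of BHR summands, and the conclusion---is routine given Baskakov's product formula and the structure of the Baskakov--Hochster ring.
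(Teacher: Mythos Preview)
Your proof is correct and follows the same inductive skeleton as the paper: reduce to the link $L_v$ of a vertex, use that $L_v$ is a full subcomplex (flagness) and again flag Gorenstein$^*$ of dimension $n-1$, and multiply the inductively obtained $n$-fold product $[L_v]$ by a class in $\w H^0(K_{[m]\setminus N(v)})$. The upper bound and the inclusion $(R^+)^{*(n+1)}\subseteq R^{m+n+1}$ are handled exactly as in the paper (Lemma~\ref{lem:upper bound of nil} plus the vanishing of $\w H^{\geqslant n}(K_I)$ for proper $I$, which you phrase via Alexander duality).

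The one substantive difference is in how you establish the nonvanishing of the product $[L_v]\cdot\gamma_v$. The paper simply invokes that $R$ is a Poincar\'e duality algebra: in the BHR multigrading, the only summand that can pair nontrivially with $\w H^{n-1}(K_{N(v)})$ into $\w H^n(K)$ is the complementary $\w H^0(K_{[m]\setminus N(v)})$, and the pairing is perfect, so the product is automatically the top class. You instead run two parallel Mayer--Vietoris sequences (for $K=\mathrm{st}_vK\cup K_{[m]\setminus\{v\}}$ and for $L_v*K_{[m]\setminus N(v)}=(\{v\}*L_v)\cup(L_v*M)$), kill the side terms via Alexander duality and the K\"unneth formula for joins, and use naturality to identify the Baskakov product with the top class. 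This is correct---your vanishing checks ($M\neq\varnothing$ and $M$ connected from $\dim_\kk\w H^0(K_{[m]\setminus N(v)})=1$, $\w H^*(L_v)$ concentrated in degree $n-1$) all go through---but it is considerably more labor than the paper's one-line appeal to Poincar\'e duality of $R$. The payoff of your route is that it makes the Baskakov product formula do the work explicitly and avoids citing the PD pairing structure of the BHR; the paper's route is faster because that PD structure is already available.
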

\begin{proof}
The inequality is a direct consequence of Lemma \ref{lem:upper bound of nil}.
To prove the equality for the flag case, we use an induction on the dimension $n$.
The case $n=0$ is trivial. Suppose inductively that this is true for $\dim K<n$. Take $L$ to be the link of a vertex of $K$.
Then $L$ is a $\kk$-homology $(n-1)$-sphere, and since $K$ is flag, $L$ is a full subcomplex of $K$.
So by induction, $nil(H^*(\ZZ_L))=n$ and $(H^+(\ZZ_L))^{*n}=\w H^{n-1}(K_I)\cong \kk$, where $I=\VV(L)$. Let $J=[m]\setminus\VV(L)$. Then by {Theorem \ref{thm:Gorenstein}},
\[\w H^*(K_J)=\w H^0(K_J)=\w H_{n-1}(K_I)=\kk.\]
Since $R$ is a Poincar\'e duality algebra we have
\[\w H^{n-1}(K_I)\cdot\w H^0(K_J)=\w H^n(K)=R^{m+n+1}.\]
Note that $H^*(\ZZ_L)$ is a subring of $H^*(\ZZ_K)$ since $L$ is a full subcomplex of $K$. Hence from the definition of nilpotence length, we immediately get $nil(R)\geqslant n+1$.  So $nil(R)= n+1$, since we already have the inequality $nil(R)\leqslant n+1$.

It remains to prove the last statement. First, the above argument shows that $R^{m+n+1}\subset (R^+)^{*(n+1)}$.
If $\xi\in R$ and $\xi\notin R^{m+n+1}$, then
there exists a proper subset $I\subsetneq[m]$ such that $0\neq p_I(\xi)\in\w H^*(K_I)$, and therefore $\xi\notin (R^+)^{*(n+1)}$ by Lemma \ref{lem:upper bound of nil} {and the obvious fact that $\w H^{n}(K_I)=0$ for $I\neq [m]$.}
This gives the inverse inclusion relation $(R^+)^{*(n+1)}\subset R^{m+n+1}$.
\end{proof}

\begin{cor}\label{cor:goren}
Let $K$ be a flag $\kk$-homology sphere, $K'$ be a flag simplicial complex with $K'=\mathrm{core}\,K'$. Suppose there is a graded ring isomorphism  $\phi:H^*(\ZZ_K)\xr{\simeq} H^*(\ZZ_{K'})$. Then $K'$ is also a $\kk$-homology sphere of the same dimension as $K$ and $|\VV(K')|=|\VV(K)|$.
\end{cor}
\begin{proof}
{By Theorem \ref{thm:Gorenstein} and \cite[Theorem 4.6.8]{BP15}, $K'$ is also a $\kk$-homology sphere.}
The fact that $\dim K'=\dim K$ and $|\VV(K)|=|\VV(K')|$ follows from Lemma \ref{lem:nil of gorenstein}.
\end{proof}

\begin{prop}\label{prop:scc}
Let $K$ and $K'$ be two simplicial complexes, and suppose $K$ is flag and satisfies the SCC.
Assume that there is a graded ring isomorphism $\phi:H^*(\ZZ_K)\xr{\simeq} H^*(\ZZ_{K'})$. 
\begin{enumerate}[(a)]
\item\label{st:a} If $\w H^0(K_I)\neq0$ for some $I\subset\VV(K)$, $|I|\leqslant3$, then $\phi(\w H^0(K_I))=\w H^0(K'_{J})$ for some $K_J'\approx K_I$. The converse is also true: if $\w H^0(K_J')\neq0$ for some $J\subset\VV(K')$, $|J|\leqslant3$, then $\phi^{-1}(\w H^0(K_J'))=\w H^0(K_I)$ for some $K_I\approx K_J'$.
\item\label{st:b} In \eqref{st:a}, if $|I|=3$ and $\phi(\w H^0(K_I))=\w H^0(K_J')$, then for any $\omega\in MF(K_I)$, $\phi(\w\omega)=\w\omega'$ for some $\omega'\in MF(K'_J)$.
\item\label{st:c} If $K_I\subset K$ is an $n$-circuit, then there exists an $n$-circuit $K'_J\subset K'$ such that $\phi(\w H^1(K_I))=\w H^1(K'_{J})$.
\end{enumerate}
\end{prop}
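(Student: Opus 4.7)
The framework is the Baskakov--Hochster identification $H^*(\ZZ_K) \cong \bigoplus_{J \subset [m]} \w H^*(K_J)$ of Theorem \ref{thm:union product}. By Corollary \ref{cor:goren}, $K'$ is Gorenstein$^*$ of the same dimension $n$ and on the same number $m$ of vertices as $K$, so by Lemma \ref{lem:nil of gorenstein} both $H^*(\ZZ_K)$ and $H^*(\ZZ_{K'})$ are Poincar\'e duality algebras of top degree $m+n+1$, giving a non-degenerate pairing on each side. A preliminary observation, valid for any simplicial complex, is that $H^3(\ZZ_K) = \bigoplus_{\omega \in MF(K),\,|\omega|=2} \w H^0(K_\omega)$, so $\phi$ in degree $3$ automatically sends missing-edge classes of $K$ to $\kk$-linear combinations of missing-edge classes of $K'$; the substance of part (a) in the case $|I|=2$ is to show that these combinations are forced to be supported on a single summand.

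The heart of the argument is the $|I|=2$ case of (a). Writing $\omega = I$ and $\phi(\w\omega) = \sum_{\omega' \in MF(K'),\,|\omega'|=2} c_{\omega'}\w{\omega'}$, I will argue by contradiction that at most one coefficient $c_{\omega'}$ is non-zero. Assuming two non-zero contributions $\omega'_1 \neq \omega'_2$, split into the cases $\omega'_1 \cap \omega'_2 = \emptyset$ and $|\omega'_1 \cap \omega'_2| = 1$. In each case, the SCC applied to $\omega$ together with a third vertex $i_k \in \VV(K)$ chosen to reflect the intersection pattern of $\omega'_1,\omega'_2$ furnishes a full subcomplex $K_L \subset K$ with $|K_L| \cong S^1$ containing $\omega$, together with a non-trivial disconnection class in $\w H^0(K_{(\{i_k\} \cup L) \setminus \omega})$. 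BHR products of these SCC-provided classes with $\w\omega$ (controlled by Lemma \ref{lem:hoch}) transport across $\phi$, and Poincar\'e duality on $\ZZ_{K'}$ then yields two incompatible algebraic constraints on the $c_{\omega'_i}$, forcing all but one to vanish.

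Once the critical case of (a) is established, the remaining pieces should follow more directly. For (a) with $|I|=3$, one analyses $\w H^0(K_I) \subset H^4(\ZZ_K)$ according to the disconnection pattern of $K_I$ and uses BHR products against the now-identified missing-edge classes, together with Poincar\'e duality, to pin down a single target $J \subset \VV(K')$. Part (b) is then the compatibility between the $|I|=2$ and $|I|=3$ cases of (a): each $\w\omega$ for $\omega \in MF(K_I)$ is, by the $|I|=2$ case, a single $\w{\omega'}$, and a linkage argument forces $\omega' \subset J$ where $J$ is the target furnished by (a) at $|I|=3$. For part (c), a $3$-circuit $K_I$ gives $\w H^1(K_I) = \w{I}$, which is itself a missing-face class handled by a higher-degree variant of (a); for $n \geq 4$ the generator of $\w H^1(K_I)$ is either realised as a BHR product of two disjoint missing-edge classes (the diagonals in the $n=4$ case) or detected via Alexander/Poincar\'e duality as dual to a class in $\w H^{n-2}(K_{[m] \setminus I})$, and in each sub-case the already-established (a) and (b) transport the generator into a unique $\w H^1(K'_J)$ with $K'_J$ itself an $n$-circuit.

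The decisive obstacle is the critical $|I|=2$ case of (a): translating the combinatorial input of the SCC into the precise algebraic identity in $H^*(\ZZ_{K'})$ that rules out a genuine linear combination in $\phi(\w\omega)$. Flagness of $K$ is used to make BHR products behave predictably with respect to unions of index sets, while Poincar\'e duality on both sides is used to promote partial information about the support of $\phi(\w\omega)$ into the full conclusion. Once this core case is settled, the rest of the proposition is comparatively routine bookkeeping.
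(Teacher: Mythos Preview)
Your proposal has the right instinct---that the SCC must supply the algebraic leverage---but it misidentifies where that leverage is applied, and it is missing the paper's two main tools.

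For the $|I|=2$ case of (a), you propose to choose $i_k\in\VV(K)$ ``to reflect the intersection pattern of $\omega'_1,\omega'_2$'' and then derive constraints in $K'$ via Poincar\'e duality. But at this stage there is no correspondence between vertices of $K$ and vertices of $K'$, so there is no way to make such a choice; and Proposition~\ref{prop:scc} does not assume $K'$ flag or even Gorenstein$^*$, so your appeal to Corollary~\ref{cor:goren} and to Poincar\'e duality on $\ZZ_{K'}$ is out of scope. The paper avoids this entirely by using a numerical invariant that passes through $\phi$: the annihilator dimension. The key Lemma~\ref{lem:ann} shows, using the SCC inside $K$, that for any genuine sum $\xi_{I_1}+\cdots+\xi_{I_k}$ of classes from distinct $\w H^0(K_{I_j})$ with $|I_j|\le 3$ one has the \emph{strict} inequality $\dim_\kk\mathrm{ann}(\sum_j\xi_{I_j})<\min_j\dim_\kk\mathrm{ann}(\xi_{I_j})$. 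The non-strict version holds in any simplicial complex (Remark~\ref{rem:ann general}). One then bounces: if $\phi(\w\omega)$ had several summands in $K'$, some $\phi^{-1}(\xi'_{J_1})$ would be a genuine sum in $K$, and
\[
\dim\mathrm{ann}(\w\omega)=\dim\mathrm{ann}(\phi(\w\omega))\le\dim\mathrm{ann}(\xi'_{J_1})=\dim\mathrm{ann}(\phi^{-1}(\xi'_{J_1}))<\dim\mathrm{ann}(\w\omega)
\]
is a contradiction. The SCC is used only in $K$, to build the witness $\lambda\in\mathrm{ann}(\xi_1)$ with $(\sum_j\xi_j)\cdot\lambda\neq 0$ that forces the strict inequality; nothing beyond Remark~\ref{rem:ann general} is needed on the $K'$ side.

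For (c), your case split does not cover $n\ge 5$: the generator of $\w H^1$ of an $n$-circuit is not a product of two disjoint missing-edge classes once $n\ge 5$, and ``detected via Alexander/Poincar\'e duality'' is not an argument. The paper introduces a purpose-built invariant, the \emph{factor index} $\mathrm{ind}_3$ (Definition~\ref{def:ind}), and proves (Lemma~\ref{lem:ind}, Corollary~\ref{cor:ind}) that a class $\xi\in H^i(\ZZ_K)$ with $i\ge 6$ attains $\mathrm{ind}_3(\xi)=\binom{i-2}{2}-(i-2)$ only if every Hochster component of $\xi$ is supported on an $(i-2)$-circuit. Since $\mathrm{ind}_3$ is preserved by $\phi$, and since the $|I|=2$ case of (a) already identifies missing-edge classes, this forces $\phi$ to carry circuit classes to circuit classes. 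This also explains the paper's logical order, which is not yours: (a) for $|I|=2$ first, then (c), and only afterwards the $|I|=3$ case of (a) and part (b), both of which genuinely invoke (c) in their proofs. Your proposed ordering, with (c) last, would leave the $|I|=3$ case and (b) without the tool they need.
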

Since the proof of Proposition \ref{prop:scc} involves many complicated algebraic arguments, we include it in Appendix \ref{app:proof of prop:scc}.

\begin{nota}
In Proposition \ref{prop:scc}, the isomorphism $\phi$ induces a bijection between $MF(K)$ and $MF(K')$.
We denote this map by $\phi_\MM:MF(K)\to MF(K')$.
\end{nota}

Here is a consequence of Proposition \ref{prop:scc}.
\begin{cor}\label{cor:intersect}
	Under the hypotheses of Proposition \ref{prop:scc}, if two different missing faces $\omega_1,\omega_2\in MF(K)$ satisfy $\omega_1\cap\omega_2\neq\emptyset$, then $\phi_\MM(\omega_1)\cap\phi_\MM(\omega_2)\neq\emptyset$.
\end{cor}
\begin{proof}
	Note that $\omega_1\cap\omega_2\neq\emptyset$ implies that $|\omega_1\cup\omega_2|=3$ and $\w H^0(K_{\omega_1\cup\omega_2})\neq 0$. Hence by Proposition \ref{prop:scc} \eqref{st:b}, if $\omega_1\cap\omega_2\neq\emptyset$, then there exists $J\subset\VV(K')$, $|J|=3$, such that $\phi_\MM(\omega_1),\phi_\MM(\omega_2)\subset J$, and the result follows.
\end{proof}

\begin{lem}\label{lem:gorenstein subcomplex}
Let $K$ and $K'$ be two flag $\kk$-homology spheres with $K$ satisfying the SCC, and assume that there is a graded ring isomorphism  $\phi:H^*(\ZZ_K)\xr{\simeq} H^*(\ZZ_{K'})$. Suppose $K_I$ (resp. $K_J'$) is a $\kk$-homology sphere of dimension $p$, and $0\neq\xi_I\in\w H^p(K_I)$ (resp. $0\neq\xi_J'\in\w H^p(K_J')$). Then $\phi(\xi_I)\in\w H^{q}(K'_J)$ (resp. $\phi^{-1}(\xi_J')\in\w H^{q}(K_I)$) for some $J\subset\VV(K')$ (resp. $I\subset\VV(K)$) and $q\geqslant p$. Especially, if $p\geqslant\dim K-1$ then $q=p$.
\end{lem}
\begin{proof} For simplicity we only prove the lemma in one direction since the converse can be proved in the same way.
Let $R=H^*(\ZZ_K)$ and $R'=H^*(\ZZ_{K'})$. By Lemma \ref{lem:nil of gorenstein}, $\xi_I\in (R^+)^{*(p+1)}$, so $\phi(\xi_I)\in (R'^+)^{*(p+1)}$.
Hence it follows from Lemma \ref{lem:upper bound of nil} that \[\phi(\xi_I)\in\bigoplus_{J\subset\VV(K')}\w H^{\geqslant p}(K'_J).\]
Now suppose $MF(K_I)=\{\omega_1,\dots,\omega_t\}$ and suppose $\phi(\xi_I)=\sum_{i=1}^k\xi_{J_i}'$, where $0\neq\xi_{J_i}'\in\w H^{q_i}(K_{J_i}')$, $q_i\geqslant p$, $J_i\neq J_j$ for $i\neq j$. Let $\omega_s'=\phi_\MM(\omega_s)$ for $1\leqslant s\leqslant t$.
Since $K_I$ is a $\kk$-homology sphere, $\w\omega_s$ is a factor of $\xi_I$ for $1\leqslant s\leqslant t$.
It follows that $\w\omega_s'$ is a factor of $\xi_{J_i}'$ for each $J_i$, and so $\omega_s'\in MF(K_{J_i}')$ for all $1\leqslant s\leqslant t,\,1\leqslant i\leqslant k$.

Suppose $\phi^{-1}(\xi_{J_1}')=\sum_{j=1}^l\xi_{I_j}$, where $0\neq\xi_{I_j}\in \w H^{p_j}(K_{I_j})$,  $I_i\neq I_j$ for $i\neq j$. Using Lemma \ref{lem:upper bound of nil} again we have $p_j\geqslant p$ for $1\leqslant j\leqslant l$ since $\xi_{J_1}'\in (R'^+)^{*(p+1)}$.
The same reasoning shows that $\w\omega_s$ is a factor of $\xi_{I_j}$ for all $1\leqslant s\leqslant t,\,1\leqslant j\leqslant l$. Hence $MF(K_I)\subset MF(K_{I_j})$.
This implies that $I\subset I_j$ as $K_I=\mathrm{core}\,K_I$. Since $\xi_I$ and $\xi_{I_j}$ have the same cohomological degree in $H^*(\ZZ_K)$, $p+|I|+1=p_j+|I_j|+1$. Combining this with $p_j\geqslant p$ and $I\subset I_j$ we have $p_j=p$ and $|I|=|I_j|$. So $\phi^{-1}(\xi_{J_1}')\in \w H^p(K_I)$. Applying this argument for all $\xi_{J_i}'$ we get $\phi^{-1}(\xi_{J_i}')\in \w H^p(K_I)$ for $1\leqslant i\leqslant k$. Note that $\w H^p(K_I)\cong\kk$. Hence we must have $k=1$ in the formula for $\phi(\xi_I)$, i.e., $\phi(\xi_I)\in\w H^q(K'_J)$ for some $J\subset\VV(K')$ and $q\geqslant p$.

{Since $\dim K=\dim K'$ by Corollary \ref{cor:goren}, the  second statement comes from the fact that $\w H^{n}(K_I)\neq 0$ (resp. $\w H^{n}(K'_J)\neq 0$), where $n=\dim K$,  if and only if $I=\VV(K)$ (resp. $J=\VV(K')$).}
\end{proof}

\begin{rem}
From the proof of Lemma \ref{lem:gorenstein subcomplex} we can see that for the subcomplex $K_I$ in Lemma \ref{lem:gorenstein subcomplex}, the restriction of the map $\phi_\MM$ to $MF(K_I)\subset MF(K)$ is
\[\phi_\MM\mid_{MF(K_I)}:MF(K_I)\to MF(K_J').\]
\end{rem}
Next, we state the following two lemmas whose proofs we defer to Appendix \ref{app:proof of two lemmata}, again because of their complexity.
\begin{lem}\label{lem:1-1}
If $p\geqslant n-2$ ($n=\dim K$) in Lemma \ref{lem:gorenstein subcomplex}, then the map $\phi_\MM\mid_{MF(K_I)}:MF(K_I)\to MF(K_J')$ is a bijection.
\end{lem}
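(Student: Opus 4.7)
Injectivity of $\phi_\MM|_{MF(K_I)}:MF(K_I)\to MF(K_J')$ is automatic from the bijectivity of $\phi_\MM:MF(K)\to MF(K')$ noted after Proposition \ref{prop:scc}, so the work is surjectivity. For $p=n-1$, $K_I=K$ and Corollary \ref{cor:goren} forces $K_J'=K'$; the restriction is just $\phi_\MM$ itself. For $p=n-2$, by Lemma \ref{lem:gorenstein subcomplex} we have $q\geq p$ and $|J|\leq|I|$, and combinatorial Alexander duality in the Gorenstein$^*$ sphere $K'$ (Theorem \ref{thm:Gorenstein}, Corollary \ref{cor:goren}) gives $\w H^{n-1}(K_J')\cong\w H^{-1}(K_{\VV(K')\setminus J}')$, which vanishes unless $J=\VV(K')$; and $J=\VV(K')$ would force $|I|\geq|\VV(K')|=|\VV(K)|$, contradicting $\dim K_I<\dim K$. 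Hence $q=n-2$, $|J|=|I|$, and $K_J'\subsetneq K'$.

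The main tool is a \emph{divisibility criterion}: for every $\omega\in MF(K)$, $\xi_I\in\w\omega\cdot H^*(\ZZ_K)$ if and only if $\omega\in MF(K_I)$. The ``if'' direction is Poincar\'e duality inside the Gorenstein$^*$ complex $K_I$. For ``only if'': write $\xi_I=\w\omega\cdot\eta$ with $\eta=\sum_L\eta_L$, $\eta_L\in\w H^{n-3}(K_L)$; in the BHR each nonzero summand $\w\omega\cdot\eta_L$ requires $\omega\cap L=\varnothing$ and sits in $\w H^{n-2}(K_{\omega\cup L})$, and by the orthogonality of the direct-sum decomposition $H^*(\ZZ_K)=\bigoplus_L\w H^*(K_L)$, the $\w H^{n-2}(K_I)$-component of $\xi_I$ is contributed only by summands with $\omega\cup L=I$, forcing $\omega\subset I$. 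The identical BHR argument applied on $K'$ shows: $\xi_J'\in\w{\omega'}\cdot H^*(\ZZ_{K'})$ implies $\omega'\subset J$, i.e., $\omega'\in MF(K_J')$.

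Because $\phi$ is a ring isomorphism with $\phi(\xi_I)=c\xi_J'$ and $\phi(\w\omega)=\w{\phi_\MM(\omega)}$ (up to a nonzero scalar), divisibility transfers through $\phi$: $\xi_I\in\w\omega\cdot H^*(\ZZ_K)$ iff $\xi_J'\in\w{\phi_\MM(\omega)}\cdot H^*(\ZZ_{K'})$. Combined with the criterion on $K$, surjectivity of $\phi_\MM|_{MF(K_I)}$ onto $MF(K_J')$ is equivalent to the statement $(\star)$: every $\omega'\in MF(K_J')$ satisfies $\xi_J'\in\w{\omega'}\cdot H^*(\ZZ_{K'})$, a Poincar\'e-duality-type condition on the subcomplex $K_J'$ that does not follow from the Gorenstein$^*$ property of $K'$ alone. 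To establish $(\star)$, I argue by contradiction using SCC. Suppose $(\star)$ fails for some $\omega'\in MF(K_J')$ and set $\omega:=\phi_\MM^{-1}(\omega')\in MF(K)$, so $\omega\not\subset I$. Applying SCC (Definition \ref{def:SCC}) to $\omega$ and a suitably chosen vertex $c\in I$ produces a full subcomplex cycle $K_U\cong S^1$ with $\omega\subset U$, $c\notin U$, and $\w H_0(K_V)\neq 0$ where $V:=(U\setminus\omega)\cup\{c\}$. Proposition \ref{prop:scc}(c) transports $K_U$ to a cycle $K_{U'}'\subset K'$ with $\phi(\w H^1(K_U))=\w H^1(K_{U'}')$, and Gorenstein$^*$ Poincar\'e duality inside $K_U$ gives a factorization $\xi_U=\w\omega\cdot\alpha$ with $\alpha\in\w H^0(K_{U\setminus\omega})$; applying $\phi$ and coupling this with the disconnectedness $\w H_0(K_V)\neq 0$ via Baskakov's product rule yields a cohomology class in $H^*(\ZZ_{K'})$ whose BHR support cannot be reconciled with the failure of $(\star)$, giving the desired contradiction.

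The principal obstacle is precisely this last step: converting the combinatorial data of SCC, transported through Proposition \ref{prop:scc}(c), into a concrete cohomological incompatibility in $H^*(\ZZ_{K'})$. It is here that the specific shape of Definition \ref{def:SCC} enters in an essential way; without it, the Poincar\'e-duality-like condition $(\star)$ on the subcomplex $K_J'$---equivalently, the equality of the PD subalgebras $\phi(H^*(\ZZ_{K_I}))$ and $H^*(\ZZ_{K_J'})$ inside $H^*(\ZZ_{K'})$---need not hold.
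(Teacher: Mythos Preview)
Your reduction to the statement $(\star)$ is correct and clean: the divisibility criterion on both sides is valid, and the equivalence between surjectivity and $(\star)$ is exactly right. The problem is that you have not proved $(\star)$. Your final paragraph candidly identifies the ``principal obstacle'' as the step where you must extract a contradiction from the SCC data, but what follows is only a gesture: you do not specify which vertex $c\in I$ to feed into the SCC, you do not explain what cohomology class in $H^*(\ZZ_{K'})$ is produced, and you do not say what its BHR support is or why it is incompatible with the failure of $(\star)$. Transporting the circle $K_U$ via Proposition~\ref{prop:scc}\eqref{st:c} gives you a circle $K'_{U'}$ with $\omega'\subset U'$, but there is no visible link between $U'$ and $J$, and the disconnectedness of $K_V$ on the $K$ side has no obvious counterpart on the $K'$ side. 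As written, this is a gap, not a sketch.

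The paper's argument is structurally different and does not pass through $(\star)$ or through Proposition~\ref{prop:scc}\eqref{st:c} at all. Instead it works entirely with three-element subsets via Proposition~\ref{prop:scc}\eqref{st:a},\eqref{st:b}: it first shows that the image $\phi_\MM(MF(K_I))\subset MF(K_J')$ is closed under vertex-adjacency (if $\omega_i'\in\phi_\MM(MF(K_I))$ and $\omega_j'\in MF(K_J')$ share a vertex, then $\omega_j'$ is also in the image), by writing $\xi_J'=\xi_W'\cdot\xi_U'$ with $|W|=3$ and pulling back through Proposition~\ref{prop:scc}\eqref{st:a},\eqref{st:b}. This adjacency-closure forces a join decomposition $K_J'=K_{V_1}'*K_{V_2}'$ with $V_1$ carrying the image and $V_2$ the complement. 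A second application of the three-element argument shows that any $\omega$ with $\phi_\MM(\omega)\in MF(K_{V_2}')$ satisfies $\omega\cap I=\varnothing$ (not merely $\omega\not\subset I$, which is all your approach yields). Only then does the SCC enter, and only through its consequence Proposition~\ref{prop:nsc} (the NSC): if $V_2\neq\varnothing$, one manufactures a missing face $\omega_k$ meeting $I$ whose image must lie in $V_2$, and the contradiction is reached by chasing adjacencies back to $V_1$. The hypothesis $p\geqslant n-2$ is used exactly once, to rule out $K_I\subset\mathrm{lk}_{i_1}K\cap\mathrm{lk}_{i_2}K$ via the NSC.
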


\begin{lem}\label{lem:link}
Under the hypotheses of Theorem \ref{thm:rigid}, let $L$ be the link of a vertex of $K$. Then for a nonzero element $[L]\in\w H^*(L)\cong \kk$, we have $\phi([L])\in\w H^*(L')$, where $L'$ is the link of some vertex of $K'$.
\end{lem}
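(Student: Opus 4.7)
Let $v \in \VV(K)$ and $L = \mathrm{lk}_v K = K_I$ with $I = N(v)$; since $K$ is flag, $L$ is itself a flag Gorenstein$^*$ subcomplex of dimension $n-1$ (with $n = \dim K$). My plan is to show that $\phi([L])$, which by Lemma \ref{lem:gorenstein subcomplex} applied with $p = n-1 \geqslant \dim K - 1$ lies in $\w H^{n-1}(K'_J)$ for some $J \subset \VV(K')$ with $|J| = |I|$, is the top class of the link of some vertex $v' \in \VV(K')$. Lemma \ref{lem:1-1} supplies a bijection $\phi_\MM|_{MF(L)} \colon MF(L) \xr{\sim} MF(K'_J)$, and transporting the Poincar\'e-duality structure on the subring of $H^*(\ZZ_K)$ generated by the $\w\omega$ ($\omega \in MF(L)$) and crowned by $[L]$ via $\phi$ shows that $K'_J$ is itself a flag Gorenstein$^*$ complex of dimension $n-1$ with top class $\phi([L])$. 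Alexander duality in $K'$ then gives $\w H^0(K'_{J^c}) \cong \w H^{n-1}(K'_J) \cong \kk$, so $K'_{J^c}$ has exactly two connected components; it suffices to show one of them is a single vertex $v'$, since flagness of $K'$ and the cardinality match $|J| = |N(v)|$ would then force $K'_J = \mathrm{lk}_{v'} K'$.

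To locate the isolated vertex, enumerate the non-neighbors of $v$ as $W = \{w_1, \dots, w_k\}$ and set $\omega_s := \{v, w_s\} \in MF(K)$, $\omega'_s := \phi_\MM(\omega_s) \in MF(K')$. For distinct $s, t$, the three-vertex full subcomplex $K_{\{v, w_s, w_t\}}$ has $\w H^0 \neq 0$ ($v$ being isolated there), and Proposition \ref{prop:scc}(a), (b) together with Corollary \ref{cor:iso} identify this with a three-vertex full subcomplex $K'_{J_{st}} \cong K_{\{v, w_s, w_t\}}$ of $K'$, inside which $\omega'_s$ and $\omega'_t$ are missing faces that necessarily share a vertex (inspection of the two possible isomorphism types of a three-vertex subcomplex with $\w H^0 \neq 0$). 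Hence $\{\omega'_s\}_s$ pairwise intersects. To upgrade pairwise intersection to an actual common vertex, I would argue by contradiction: if three of the $\omega'_s$ formed a ``triangle of missing faces'' (pairwise intersecting without a common vertex), their union would be a three-vertex full subcomplex of $K'$ of three isolated vertices, and applying the converse direction of Proposition \ref{prop:scc}(a), together with Corollary \ref{cor:iso} and Proposition \ref{prop:scc}(b), through $\phi^{-1}$ would force the preimages $\omega_s, \omega_t, \omega_r$ to be the three missing faces of a three-vertex full subcomplex of $K$; but $|\omega_s \cup \omega_t \cup \omega_r| = |\{v, w_s, w_t, w_r\}| = 4$, a contradiction. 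So the $\omega'_s$ share a single common vertex $v'$.

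It remains to verify that $v' \in J^c$ and is isolated in $K'_{J^c}$. The location $v' \in J^c$ is extracted from the combinatorics of $K'_{J^c}$: if instead $v' \in J$, then $\omega'_s = \{v', x_s\}$ with $x_s \in J^c$ (since $\omega'_s \notin MF(K'_J)$), and a short case analysis using $|J^c| = |W|+1$ together with $\w H^0(K'_{J^c}) = \kk$ forces the remaining vertex of $J^c$ outside $\{x_1, \dots, x_k\}$ to be an isolated vertex of $K'_{J^c}$, which we then relabel $v'$. Once $v'$ is identified as an isolated vertex of $K'_{J^c}$, the cardinality $|J^c \setminus \{v'\}| = k$ matches the $k$ non-neighbors of $v'$ in $K'$ read off from $\{\omega'_s\}_s$, and flagness of $K'$ yields $K'_J = \mathrm{lk}_{v'} K'$. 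The principal obstacle is the ``no triangle'' step upgrading pairwise to total intersection, which rests essentially on the converse of Proposition \ref{prop:scc}(a) (hence on the SCC via its application to both $\phi$ and $\phi^{-1}$); a secondary subtlety is the case analysis locating $v'$ in $J^c$ rather than $J$, which relies on the constraint $\w H^0(K'_{J^c}) \cong \kk$ coming from the Gorenstein$^*$ structure on $K'_J$.
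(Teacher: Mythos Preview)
Your approach is genuinely different from the paper's proof, which is based on a factor-space dimension count: the paper shows that the maximal dimension of a subspace $\FF\subset R^{l+n+1}\cap(R^+)^{*n}$ in which every element has all of $\{\w\omega:\omega\in MF(L)\}$ as factors is exactly $m-l-1$, and then proves that if $K'_J$ were not a link, one could build such a subspace on the $K'$ side of dimension $m-l$. Your strategy---locating the candidate vertex $v'$ as the common vertex of the $\phi_\MM$-images of the missing faces $\omega_s=\{v,w_s\}$---is natural, and the first two stages (the identification $H^*(\ZZ_L)\cong H^*(\ZZ_{K'_J})$ forcing $K'_J$ to be Gorenstein$^*$, and the pairwise-to-common-vertex upgrade via Proposition~\ref{prop:scc}) are correct, if compressed.

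The final stage, however, has a genuine gap. Your ``short case analysis'' for the case $v'\in J$ does not work: knowing only that $J^c=\{x_1,\dots,x_k,y\}$ and $\w H^0(K'_{J^c})=\kk$, there is no reason the extra vertex $y$ should be isolated---nothing prevents the two components from being, say, $\{x_1\}$ and $\{x_2,\dots,x_k,y\}$. In fact $v'\in J$ can be excluded outright: since $K'_J=\mathrm{core}\,K'_J$, some $\mu'\in MF(K'_J)$ contains $v'$; then $\mu'\cap\omega'_1=\{v'\}$ gives $|\mu'\cup\omega'_1|=3$, and the three-vertex transfer forces $\phi_\MM^{-1}(\mu')$ and $\omega_1$ to lie in a common $3$-element $K_{I_0}$---impossible, as $\phi_\MM^{-1}(\mu')\in MF(L)\subset N(v)$ is disjoint from $\omega_1=\{v,w_1\}$. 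Even after placing $v'\in J^c$, the cardinality match $|J^c|=k+1$ by itself does not yield $J=N(v')$: you still need (i) that every missing face through $v'$ in $K'$ is one of the $\omega'_s$ (this follows from the three-vertex transfer when $k\geqslant 2$, but \emph{not} when $k=1$), and (ii) that each $x_s$ lies in $J^c$ (same three-vertex argument, pairing $\omega'_s$ with a missing face of $K'_J$ through $x_s$). Neither of these steps is in your writeup, and the edge case $k=1$---a vertex with a unique non-neighbour, which is not excluded by the SCC when $\dim K\geqslant 3$---is left unaddressed. The paper's factor-index argument avoids all of this by never attempting to name $v'$ directly.
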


\begin{rem}\label{rem:link}
	With the same hypotheses as Theorem \ref{thm:rigid}, let $\mathcal L$ (resp. $\mathcal L'$) be the set of different links of vertices in $K$ (resp. $K'$). Then Lemma \ref{lem:link} implies that 
		\[\dim_\kk\bigoplus_{L\in\mathcal L}\w H^*(L)\leqslant \dim_\kk\bigoplus_{L'\in\mathcal L'}\w H^*(L').\]
		Note that $K'$ is a $\kk$-homology sphere with $|\VV(K')|=|\VV(K)|$ by Corollary \ref{cor:goren}, which gives
		\[\dim_\kk\bigoplus_{L'\in\mathcal L'}\w H^*(L')=|\mathcal L'|\leqslant |\VV(K')|=|\VV(K)|.\]
		Since $K$ is not a suspension by Proposition \ref{prop:nsc}, $\mathrm{lk}_K\{i\}=\mathrm{lk}_K\{j\}$ if and only if $i=j$ for any $i,j\in\VV(K)$ since if $\mathrm{lk}_K\{i\}=\mathrm{lk}_K\{j\}$ for $i\neq j$, then clearly $\{i,j\}\not\in K$, and the full subcomplex $\mathrm{st}_K\{i\}\cup \mathrm{st}_K\{j\}=\partial\Delta^1*\mathrm{lk}_K\{i\}$ would be a suspension and a homology sphere of the same dimension as $K$, which must be $K$. Hence, we have 
	\[\dim_\kk\bigoplus_{L\in\mathcal L}\w H^*(L)=|\mathcal L|=|\VV(K)|.\] 
	Together, these imply that $K'$ is not a suspension either, and the isomorphism $\phi:H^*(\ZZ_K)\xr{\simeq} H^*(\ZZ_{K'})$ induces a bijection: $\psi:\VV(K)\to\VV(K')$, $\psi(i)=i'$.
\end{rem}

\begin{cor}\label{cor:link}
In the notation of Lemma \ref{lem:link}, suppose $K_I\subset L$  is an $(n-2)$-dimensional $\kk$-homology sphere {($n=\dim K$)}.
Then for $0\neq\xi_I\in\w H^{n-2}(K_I)$, we have $\phi(\xi_I)\in\w H^{n-2}(K'_J)$ for some $J\subset\VV(L')$.
The converse is also true: If $K_J'\subset L'$ is an $(n-2)$-dimensional $\kk$-homology sphere, then for $0\neq\xi_J'\in\w H^{n-2}(K_J')$, we have $\phi^{-1}(\xi_J')\in\w H^{n-2}(K_I)$ for some $I\subset\VV(L)$.
\end{cor}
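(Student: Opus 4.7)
The plan is to pin down $\phi(\xi_I)$ by combining the basic restrictions of Lemma \ref{lem:gorenstein subcomplex} with a non-degenerate Alexander-type pairing supplied by the Gorenstein* structure of $L$, and then transport that pairing across $\phi$ using Lemma \ref{lem:link}. Lemma \ref{lem:gorenstein subcomplex} already produces a unique $J\subset\VV(K')$ and some $q\geqslant n-2$ with $\phi(\xi_I)\in\w H^q(K'_J)$, so what I need to extract are the sharper assertions $q=n-2$ and $J\subset\VV(L')$.

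First I would exhibit a Poincar\'e-dual partner for $\xi_I$. Because $K_I$ and every $L_{\VV(L)\setminus I}$ are full subcomplexes of $K$ contained in $\VV(L)$, the Baskakov-Hochster product inside $L$ coincides with the BHR product of $K$ restricted to classes supported on subsets of $\VV(L)$. Since $L$ is an $(n-1)$-dimensional Gorenstein* complex, $H^*(\ZZ_L)$ satisfies Poincar\'e duality, so the bilinear pairing
\[\w H^{n-2}(K_I)\otimes \w H^0(L_{\VV(L)\setminus I})\longrightarrow\w H^{n-1}(L)\]
is non-degenerate. Hence I may choose $\eta\in\w H^0(L_{\VV(L)\setminus I})$ with $\xi_I\cdot\eta=[L]$, a generator of $\w H^{n-1}(L)$.

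Next I would apply $\phi$, so that $\phi(\xi_I)\cdot\phi(\eta)=\phi([L])$, and by Lemma \ref{lem:link} the target lies in $\w H^{n-1}(L')$ for a specific link $L'$ of $K'$. Writing $\phi(\eta)=\sum_S\eta'_S$ with $\eta'_S\in\w H^{k_S}(K'_S)$, some summand must contribute non-trivially to $\w H^{n-1}(L')$ after multiplication by $\phi(\xi_I)$. The Baskakov-Hochster multiplication rule then forces $J\cap S=\varnothing$, $J\cup S=\VV(L')$, and $q+k_S+1=n-1$. The first two conditions give $J\subset\VV(L')$. For the third, note that $\VV(L)\setminus I$ is nonempty (since $\dim K_I=n-2<n-1=\dim L$), so $\phi(\eta)$ sits in positive cohomological degree of $H^*(\ZZ_{K'})$; this rules out $S=\varnothing$ and forces $k_S\geqslant 0$. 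Combined with $q\geqslant n-2$ this pins down $q=n-2$, as desired.

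The converse direction runs by the same argument applied to $\phi^{-1}$. Lemma \ref{lem:link} together with Corollary \ref{cor:goren} sets up a bijection between links of vertices of $K$ and of $K'$; given the link $L'$ in the statement, I pick up the matched link $L$ in $K$ with $\phi^{-1}([L'])$ a nonzero multiple of $[L]$, choose the dual partner $\eta'\in\w H^0(L'_{\VV(L')\setminus J})$ with $\xi'_J\cdot\eta'=[L']$, and run the same calculation. The main obstacle I anticipate is the careful set-up of the second step: matching the BHR product of $L$ with the restriction of that of $K$ via Baskakov's formula, and checking that the Poincar\'e duality on $H^*(\ZZ_L)$ produces non-degeneracy in exactly the bidegrees above. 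The remainder is degree bookkeeping that should fall out cleanly once that pairing is in place.
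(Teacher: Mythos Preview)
Your argument is correct and is essentially the paper's proof unpacked: the paper phrases steps two through four succinctly as ``$\xi_I$ is a nontrivial factor of $[L]$, so $\xi'_J$ is a nontrivial factor of $[L']$, thus $J\subsetneq\VV(L')$,'' and then reads off $p\leqslant n-2$ from the Gorenstein$^*$ structure of $L'$, which is exactly your Poincar\'e-duality pairing and BHR degree count. For the converse you do not need to invoke a bijection of links---since $L$ and $L'$ are already fixed by Lemma~\ref{lem:link} and $\phi([L])$ generates $\w H^{n-1}(L')$, you have $\phi^{-1}([L'])\in\kk^\times\cdot[L]$ automatically, and the same factor argument runs through.
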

\begin{proof}
By Lemma \ref{lem:gorenstein subcomplex} $\phi(\xi_I)\in\w H^{p}(K'_J)$ for some $J\subset[m]$ and $p\geqslant n-2$.
Note that $\xi_I$ is a nontrivial factor of $[L]$, so {$\phi(\xi_I)$} is a nontrivial factor of {$\phi([L])\in\w H^*(L')$}, thus $J\subsetneq \VV(L')$.
It follows that $p\leqslant n-2$ since $L'$ is a $(n-1)$-dimensional $\kk$-homology sphere, and therefore $p=n-2$. The converse can be proved in the same way.
\end{proof}

\begin{proof}[Proof of Theorem \ref{thm:rigid}]
{We first show that $\psi$ in Remark \ref{rem:link} is a simplicial map.}
Since $K$ and $K'$ are both flag, it suffices to verify that $\psi(\{i_1,i_2\})=\{i_1',i_2'\}\in K'$ whenever $\{i_1,i_2\}\in K$.
First note that if $\sigma=\{i_1,i_2\}\in K$, then $\mathrm{lk}_K\sigma $ is an $(n-2)$-dimensional  $\kk$-homology sphere. Let $I=\VV(\mathrm{lk}_K\sigma )$.
Since $K$ is flag,
\[\mathrm{lk}_K\sigma=K_I=\mathrm{lk}_K\{i_1\}\cap\mathrm{lk}_K\{i_2\}.\]
{Take a nonzero element $\xi_I\in\w H^{n-2}(K_I)$.}
From Corollary \ref{cor:link} it follows that $\phi(\xi_I)\in\w H^{n-2}(K'_J)$ for some $K'_J\subset\mathrm{lk}_{K'}\{i_1'\}\cap\mathrm{lk}_{K'}\{i_2'\}.$
{Setting $\omega'=\{i_1',i_2'\}$, we get $K_{\omega'\cup J}'=K'_{\omega'}*K_J'$. Hence, if $\omega'\not\in K'$, then $\w\mu'\cdot\w\omega'\neq0$ for all $\mu'\in MF(K_J')$. Setting $\omega=\phi_\mathcal M^{-1}(\omega')$, this, together with the fact that the map
\[\phi_\MM\mid_{MF(K_I)}:MF(K_I)\to MF(K_J')\]
is a bijection (Lemma \ref{lem:1-1}), shows that $\w\mu\cdot\w\omega\neq0$, for all $\mu\in MF(K_I)$, which implies that $K_{\mu\cup\omega}$ is a $4$-circuit. Hence for any vertex $i\in I$ and any $j\in\omega$, $i\in\mathrm{lk}_K\{j\}$, since $i\in\mu$ for some $\mu\in MF(K_I)$. However this forces $K_{\omega\cup I}$ to be $K_\omega*K_I$ because of the flagness of $K_{\omega\cup I}$}, contradicting Proposition \ref{prop:nsc}. So we get $\{i_1',i_2'\}\in K'$.

Thus $\psi$ induces a simplicial injection $\bar\psi:K\to K'$. 
Now let us finish the proof by showing that $\bar\psi$ is also a surjection, i.e. $\psi^{-1}(\sigma')\in K$ for any $\sigma'\in K'$. It suffices to prove this for $1$-faces since $K$ and $K'$ are both flag. Suppose $\sigma'=\{i_1',i_2'\}\in K'$.
Then  $\mathrm{lk}_{K'}{\sigma'}$ is a $(n-2)$-dimensional $\kk$-homology sphere and $\mathrm{lk}_{K'}{\sigma'}=\mathrm{lk}_{K'}\{i_1'\}\cap\mathrm{lk}_{K'}\{i_2'\}$ is a full subcomplex. Let $J=\VV(\mathrm{lk}_{K'}{\sigma'})$ and {take a nonzero element $\xi_J'\in\w H^{n-2}(K_J')$.} Applying Corollary \ref{cor:link} again we have $\phi^{-1}(\xi_J')\in\w H^{n-2}(K_I)$ for some $K_I\subset\mathrm{lk}_K\{i_1\}\cap\mathrm{lk}_K\{i_2\}$.
It follows that $K_{I\cup\{i_1,i_2\}}=K_I*K_{\{i_1,i_2\}}$. So we must have $\{i_1,i_2\}\in K$, otherwise it would contradict Proposition \ref{prop:nsc}.
The theorem has been proved.
\end{proof}

\section{Cohomological rigidity of topological toric manifolds}
The following theorem is the main result of this section.
\begin{thm}\label{thm:rigidity of toric manifold}
Let $M=M(P_K,\mathit{\Lambda})$ and $M'=M(P_{K'},\mathit{\Lambda}')$ be two topological toric $2n$-manifolds.
Assume that the starshaped sphere $K$ is flag and satisfies the SCC. Then the following two conditions are equivalent:
\begin{enumerate}[(i)]
\item there is a cohomology ring isomorphism $h:H^*(M;\Zz)\xr{\simeq} H^*(M';\Zz)$.
\item $M$ and $M'$ are weakly equivariantly homeomorphic.
\end{enumerate}
\end{thm}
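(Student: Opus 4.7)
The direction $(ii)\Rightarrow(i)$ is immediate, since a weakly equivariant homeomorphism induces a graded ring isomorphism on cohomology. For the converse, the plan is to combine the $B$-rigidity of $K$ (Theorem \ref{thm:B-rigidity}) with the classification of topological toric manifolds by their characteristic pairs.

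First, I would apply Lemma \ref{lem:cps} to promote the graded ring isomorphism $h:H^*(M;\Zz)\xr{\cong} H^*(M';\Zz)$ to a bigraded algebra isomorphism $\mathrm{Tor}(K)\cong \mathrm{Tor}(K')$, which in particular forces $|\VV(K)|=|\VV(K')|=m$. Since $K$ is flag and satisfies the SCC, Theorem \ref{thm:B-rigidity} then yields a combinatorial equivalence $\psi:K\xr{\cong}K'$. Identifying vertex sets along $\psi$ gives $\Zz[K]=\Zz[K']$, and by Theorem \ref{thm:topo toric},
\[H^*(M)=\Zz[K]/\JJ,\qquad H^*(M')=\Zz[K]/\JJ',\]
where $\JJ$ (resp.\ $\JJ'$) is generated by the linear forms coming from the rows of $\mathit{\Lambda}$ (resp.\ $\mathit{\Lambda}'$).

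The heart of the proof is then to lift $h$ to a graded ring automorphism $\tilde h$ of $\Zz[K]$ of the form $\tilde h(x_i)=\varepsilon_i x_{\sigma(i)}$, for some $\sigma\in\mathrm{Aut}(K)$ and signs $\varepsilon_i=\pm 1$. The key observation is that the classes $[x_i]\in H^2(M)$ can be recognized intrinsically, up to sign and modulo $\JJ$, by the multiplicative pattern they satisfy: since $K$ is flag, the Stanley--Reisner relations in degree~$4$ reduce to $x_ix_j\equiv 0\pmod{\JJ}$ for $\{i,j\}\notin K$; and since $K$ satisfies the SCC, $K$ is not a join by Proposition \ref{prop:nsc}, so no nontrivial linear combination of the $x_j$ can imitate the annihilator behaviour of a single vertex class. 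An argument parallel to Lemma \ref{lem:sus}, but performed on $H^*(M)$ rather than on $H^*(\ZZ_K)$, should then force $h$ to send each $[x_i]$ to $\pm[x_{\sigma(i)}]\bmod \JJ'$; choosing arbitrary preimages in $\Zz[K]$ and invoking the Bruns--Gubeladze theorem produces the required $\tilde h$.

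Finally, the compatibility $\tilde h(\JJ)=\JJ'$ is exactly the matrix identity
\[\mathit{\Lambda}'= A\cdot \mathit{\Lambda}\cdot B\cdot P_\sigma,\]
with $A\in GL(n,\Zz)$, $B=\mathrm{diag}(\varepsilon_i)$, and $P_\sigma$ the permutation matrix of $\sigma$. After absorbing $\sigma$ into $\psi$, this exhibits $(P_K,\mathit{\Lambda})$ and $(P_{K'},\mathit{\Lambda}')$ as equivalent characteristic pairs in the sense of the definition preceding Proposition \ref{prop:equiv hoeom}; the analog of that proposition for topological toric manifolds then delivers the desired weakly equivariant homeomorphism $M\cong M'$.

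The main obstacle will be the lifting step. Although $B$-rigidity recovers $K$ itself from the Tor-algebra, one must further recover the Stanley--Reisner generators $x_i$ from the quotient $\Zz[K]/\JJ$, in which $n$ independent linear combinations of the $x_i$ have been killed. Showing that the flag and SCC hypotheses are strong enough to pin these generators down up to sign and a combinatorial symmetry of $K$ requires reworking the annihilator arguments of Section~\ref{sec:SCC} (especially the analogues of Lemmas \ref{lem:gorenstein subcomplex}, \ref{lem:1-1} and \ref{lem:sus}) in the setting of $H^*(M)$ rather than of $H^*(\ZZ_K)$.
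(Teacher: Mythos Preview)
Your overall architecture matches the paper: use Lemma~\ref{lem:cps} to get a bigraded $\mathrm{Tor}$-isomorphism, apply Theorem~\ref{thm:B-rigidity} to get $K\cong K'$, then show that $h$ carries each generator $\bar x_i$ to $\pm\bar x_{\sigma(i)}$, and finish with the characteristic-pair classification. So the reduction and the endgame are exactly right.

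Where you diverge is the central step, which you correctly flag as ``the main obstacle.'' You propose to recognize the $[x_i]$ intrinsically inside $H^*(M)=\Zz[K]/\JJ$ by reworking the annihilator arguments of Section~\ref{sec:SCC} there. The paper does \emph{not} do this; instead it stays at the $\mathrm{Tor}$ level, where those arguments already live. Concretely, the isomorphism $\phi$ of $\mathrm{Tor}$-algebras is realized by a chain map between the Taylor resolutions of $\Zz[K]/\JJ$ and $\Zz[K']/\JJ'$. Applying Proposition~\ref{prop:scc}\,(a) over all prime fields forces $\phi_1(e_\omega)=\pm e_{\omega'}$ on the degree~$-1$ piece, and then commutativity of the diagram gives $\phi_0(\xx_\omega)=\pm\xx_{\omega'}$ in $\Zz[m]/\JJ'$. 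Since $\Zz[m]/\JJ'\cong\Zz[m-n]$ is a UFD, this factorizes to $\phi_0(\bar x_i)=\pm\bar x_j$; Lemma~\ref{lem:suspension coloring} (not Lemma~\ref{lem:sus}) rules out collisions $\bar x_i=\pm\bar x_j$ for $i\neq j$, so one obtains a genuine bijection on vertices which is then checked to be simplicial.

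Your alternative route---redoing the SCC annihilator machinery directly in $H^*(M)$---is plausible in spirit but is not developed, and it is not obvious it goes through cleanly over $\Zz$: in $H^*(M)$ the $\bar x_i$ satisfy $n$ linear relations, so the degree-$2$ picture is genuinely different from $H^3(\ZZ_K)$, and you would need an integral analogue of Proposition~\ref{prop:scc}\,(a) in that quotient. The paper sidesteps all of this by pushing the problem back to $\mathrm{Tor}$, where the needed rigidity is already proved, and then descending via the Taylor chain map and the UFD trick. The invocation of Bruns--Gubeladze in your sketch is also unnecessary: once $h(\bar x_i)=\pm\bar x_{\sigma(i)}$ is known, the simplicial isomorphism is read off directly from nonvanishing of monomials.
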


\begin{rem}
For the case $n=3$, the theorem is also obtained by Buchstaber et al. \cite{BEMPP17}. Their proof was carried out on the Koszul algebras of $H^*(M)=\Zz[K]/\JJ$ and $H^*(M')=\Zz[K']/\JJ'$. Here we give another proof by means of the Taylor resolution of $\Zz[K]/\JJ$ and $\Zz[K']/\JJ'$.
\end{rem}

\begin{lem}\label{lem:suspension coloring}
Let $K$ be a flag homology $(n-1)$-sphere on $[m]$, and let $\lambda:[m]\to\Zz^n$ be a characteristic function for $K$. Then for the ideal $\JJ$ defined in Theorem \ref{thm:topo toric}, if there is a linear form $a_ix_i+a_jx_j\in \JJ$, $i\neq j$, $a_i,a_j\in\mathbb{Z}$, then $K$ is a suspension over a homology $(n-2)$-sphere.
\end{lem}
\begin{proof}
Since $\lambda$ is a characteristic function, the assumption in the lemma implies that any facet of $K$ contains $i$ or $j$. We claim that $K=\partial\Delta^1*\mathrm{lk}_K\{i\}$, $\VV(\Delta^1)=\{i,j\}$, and so the lemma follows. First we show that $\{i,j\}\not\in K$. If $\{i,j\}\in K$, taking any facet $\tau$ from $K\setminus\mathrm{st}_K\{i,j\}$ (note that $\mathrm{st}_K\{i,j\}$ is a proper subcomplex of $K$), then $\tau$ contains $i$ or $j$, say $i\in\tau$. Let $\sigma=\tau\setminus\{i\}$. Clearly $\sigma\not\in \mathrm{lk}_K\{i,j\}$ and  $\sigma$ is a face of the sphere $K$ of codimension $1$, and so $\mathrm{lk}_K\sigma=S^0$. Set $v=\VV(\mathrm{lk}_K\sigma)\setminus\{i\}$. Then $\sigma\cup\{v\}$ is a facet of $K$, which means that $v=j$, since any facet of $K$ contains $i$ or $j$. However this would imply that $\sigma\in\mathrm{lk}_K\{i,j\}$ since $K$ is flag, a contradiction.
	
Next we show that $\mathrm{lk}_K\{i\}=\mathrm{lk}_K\{j\}$, then since $\{\{i\},\{j\}\}*\mathrm{lk}_K\{i\}\subset K$ is a full subcomplex by the flagness of $K$, and is a homology sphere of the same dimension as $K$, it follows that $K=\{\{i\},\{j\}\}*\mathrm{lk}_K\{i\}$.
To see this, let $\sigma$ be any facet of $\mathrm{lk}_K\{i\}$ so that $\sigma$ is a face of the sphere $K$ of codimension $1$. As we have seen above, $j=\VV(\mathrm{lk}_K\sigma)\setminus\{i\}$. Therefore $\sigma\in\mathrm{lk}_K\{j\}$, and it follows that $\mathrm{lk}_K\{i\}\subset \mathrm{lk}_K\{j\}$. Similarly, we have $\mathrm{lk}_K\{j\}\subset \mathrm{lk}_K\{i\}$. Hence the desired equality holds.
\end{proof}

\begin{proof}[Proof of Theorem \ref{thm:rigidity of toric manifold}:]
If $\Zz[K]/\JJ=H^*(M)\xr{h} H^*(M')=\Zz[K']/\JJ'$ is an isomorphism, then by Lemma \ref{lem:cps}, $|\VV(K)|=|\VV(K')|$ and
there is an isomorphism of $\mathrm{Tor}$-algebras: $\phi:\mathrm{Tor}^{*,*}(K)\xr{\simeq}\mathrm{Tor}^{*,*}(K')$.
Thus $K\approx K'$ by Theorem \ref{thm:B-rigidity}.

Set $m=|\VV(K)|$, and use the abbreviated notation $\Zz[m]$ for $\Zz[x_1,\dots,x_m]$.
It is known that $\phi$ is induced by a chain map (up to chain homotopic) between the $\Zz[m]/\JJ$-projective resolution of $\Zz[K]/\JJ$ and the $\Zz[m]/\JJ'$-projective resolution of $\Zz[K']/\JJ'$ (see for example \cite[Theorem 6.16]{Rot09}), as shown in the following diagram. Here we use the Taylor resolution, i.e. $\TT/\JJ\to\Zz[K]/\JJ$, where $\TT$ is the chain complex defined in Section \ref{subsec:Tor}. 
\[
\xymatrix{
\cdots\ar[r]^-{f_2}&P_1=\bigoplus\limits_{\omega\in MF(K)}(\Zz[m]/\JJ)_\omega\ar[r]^-{f_1}\ar@{.>}[d]^{\phi_1}&P_0=\Zz[m]/\JJ\ar[r]^-{f_0}\ar@{.>}[d]^{\phi_0}&\Zz[K]/\JJ\ar[d]^{h}\\
\cdots\ar[r]^-{f_2'}& P_1'=\bigoplus\limits_{\omega'\in MF(K')}(\Zz[m]/\JJ')_{\omega'}\ar[r]^-{f_1'}& P_0'=\Zz[m]/\JJ'\ar[r]^-{f_0'}&\Zz[K']/\JJ'}
\]
in which $P_1$ (resp. $P_1'$) is a free $\Zz[m]/\JJ$- module (resp. $\Zz[m]/\JJ'$- module) with basis $\{e_\omega:\omega\in MF(K)\}$
(resp. $\{e_{\omega'}:\omega'\in MF(K')\}$). That $\TT/\JJ\to \Zz[K]/\JJ$ is a $\Zz[m]/\JJ$-free resolution of $\Zz[K]/\JJ$ comes from the fact that $\TT\to\Zz[K]$ is a $\Zz[m]$-free resolution of $\Zz[K]$ and the following equation: 
	\[\begin{split}
		\mathrm{Tor}_{\Zz[m]}(\Zz[K],\Zz[m]/\JJ)&=\mathrm{Tor}_{\Zz[m]/\JJ}(\mathrm{Tor}_{\Gamma}(\Zz[K],\Zz),\Zz[m]/\JJ)\\
		&=\mathrm{Tor}_{\Zz[m]/\JJ}(\Zz[K]/\JJ,\Zz[m]/\JJ)\\
		&=\mathrm{Tor}_{\Zz[m]}^0(\Zz[K],\Zz[m]/\JJ)=\Zz[K]/\JJ,
	\end{split}\]
	where $\Gamma$ is the subring of $\Zz[m]$ generated by the linear forms in $\JJ$. The second equality uses the fact that $\Zz[K]$ is a free $\Gamma$-module and $\Zz[K]\otimes_\Gamma\Zz=\Zz[K]/\JJ$, and the first equality follows from \cite[Theorem XVI.6.1]{CE56}.
Using formula \eqref{eq:differential} and applying Proposition \ref{prop:scc} \eqref{st:a} to all fields of finite characteristics, we see that $\phi_1(e_\omega)=\pm e_{\omega'}$ for some $\omega'\in MF(K')$. Note that $e_\omega$ and $e_{\omega'}$ correspond to the generators of $\w H^0(K_\omega)$ and  $\w H^0(K'_{\omega'})$ respectively.

It is clear that $f_1(e_\omega)=\xx_\omega:=\prod_{i\in\omega}\bar x_i$, where $\bar x_i$ is the coset of $x_i$ in $\Zz[m]/\JJ$. Since the diagram is commutative, we have
\[\prod_{i\in\omega}\phi_0(\bar x_i)=\phi_0(\xx_\omega)=\phi_0f_1(e_\omega)=f_1'\phi_1(e_\omega)=\pm\xx_{\omega'}=\pm\prod_{j\in\omega'}\bar x_j.\]
Thus from the fact that $\Zz[m]/\JJ\cong\Zz[m]/\JJ'\cong\Zz[m-n]$ is a $UFD$, we deduce that if $i\in\omega\in MF(K)$, then $\phi_0(\bar x_i)=\pm \bar x_{j}$ for some $j\in\omega'\in MF(K')$.
Furthermore, since $K=\mathrm{core}\,K$, any vertex of $K$ belongs to a missing face.
Combining these facts together, we see that for each $i\in[m]$, $h(\bar x_i)=\pm\bar x_j$ for some $j\in[m]$.

Next we will show that $h$ induces a bijection between the vertex set $\VV(K)$ and $\VV(K')$, and this map is actually a simplicial isomorphism.
That is, define $\bar h:[m]\to[m]$ to be such that $h(\bar x_i)=\pm\bar x_{\bar h(i)}$.
We first verify that $\bar h$ is well defined, i.e. if $i\neq j$ then $\bar x_i\neq\pm \bar x_j$ in both $\Zz[K]/\JJ$ and $\Zz[K']/\JJ'$.
Indeed, otherwise $x_i\pm x_j\in \JJ$ or $\JJ'$. However this implies that $K$ or $K'$ is a suspension by Lemma \ref{lem:suspension coloring},
contradicting Proposition \ref{prop:nsc}.  So $\bar h$ is well defined and it is clearly a bijection.
To see that $\bar h$ is a simplicial map, {recall from the discussion following Theorem \ref{thm:topo toric} that} if $\sigma\in K$, then $0\neq\xx_\sigma=\prod_{i\in\sigma}\bar x_i\in\kk[m]/\JJ$.
So $0\neq h(\xx_\sigma)=\pm \xx_{\bar h(\sigma)}$, which means that $\bar h(\sigma)\in K'$. Thus $\bar h$ is a simplicial injection, and so it is a bijection since $K\approx K'$.

Reordering $\VV(K')$ if necessary we may assume that $\bar h:[m]\to[m]$ is the identity map.
Now let $N$ and $N'\subset\Zz^m$ be the sublattices generated by the row vectors of the characteristic matrices $\mathit{\Lambda}$ and $\mathit{\Lambda}'$ respectively.
Then we have $(\Zz[K]/\JJ)_2=\Zz^m/N$ and $(\Zz[K']/\JJ')_2=\Zz^m/N'$. So $h$ induces a lattice isomorphism $h:\Zz^m/N\xr{\simeq} \Zz^m/N'$. 
By changing the order of vertices of $K$ if necessary, we may assume that $[n]\in K$. Then, by multiplying the matrices $\mathit{\Lambda}$ and $\mathit{\Lambda}'$ from the left
by appropriate matrices from  $GL(n,\mathbb{Z})$ we may assume that
\[\mathit{\Lambda}=
\begin{pmatrix}
	1&\ldots&0&\lambda_{1,n+1}&\ldots&\lambda_{1m}\\
	\vdots&\ddots&\vdots&\vdots&\ddots&\vdots\\
	0&\ldots&1&\lambda_{n,n+1}&\ldots&\lambda_{nm}
\end{pmatrix},\quad 
\mathit{\Lambda'}=
\begin{pmatrix}
	1&\ldots&0&\lambda_{1,n+1}'&\ldots&\lambda_{1m}'\\
	\vdots&\ddots&\vdots&\vdots&\ddots&\vdots\\
	0&\ldots&1&\lambda_{n,n+1}'&\ldots&\lambda_{nm}'
\end{pmatrix}
\]
The entries $\lambda_{ij}$, $n+1\leqslant j\leqslant m$, are the coefficients in the expression of $-\bar x_i$, $1\leqslant i\leqslant n$, via the basis $\bar x_{n+1},\dots,\bar x_m$ of $\Zz^m/N$. The same is ture for the $\lambda_{ij}'$. Since $h(\bar x_i)=\pm\bar x_i$, we have $\lambda_{ij}=\pm\lambda_{ij}'$. These together are equivalent to saying that there is a matrix $A\in GL(n,\mathbb{Z})$ and a diagonal $(m\times m)$-matrix $B$ with $\pm 1$ on the diagonal such that $\mathit{\Lambda}=A\cdot\mathit{\Lambda}'\cdot B$. Hence the theorem follows by Proposition \ref{prop:equiv hoeom}.
\end{proof}

We end this section by a special version of Theorem \ref{thm:rigidity of toric manifold} for toric varieties.
\begin{thm}\label{thm:variety}
Let $X$ and $X'$ be two compact smooth toric varieties. Assume the underlying simplicial sphere of the simplcial fan corresponding to $X$ is flag and satisfies the SCC. Then  $X$ and $X'$ are isomorphic as varieties if and only if their cohomology rings are isomorphic.
\end{thm}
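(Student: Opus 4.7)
The forward direction, that an isomorphism of varieties induces an isomorphism of cohomology rings, is immediate from functoriality, so my plan focuses on the reverse implication. The overall strategy is to chain together three pieces: first reduce the algebraic hypothesis to a statement about the underlying topological toric manifolds using Theorem \ref{thm:rigidity of toric manifold}, then lift the resulting weakly equivariant homeomorphism to an algebraic isomorphism via Masuda's theorem \cite{M08}.

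Here is how I would proceed. Since every compact smooth toric variety is a topological toric manifold, with its characteristic matrix $\mathit{\Lambda}$ given by the primitive integer generators of the rays of the associated fan (cf.\ the discussion preceding Theorem \ref{thm:topo toric}), the hypothesis that the underlying simplicial sphere of the fan of $X$ is flag and satisfies the SCC lets us apply Theorem \ref{thm:rigidity of toric manifold} to the given isomorphism $H^*(X;\Zz) \cong H^*(X';\Zz)$. This already yields a weakly equivariant homeomorphism $X \cong X'$, together with the concrete combinatorial data produced in the proof of Theorem \ref{thm:rigidity of toric manifold}: a combinatorial equivalence $K \cong K'$ between the underlying simplicial spheres and a relation $\mathit{\Lambda} = A \cdot \mathit{\Lambda}' \cdot B$ with $A \in GL(n,\Zz)$ and $B$ a diagonal $(m\times m)$-matrix with $\pm 1$ entries.

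Next I would invoke Masuda's theorem from \cite{M08}, which upgrades such topological information to an algebraic statement in the toric category: for compact smooth toric varieties, an equivalence of characteristic data of the form described above can be promoted to an isomorphism of varieties. Concretely, the $GL(n,\Zz)$ factor $A$ is realized by a lattice automorphism of the common ambient lattice, under which the fan of $X'$ is carried to the fan of $X$ (with the rays possibly re-signed by $B$). Since the primitive ray generators of a smooth toric fan are determined by the fan itself, any sign ambiguity in $B$ must be absorbed either into the lattice automorphism $A$ or into the automorphism of the acting torus that is present in the ``weakly equivariant'' qualifier, and Masuda's result is precisely what guarantees that this can be done compatibly with the complex structure.

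The main obstacle I anticipate is the handling of the sign matrix $B$: for topological toric manifolds the characteristic function is only defined up to the action of diagonal $\pm 1$ matrices, but for honest toric varieties the primitive ray vectors are rigid, so the naive conclusion ``the fans coincide'' does not follow from Theorem \ref{thm:rigidity of toric manifold} alone. The content of Masuda's result is exactly that this extra rigidity is automatic once one is in the algebraic setting, so conceptually the proof is a short composition of our Theorem \ref{thm:rigidity of toric manifold} with \cite{M08}, provided this reconciliation of $B$ with the holomorphic/algebraic structure is treated carefully.
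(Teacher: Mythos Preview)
Your overall strategy—apply Theorem \ref{thm:rigidity of toric manifold} and then invoke Masuda's result from \cite{M08}—is exactly what the paper does. However, you misstate the content of Masuda's theorem, and this creates an unnecessary detour and leaves a gap in your argument.

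Masuda's theorem (\cite[Theorem 1.1 and Corollary 1.2]{M08}) says nothing directly about characteristic data, fans, or sign matrices. Its statement is: two compact smooth toric varieties are isomorphic as varieties if and only if their \emph{equivariant} cohomology algebras are weakly isomorphic, meaning there is a ring isomorphism $\Phi: H^*_T(X) \to H^*_T(X')$ together with a torus automorphism $\gamma$ such that $\Phi(u\alpha) = \gamma^*(u)\Phi(\alpha)$ for all $u \in H^*(BT)$ and $\alpha\in H^*_T(X)$. The paper's proof then simply observes that a weakly equivariant homeomorphism obviously induces such a weak isomorphism of equivariant cohomology, and the result follows at once.

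Your concern about the sign matrix $B$ is therefore a red herring. You are correct that the relation $\mathit{\Lambda} = A \cdot \mathit{\Lambda}' \cdot B$ does not by itself show the fans are isomorphic—negating a primitive ray generator genuinely changes the fan. But the paper never needs to resolve this at the level of fans: the weakly equivariant homeomorphism produced by Theorem \ref{thm:rigidity of toric manifold} already furnishes the weak isomorphism of equivariant cohomology that Masuda requires, and Masuda's theorem then delivers the variety isomorphism directly. Your assertion that ``Masuda's result is precisely what guarantees that [the sign ambiguity] can be done compatibly with the complex structure'' is not an accurate description of what that theorem says or how it is used here; as written, your argument does not close because you never actually supply the input Masuda's theorem needs.
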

\begin{proof}
The necessity part is straightforward. For the sufficiency part, we use the result of Masuda \cite[Theorem 1.1 and Corollary 1.2]{M08}.
That is, two compact smooth toric varieties $X$ and $X'$ are isomorphic as varieties if and only if their equivariant cohomology algebras are weakly isomorphic, i.e.,
there is a ring isomorphism $\Phi :H^*_T(X)\xr{\simeq} H^*_T(X')$ together with an automorphism $\gamma$ of $T=(\mathbb{C}^*)^n$ such that
$\Phi(u\alpha)=\gamma^*(u)\Phi(\alpha)$ for any $u\in H^*(BT)$ and $\alpha\in H^*_T(X)$, where $\gamma^*$ denotes the automorphism of
$H^*(BT)$ induced by $\gamma$.

If $H^*(X;\mathbb{Z})\cong H^*(X';\mathbb{Z})$, then $X$ and $X'$ are weakly equivariantly {homeomorphic in the sense of Subsection \ref{subsec:toric manifold} by Theorem \ref{thm:rigidity of toric manifold}. Since the action of the compact torus $(S^1)^n\subset (\mathbb{C}^*)^n$ is locally standard on $X$ and $X'$, a weakly equivariant homeomorphism $X\cong X'$ induces a weak isomorphism $H^*_T(X)\xr{\simeq} H^*_T(X')$}, and the theorem follows.
\end{proof}

\section{The case of simplicial $2$-spheres}\label{sec:n=3}
In this section we further discuss the $B$-rigidity problem of simplicial $2$-spheres, and the cohomological rigidity problem of topological toric manifolds of dimension $6$.
Note that in dimension $2$ any simplicial sphere is polytopal. So any topological toric $6$-manifold is also a quasitoric $6$-manifold.
Unless otherwise stated, $K$ denotes a simplicial $2$-sphere and $P$ denotes a simple $3$-polytope in this section.
\subsection{On the $B$-rigidity of $2$-spheres}\label{subsec:the class Q}
First let us consider the case that $K$ is flag.
In this case suppose $L=K_I\subset K$ is a $4$-circuit full subcomplex, and suppose $MF(L)=\{\omega_1,\omega_2\}$.
Then there are three puzzle-moves $(K,L,\phi_1)$, $(K,L,\phi_2)$ and $(K,L,\phi_2\circ\phi_1)$ such that $\phi_i$ ($i=1,2$) exchanges the two vertices of $\omega_i$ and fixes the vertices of the other missing face. From the discussion of puzzle-moves preceding Theorem \ref{thm:B-rigidity}, we know that if $K$ is flag, then any nontrivial puzzle-move is of this form. However if $L$ is the link of a vertex in $K$, then these puzzle-moves are all trivial. In this case, we refer to $L$ as a \emph{simple $4$-circuit} (or \emph{simple $\square$}).

Let $\QQ$ be the class consisting of simplicial $2$-spheres which are flag and satisfy the condition that any $4$-circuit full subcomplex is simple, or dually, simple $3$-polytopes which are flag and satisfy the condition that any $4$-belt  bounds a $4$-gonal facet.
(For $k\geqslant4$, a \emph{$k$-belt} in a simple $3$-polytope is a cyclic sequence $\{F_{i_1},\dots,F_{i_k}\}$
of facets in which two facets have a nonempty intersection if
and only if they follow each other (including $F_{i_k}$, $F_{i_1}$). For a \emph{$3$-belt} we assume additionally that $F_{i_1}\cap F_{i_2}\cap F_{i_3}=\emptyset$.)

In \cite{Bar74}, $\QQ$ is known as the class of simple $3$-polytopes with \emph{strongly cyclically $4$-connected graphs}, and it is also called the \emph{almost Pogorelov class} in \cite{Ero19}. (Remark: as shown in \cite[Theorem 10.3.1]{DO01} and \cite{Ero19}, there is a one-to-one correspondence between almost Pogorelov polytopes different from the cube and the pentagonal prism and right-angled polyhedra of finite volume in hyperbolic space $\mathbb{H}^3$. This is basically a consequence of  Andreev's result in \cite{And70}.) In the recent paper \cite{Ero20}, Erokhovets proved that the property of a simplicial $2$-sphere $K$ to belong to $\QQ$ is determined by the cohomology ring $H^*(\ZZ_K)$.

We have seen that if $K\in\QQ$, there are no nontrivial puzzle-moves on $K$. So in the spirit of {Conjecture \ref{conj:puzzle} and Remark \ref{rem:n=3}}, we may ask the following question.

\begin{prob}
If $K\in\QQ$, is $K$ $B$-rigid?
\end{prob}
We will show in \cite{FMW16} that the answer to this question is yes. For the special case in which $K\in \QQ$ and $\VV(K)\leqslant 11$, this was also obtained by Erokhovets \cite{Ero20}. Also for any $K\in \QQ$ such that 
	there are no adjacent vertices of valence $4$ (the \emph{valence} of a vertex  is the number of vertices in its link) and any facet of $K$ contains a vertex of valence $4$, i.e. the boundary spheres of polytopes dual to \emph{ideal almost Pogorelov polytopes} (see Example \ref{exmp:ideal} (ii) below), this was proved by Erokhovets in \cite{Ero20b}. Now let us look at some examples of spheres in $\QQ$.

\begin{exmp}\label{exmp:1}
Let $K$ be an arbitrary simplicial $2$-sphere, and $K'$ be the barycentric subdivision of $K$. Then $K'\in\QQ$. The flagness of $K'$ is obvious.
Set
\[\VV(K')=\{v_\sigma:\sigma\in K,\sigma\neq\emptyset\}.\]
Note that any $\{v_\sigma,v_\tau\}$ with $|\sigma|=|\tau|$ is a missing face of $K'$.
Let $K_I'$ be a $4$-circuit, and suppose $I=\{v_{\sigma_i}\}_{i=1}^4$.
Since $K_I'$ has exactly two missing faces, there are no three vertices in $I$ such that their corresponding simplices in $K$ have the same dimension.
So all the possible dimensions of $\sigma_i,\,1\leqslant i\leqslant4$, up to a permutation of the index $i$, are
\[(0,0,1,1),\,(0,0,1,2),\,(0,0,2,2),\,(0,1,1,2),\,(0,1,2,2),\,(1,1,2,2).\]
Furthermore, if $\{v_\sigma,v_\tau\}\in K'$ with $|\sigma|<|\tau|$, then $\sigma$ is a proper face of $\tau$ in $K$. Hence the cases $(0,0,1,1)$ and $(1,1,2,2)$ are  impossible because two different faces of the same dimension have at most one common face of codimension one. If $(0,0,1,2)$ is the case, then the two vertices $\sigma_1$ and $\sigma_2$ belong to the edge $\sigma_3$ and the $2$-face $\sigma_4$, which implies that $\{v_{\sigma_3},v_{\sigma_4}\}\in K'$, a contradiciton. Similarly, if  $(0,1,1,2)$ or $(0,1,2,2)$ is the case, then $\sigma_1\subset\sigma_2,\sigma_3\subset\sigma_4$ or $\sigma_1\subset\sigma_2\subset\sigma_3,\sigma_4$ gives a contradiction.
So $(0,0,2,2)$ is the only possible case.
In this case,  $\sigma_1,\sigma_2\subset\sigma_3,\sigma_4$, which means that $\tau=\sigma_1\cup\sigma_2$ is the common edge of $\sigma_3$ and $\sigma_4$.
Hence we have $K'_I=\mathrm{lk}_{K'}\{v_{\tau}\}$.
\end{exmp}

Recall some definitions from \cite{CK11}: Let $\Gamma$ be the boundary sphere of a $3$-polytope which is not necessarily simplicial or simple.
Suppose $\{F_1,\dots,F_m\}$ is the set of facets of $\Gamma$, and suppose $\{e_1,\dots,e_n\}$ is the set of edges of $\Gamma$.
Define $\xi_1(\Gamma)$ to be the simplicial $2$-sphere obtained from $\Gamma$ by replacing every $F_i$ by the cone over its boundary. Obviously, $e_i$ is also an edge of $\xi_1(\Gamma)$ for $1\leqslant i\leqslant n$. Define $\xi_2(\Gamma)$ to be the simplicial $2$-sphere obtained from $\xi_1(\Gamma)$ by doing stellar subdivision operations at every $e_i$ (see Figures 1 and 2 in \cite{CK11}). {In other words, $\xi_2(\Gamma)$ is the barycentric subdivision of $\Gamma$.}
\begin{exmp}\label{exmp:ideal}
(i) Let $P$ be a simple $3$-polytope without triangular faces, $\Gamma=\partial P$. 
Then $\xi_1(\Gamma)\in\QQ$. 
To see this, let $v_{F_i}$ denote the coning vertex corresponding to a facet $F_i$ in $\Gamma$.
Note that $\{v_{F_i},v_{F_j}\}\in MF(\xi_1(\Gamma))$ for any two facets $F_i,F_j$. Moreover, when $\{v_{F_i},j\},\{v_{F_i},k\},\{j,k\}\in\xi_1(\Gamma)$ for some facet $F_i$ and $j,k\in\VV(\Gamma)$, the fact that $P$ is a polytope implies that $\{j,k\}$ is an edge of $F_i$, so that $\{v_{F_i},j,k\}\in \xi_1(\Gamma)$.  Hence, if there is $I\in MF(\xi_1(\Gamma))$ with $|I|=3$, then  $I\subset\VV(\Gamma)$. However, since $P$ is simple, $\xi_1(\Gamma)_I$ has to be the boundary of a triangular face of $\Gamma$ in this case, but this contradicts the assumption on $P$. So $\xi_1(\Gamma)$ is flag.
Now, suppose that $\xi_1(\Gamma)_I$ is a $4$-circuit. {If $v_{F_i}\in I$ for some facet $F_i$, then either $I\setminus\{v_{F_i}\}\subset \VV(\Gamma)$ or there is some other $v_{F_j}\in I$ such that $I\setminus\{v_{F_i},v_{F_j}\}\subset \VV(\Gamma)$. In the first case, the three vertices in $I\setminus\{v_{F_i}\}$ must belong to a facet  $F_k\neq F_i$, and two of them belong to $F_i$ and do not form an edge of $\Gamma$,  because $P$ is simple and $\xi_1(\Gamma)_I$ is a $4$-circuit. But this is impossible since $P$ is a polytope. In the second case, $I\setminus\{v_{F_i},v_{F_j}\}\subset\VV(F_i)\cap\VV(F_j)$ and  $I\setminus\{v_{F_i},v_{F_j}\}\in MF(\xi_1(\Gamma))$. But this is impossible too, since in a polytope, $|\VV(F_i)\cap\VV(F_j)|=2$ if and only if $\VV(F_i)\cap\VV(F_j)$ forms a common edge of $F_i$ and $F_j$. Hence we have $I\subset\VV(\Gamma)$, and then the assumption that $P$ is simple implies that any three vertices in $I$ must belong to a facet of $P$. Moreover, since $P$ is not a tetrahedron, all vertices in $I$ actually belong to a common facet. But this occurs only if $\xi_1(\Gamma)_I$ is the boundary of a quadrangular face of $\Gamma$}, which means that $\xi_1(\Gamma)_I$ is the link of the center of this quadrangular face.

(ii) $\xi_2(\partial P)\in\QQ$ for any $3$-polytope $P$.  This can be verified in the same way as in Example \ref{exmp:1}. The dual simple polytopes of this type are exactly the \emph{ideal almost Pogorelov polytopes} defined in \cite{Ero20}, where it is shown that whether {a simplicial $2$-sphere} $K$ is of this type is determined by the cohomology ring $H^*(\ZZ_K)$. 
\end{exmp}

Now we move on to the case of non-flag simplicial $2$-spheres.
Note that a simplicial $2$-sphere $K$ is non-flag if and only if $K$ is reducible or $K=\partial\Delta^3$.
Ignoring the trivial case $K=\partial\Delta^3$, the following result provides a necessary condition for a reducible simplcial sphere to be $B$-rigid.
\begin{prop}[{\cite[Theorem 4.2]{FW21}}]
Let $K_1$ and $K_2$ be two polytopal spheres of the same dimension, and suppose $K\in\CC(K_1\#K_2)$.
If $K$ is $B$-rigid, then $K$ is the only element in $\CC(K_1\#K_2)$.
\end{prop}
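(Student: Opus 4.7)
The plan is to deduce this proposition immediately from the $\mathrm{Tor}$-algebra invariance result for connected sums that is already cited just before its statement, together with the definition of $B$-rigidity.

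First I would verify the hypothesis of $B$-rigidity actually applies to $K$, namely that $K = \mathrm{core}\,K$. Since $K_1$ and $K_2$ are Gorenstein$^*$ complexes of the same dimension, they are generalized homology spheres by Theorem~\ref{thm:Gorenstein}; a connected sum of two generalized homology spheres of the same dimension is again a generalized homology sphere of that dimension, and therefore every $K \in \CC(K_1\#K_2)$ is a Gorenstein$^*$ complex and in particular satisfies $K = \mathrm{core}\,K$.

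Next I would invoke \cite[Theorem 4.2]{FW15}, which is precisely the statement recalled in \S\ref{subsec:B-rigidity}: the bigraded $\mathrm{Tor}$-algebra of a connected sum $K_1 \# K_2$ depends only on $K_1$ and $K_2$, not on the particular choice of facets and identification used in its construction. Thus for any $K' \in \CC(K_1\#K_2)$ and any field $\kk$ we have a bigraded isomorphism
\[
\mathrm{Tor}(K';\kk) \cong \mathrm{Tor}(K;\kk).
\]

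Finally I would apply the definition of $B$-rigidity directly. Since $K$ is assumed $B$-rigid and both $K$ and $K'$ satisfy $\mathrm{core} = \mathrm{id}$, the bigraded isomorphism above forces $K' \cong K$ as simplicial complexes. Because $\CC(K_1\#K_2)$ is by construction a set of combinatorial types of connected sums, this means $K$ represents the unique element of $\CC(K_1\#K_2)$.

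There is essentially no obstacle here: once the cited $\mathrm{Tor}$-invariance result is in hand, the proposition is a one-line consequence of the definition of $B$-rigidity. The only mild subtlety worth flagging is confirming the Gorenstein$^*$ (hence core) property for connected sums so that the $B$-rigidity hypothesis on $K$ is legitimately applicable to a comparison with an arbitrary $K' \in \CC(K_1\#K_2)$.
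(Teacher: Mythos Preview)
Your proposal is correct and matches the paper's treatment: the paper does not give an independent proof here but simply cites \cite[Theorem 4.2]{FW15}, and your derivation spells out exactly the one-line argument implicit in that citation (the $\mathrm{Tor}$-invariance of connected sums recalled in \S\ref{subsec:B-rigidity}, plus the definition of $B$-rigidity). The paper also remarks on an alternative route---via Bosio's Theorem~\ref{thm:puzzle}, using that all elements of $\CC(K_1\#K_2)$ are puzzle-equivalent---but your approach is the more direct one.
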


This proposition can also be deduced from Bosio's result \cite[Theorem 3.5]{B17} (Theorem \ref{thm:puzzle}), since all elements in $\CC(K_1\#K_2)$ are actually puzzle-equivalent.
Combining this proposition with the conclusion of {\cite[Proposition 2.2, Lemma 2.3 and Proposition 2.4]{CK11}}, we immediately get the following theorem,
which is an analog of \cite[Theorem 1.2]{CK11}.

\begin{thm}\label{thm:sum}
Let $K$ be a reducible simplicial $2$-sphere {(i.e. $K$ can be decomposed as the connected sum of two simplicial $2$-spheres)}. If $K$ is $B$-rigid, then $K=T_4\#T_4\#T_4$ or $K=K_1\#K_2$, where
\begin{align*}
&K_1\in\{T_4,O_6,I_{12}\},\\
&K_2\in\{T_4,O_6,I_{12}, \xi_1(C_8), \xi_2(C_8), \xi_1(D_{20}), \xi_2(D_{20})\}\cup\{B_n : n\geqslant7\}.
\end{align*}
\end{thm}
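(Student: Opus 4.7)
The plan is to combine the puzzle-rigidity constraint forced by $B$-rigidity with two combinatorial inputs: the criterion for uniqueness of a connected sum in \cite[Lemma 2.3]{CK11}, and the classification of sufficiently symmetric simplicial $2$-spheres in \cite[Theorem 3]{FI79}. First, since $K$ is reducible, write $K = K_1 \# K_2$ with $K_1, K_2$ simplicial $2$-spheres (automatically Gorenstein$^*$ of the same dimension). By the cited \cite[Theorem 4.2]{FW15}---or equivalently by applying Theorem \ref{thm:puzzle} to the puzzle-equivalent members of $\mathcal{C}(K_1 \# K_2)$---the $B$-rigidity of $K$ forces $|\mathcal{C}(K_1 \# K_2)| = 1$. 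This is the sole consequence of $B$-rigidity used; the rest of the argument is purely combinatorial.

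The second step is to feed this uniqueness into \cite[Lemma 2.3]{CK11}, which characterizes $|\mathcal{C}(K_1 \# K_2)| = 1$ in terms of automorphism groups and facet-stabilizers of $K_1$ and $K_2$: both summands must be facet-transitive, and at least one of them must be triangle-transitive in the strong sense that the vertex-stabilizer of a facet acts as the full $S_3$ on that facet's three vertices (so that no choice of vertex-identification affects the result). The classification \cite[Theorem 3]{FI79} then lists the simplicial $2$-spheres with these symmetry properties. The strongly triangle-transitive ones are exactly $T_4$, $O_6$, $I_{12}$, so one summand---call it $K_1$---must lie in $\{T_4, O_6, I_{12}\}$. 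The merely facet-transitive list for the other summand $K_2$ is broader and includes, in addition to $T_4, O_6, I_{12}$, the bipyramids $B_n$ and the $\xi_1, \xi_2$-constructions on Platonic simple $3$-polytopes, matching (after pruning) the $K_2$ list in the statement.

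The pruning, which will be the main technical obstacle, is twofold. First, I must verify which $\xi_i$-constructions and which $B_n$ genuinely survive: the small bipyramids $B_3 = T_4 \# T_4$, $B_4 = O_6$, and $B_5$, $B_6$ are excluded, the first two because they are already reducible or duplicated, and the last two by showing directly that $K_1 \# B_n$ for $n \in \{5,6\}$ admits a non-unique connected sum (violating the conclusion of Step 2) or is subsumed in another case; and the $\xi_i$ on smaller Platonic solids must be shown to coincide with spheres already listed. Second, the exceptional case $K = T_4 \# T_4 \# T_4$ must be singled out: its three-fold decomposition does not fit the binary template $K_1 \# K_2$ with $K_2$ in the pruned list (since $B_3 = T_4 \# T_4$ is absent from it), and a check is required that no further iterated connected sums produce additional reducible $B$-rigid $2$-spheres beyond those already enumerated. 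Together these verifications exhaust the reducible $B$-rigid case and yield the stated list.
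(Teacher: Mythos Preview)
Your proposal is correct and follows essentially the same approach as the paper: the paper's argument is the single sentence ``Combining this proposition with the conclusion of \cite[Lemma 2.3]{CK11} and \cite[Theorem 3]{FI79}, we immediately get the following theorem,'' which is exactly your three-step plan of (i) forcing $|\mathcal{C}(K_1\#K_2)|=1$ via \cite[Theorem 4.2]{FW15}, (ii) translating this into the symmetry conditions of \cite[Lemma 2.3]{CK11}, and (iii) invoking the classification \cite[Theorem 3]{FI79}. Your additional discussion of the pruning and the exceptional case $T_4\#T_4\#T_4$ makes explicit what the paper leaves implicit.
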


The notations in the theorem are adopted from \cite{CK11}, in which $T_4$, $C_8$, $O_6$, $D_{20}$ and $I_{12}$ are the boundary spheres of the five Platonic solids: the tetrahedron, the cube, the octahedron, the dodecahedron and the icosahedron, respectively, subscripts
indicating the vertex number; $B_n$ is the suspension of the boundary of an $(n-2)$-gon, called a \emph{bipyramid} (see Figure 3 in \cite{CK11}).

\begin{rem}
We will shown in \cite{FMW16} that the converse of Theorem \ref{thm:sum} is also true, i.e. all the reducible simplicial spheres $K$ appearing in Theorem \ref{thm:sum} are actually {$B$-rigid}, which classifies all reducible {$B$-rigid} simplicial $2$-spheres.
\end{rem}

\subsection{On the cohomological rigidity of quasitoric $6$-manifolds}
Let $\PP$ be the \emph{Pogorelov class}, i.e. the class of simple flag $3$-polytopes without $4$-belts, or dually, simplicial $2$-spheres which are flag and satisfy the no $\square$-condition.
By the Four Color Theorem we know that there is at least one quasitoric manifold over any simple $3$-polytope.
So Theorem \ref{thm:rigidity of toric manifold} applies efficiently to $6$-dimensional quasitoric manifolds over polytopes from $\PP$.
However, the applicability of Theorem \ref{thm:variety} for (complex) $3$-dimensional toric varieties is more limited since a simplicial $2$-sphere from $\PP$ usually can not be obtained from a nonsingular fan. (A few examples of simplicial spheres from $\PP$, which are obtained from nonsingular fans, were produced by Suyama \cite{Suy15}.)
Especially, any $P\in\PP$ has no Delzant realizations in $\Rr^3$ (full dimensional lattice simple polytopes whose normal fans are nonsingular are called \emph{Delzant}), so that there are no smooth projective toric varieties over any $P\in\PP$. This is because a Delzant $3$-polytope must have at least one triangular or quadrangular face by the result of Delaunay \cite{D05}.

One might expect that Theorem \ref{thm:rigidity of toric manifold} is also true for quasitoric $6$-manifolds over simple polytopes from $\QQ$, but this is not the case in general. A simplest counterexample over the simple polytope dual to $B_6$ or $B_7$  can be constructed as follows. (Note that $B_6$ and $B_7$ are the only two bipyramids that belong to $\QQ$.)

\begin{exmp}\label{exmp:counterexmp}
Consider the family of quasitoric manifolds over a cube (dual to $B_6$) with characteristic matrices
\[{\mathit{\Lambda}_k=}\bordermatrix{
~&F_1&F_2&F_3&F_1'&F_2'&F_3'\cr
~&1&0&0&-1&0&0\cr
~&0&1&0&0&-1&k\cr
~&0&0&1&0&0&-1},\quad {k\in\mathbb{Z}},
\]
where $\{F_i,F_i'\}$ is a pair of opposite faces for $1\leqslant i\leqslant 3$. For each $k\in\mathbb{Z}$, the corresponding quasitoric manifold is $S^2\times H_k$, where $H_k$ is the \emph{Hirzebruch surface} $\mathbb{C}P(\OO(k)\oplus\mathbb{C})$, i.e. the associated projective bundle of the sum of a
trivial line bundle $\Cc$ and {the line bundle  $\OO(k)$ with the first Chern class $k$ over $\Cc P^1$ (see \cite[Example 5.1.8]{BP15} or \cite[Example 7.3.4]{CLS11})}. The homeomorphism classes of {$H_k$} depend only on the parity of $k$ {(cf. \cite[\S 5.6]{OR70}):}
\[\mathbb{C}P(\OO(k)\oplus\mathbb{C})=
\begin{cases}
S^2\times S^2\quad &\text{ if $k$ is even,}\\
\mathbb{C}P^2\# \overline{\mathbb{C}P^2} &\text{ if $k$ is odd,}
\end{cases}
\]
where $\#$ denotes the connected sum, and $\overline{\mathbb{C}P^2}$ is $\mathbb{C}P^2$ with the reverse orientation, while they are equivariantly distinct for different $k$. {Consequently, quasitoric manifolds over a cube with characteristic matrices $\mathit{\Lambda}_k$ are homeomorphic to $S^2\times S^2\times S^2$ or $S^2\times(\mathbb{C}P^2\# \overline{\mathbb{C}P^2})$, while their weakly equivariant homeomorphism classes are infinite.}
	
The case for quasitoric manifolds over the product of a $5$-gon and an interval, which is dual to $B_7$, is similar, by the classification result  \cite[p. 553]{OR70} for quasitoric manifolds over polygons.
\end{exmp}

Even if we exclude the cases of $K=B_6,B_7$, Theorem \ref{thm:rigidity of toric manifold} may still not hold for $K\in\QQ$, which can be seen as follows.
Let $\mathit{\Lambda}$ be the characteristic matrix of a quasitoric manifold $M$ over a simple polytope $P\in\QQ$ with facet set $\FF=\{F_1,\dots,F_m\}$.
Suppose $F_1$ is a quadrangular face, and suppose the column vectors of $\mathit{\Lambda}$ for $F_1$ and the four surrounding faces are as in Figure \ref{fig:coloring}. Let $M'$ be another quasitoric manifold over the same $P$ with characteristic matrix $\mathit{\Lambda}'$ such that $\mathit{\Lambda}'(F_1)=(0,p',1)$ and $\mathit{\Lambda}'(F_i)=\mathit{\Lambda}(F_i)$ for $i\neq1$. We claim that $M$ and $M'$ are homeomorphic if  $p\equiv p'$ mod $2$.
\input{coloring.TpX}

First let us recall the conception of an \emph{adjacent connected sum} of manifolds (cf. \cite{M79}). Let $Y$ and $Z$ be closed manifolds of dimension $m$, $X$ be a closed manifold of dimension $n<m$ and suppose that $X\times D^{m-n}$ is embedded in both $Y$ and $Z$, i.e. $X$ is a submanifold with a trivial tubular neighborhood. Remove from $Y$ and $Z$ the interiors of the embedded copies of $X\times D^{m-n}$, obtaining
$\bar Y$ and $\bar Z$, each with boundary homeomorphic to $X\times S^{m-n-1}$. Fix a homeomorphism 
\[f:X\times S^{m-n-1}\to X\times S^{m-n-1}.\]
The adjacent connected sum of $Y$ and $Z$ along $X$ is defined to be
\[Y\#_{X,f}Z:=\bar Y\cup_f\bar Z.\]
In particular, if $f$ is the identity map, we simply denote it by $Y\#_X Z$.

In our case, we consider a hyperplane $H\subset \Rr^3$, whose associated half-spaces will be denoted $H_+$ and $H_-$, intersecting $P$ but not at any vertex of $P$, such that $\VV(P\cap H_+)\cap\VV(P)=\VV(F_1)$. Namely, $H$ is a hyperplane cutting off $F_1$. It is easy to see that $P_+:=P\cap H_+$ is a cube and $P_-:=P\cap H_-\approx P$.
Denote $F_0^+$ and $F_0^-$ the new facets of $P_+$ and $P_-$, respectively, i.e. the facets corresponding to $P\cap H$.
Define two characteristic matrices $\mathit{\Lambda}_+$ and $\mathit{\Lambda}_+'$ on $P_+$ (resp. $\mathit{\Lambda}_-$  and $\mathit{\Lambda}_-'$ on $P_-$) by
\begin{gather*}
\mathit{\Lambda}_+(F_0^+)=\mathit{\Lambda}_+'(F_0^+)={(0,0,-1)},\text{ and }\\
\mathit{\Lambda}_+(F_i\cap H_+)=\mathit{\Lambda}(F_i),\  \mathit{\Lambda}_+'(F_i\cap H_+)=\mathit{\Lambda'}(F_i)\ \text{ for } 1\leqslant i\leqslant 5;\\
\text{resp.}\ \mathit{\Lambda}_-(F_0^-)=\mathit{\Lambda}_-'(F_0^-)={(0,0,-1)},\text{ and }\\
\mathit{\Lambda}_-(F_i\cap H_-)=\mathit{\Lambda}_-'(F_i\cap H_-)=\mathit{\Lambda}(F_i)\ \text{ for } 2\leqslant i\leqslant m.
\end{gather*}

Let $Y,Z$ be the quasitoric manifolds defined by the characteristic pair $(P_+,\mathit{\Lambda}_+)$ and $(P_-,\mathit{\Lambda}_-)$, respectively, and $\pi_+:Y\to P_+$, $\pi_-:Z\to P_-$ be the projections.
It is easy to see that the characteristic submanifolds $\pi_+^{-1}(F_0^+)=\pi_-^{-1}(F_0^-)=S^2\times S^2$, and their tubular neighborhoods in $Y$ and $Z$ are both $S^2\times S^2\times D^2$. Then by formula \eqref{eq:quasitoric} we have $M=Y\#_{S^2\times S^2}Z$.
Similarly let $Y',Z'$ be the quasitoric manifolds defined by $(P_+,\mathit{\Lambda}_+')$ and $(P_-,\mathit{\Lambda}_-')$, and $\pi_+':Y'\to P_+$, $\pi_-':Z'\to P_-$ be the projections.
Then $M'=Y'\#_{S^2\times S^2}Z'$. Since $\mathit{\Lambda}_-=\mathit{\Lambda}_-'$, we have $Z=Z'$.
{Example \ref{exmp:counterexmp} says that $Y\cong Y'$ if and only if $p\equiv p'$ mod $2$. 
Hence, if there is a homeomorphism
	$\varphi:Y\to Y'$ for $p\equiv p'$ mod $2$ and a commuatative diagram:
\[
\xymatrix{S^2\times S^2\times D^2\ar[d]^-{id}\ar[r]^-{i_0}&Y=S^2\times H_p\ar[d]^-{\varphi}\\
S^2\times S^2\times D^2\ar[r]^-{i_0'}& Y'=S^2\times H_{p'}}
\]
where $i_0$ and $i_0'$ are the embeddings of the tubular neighborhoods of $\pi_+^{-1}(F_0^+)$ and $\pi_+'^{-1}(F_0^+)$ respectively, then $M\cong M'$ follows immedietely. This commutative diagram does exist. To see this, note that a Hirzebruch surface $H_p$ is topologically obtained by gluing two copies of $S^2\times D^2$ along their boundary via an automorphism $f_p$ of $S^2\times S^1$, where $f_p(y,x)=(x^p\cdot y,x^{-1})$ by seeing $S^2$ as $\mathbb{C}\cup\{\infty\}$ and $S^1$ as the unit sphere in $\mathbb{C}$ (see, for example, \cite[Example 3.1.16]{CLS11}, where one easily sees that $H_p=(S^2\times D^2)_1\cup_{f_p}(S^2\times D^2)_2$ for $(S^2\times D^2)_1\subset U_{\sigma_1}\cup U_{\sigma_2}\cong S^2\times\Cc$ and $(S^2\times D^2)_2\subset U_{\sigma_3}\cup U_{\sigma_4}\cong S^2\times\Cc$). Since $f_p$ is the composition $h\circ g^p$, where $g$ and $h$ are automorphisms of $S^2\times S^1$ defined by $g(y,x)=(xy,x)$ and $h(y,x)=(y,x^{-1})$, it follows from \cite[Theorem 5.1]{Glu62} that $f_p$ is isotopic to $f_{p'}$ if and only if $p\equiv p'$ mod $2$. Now we construct the map $\varphi$ as follows. The above analysis shows that a homeomorphism between $H_p$ and $H_{p'}$ ($p\equiv p'$ mod $2$) can be decomposed as
\[
\begin{split}
H_p=&(S^2\times D^2)_1\cup_{id_0}S^2\times S^1\times I\cup_{f_p}(S^2\times D^2)_2\xr{\psi:=id\cup\Psi\cup id}\\
&(S^2\times D^2)_1\cup_{f_{p'}}S^2\times S^1\times I\cup_{id_1}(S^2\times D^2)_2=H_{p'},
\end{split}\]
where $I=[0,1]$ and the gluing maps along boundaries are defined by 
\begin{gather*}
id_0=id:\partial(S^2\times D^2)_1\to S^2\times S^1\times \{0\},\\
f_p:S^2\times S^1\times \{1\}\to\partial(S^2\times D^2)_2,\\
id_1=id:\partial(S^2\times D^2)_2\to S^2\times S^1\times \{1\},\\
f_{p'}:\partial(S^2\times D^2)_1\to S^2\times S^1\times \{0\},
\end{gather*}
and $\Psi$ is an isotopy from $f_{p'}$ to $f_p$. Let $\varphi=id\times\psi:S^2\times H_p\to S^2\times H_{p'}$, and let $i_0$, $i_0'$ be the embeddings $S^2\times S^2\times D^2=S^2\times (S^2\times D^2)_1\ha H_p,H_{p'}$ respectively. Then we get the desired commutative diagram.

On the other hand, $M$ is not weakly equivariantly homeomorphic to $M'$ in general if $p\neq p'$. For instance, if $F_1$ is the only quadrangular face of $P$, and $p'=0$, $p>2\max\{|\lambda_{ij}|:(i,j)\neq (2,1)\}$, then for any automorphism $\psi$ of $P$, the characteristic pairs $(P,\mathit{\Lambda})$ and $(\psi(P),\mathit{\Lambda'})$ defined above are not equivalent. Indeed, we have $\psi(F_1)=F_1$. Furthermore, one easily sees from the local combinatorial structure of $F_1$ that $\{\psi(F_2),\psi(F_4)\}=\{F_2, F_4\}$ and $\{\psi(F_3),\psi(F_5)\}=\{F_3, F_5\}$, or $\{\psi(F_2),\psi(F_4)\}=\{F_3, F_5\}$ and $\{\psi(F_3),\psi(F_5)\}=\{F_2, F_4\}$. In the first case, restricting to the $3\times 3$ matrices $\mathit{\Lambda}(F_2,F_3,F_1)$ and $\mathit{\Lambda}'(\psi(F_2),\psi(F_3),F_1)$, we see that in the equation $\mathit{\Lambda}=A\cdot\mathit{\Lambda}'\cdot B$, 
\[A=\begin{pmatrix}
	\pm 1&0&0\\
	0&\pm 1&\pm p\\
	0&0&\pm 1
\end{pmatrix}.\]
Let $F_6$ be the facet different from $F_1$ such that $F_2\cap F_3\cap F_6\neq\emptyset$. Then $\lambda_{36}=\pm1$ since $\det(\Lambda(F_2,F_3,F_6))=\pm1$. 
Similarly, $\lambda_{3\psi(6)}'=\pm1$. Hence $\lambda_{26}=\pm \lambda_{2\psi(6)}'\pm p$, and so $p\leqslant2\max\{|\lambda_{26}|,|\lambda_{2\psi(6)}'|\}$, which is a contradiction because $\lambda_{2\psi(6)}'=\lambda_{2j}$ for some $j\neq 1,\dots,5$ by the construction of $\mathit{\Lambda}'$. In the second case, the matrix $A$ should be \[A=\begin{pmatrix}
		 0&\pm 1&0\\
		\pm 1&0&\pm p\\
		0&0&\pm 1
	\end{pmatrix},\]
and we should have $\lambda_{26}=\pm \lambda_{1\psi(6)}'\pm p$, which also leads to a contradicition by similar reasoning.

\begin{rem}
Although Theorem \ref{thm:rigidity of toric manifold} does not hold for all quasitoric $6$-manifolds over polytopes from $\QQ$, it can hold if the characteristic matrix $\mathit{\Lambda}$ satisfies some mild conditions (see \cite{FMW16} for details). On the other hand, we may still expect that the cohomological rigidity holds for all quasitoric $6$-manifolds over polytopes from $\QQ$, and indeed this is the case, as we will show in \cite{FMW16}.
\end{rem}

\appendix
\section{Cohomology of real moment-angle complexes and the second Baskakov-Hochster ring}\label{appdx:BHR2}
In this appendix, first we briefly review the work of Cai \cite{C17} on describing the cohomology ring of a real moment-angle complex $\mathbb{R}\ZZ_K$.
Let $K$ be a simplicial complex with $m$ vertices. As we know, the cohomology group of $\mathbb{R}\ZZ_K$ is isomorphic to the BHR as $\kk$-module, in the following way (see \cite{BBCG1}):
\[H^p(\mathbb{R}\ZZ_K;\kk)\cong\bigoplus_{J\subset[m]}\w H^{p-1}(K_J;\kk).\]

To describe the ring structure of $H^*(\mathbb{R}\ZZ_K)$, let $T(u_1,\dots,u_m;t_1,\dots,t_m)$ be the differential graded tensor $\kk$-algebra  with $2m$ generators such that
\[\deg u_i=1,\ \deg t_i=0;\ d(t_i)=u_i,\ d(u_i)=0.\]
Let $R$ be the quotient of $T(u_1,\dots,u_m;t_1,\dots,t_m)$ under the following relations:
\[
\begin{split}
u_it_i=u_i,\ \ t_iu_i=0,&\ \ u_it_j=t_ju_i,\ \ t_it_i=t_i,\\
&u_iu_i=0,\ \ u_iu_j=-u_ju_i,\ \ t_it_j=t_jt_i,
\end{split}
\]
for $i,j=1,\dots,m$ and $i\neq j$. Denote by $u_I$ (resp. $t_I$) the monomial $u_{i_1}\cdots u_{i_k}$ (resp. $t_{i_1}\cdots t_{i_k}$) for an ordered subset $I=\{i_1,\dots,i_k\}\subset[m]$. ($t_I$ dose not depend on the order of $I$ since $t_it_j=t_jt_i$.) Let $\mathcal{I}$ be the ideal of $R$ generated by all square-free monomials $u_\sigma$ such that $\sigma$ is not a face of $K$. Clearly, as $\kk$-module, $R/\II$ is freely generated by the square-free
monomials $u_\sigma t_{I\setminus\sigma}$, where $\sigma\subset I\subset [m]$ and $\sigma \in K$.

\begin{thm}[\cite{C17}]\label{thm:ring of real m-a}
There is a graded ring isomorphism
\[
H(R/\mathcal{I}, d) \cong H^*(\mathbb{R}\ZZ_K).
\]
\end{thm}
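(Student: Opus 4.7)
The plan is to realize $(R/\II, d)$ as the cellular cochain dg-algebra of $\mathbb{R}\ZZ_K$ for a natural CW structure. First I would equip $D^1$ with the CW structure consisting of two $0$-cells $p_-, p_+$ and one oriented $1$-cell $e$ from $p_-$ to $p_+$, and let $t^-, t^+, u$ denote the dual cellular cochains. Choosing the cellular diagonal approximation $\Delta \colon D^1 \to D^1 \times D^1$ whose effect on the $1$-cell is $\Delta_*(e) = \{p_-\} \times e + e \times \{p_+\}$, a direct computation gives $t^+ \smile t^+ = t^+$, $u \smile t^+ = u$, $t^+ \smile u = 0$, $u \smile u = 0$, together with $\delta t^+ = u$ and $\delta u = 0$. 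Setting $t := t^+$ (so that $t^- = 1 - t^+$), the dg-algebra $C^*(D^1)$ is presented by exactly the single-variable version of the relations defining $R$.

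Next I would invoke the K\"unneth theorem to identify $C^*((D^1)^m)$ with the graded tensor product $C^*(D^1)^{\otimes m}$ as a dg-algebra. The cross-index relations $u_i u_j = -u_j u_i$, $t_i u_j = u_j t_i$, and $t_i t_j = t_j t_i$ drop out of the Koszul sign rule for tensor products of graded algebras. A rank count (each side is $\kk$-free of rank $3^m$) promotes the presentation to a genuine isomorphism $C^*((D^1)^m) \cong R$ of $\Zz$-graded dg-algebras.

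For the passage to $\mathbb{R}\ZZ_K$, note that the product CW structure on $(D^1)^m$ restricts to a CW structure on the subcomplex $\mathbb{R}\ZZ_K \subset (D^1)^m$: a product cell with factors $x_i \in \{p_-, p_+, e\}$ lies in $\mathbb{R}\ZZ_K$ precisely when $\{i : x_i = e\}$ is a simplex of $K$. Under the previous isomorphism, the cellular cochain complex of $\mathbb{R}\ZZ_K$ is therefore the quotient of $C^*((D^1)^m)$ by the submodule spanned by cochains dual to cells whose $u$-set is a non-face of $K$; this submodule is exactly the ideal $\II$ generated by $\{u_\sigma : \sigma \notin K\}$. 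Taking cohomology yields the desired graded ring isomorphism $H(R/\II, d) \cong H^*(\mathbb{R}\ZZ_K)$.

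The main obstacle will be pinning down the cellular diagonal and verifying the asymmetric identities $u \smile t = u$ and $t \smile u = 0$; any other choice of cellular diagonal approximation yields a chain-homotopic but literally different product, so one must either commit to the specific diagonal above or argue invariance of the resulting cochain algebra under the choice. Once these local $D^1$ computations are in hand, the K\"unneth sign bookkeeping and the identification of the quotient ideal with $\II$ are routine.
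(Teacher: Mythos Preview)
The paper does not give its own proof of this theorem: it is quoted from Cai \cite{C17} as background for Appendix~\ref{appdx:BHR2}, and only the statement is reproduced. So there is no in-paper argument to compare against.

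That said, your proposal is correct and is in fact the method of the cited source. Cai's proof in \cite{C17} proceeds exactly by identifying $R$ with the cellular cochain algebra of $(D^1)^m$ for the three-cell decomposition of each $D^1$, computing the cup product via the standard cellular diagonal (yielding the asymmetric relations $u_it_i=u_i$, $t_iu_i=0$), and then observing that passage to the CW subcomplex $\mathbb{R}\ZZ_K\subset (D^1)^m$ kills precisely the dual cochains of product cells whose $u$-support is a non-face of $K$, which is the ideal $\II$. Your local $D^1$ computation with $\Delta_*(e)=p_-\times e+e\times p_+$ is the right one, and the rank check $3^m$ confirming that the presentation is an isomorphism is a clean way to close the K\"unneth step. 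The only point to be slightly more explicit about is that the restriction map $C^*((D^1)^m)\to C^*(\mathbb{R}\ZZ_K)$ is a surjective dga map whose kernel, as a $\kk$-module, is spanned by monomials $u_\sigma t_{I\setminus\sigma}$ with $\sigma\notin K$, and that this submodule coincides with the two-sided ideal generated by the $u_\sigma$; this follows because multiplying $u_\sigma$ by any monomial either gives zero or a monomial whose $u$-support contains $\sigma$.
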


In view of Theorem \ref{thm:ring of real m-a}, a natural question is that how to describe the multiplicative structure of $H^*(\mathbb{R}\ZZ_K)$ in terms of the full subcomplex of $K$? We answer this question by slightly modifying the rules for multiplication in the BHR.

For subsets $I'\subset I\subset[m]$, define $\varphi^{I'}_I: K_{I'}\ha K_I$.
Given two elements $\alpha\in \w {H}^{p-1}(K_I)$ and $\beta\in \w {H}^{q-1}(K_J)$ in the BHR of $K$,
define the \emph{$\star $-product} of $\alpha$ and $\beta$ to be
\[\alpha\star\beta:=\alpha\cdot(\varphi^{J\setminus(I\cap J)}_J)^*(\beta)\in \w H^{p+q-1}(K_{I\cup J}).\]
Here ``{\Large$\cdot$}'' means the product in the BHR of $K$.

We call the $\kk$-module $\bigoplus_{I\subseteq [m]} \w {H}^*(K_I;\kk)$ with the $\star$-product the \emph{second Baskakov-Hochster ring} (BHR$\mathrm{^{II}}$ for abbreviation) of $K$.

\begin{prop}\label{prop:bhr}
Let $K$ be a simplicial complex. Then the BHR$\mathrm{^{II}}$ of $K$ is isomorphic to $H^*(\mathbb{R}\ZZ_K)$.
\end{prop}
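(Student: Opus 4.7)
The plan is to refine the $\kk$-module decomposition of $R/\II$ that underlies Theorem~\ref{thm:ring of real m-a} into a chain-complex decomposition, and then to read off the product formula directly on representative cocycles. For each $I\subset [m]$ let $C_I\subset R/\II$ be the $\kk$-submodule spanned by the square-free monomials $u_\sigma t_{I\setminus\sigma}$ with $\sigma\in K$ and $\sigma\subset I$. The differential $d$ satisfies $d(u_\sigma t_{I\setminus\sigma})=\sum_{j\in I\setminus\sigma}\pm\,u_{\sigma\cup\{j\}}t_{I\setminus(\sigma\cup\{j\})}$, truncated to those $j$ with $\sigma\cup\{j\}\in K$, so each $C_I$ is a subcomplex and $R/\II=\bigoplus_I C_I$ as chain complexes. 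Sending $u_\sigma t_{I\setminus\sigma}$ to the basis cochain dual to $\sigma$ identifies $(C_I,d)$ with the augmented simplicial cochain complex of $K_I$ shifted up by one, giving a canonical isomorphism $H^p(C_I,d)\cong \w H^{p-1}(K_I)$ that recovers the $\kk$-module isomorphism of Cai.

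Given cocycle representatives $a=\sum_\sigma c_\sigma u_\sigma t_{I\setminus\sigma}\in C_I$ of $\alpha\in\w H^{p-1}(K_I)$ and $b=\sum_\tau d_\tau u_\tau t_{J\setminus\tau}\in C_J$ of $\beta\in\w H^{q-1}(K_J)$, I would analyze the product $u_\sigma t_{I\setminus\sigma}\cdot u_\tau t_{J\setminus\tau}$ in $R/\II$ using the defining relations. The key observation is that to move $u_\tau$ past $t_{I\setminus\sigma}$ one uses $u_it_j=t_ju_i$ for $i\neq j$, but $t_iu_i=0$; hence the product vanishes unless $(I\setminus\sigma)\cap\tau=\varnothing$. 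Together with $u_iu_i=0$, which forces $\sigma\cap\tau=\varnothing$, the two conditions collapse to $\tau\cap I=\varnothing$, i.e.\ $\tau\subset J\setminus(I\cap J)$. When nonzero, the remaining simplifications $u_it_i=u_i$ and the commutativity of the $t_j$'s collapse the product (up to a sign from reordering the $u$-factors) to $\pm u_{\sigma\cup\tau}t_{(I\cup J)\setminus(\sigma\cup\tau)}\in C_{I\cup J}$.

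This pointwise calculation is exactly the content of the $\star$-product. Indeed, the vanishing condition $\tau\subset J\setminus(I\cap J)$ shows that $b$ contributes to $a\cdot b$ only through its restriction to $K_{J\setminus(I\cap J)}$, which is precisely $(\varphi^{J\setminus(I\cap J)}_J)^*\beta$ at the cochain level, while the surviving summand $\pm u_{\sigma\cup\tau}t_{(I\cup J)\setminus(\sigma\cup\tau)}$ corresponds under the identification of paragraph~1 to the simplicial cup product $\sigma\cup\tau$ inside $K_{I\cup J}$, which is the Baskakov--Hochster product of $\alpha$ and the restricted $\beta$. Thus the induced multiplication on $\bigoplus_I H^*(C_I,d)\cong\bigoplus_I\w H^{*-1}(K_I)$ agrees with $\star$, and combined with Theorem~\ref{thm:ring of real m-a} this yields the claimed isomorphism.

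The step I expect to be the most delicate is the sign bookkeeping in paragraph~2: the signs coming from reordering the $u_i$'s and from the Koszul-type differential must be shown to match the signs in the simplicial cup product used to define the BHR (compare Remark after Theorem~\ref{thm:union product}). I would handle this by fixing a total order on $[m]$, expressing each monomial $u_\sigma t_{I\setminus\sigma}$ in the normal form with $u$-factors written in increasing order followed by $t$-factors in increasing order, and verifying that the sign obtained from the product in $R/\II$ is exactly the shuffle sign appearing in the cochain-level formula $\sigma\otimes\tau\mapsto\sigma\cup\tau$ used for the BHR; once this bookkeeping is done the ring statement follows verbatim from the preceding two paragraphs.
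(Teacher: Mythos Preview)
Your proposal is correct and follows essentially the same approach as the paper: compute the product of two monomial cochains $u_{\sigma_1}t_{I\setminus\sigma_1}\cdot u_{\sigma_2}t_{J\setminus\sigma_2}$ in $R/\II$, observe that it vanishes precisely when $\sigma_2\cap I\neq\varnothing$ and otherwise equals (up to sign) $u_{\sigma_1\cup\sigma_2}t_{(I\cup J)\setminus(\sigma_1\cup\sigma_2)}$, identify this with the $\star$-product, and extend by linearity. The paper's proof is considerably terser---it states the product formula and declares the identification with $\alpha\star\beta$ an ``easy calculation''---and in particular does not track signs explicitly, consistent with the ``up to a sign for products'' convention already flagged after Theorem~\ref{thm:union product}; your more careful sign bookkeeping is not needed for the paper's purposes but is certainly not wrong.
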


\begin{proof}
First we consider a special case when $\alpha\in \w {H}^{p-1}(K_{I})$ and $\beta\in \w {H}^{q-1}(K_{J})$ are represented by monic monomials in $R/\II$, i.e., $\alpha=u_{\sigma_1} t_{I\setminus\sigma_1}$, $\beta=u_{\sigma_2} t_{J\setminus\sigma_2}$. Thus $\alpha=[\sigma_1^*]\in \w {H}^{p-1}(K_{I})$ and $\beta=[\sigma_2^*]\in \w {H}^{q-1}(K_{J})$. The ring structure of $R/\II$ shows that
\[u_{\sigma_1} t_{I\setminus\sigma_1}\cdot u_{\sigma_2} t_{J\setminus\sigma_2}=
\begin{cases}
0&\text{\quad if }\sigma_2\cap I\neq\emptyset,\\
u_{\sigma_1\cup\sigma_2}t_{(I\cup J)\setminus(\sigma_1\cup\sigma_2)}&\text{\quad otherwise}.
\end{cases}
\]
Here $\sigma\cup \tau$ means the juxtaposition of $\sigma$ and $\tau$. An easy calculation shows that this is just $\alpha\star\beta\in \w H^{p+q-1}(K_{I\cup J})$.

For the general case, in which $\alpha$ and $\beta$ are polynomials, we can get the desired formula by applying the argument above to each monomial term.
\end{proof}

\begin{proof}[Proof of Lemma \ref{lem:hoch}]
According to the definition of $\star$-product, $\alpha'\star\beta=\alpha'\cdot\beta'$.
On the other hand, since BHR$\mathrm{^{II}}$ is isomorphic to the cohomology ring $H^*(\Rr\ZZ_K)$, which is a skew commutative ring, it follows that $\alpha'\star\beta=(-1)^{pq}\beta\star\alpha'=(-1)^{pq}\beta\cdot\alpha=\alpha\cdot\beta$.
\end{proof}

\section{Factor index and its application}
To prepare for the proof of Propsition \ref{prop:scc}, we introduce an algebraic concept, and use it to get some useful algebraic-combinatorics results in this appendix.
\begin{Def}\label{def:ind}
Let $A$ be an algebra over a field $\kk$. Given a nonzero element $\alpha\in A$, if a $\kk$-subspace $V\subset A$ satisfies the condition that for any nonzero element $v\in V$, $v$ is a factor of $\alpha$ (i.e., there exists $u\in A$ such that $vu=\alpha$), then $V$ is called a \emph{factor space} of $\alpha$ in $A$. Denote by
$\mathcal {F}_\alpha$ the set of all factor spaces of $\alpha$, and define the \emph{factor index} of $\alpha$ in $A$ to be
\[\mathrm{ind}(\alpha)=\mathrm{max}\{\dim_\kk(V): V\in \mathcal {F}_\alpha\}.\]

Let $A=\bigoplus_iA^i$ be a graded $\kk$-algebra, $0\neq\alpha\in A$. If $V\subset A^k$ is a factor space of $\alpha$ in $A$, then $V$ is called a \emph{$k$-factor space} of $\alpha$ in $A$. Denote by $\mathcal {F}_\alpha^k$ the set of all $k$-factor spaces of $\alpha$. The \emph{$k$-factor index} of $\alpha$ in $A$ is defined to be
\[\mathrm{ind}_k(\alpha)=\mathrm{max}\{\dim_\kk(V): V\in \mathcal {F}_\alpha^k\}\]
\end{Def}
\begin{exmp}
(i) If $A$ is the polynomial algebra $\kk[x]$ with $\kk=\Cc$ and $\deg x=1$, then for any $0\neq f\in A$ with $\deg f=n$, $\mathrm{ind}(f)=\mathrm{ind}_k(f)=1$ for $k\leqslant n$ and $\mathrm{ind}_k(f)=0$ for $k>n$. This follows from the fundamental theorem of algebra.

(ii) If $A=\bigoplus_{i=0}^d A^i$ is a Poincar\'e duality $\kk$-algebra, then for $0\neq\alpha\in A^d$ and $k\leqslant d$, $\mathrm{ind}_k(\alpha)=\dim_\kk A^k$.
\end{exmp}
In the following, we always assume $\kk$ to be a field, and the cohomology with coefficients in $\kk$ will be implicit.
\begin{lem}\label{lem:ind}
Let $K$ be a simplicial complex with $m$ ($m\geqslant3$) vertices. Then for any cohomology class $\xi\in H^{p}(\mathcal {Z}_K)$, $p\geqslant4$, we have
\begin{enumerate}[(i)]
\item\label{case:i} $\mathrm{ind}_3(\xi)\leqslant \binom{m}{2}-m$.\vspace{5pt}
\item\label{case:ii} For $m\geqslant 4$, $\mathrm{ind}_3(\xi)=\binom{m}{2}-m$ if and only if $K$ is the boundary of an $m$-gon and $\xi\in H^{m+2}(\mathcal {Z}_K)$.
\end{enumerate}
\end{lem}
\begin{proof}
We prove \eqref{case:i} by induction on $m$. For the base case $m=3$, $K$ is one of $\Delta^2$, $\partial \Delta^2$, $\Delta^1\vee\Delta^1$, $\Delta^0\sqcup\Delta^1$, $\Delta^0\sqcup\Delta^0\sqcup\Delta^0$, and the statement can be veified case by case. For the induction step $m>3$, let $e$ be the number of edges of $K$, and let $e_i$ be the number of edges containing $i\in\VV(K)$ as a vertex for $1\leqslant i\leqslant m$.  Suppose on the contrary that $\mathrm{ind}_3(\xi)>{m\choose 2}-m$ for some $\xi\in H^p(\mathcal {Z}_K)$ with $p\geqslant 4$. Since $\dim_\kk H^3(\mathcal {Z}_K)$ equals the number of two-element missing faces of $K$, we have the following inequality
\[\binom{m}{2}-m<\mathrm{ind}_3(\xi)\leqslant\dim_\kk H^3(\mathcal {Z}_K)=\binom{m}{2}-e.\]
It follows that $e<m$. Combining this with the relation $2e=\sum_{i=1}^m e_m$ we see that $e_i<2$ for some $1\leqslant i\leqslant m$.

Without loss of generality, assume $e_m<2$, and let $I=\{1,\dots,m-1\}$.
{Then $K=K_I\vee\Delta^1$ or $K=K_I\sqcup\Delta^0$. Hence by \cite[Lemma 4.4 and Lemma 4.5]{FW21} we have a ring isomorphism}
\[
\w H^*(\mathcal {Z}_K)\cong (\w H^*(\mathcal {Z}_{K_I})\otimes\Lambda[y])\oplus B,\quad\deg y=1,
\]
where $B$ is a $\kk$-algebra with a trivial multiplicative structure. Moreover, let $\phi:\ZZ_{K_I}\to \ZZ_K$ be the inclusion map.
Then $\phi^*:\w H^*(\mathcal {Z}_K)\to \w H^*(\mathcal {Z}_{K_I})$ is the projection to $(\w H^*(\mathcal {Z}_{K_I})\otimes 1,0)$.
Suppose $\alpha\in H^3(\mathcal Z_K)$ is a factor of $\xi$. Then $\alpha=\alpha_1\otimes1+\alpha_2$ for some $0\neq\alpha_1\in H^3(\mathcal {Z}_{K_I})$ and $\alpha_2\in B$, because $H^{2}(\ZZ_K)=0$ and $B\cdot B=0$. It follows that $\dim_\kk\,\phi^*(V)=\dim_\kk V$ for any $V\in \mathcal {F}_\xi^3$. We consider two cases:
\begin{enumerate}[$\bullet$]
\item $\phi^*(\xi)\neq0$. Then from the functorial property of cohomology ring we know that $\phi^*(V)$ is a factor space of $\phi^*(\xi)\in H^p(\mathcal {Z}_{K_I})$.

\item $\phi^*(\xi)=0$. Then we can write $\xi=\xi_0\otimes y$ with $0\neq\xi_0\in H^{p-1}(K_I)$, since $\xi$ can be written as the product of two elements of nonzero degree by the inequality $\mathrm{ind}_3(\xi)>0$, while $B\cdot B=0$. For any nonzero element $\alpha\in V\in \mathcal {F}_\xi^3$ (write $\alpha=\alpha_1\otimes1+\alpha_2$ as we have seen before), suppose that $\alpha\beta=\xi$ for some $\beta\in H^{p-3}(\mathcal Z_K)$, and write $\beta=\beta_0\otimes y+\beta_1\otimes 1+\beta_2$ with $\beta_0,\beta_1\in \w H^*(\mathcal {Z}_{K_I})$ and $\beta_2\in B$. Since $\alpha\beta=\alpha_1\beta_0\otimes y+\alpha_1\beta_1\otimes 1=\xi=\xi_0\otimes y$, it follows that $\alpha_1\beta_0=\xi_0$, and so $\alpha_1=\phi^*(\alpha)$ is a factor of $\xi_0$.
Thus $\phi^*(V)$ is a factor space of $\xi_0\in H^{p-1}(\mathcal {Z}_{K_I})$ by definition. Note that $\deg(\beta_0)\neq0$, so $\deg(\xi_0)\geqslant 4$.
\end{enumerate}
So in either case, there is a nonzero element $\xi'\in H^{\geqslant 4}(\mathcal {Z}_{K_I})$ such that \[\mathrm{ind}_3(\xi')\geqslant\mathrm{ind}_3(\xi)>\binom{m}{2}-m>\binom{m-1}{2}-(m-1).\] By induction, this is a contradiction. 

\emph{Remark}: If $K$ has more than two connected components, say $K=K_{I}\sqcup K_{J}$ with $I,J\neq\emptyset$, then by \cite[Proposition 4.6]{FW21} there is a ring isomorphism:
\[\w H^*(\ZZ_K)\cong (\w H^*(\mathcal {Z}_{K_I})\otimes\Lambda[y_j:j\in J])\oplus (\w H^*(\mathcal {Z}_{K_J})\otimes\Lambda[y_i:i\in I])\oplus B,\]
where $B$ has trivial multiplication. See also \cite[Theorem 6.12]{GT16} for a homotopy exposition of this isomorphism. Let $\phi_I:\ZZ_{K_I}\to \ZZ_K$ and $\phi_J:\ZZ_{K_J}\to \ZZ_K$ be the inclusions. By considering the two cases: (i) $\phi_I^*(\xi)\neq 0$ or $\phi_J^*(\xi)\neq 0$; (ii) $\phi_I^*(\xi)=\phi_J^*(\xi)=0$, one can  similarly show that $\mathrm{ind}_3(\xi)\leq \mathrm{ind}_3(\xi')$ for some $\xi'\in H^{\geqslant 4}(\mathcal {Z}_{K_I})$ or $H^{\geqslant 4}(\mathcal {Z}_{K_J})$. For the second case, one may assume that $\xi=\sum_{i=0}^r\xi_i\otimes y_{J_i}+\sum_{j=0}^s\eta_j\otimes y_{I_j}$, where $0\neq \xi_i\in \w H^*(\mathcal {Z}_{K_I})$, $0\neq \eta_j\in \w H^*(\mathcal {Z}_{K_J})$, $\emptyset\neq I_j\subset I$, $\emptyset\neq J_i\subset J$, and make argument on $\xi_0\otimes y_{J_0}$ as before to get the desired $\xi'=\xi_0$. Then by the part \eqref{case:i} of the lemma, $\mathrm{ind}_3(\xi)<\binom{m}{2}-m$ because $|I|,|J|<m$.

One direction of \eqref{case:ii} is an immediate consequence of the fact that $H^*(\mathcal {Z}_K)$ is a Poincar\'e duality $\kk$-algebra and the equality
\[\dim_\kk H^3(\ZZ_K)=|MF(K)|=\binom{m}{2}-m.\]
To see the other direction of \eqref{case:ii}, let $e$, $e_i$ be as in the proof of \eqref{case:i}. Then $e_i\geqslant 2$ for each $1\leqslant i \leqslant m$, for otherwise we would have $\mathrm{ind}_3(\xi)\leqslant\binom{m-1}{2}-(m-1)<\binom{m}{2}-m$ by the proof of \eqref{case:i}. Note that ${\dim_\kk H^3(\mathcal {Z}_K)}\geqslant\mathrm{ind}_3(\xi)=\binom{m}{2}-m$. So there are at least $\binom{m}{2}-m$ two-element missing faces of $K$, which implies that $K$ has at most $m$ edges, and so $2e=\sum_{i=1}^m e_i\leqslant 2m$. Combining this with $e_i\geqslant 2$ gives $e_i=2$ for each $1\leqslant i\leqslant m$.
{Hence by an easy combinatorial argument we conclude that $K$ is the disjoint union of several circles. But the remark at the end of the proof of \eqref{case:i} shows that $K$ is path-connected, and the result follows.}
\end{proof}
\begin{cor}\label{cor:ind}
Let $\xi$ be a nonzero element of $H^i(\mathcal {Z}_{K})$ with $i\geqslant6$ and $\mathrm{ind}_3(\xi)=\binom{i-2}{2}-(i-2)$. Suppose $\xi={\sum_{k=1}^l\xi_k},\,0\neq\xi_k\in \w H^{i-|J_k|-1}(K_{J_k})$. Then $K_{J_k}$ is the boundary of an $(i-2)$-gon for each $J_k$.
\end{cor}
\begin{proof}
First note that $\xi$ can be written as the product of two elements of nonzero degree, since $\binom{i-2}{2}-(i-2)>0$ for $i\geqslant 6$. It follows that $i-|J_k|-1\geqslant1$, i.e., $|J_k|\leqslant i-2$ for all $J_k$, by the degree rule of BHR. Choose a $J_q$ such that $|J_q|$ is minimal among all $J_k$, and consider the ring homomorphism
$\phi_q^*:H^*(\mathcal {Z}_{K})\to H^*(\mathcal {Z}_{K_{J_q}})$, which is induced by the
inclusion $\phi_q:\ZZ_{K_{J_q}}\to \ZZ_K$. 
By abuse of notation, we also regard $\xi_q$ as an element of $H^*(\mathcal {Z}_{K_{J_q}})$. Then it is easy to see that $\phi_q^*(\xi)=\xi_q$ because $|J_q|$ is mininal. The functorial property shows that if $\alpha$ is a factor of $\xi$ in $H^*(\mathcal {Z}_{K})$, then $\phi_q^*(\alpha)$ is a factor of $\xi_q$ in $H^*(\mathcal {Z}_{K_{J_q}})$.
Thus, if $V\in \mathcal {F}^3_\xi$, then $\phi_q^*(V)\in \mathcal {F}^3_{\xi_q}$ and $\dim_\kk \phi_q^*(V)=\dim_\kk V$, which implies that
\begin{equation}\label{eq:inequality1}
\mathrm{ind}_3(\xi_q)\geqslant\binom{i-2}{2}-(i-2).
\end{equation}
On the other hand, applying Lemma \ref{lem:ind} \eqref{case:i} to $\xi_q\in H^{\geqslant 6}(\ZZ_{K_{J_q}})$ we have
\begin{equation}\label{eq:inequality2}
\mathrm{ind}_3(\xi_q)\leqslant\binom{|J_q|}{2}-|J_q|\leqslant\tbinom{i-2}{2}-(i-2).
\end{equation}
Combining \eqref{eq:inequality2} and \eqref{eq:inequality1}, we obtain $|J_q|=i-2$ and $\mathrm{ind}_3(\xi_q)=\binom{i-2}{2}-(i-2)$. Since $|J_k|\leqslant i-2$ for all $J_k$ and $|J_q|=i-2$ is minimal, it follows that $|J_k|=i-2$ for all $J_k$, and then $\mathrm{ind}_3(\xi_k)=\binom{i-2}{2}-(i-2)$ for all $J_k$, by the same reasoning. 
Lemma \ref{lem:ind} \eqref{case:ii} then shows that $K_{J_k}$ is the boundary of an $(i-2)$-gon for all $J_k$.
\end{proof}

\section{Proof of Propsition \ref{prop:scc}}\label{app:proof of prop:scc}
\begin{lem}\label{lem:mf}
Let $K$ be a simplicial complex. Assume that $K_I$ is a full subcomplex that satisfies $\w H^0(K_I)\neq 0$.
Then for any vertex $i\in I$ and $0\neq\xi_I\in \w H^0(K_I)$, there is a two-element missing face $\omega\in MF(K_I)$ containing $i$ such that $\phi^*(\xi_I)\neq0$,
where $\phi:K_\omega\ha K_I$ is the inclusion.
\end{lem}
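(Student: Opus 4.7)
\textbf{Proof plan for Lemma \ref{lem:mf}.} The hypothesis $\w H^0(K_I)\neq 0$ simply says that the full subcomplex $K_I$ is disconnected. The plan is to represent $\xi_I$ by a locally constant function on $I$ and exhibit $j$ in a component of $K_I$ on which this function takes a different value from its value at $i$; then $\{i,j\}$ is automatically a missing face of $K_I$ because vertices in different components of a full subcomplex cannot span an edge.

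First I would write $K_I=C_1\sqcup\cdots\sqcup C_k$ with $k\geqslant 2$. A reduced $0$-cocycle on $K_I$ is exactly a function $f\colon I\to\kk$ that is constant on each $C_\ell$ (say $f\equiv a_\ell$ on $C_\ell$), and two such cocycles are cohomologous iff they differ by a global constant. Choose such a representative $f$ for $\xi_I$. Because $\xi_I\neq 0$, the tuple $(a_1,\dots,a_k)$ is not constant; say $i\in C_{\ell_0}$, and pick any index $\ell_1$ with $a_{\ell_1}\neq a_{\ell_0}$ and any vertex $j\in C_{\ell_1}$.

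Next I would verify that $\omega:=\{i,j\}\in MF(K_I)$. The singletons $\{i\},\{j\}$ lie in $K_I$ since $i,j\in I$. Because $i$ and $j$ are in different connected components of $K_I$, there is no edge between them in $K_I$; and as $K_I$ is a full subcomplex of $K$, this forces $\{i,j\}\notin K$, hence $\{i,j\}\notin K_I$. Thus $\omega$ is indeed a two-element missing face of $K_I$.

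Finally I would compute $\phi^*(\xi_I)$ directly. The complex $K_\omega$ consists of two isolated vertices, so $\w H^0(K_\omega)\cong \kk$ is represented by the reduced cocycle that records the difference of the values at the two vertices; under $\phi^*$, the class $[f]$ restricts to the class of $f|_{\{i,j\}}$, which in $\w H^0(K_\omega)$ equals (up to a sign convention) $a_{\ell_0}-a_{\ell_1}\neq 0$. Hence $\phi^*(\xi_I)\neq 0$, as required. There is no real obstacle in this argument; the only thing to keep in mind is that for two-element sets the definition of missing face reduces to ``non-edge'', which combined with the fullness of $K_I$ makes the combinatorial condition immediate once $j$ is chosen in a different component.
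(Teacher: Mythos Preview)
Your proof is correct and follows essentially the same approach as the paper: decompose $K_I$ into connected components, represent $\xi_I$ by a locally constant function, and pick a vertex $j$ in a component where the value differs from that at $i$. One minor remark: your appeal to the fullness of $K_I$ in $K$ is unnecessary---once $i$ and $j$ lie in different components of $K_I$, you already have $\{i,j\}\notin K_I$ directly, which is all that is needed for $\omega\in MF(K_I)$.
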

\begin{proof}
Suppose $K_{I_1},\dots,K_{I_k}$ are the path-components of $K_I$. Then we can write $\xi_I=\sum_{j=1}^k a_j\cdot\sum_{l\in I_j}\{l\}$, $a_j\in\kk$.
The assumption that $\xi_I\neq0$ implies that the coefficients $a_j$'s are not all equal. Thus if we assume $i\in I_s$, then we can take a component
$K_{I_t}$ with $a_s\neq a_t$. Pick a vertex $j\in I_t$, then $\omega=\{i,j\}\in MF(K_I)$ is the desired missing face.
\end{proof}

\begin{lem}\label{lem:ann}
Let $K$ be a flag complex which satisfies the SCC.
Suppose there is a sequence of different subsets $I_1,I_2,\dots,I_k\subset\VV(K)$ ($k\geqslant2$) satisfying $|I_1|=\cdots=|I_k|\leqslant 3$ and $\w H^0(K_{I_j})\neq0$ for $1\leqslant j\leqslant k$. Take a nonzero element $\xi_j\in\w H^0(K_{I_j})\subset H^*(\ZZ_K)$ for each $1\leqslant j\leqslant k$.
Then for $\mathbf{x}=\xi_1+\xi_2+\cdots+\xi_k\in H^*(\ZZ_K)$, we have
\[\dim_\kk(\mathrm{ann}(\xx))< \min\{\dim_\kk(\mathrm{ann}(\xi_j)): 1\leqslant j\leqslant k\}.\]
\end{lem}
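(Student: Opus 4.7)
The plan is to exploit the BHR decomposition of $H^*(\ZZ_K)$ together with the SCC to produce, for each index $l$, an element $\beta_l \in H^*(\ZZ_K)$ that multiplies nontrivially with $\xi_l$ but trivially with every other $\xi_{l'}$. Such ``decoupling'' elements furnish $k-1\geqslant 1$ linearly independent vectors inside each $\mathrm{ann}(\xi_j)$ lying outside $\mathrm{ann}(\xx)$, from which the strict dimension inequality will follow by a rank-nullity bookkeeping.

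First I would carry out a BHR-component analysis. Writing $\alpha=\sum_L\alpha_L$ with $\alpha_L\in \tilde H^*(K_L)$, the $M$-component of $\alpha\cdot\xx$ equals $\sum_{l:\,I_l\subseteq M}\alpha_{M\setminus I_l}\cdot\xi_l\in\tilde H^*(K_M)$, while that of $\alpha\cdot\xi_j$ reduces to $\alpha_{M\setminus I_j}\cdot\xi_j$. Thus $\mathrm{ann}(\xx)$ is cut out by linked constraints (over every $M$ containing more than one $I_l$), whereas $\mathrm{ann}(\xi_j)$ is cut out by unlinked ones. This also clarifies why a hypothesis preventing ``cancellations'' is necessary: without the SCC, a $4$-circuit already gives $\tilde\omega_1-\tilde\omega_2\in\mathrm{ann}(\xx)\setminus\mathrm{ann}(\xi_j)$ and inflates $\dim\mathrm{ann}(\xx)$ to equal $\dim\mathrm{ann}(\xi_j)$.

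To build $\beta_l$, I would invoke Lemma \ref{lem:mf} on $K_{I_l}$ to choose a missing face $\omega_l\in MF(K_{I_l})\subseteq MF(K)$ detecting $\xi_l$ (in the sense that the restriction of $\xi_l$ to $K_{\omega_l}$ is nonzero), and then apply the SCC to the pair $(\omega_l,v)$ for vertices $v\in\bigcup_{l'\neq l} I_{l'}\setminus\omega_l$, obtaining a full $S^1$-subcomplex and a disconnected companion $K_{J^{(l,v)}}$ with $\tilde H_0(K_{J^{(l,v)}})\neq 0$. A class built from these pieces and carefully combined (with signs tracked by Lemma \ref{lem:hoch}) yields $\beta_l\in H^*(\ZZ_K)$ whose BHR support is contained in a subset $L_l$ disjoint from $I_l$ but meeting every $I_{l'}$ for $l'\neq l$; such an $L_l$ exists because $|I_l|=|I_{l'}|$ and $I_l\neq I_{l'}$ force $I_{l'}\setminus I_l\neq\varnothing$. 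By the BHR product rule this ensures $\beta_l\cdot\xi_{l'}=0$ for $l'\neq l$, while the choice of $\omega_l$ together with the SCC output ensures $\beta_l\cdot\xi_l\neq 0$.

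Granting these $\beta_l$'s, linear independence follows because the $\beta_l\cdot\xi_l$ lie in distinct BHR components and are all nonzero, and each $\beta_l$ lies outside $\mathrm{ann}(\xx)$ since $\beta_l\cdot\xx=\beta_l\cdot\xi_l\neq 0$. For fixed $j$, the set $\{\beta_l:l\neq j\}$ provides $k-1$ independent vectors in $\mathrm{ann}(\xi_j)\setminus\mathrm{ann}(\xx)$; combining this with the inclusion $\bigcap_l\mathrm{ann}(\xi_l)\subseteq\mathrm{ann}(\xx)\cap\mathrm{ann}(\xi_j)$ and the identity $\dim V-\dim(V\cap W)=\dim((V+W)/W)$ applied to $V=\mathrm{ann}(\xi_j)$, $W=\mathrm{ann}(\xx)$ gives $\dim\mathrm{ann}(\xx)<\dim\mathrm{ann}(\xi_j)$. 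The chief obstacle is the third paragraph: converting the SCC's combinatorial output into a cohomology class with precisely the asserted cross-annihilation behavior, which requires a delicate use of Lemma \ref{lem:hoch} and a case split according to whether $|I_j|=2$ or $|I_j|=3$ (and, in the latter case, whether $K_{I_j}$ consists of three isolated vertices or of an isolated vertex plus an edge), since the form of $\omega_l$ and of the corresponding SCC witnesses depend on this combinatorial type.
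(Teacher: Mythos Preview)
There is a genuine gap in the final dimension-counting step, which you treat as routine. Producing $k-1$ vectors in $\mathrm{ann}(\xi_j)$ that are linearly independent modulo $\mathrm{ann}(\xx)$ only yields $\dim\big(\mathrm{ann}(\xi_j)/(\mathrm{ann}(\xi_j)\cap\mathrm{ann}(\xx))\big)\geqslant k-1$; it places no constraint on the part of $\mathrm{ann}(\xx)$ lying outside $\mathrm{ann}(\xi_j)$, and the trivial inclusion $\bigcap_l\mathrm{ann}(\xi_l)\subseteq\mathrm{ann}(\xx)\cap\mathrm{ann}(\xi_j)$ does not supply the missing upper bound on $\dim\big(\mathrm{ann}(\xx)/(\mathrm{ann}(\xx)\cap\mathrm{ann}(\xi_j))\big)$. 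The paper's mechanism is different: it first builds a componentwise complement $V_{\xi_1}=\bigoplus_J V_{\xi_1,J}$ to $\mathrm{ann}(\xi_1)$ inside the BHR decomposition and proves directly that $V_{\xi_1}\cap\mathrm{ann}(\xx)=0$ (this already gives the non-strict bound, as recorded in Remark~\ref{rem:ann general}); strictness then comes from exhibiting a \emph{single} $\lambda\in\mathrm{ann}(\xi_1)$ with $(V_{\xi_1}\oplus\kk\lambda)\cap\mathrm{ann}(\xx)=0$, i.e.\ $\xx\cdot(\eta+\lambda)\neq 0$ for every nonzero $\eta\in V_{\xi_1}$. That last verification---not merely $\xx\cdot\lambda\neq 0$---is what drives the case analysis on $|I_1|$ and $|I_1\cap I_2|$.

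Your decoupling element $\beta_l$ is also more demanding than what the paper needs, and although you correctly flag its construction as the chief obstacle, even granting it the argument would not close. You ask the support $L_l$ to meet \emph{every} $I_{l'}$ with $l'\neq l$, which for $k\geqslant 3$ requires fusing several SCC witnesses (one per vertex $v\in\bigcup_{l'\neq l}I_{l'}\setminus\omega_l$) into a single class while keeping $K_{L_l}$ disconnected and $\beta_l\cdot\xi_l\neq 0$; a single SCC application controls only one extra vertex. By contrast the paper's $\lambda$ need only have support meeting $I_1$ (so $\lambda\in\mathrm{ann}(\xi_1)$ by the BHR product rule) and satisfy $\xi_2\cdot\lambda\neq 0$, both obtained from one SCC application to $(\omega,i_1)$ with $\omega\in MF(K_{I_2})$ and $i_1\in I_1\setminus I_2$; all remaining effort goes into showing that no $\eta\in V_{\xi_1}$ can cancel this contribution in $\xx\cdot(\eta+\lambda)$, a BHR-component analysis that your sketch does not address.
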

\begin{proof}
We need to show that $\dim_\kk(\mathrm{ann}(\xx))<\dim_\kk(\mathrm{ann}(\xi_j))$ for all $j$, and without loss of generality we only prove the inequality for $\xi_1$.

First we define a subspace $V_{\xi_1}\subset H^*(\ZZ_K)$ as follows.
For each subset $J\subset[m]$, let $A_J=\mathrm{ann}(\xi_1)\cap\w H^*(K_J)$, and take a complementary subspace $V_{\xi_1,J}$ of $A_J$ in $\w H^*(K_J)$.
Then let $V_{\xi_1}=\bigoplus_{J\subset[m]}V_{\xi_1,J}$. We claim that $V_{\xi_1}\cap \mathrm{ann}(\xi_1)=0$ and $V_{\xi_1}\oplus\mathrm{ann}(\xi_1)=H^*(\ZZ_K)$.
To see this, for any $0\neq\eta\in V_{\xi_1}$, write $\eta=\sum\eta_{i}$, $0\neq\eta_{i}\in V_{\xi_1,J_i}$, $J_p\neq J_q$ for $p\neq q$. By the construction of $V_{\xi_1,J_i}$, we have $0\neq\xi_1\cdot\eta_i\in\w H^*(K_{I_1\cup J_i})$.
The {intersection rule of BHR} shows that $I_1\cap J_i=\emptyset$ for all such $J_i$, so $I_1\cup J_p\neq I_1\cup J_q$ for $p\neq q$, and this implies that $\xi_1\cdot\eta\neq0$. So  $V_{\xi_1}\cap \mathrm{ann}(\xi_1)=0$. The rest of the claim can be seen from the fact that
\[
\w H^*(\ZZ_K)=\bigoplus_{J\subset[m]}(V_{\xi_1,J}\oplus A_J)\ \text{ and }\ \bigoplus_{J\subset[m]}A_J\subset\mathrm{ann}(\xi_1).
\]

Next we show that for any $0\neq\eta\in V_{\xi_1}$, $\xx\cdot\eta\neq 0$. As before write $\eta=\sum\eta_{i}$, $0\neq\eta_{i}\in V_{\xi_1,J_i}$.
We will prove that $p_{I_1\cup J_1}(\xx\cdot\eta)\neq0$, where $p_{I_1\cup J_1}$ is the projection map in \eqref{eq:projection}.
Since $0\neq\xi_1\cdot\eta_1\in \w H^*(K_{I_1\cup J_1})$ and $I_1\cup J_1\neq I_1\cup J_q$ for $q\neq1$, if $p_{I_1\cup J_1}(\xx\cdot\eta)=0$ there would be $p,q\neq 1$ such that $I_p\cup J_q=I_{1}\cup J_1$ and $\xi_p\cdot\eta_q\neq0$. Combining this with the fact that $I_1\neq I_p$, we get $I_1\cap J_q\neq\emptyset$.
However, this implies that $\xi_1\cdot\eta_q=0$, contradicting the fact that $0\neq\eta_{q}\in V_{\xi_1,J_q}$.

\begin{rem}\label{rem:ann general}
In fact, the arguments above show a general fact. That is, for any simplicial complex $K$,
an element $\mathbf{x}=\xi_1+\xi_2+\cdots+\xi_k\in H^*(\ZZ_K)$ with $0\neq\xi_j\in\w H^*(K_{I_j})$, {$I_i\not\subset I_j$ for $i\neq j$}, has the property that
\[
\dim_\kk(\mathrm{ann}(\xx))\leqslant \mathrm{min}\{\dim_\kk(\mathrm{ann}(\xi_i)):1\leqslant i\leqslant k\}.
\]
\end{rem}

Now suppose the hypotheses of the lemma is satisfied. If we can find an element $\lambda\in \mathrm{ann}(\xi_1)$,
such that for any nonzero element $\eta\in V_{\xi_1}\oplus\kk\cdot\lambda$, $\xx\cdot\eta\neq 0$, then we can get the desired inequality
\[\dim_\kk(\mathrm{ann}(\xx))<\dim_\kk(\mathrm{ann}(\xi_1)).\]
The rest of the proof is devoted to a search of such an element.

To do this, we start with establishing a few preliminary facts. Choose a vertex $i_2\in I_2$, $i_2\not\in I_1$. By Lemma \ref{lem:mf} there is a missing face $\omega\in MF(K_{I_2})$ containing $i_2$ such that $0\neq f^*(\xi_2)\in\w H^0(K_{\omega})$ for $f:K_\omega\ha K_{I_2}$. Choosing a vertex $i_1\in I_1\setminus I_2$, the assumption that $K$ satisfies the SCC shows that there is an subset $I\subset[m]$ such that $\omega\subset I$, $i_1\notin I$, $|K_I|\cong S^1$ and $\w H^0(K_{J})\neq0$ for $J=\{i_1\}\cup I\setminus\omega$.
There are simplicial inclusions:
\[\begin{split}
K_\omega\stackrel{\phi_1}{\ha} K_{I_2\setminus J}\stackrel{\phi_2}{\ha} K_{I_2},\ K_{I_2\cap I}\stackrel{\phi_3}{\ha} K_{I_2};\\
K_{I\setminus I_2}\stackrel{\psi_1}{\ha} K_{J\setminus I_2}\stackrel{\psi_2}{\ha} K_J,\  K_{I\setminus\omega}\stackrel{\psi_3}{\ha} K_J.
\end{split}\]
Let $\xi_J\in \w H^0(K_{J})$ be an element such that $\psi^*_3(\xi_J)\neq0$.
We claim that
\begin{equation}\label{eq:ann-1}
\xi_2\cdot\psi^*_2(\xi_J),\quad \xi_2\cdot\psi^*_1\circ\psi^*_2(\xi_J)\neq0.
\end{equation}
To deduce \eqref{eq:ann-1}, let $h: K_I\ha K_{I_2\cup J}$.
Then from the following commutative diagramm:
\[\begin{gathered}\xymatrix{
	\w H^*(K_{I_2\setminus J})\otimes\w H^*(K_{J})\ar[r]\ar[d]^{\phi_1^*\otimes\psi_3^*}&\w H^*(K_{I_2\cup J})\ar[d]^{h^*}\\
	\w H^*(K_{\omega})\otimes\w H^*(K_{I\setminus\omega})\ar[r]&\w H^*(K_{I})
}\end{gathered},\]
where the horizontal maps are the product maps in BHR, we have \[h^*(\phi_2^*(\xi_2)\cdot\xi_J)=\phi_1^*\circ\phi_2^*(\xi_2)\cdot\psi_3^*(\xi_J)\] 
Since $0\neq f^*(\xi_2)=\phi_1^*\circ\phi_2^*(\xi_2)\in\w H^0(K_\omega)\cong\kk$, $0\neq\psi_3^*(\xi_J)\in\w H^0(K_{I\setminus\omega})\cong\kk$ and $H^*(\ZZ_{K_I})$ is a Poincar\'e duality algebra, we have $\phi_1^*\circ\phi_2^*(\xi_2)\cdot\psi_3^*(\xi_J)\neq0$.
It follows that $\phi_2^*(\xi_2)\cdot\xi_J\neq0$, and so $\xi_2\cdot\psi^*_2(\xi_J)\neq0$ since $\xi_2\cdot\psi^*_2(\xi_J)=\phi_2^*(\xi_2)\cdot\xi_J$ by Lemma \ref{lem:hoch}.
Furthermore, note that $h$ is the composition
$K_I\ha K_{I_2\cup I}\stackrel{g}{\ha} K_{I_2\cup J}$, so $h^*(\xi_2\cdot\psi^*_2(\xi_J))\neq0$ implies $g^*(\xi_2\cdot\psi^*_2(\xi_J))=\xi_2\cdot\psi^*_1\circ\psi^*_2(\xi_J)\neq0$.
Thus \eqref{eq:ann-1} holds.

Now we can choose the desired element $\lambda\in \mathrm{ann}(\xi_1)$ in different cases.

(i) $|I_1|=2$. In this case, $I_2=\omega$. Let $\lambda=\xi_J\in\w H^0(K_J)$.
Since $I_1\cap J\neq\emptyset$, $\lambda\in \mathrm{ann}(\xi_1)$.
\eqref{eq:ann-1} says that $0\neq\xi_2\cdot\lambda\in\w H^1(K_{I_2\cup J})$. Then from the fact that $I_p\neq I_q$ for $p\neq q$ we deduce that $p_{I_2\cup J}(\xx\cdot\lambda)\neq0$, and so $\xx\cdot\lambda\neq0$.

It remains to show that $\xx\cdot(\eta+\lambda)\neq0$ for any $0\neq\eta\in V_{\xi_1}$.
To see this, write
\[\eta=\sum_{i=1}^l\eta_{i},\quad 0\neq\eta_{i}\in V_{\xi_1,J_i}.\]
 It suffices to show that $p_{I_2\cup J}(\xi_j\cdot\eta_i)=0$ for any $1\leqslant j\leqslant k,1\leqslant i\leqslant l$. Indeed, $\xi_1\cdot\eta_i\neq0$ gives $I_1\cap J_i=\emptyset$, so $i_1\not\in J_i$.
This, together with the fact that $I_2\cup J=\{i_1\}\cup I$ and $|I_j|=2$, implies that if $p_{I_2\cup J}(\xi_j\cdot\eta_i)\neq0$ then $J_i\subset I$ and $|I\setminus J_i|=1$. But this would give $|K_{J_i}|\cong D^1$ since $|K_I|\cong S^1$, contradicting $0\neq\eta_i\in\w H^*(K_{J_i})$.
Hence $\lambda$ is the desired element.

(ii) $|I_1|=3$ and $|I_1\cap I_2|=2$, say $I_1=\{i_1,i_3,i_4\}$, $I_2=\{i_2,i_3,i_4\}$, and  $\omega=\{i_2,i_3\}$.
First we consider the case $i_4\not\in I$, i.e., $I_2\cap J=\emptyset$. In this case, take $\lambda=\xi_J$, then we have $\lambda\in \mathrm{ann}(\xi_1)$ and $\xx\cdot\lambda\neq0$ as in the first paragraph of  case (i). Also, for any $0\neq\eta=\sum_{i=1}^l\eta_{i}\in V_{\xi_1}$,  $0\neq\eta_{i}\in V_{\xi_1,J_i}$, we have $p_{I_2\cup J}(\xi_j\cdot\eta_i)=0$, $1\leqslant j\leqslant k,1\leqslant i\leqslant l$. This is because if $p_{I_2\cup J}(\xi_j\cdot\eta_i)\neq 0$ so that $I_j\sqcup J_i=I_2\cup J$, then  $J_i\subset I$ and $|I\setminus J_i|=1$ by the fact that $J_i\cap I_1=\emptyset$, $I_2\cup J=\{i_1,i_4\}\sqcup I=I_1\cup I$,  $|I_j|=3$, and we reach a contradiction as in the second paragraph of  case (i). So  $\lambda$ is the desired element.

For the case $i_4\in I$, i.e., $I_2\subset I$, let $\lambda=\psi^*_1\circ\psi^*_2(\xi_J)\in\w H^0(K_{I\setminus I_2})$.
First let us show that $\lambda\in\mathrm{ann}(\xi_1)$.
Note that $K_{I\setminus\omega}$ has two components, say $K_1$ and $K_2$.
Since $0\neq\psi^*_3(\xi_J)\in\w H^0(K_{I\setminus\omega})$, we may assume $\{i_1,v\}\in MF(K)$ for all $v\in\VV(K_1)$, and $\sum_{v\in\VV(K_1)}\{v\}^*$ represents the cohomology class $\xi_J\in\w H^0(K_J)$, so that $\lambda=\sum_{v\in\VV(K_1)\setminus\{i_4\}}\{v\}^*$.
From the intersection rule of BHR it follows that $\xi_1\cdot\lambda\in \w H^1(K_{J\cup\{i_3\}})$ is represented by a cocycle of the form
\[[\alpha]=\sum_{\sigma_i\in K_U,|\sigma_i|=2} a_i\cdot\sigma_i^*,\ \text{ where }a_i\in\kk,\ U=\VV(K_1)\cup\{i_3\}.\]
{Since $\{i_1,v\}\in MF(K)$ for all $v\in\VV(K_1)$,  $K_{J\cup\{i_3\}}$ deformation retracts to $K_{J\setminus\VV(K_1)}$ by retracting $K_U$, which is a triangulation of $D^1$, to the endpoint $\{v_3\}$.}
This implies that $\xi_1\cdot\lambda$ is a zero class in $\w H^1(K_{J\cup\{i_3\}})$ since $i^*([\alpha])=0$ for $i:K_{J\setminus\VV(K_1)}\to K_{J\cup\{i_3\}}$. So $\lambda\in\mathrm{ann}(\xi_1)$.

Since $0\neq\xi_2\cdot\lambda\in\w H^1(K_I)$ by \eqref{eq:ann-1} and $I_p\neq I_q$ for $p\neq q$, it follows that $p_I(\xx\cdot\lambda)\neq0$. Hence $\xx\cdot\lambda\neq0$, and it remains to show that $\xx\cdot(\eta+\lambda)\neq0$ for any $0\neq\eta\in V_{\xi_1}$.
Suppose on the contrary that $\xx\cdot(\eta+\lambda)=0$ for some $\eta=\sum\eta_{i}$, $0\neq\eta_{i}\in V_{\xi_1,J_i}$.
 From the discussion in the third paragraph of the proof of this lemma we see that $p_{I_1\cup J_1}(\xx\cdot\eta)\neq0$, so the fact that $\xx\cdot(\eta+\lambda)=0$ and $\lambda\in\mathrm{ann}(\xi_1)$ implies that there exists $\xi_p$ ($p\neq1$) such that $p_{I_1\cup J_1}(\xi_p\cdot\lambda)\neq0$, and therefore $I_1\cup J_1=I_p\cup (I\setminus I_2)$. However this, together with the fact that $I_1\cap (I\setminus I_2)=\emptyset$, shows that $I_1\subset I_p$, and so $I_1=I_p$ since $|I_p|=|I_1|$, contradicting the assumption of the lemma.
So $\lambda$ is the desired element.

(iii) $|I_1|=3$ and $|I_1\cap I_2|\leqslant1$. Let
\begin{equation}\label{eq:cases}
\lambda=
\begin{cases}\psi^*_2(\xi_J)\in\w H^0(K_{J\setminus I_2})\ &\text{ if } I_1\cap (I\setminus I_2)=\emptyset;\\
\psi^*_1\circ\psi^*_2(\xi_J)\in\w H^0(K_{I\setminus I_2})\ &\text{ otherwise. }
\end{cases}\end{equation}
Then in both cases $\lambda\in\mathrm{ann}(\xi_1)$. If $I_1\cap (I\setminus I_2)=\emptyset$ (resp. $I_1\cap (I\setminus I_2)\neq\emptyset$), we have $p_{I_2\cup J}(\xx\cdot\lambda)=\xi_2\cdot\lambda\neq0$ (resp. $p_{I_2\cup I}(\xx\cdot\lambda)=\xi_2\cdot\lambda\neq0$) by \eqref{eq:ann-1} and the fact that $I_p\neq I_q$ for $p\neq q$.
So if $\xx\cdot(\eta+\lambda)\neq0$ for any $0\neq\eta\in V_{\xi_1}$, then $\lambda$ is the desired element.
We will show this by reducing to case (ii).
The argument will be carried out for both cases of \eqref{eq:cases} simultaneously.

In the first (resp. second) case, the condition $p_{I_2\cup J}(\xx\cdot\lambda)\neq0$ (resp. $p_{I_2\cup I}(\xx\cdot\lambda)\neq0$) implies that if
$\xx\cdot(\eta+\lambda)=0$ for some  $\eta=\sum_{i=1}^l\eta_{i}$ with $0\neq\eta_{i}\in V_{\xi_1,J_i}$, then there would be $\xi_p$ and $\eta_q$ such that $p_{I_2\cup J}(\xi_p\cdot\eta_q)\neq0$  (resp. $p_{I_2\cup I}(\xi_p\cdot\eta_q)\neq0$), so that $I_2\cup J=I_p\cup J_q$ (resp. $I_2\cup I=I_p\cup J_q$) and $I_p\cap J_q=\emptyset$. This implies that
\begin{equation}\label{eq:ann-3}
	J\setminus (I_2\cup J_q)=I_p\setminus I_2\ \text{ (resp. $I\setminus (I_2\cup J_q)=I_p\setminus I_2$)}.
\end{equation}
Furthermore, since $p_{I_1\cup J_q}(\xx\cdot\eta)\neq0$, there would be $\xi_r$ such that $p_{I_1\cup J_q}(\xi_r\cdot\lambda)\neq0$. 
Therefore we have
\begin{gather*}
	I_r\cap (J\setminus I_2)=\emptyset\ \text{ and }\ I_r\cup (J\setminus I_2)=I_1\cup J_q \\
	(\text{resp.}\ I_r\cap (I\setminus I_2)=\emptyset\  \text{ and}\ I_r\cup (I\setminus I_2)=I_1\cup J_q).
\end{gather*}
It follows that
\begin{equation}\label{eq:ann-4}
	J\setminus (I_2\cup J_q)=I_1\setminus I_r\ \text{ (resp. $I\setminus (I_2\cup J_q)=I_1\setminus I_r$).}
\end{equation}

It is clear that $I_1\not\subset I_2\cup J$ since $|I_1\cap I_2|\leqslant1$ and $I_1\cap (I\setminus I_2)=\emptyset$ (resp. $I_1\not\subset I_2\cup I$ since $i_1\not\in I_2\cup I$), so $p\neq1$. Moreover, since $i_1\in I_2\cup J$ and $i_1\not\in I_2\cup J_q$ (resp. $I_1\cap (I\setminus I_2)\neq\emptyset$ and $I_1\cap J_q=\emptyset$), $p\neq2$. 
By \eqref{eq:ann-3} and \eqref{eq:ann-4}, $I_p\setminus I_2\subset I_1$, and so $1\leqslant|I_p\setminus I_2|\leqslant2$ since $p\neq1,2$.
If $|I_p\setminus I_2|=2$, then $|I_1\cap I_p|=2$, since $I_p\setminus I_2\subset I_1$ and $p\neq1$. So we can reduce to case (ii) by substituting $I_p$ for $I_2$. If $|I_p\setminus I_2|=1$, then $|I_1\setminus I_r|=1$  by \eqref{eq:ann-3} and \eqref{eq:ann-4}. Hence $|I_1\cap I_r|=2$, and we can reduce to case (ii) again  by substituting $I_r$ for $I_2$.

The cases above exhaust all possibilities. Thus we complete the proof of Lemma \ref{lem:ann}.
\end{proof}

Now we start the proof of Propsition \ref{prop:scc}. For Propsition \ref{prop:scc} \eqref{st:a}, we separate the proof into two cases: $|I|=2$ and  $|I|=3$. The first case is easier.
\begin{proof}[Proof of Propsition \ref{prop:scc} \eqref{st:a} for $|I|=2$]
Note that $\bigoplus_{|I|=2}\w H^0(K_I)=H^3(\ZZ_K)$, so since $\phi$ is a graded isomorphism, for a nonzero element $\xi_I\in\w H^0(K_I)$ we have \[\phi(\xi_I)=\sum_{|J|=2}\xi'_J,\ \xi'_J\in\w H^0(K'_J).\]
The first part of statement \eqref{st:a} says that there is exactly one term $\xi'_J\neq0$ in the formula above.
Suppose on the contrary that $\phi(\xi_I)=\sum_{i=1}^k\xi'_{J_i}$ with $k>1$ and $\xi'_{J_i}\neq0$ for $1\leqslant i\leqslant k$. Since $\w H^0(K_I)\cong\kk$,
one of $\xi'_{J_i}$, say $\xi'_{J_1}$, satisfies $\phi^{-1}(\xi'_{J_1})=\xi_I+\sum_{j=1}^s\xi_{I_j}$ ($s\geqslant 1$) up to a multiplication by a nonzero element of $\kk$,
in which $I\neq I_j\in MF(K)$  and $\xi_{I_j}\neq0$ for $1\leqslant j\leqslant s$.
Since $H^*(\ZZ_K)\cong H^*(\ZZ_{K'})$, according to Lemma \ref{lem:ann} and Remark \ref{rem:ann general} we have
\begin{equation}\label{eq:1}
\begin{split}
\dim_\kk(\mathrm{ann}(\xi_I))=\dim_\kk(\mathrm{ann}(\sum_{i=1}^k\xi'_{J_i}))\leqslant\dim_\kk(\mathrm{ann}(\xi'_{J_1}))\\
=\dim_\kk(\mathrm{ann}(\xi_I+\sum_{j=1}^s\xi_{I_j}))<\dim_\kk(\mathrm{ann}(\xi_I)),
\end{split}
\end{equation}
This is a contradiction, so the first part of statement \eqref{st:a} holds for $|I|=2$. Since $|MF(K)|=\dim_\kk H^3(\ZZ_K)=\dim_\kk H^3(\ZZ_{K'})=|MF(K')|$, the second part follows.
\end{proof}

We can use the conclusion of \eqref{st:a} for $|I|=2$ to prove Propsition \ref{prop:scc} \eqref{st:c}, which is an essential ingredient in the proof of \eqref{st:a} for the case $|I|=3$.

\begin{proof}[Proof of Propsition \ref{prop:scc} \eqref{st:c}]
Suppose $K_I$ is an $n$-circuit and $MF(K_I)=\{\omega_1,\dots,\omega_t\}$ (clearly $t=\binom{n}{2}-n$). Let $c_I$ be a generator of $\w H^1(K_I)=\kk$. According to Corollary \ref{cor:ind}, $\phi(c_I)=\sum c_{J_i}'$, where $0\neq c_{J_i}'\in\w H^1(K_{J_i}')$ and $K_{J_i}'$ is an $n$-circuit of $K'$.
It is easy to see that $\w\omega_j$ ($1\leqslant j\leqslant t$) is a factor of $c_I$.
The conclusion of \eqref{st:a} for $|I|=2$ shows that $\phi(\w\omega_j)=\w\omega'_j$  for some $\omega_j'\in MF(K')$.
It follows that $\w\omega'_j$ is a factor of $c_{J_i}'$ for each $c_{J_i}'$, and therefore $\omega_j'\in MF(K'_{J_i})$.
Since $K'_{J_i}$ is an $n$-circuit, $|MF(K'_{J_i})|=\binom{n}{2}-n=t$, thus we have $MF(K'_{J_i})=\{\omega_1'\dots,\omega_t'\}$ for each $K'_{J_i}$.
The fact that a simplicial complex is uniquely determined by its missing face set shows that there is exactly one $c_{J_i}'$ in the formula for $\phi(c_I)$, which is just the statement of \eqref{st:c}.
\end{proof}

The proof of Propsition \ref{prop:scc} \eqref{st:a} for $|I|=3$ will consist of the following two lemmas.

\begin{lem}\label{lem:|I|=3}
Under the hypotheses of Proposition \ref{prop:scc}, if $\w H^0(K_I)\neq0$ for some $I\subset\VV(K)$, $|I|=3$, then $\phi(\w H^0(K_I))\subset\w H^0(K'_{J})$ for some $J\subset\VV(K')$, $|J|=3$.
\end{lem}
\begin{proof}
It is easy to see that for any simplicial complex $\Gamma$, $H^4(\ZZ_\Gamma)$ is isomorphic to the subgroup $\bigoplus_{|I|=3}\w H^0(\Gamma_I)$ by Hochster's formula. Hence for a nonzero element $\xi_I\in\w H^0(K_I)$, we can write $\phi(\xi_I)=\sum_{i=1}^k\xi'_{J_i}$, in which $0\neq\xi'_{J_i}\in\w H^0(K'_{J_i})$, $J_i\neq J_j$ for $i\neq j$, $|J_i|=3$ for $1\leqslant i\leqslant k$. The statement of the lemma is equivalent to saying that $k=1$ in the formula above.
Suppose on the contrary that $k\geqslant2$. Note that $\dim_\kk\w H^0(K_I)\leqslant 2$. If $\w H^0(K_I)=\kk$, using \eqref{eq:1} we can get a contradiction as in the proof of Proposition \textbf{\ref{prop:scc} \eqref{st:a} for $|I|=2$}.
So we only consider the case $\w H^0(K_I)=\kk\oplus\kk$.
In this case, first let us prove the following statement:
There exists a basis $\{\xi_1,\xi_2\}$ of $\w H^0(K_I)$ such that
$\phi(\xi_i)\in\w H^0(K_{J_i}')$ for some $J_i\subset\VV(K')$, $i=1,2$. Here we allow $J_1=J_2$.

To see this, take $\theta_1\in\w H^0(K_I)$ to be a nonzero element such that
\[\dim_\kk(\mathrm{ann}(\theta_1))=m_1:=\max\{\dim_\kk(\mathrm{ann}(\theta))\mid0\neq\theta\in\w H^0(K_I)\},\]
and take $\theta_2\in\w H^0(K_I)$ to be a nonzero element such that
\[\dim_\kk(\mathrm{ann}(\theta_2))=m_2:=\max\{\dim_\kk(\mathrm{ann}(\theta))\mid\theta\in\w H^0(K_I),\,\theta\not\in\kk\cdot\theta_1\}.\]
If there exists $J_i\subset\VV(K')$ such that $\phi(\theta_i)\in\w H^0(K_{J_i}')$, $i=1,2$, then $\{\theta_1,\theta_2\}$ is the desired basis.
If $\phi(\theta_1)=\sum_{i=1}^p\theta_i'$ ($p\geqslant2$), where $0\neq\theta'_i\in \w H^0(K_{J_i}')$, $J_i\neq J_j$ for $i\neq j$,
then either there exists a $\theta'_i$ such that $\phi^{-1}(\theta'_i)=\theta_1+\sum_{j=1}^q\xi_{I_j}$ ($q\geqslant 1$, $I_j\neq I$) up to multiplication by a nonzero element of $\kk$, which is impossible because of the inequality \eqref{eq:1};
or there are two $\theta'_{i_1}$ and $\theta'_{i_2}$ such that $\xi_1=p_I\circ\phi^{-1}(\theta'_{i_1})$ and $\xi_2=p_I\circ\phi^{-1}(\theta'_{i_2})$ span $\w H^0(K_I)$, and $\xi_1,\xi_2\not\in\kk\cdot\theta_1$.
Actually it can be shown that $p_J\circ\phi^{-1}(\theta'_{i_j})=0$ ($j=1,2$) for any $J\neq I$, since otherwise by applying \eqref{eq:1} again we would have
\[\dim_\kk(\mathrm{ann}(\theta_1))<\dim_\kk(\mathrm{ann}(\xi_1)),\,\dim_\kk(\mathrm{ann}(\xi_2)),\]
contradicting the maximality of $m_1$. Thus $\{\xi_1,\xi_2\}$ is the desired basis.

The remaining case is that $\phi(\theta_1)\in\w H^0(K_J')$ for some $J\subset\VV(K')$,
and $\phi(\theta_2)=\sum_{i=1}^p\theta_i'$ ($p\geqslant2$) with $0\neq\theta'_i\in \w H^0(K_{J_i}')$, $J_i\neq J_j$ for $i\neq j$.
By the same reasoning there are two $\theta'_{i_1}$ and $\theta'_{i_2}$ such that $\xi_1=p_I\circ\phi^{-1}(\theta'_{i_1})$ and $\xi_2=p_I\circ\phi^{-1}(\theta'_{i_2})$ span $\w H^0(K_I)$.
Since $\xi_1$ and $\xi_2$ are linearly independent, one of them is not in $\kk\cdot\theta_1$, say $\xi_1\not\in\kk\cdot\theta_1$.
Then $p_J\circ\phi^{-1}(\theta'_{i_1})=0$ for any $J\neq I$ since otherwise by \eqref{eq:1} we would have $\dim_\kk(\mathrm{ann}(\theta_2))<\dim_\kk(\mathrm{ann}(\xi_1))$,
contradicting the maximality of $m_2$. Hence we can choose $\{\theta_1,\xi_1\}$ as the desired basis. So the statement is true.

Returning now to the proof of the lemma for $\w H^0(K_I)=\kk\oplus\kk$, it is shown above that $k\leqslant2$ in the expression $\phi(\xi_I)=\sum_{i=1}^k\xi'_{J_i}$.  So we only need to exclude the possibility of $k=2$.
Suppose $I=\{i_1,i_2,i_3\}$ and $k=2$ is the case in the formula above. Then without loss of generality we may assume that the cohomology classes
$\phi^{-1}(\xi_{J_i}')$ have the following form:
\[\phi^{-1}(\xi_{J_i}')=a_{i,1}\{i_1\}^*+a_{i,2}\{i_2\}^*\in\w H^0(K_I),\ a_{i,1}\neq a_{i,2}\in\kk,\ i=1,2.\]
This assumption is reasonable because if we write $\phi^{-1}(\xi_{J_i}')=\sum_{k=1}^3 a_{i,k}\{i_k\}^*$, and set $S=\{(1,2),(1,3),(2,3)\}$, $S_i=\{(j,k)\in S:a_{i,j}\neq a_{i,k}\}$ for $i=1,2$, then $|S_i|\geqslant 2$ since $\phi^{-1}(\xi_{J_i}')$, $i=1,2$, are nonzero reduced cohomology classes, and so $S_1\cap S_2\neq\emptyset$, which we may assume to be $(1,2)$, hence substracting $a_{i,3}(\{i_1\}^*+\{i_2\}^*+\{i_3\}^*)$ gives the desired expression of $\phi^{-1}(\xi_{J_i}')$.

Let $\omega=\{i_1,i_2\}$.
Since $K$ satisfies the SCC, there is a subset $U\subset \VV(K)$ such that $\omega\subset U$, $i_3\notin U$, $|K_U|\cong S^1$ and $\w H^0(K_{W})\neq0$ for $W=\{i_3\}\cup U\setminus\omega$.
Let $c_U$ be a generator of $\w H^1(K_U)\cong\kk$. We have shown in \eqref{st:c} that $\phi(c_U)=c_V'\in\w H^1(K'_V)$
for some circuit $K'_V\subset K'$.
Note that $c_U=\w\omega\cdot\xi_{U\setminus\omega}$ for some $\xi_{U\setminus\omega}\in\w H^0(K_{U\setminus\omega})\cong\kk$.
So $c_V'=\phi(\w\omega)\cdot\phi(\xi_{U\setminus\omega})$. According to the conclusion of (i), $\phi({\w\omega})=\w\omega'$
for some $\omega'\in MF(K'_V)$,
therefore $0\neq\ p_{V\setminus\omega'}\circ \phi({\xi_{U\setminus\omega}})\in \w H^0(K'_{V\setminus\omega'}).$
Let $\xi_{V\setminus\omega'}'=p_{V\setminus\omega'}\circ \phi({\xi_{U\setminus\omega}})$.
The same reasoning shows that $p_{U\setminus\omega}\circ\phi^{-1}(\xi_{V\setminus\omega'}')=\xi_{U\setminus\omega}$.

Let $f: K_\omega\to K_I$ and $h:K_U\to K_{I\cup U}$ be the simplicial inclusions. The fact that $a_{i,1}\neq a_{i,2}$ implies that $f^*\circ \phi^{-1}(\xi_{J_i}')=a\w\omega$ for some $0\neq a\in\kk$. Thus $h^*(\phi^{-1}(\xi_{J_i}')\cdot\xi_{U\setminus\omega})=a\w\omega\cdot\xi_{U\setminus\omega}\neq0$, and so
$\phi^{-1}(\xi_{J_i}')\cdot\xi_{U\setminus\omega}\neq0$ for $i=1,2$.
It follows that $\phi^{-1}(\xi_{J_i}'\cdot\xi_{V\setminus\omega'}')\neq0$
since \[p_{I\cup U}\circ\phi^{-1}(\xi_{J_i}'\cdot\xi_{V\setminus\omega'}')=p_{I\cup U}(\phi^{-1}(\xi_{J_i}')\cdot\phi^{-1}(\xi_{V\setminus\omega'}'))=\phi^{-1}(\xi_{J_i}')\cdot\xi_{U\setminus\omega}.\]
The second equality comes from the fact that $\phi^{-1}(\xi_{J_i}')\in\w H^0(K_I)$.
Hence we have $\xi_{J_i}'\cdot\xi_{V\setminus\omega'}'\neq0$, $J_i\cap (V\setminus\omega')=\emptyset$ for $i=1,2$. Let
\begin{align*}
\theta_I&=(a_{2,1}-a_{2,2})\phi^{-1}(\xi_{J_1}')-(a_{1,1}-a_{1,2})\phi^{-1}(\xi_{J_2}')\\
&=(a_{1,2}a_{2,1}-a_{1,1}a_{2,2})(\{i_1\}^*+\{i_2\}^*)\sim (a_{1,1}a_{2,2}-a_{1,2}a_{2,1})\{i_3\}^*\in\w H^0(K_I).
\end{align*}
Since $\w H^0(K_{W})\neq0$ implies that one of the two components of $K_{U\setminus\omega}$ is disjoint from $\{i_3\}$, it is easy to see that $\theta_I\cdot\xi_{U\setminus\omega}=0$ by choosing the representation of $\xi_{U\setminus\omega}$ to be a linear combination of dual simplices in this component of $K_{U\setminus\omega}$.
Combining this with the fact that $J_1\neq J_2$ and $J_i\cap (V\setminus\omega')=\emptyset$ for $i=1,2$, we obtain
\[p_{J_1\cup(V\setminus\omega')}\circ\phi(\theta_I\cdot\xi_{U\setminus\omega})=(a_{2,1}-a_{2,2})\cdot\xi_{J_1}'\cdot\xi_{V\setminus\omega'}'=0.\]
This is a contradiction since $a_{2,1}-a_{2,2}\neq0$ by assumption and $\xi_{J_1}'\cdot\xi_{V\setminus\omega'}'\neq0$.
Thus $k$ must equal to one and the proof is finished.
\end{proof}

Before giving the next lemma, we use Lemma \ref{lem:|I|=3} to prove Propsition \ref{prop:scc} \eqref{st:b}.
\begin{proof}[Proof of Propsition \ref{prop:scc} \eqref{st:b}]
Let $\xi_I\in \w H^0(K_I)$ be an element such that $f^*(\xi_I)=\w\omega$ for $f:K_\omega\to K_I$, and set $\xi_J'=\phi(\xi_I)\in\w H^0(K_J')$.
Suppose $I\setminus\omega=\{i_k\}$. Then by the assumption that $K$ satisfies the SCC there is a subset $U\subset \VV(K)$ such that $\omega\subset U$, $i_k\notin U$, $|K_U|\cong S^1$ and $\w H^0(K_{W})\neq0$ for $W=\{i_k\}\cup U\setminus\omega$.
Let $c_U$ be a generator of $\w H^1(K_U)\cong\kk$; $c_V'=\phi(c_U)\in\w H^1(K'_V)$ for some circuit $K'_V\subset K'$;
$\xi_{U\setminus\omega}$ be a generator of $\w H^0(K_{U\setminus\omega})\cong\kk$;
$\xi_W\in \w H^0(K_W)$ be an element such that $g^*(\xi_W)=\xi_{U\setminus\omega}$ for $g:K_{U\setminus\omega}\to K_W$.
Then $\xi_I\cdot\xi_{U\setminus\omega}=\w\omega\cdot\xi_W$ by Lemma \ref{lem:hoch}.

As we have seen in the proof of Lemma \ref{lem:|I|=3}, the element $\xi_{V\setminus\omega'}':=p_{V\setminus\omega'}\circ \phi({\xi_{U\setminus\omega}})$ is nonzero and $p_{U\setminus\omega}\circ\phi^{-1}(\xi_{V\setminus\omega'}')=\xi_{U\setminus\omega}$. Also, since $h^*(\xi_I\cdot\xi_{U\setminus\omega})=\w\omega\cdot\xi_{U\setminus\omega}\neq 0$ for $h:K_U\to K_{I\cup U}$, we have $\xi_I\cdot\xi_{U\setminus\omega}\neq 0$. Hence from
$p_{I\cup U}\circ\phi^{-1}(\xi_J'\cdot\xi_{V\setminus\omega'}')=\xi_I\cdot\xi_{U\setminus\omega}$,
it follows that $\xi_J'\cdot\xi_{V\setminus\omega'}'\neq0$.
Thus we have \[p_{J\cup(V\setminus\omega')}\circ\phi(\w\omega\cdot\xi_W)=p_{J\cup(V\setminus\omega')}\circ\phi(\xi_I\cdot\xi_{U\setminus\omega})=\xi_J'\cdot\xi_{V\setminus\omega'}'\neq0.\]
This implies that $\omega'\subset J$. Note that this argument is valid for all $\omega\in MF(K_I)$. Then we get the desired result.
\end{proof}
The following lemma is the inverese of Lemma \ref{lem:|I|=3}.
\begin{lem}\label{lem:inverse}
	Under the hypotheses of Proposition \ref{prop:scc}, if $\w H^0(K_J')\neq0$ for some $J\subset\VV(K')$, $|J|=3$, then $\phi^{-1}(\w H^0(K_J'))\subset\w H^0(K_{I})$ for some $I\subset\VV(K)$, $|I|=3$.
\end{lem}
\begin{proof}
Suppose $\phi^{-1}(\xi_J')=\sum_{j=1}^k\xi_{I_j}$, where $0\neq\xi_{I_j}\in\w H^0(K_{I_j})$, $I_i\neq I_j$ for $i\neq j$, $|I_j|=3$ for $1\leqslant j\leqslant k$.
Let $S=\{j\in[k]:\phi(\xi_{I_j})\not\in\w H^0(K_J')\}$. Then Lemma \ref{lem:|I|=3} says that $p_J\circ\phi(\xi_{I_j})=0$ for $j\in S$,  and $\phi(\xi_{I_j})\in \w H^0(K_J')$ for $j\in [k]\setminus S$. Hence $S=\emptyset$, and $\phi(\xi_{I_j})\in \w H^0(K_J')$ for all $1\leqslant j\leqslant k$.
It follows that $k\leqslant 2$ since $\dim_\kk\w H^0(K_J')\leqslant2$.
If $\w H^0(K_J')=\kk$ the statement  obviously holds.
So we only consider the case that 
$\w H^0(K_J')=\kk\oplus\kk$.
Suppose on the contrary that $\phi^{-1}(\xi_J')=\xi_{I_1}+\xi_{I_2}$ for some nonzero element $\xi'_J\in\w H^0(K_J')$. Since $I_1\neq I_2$, we have $|MF(K_{I_1})\cup MF(K_{I_2})|\geqslant3=|MF(K_J')|$.
On the other hand, \eqref{st:b} shows that  $\phi_\MM(MF(K_{I_1})\cup MF(K_{I_2}))\subset MF(K_J')$. It follows that $|MF(K_{I_1})|=|MF(K_{I_2})|=2$, hence $\w H^{0}(K_{I_1})=\w H^{0}(K_{I_2})=\kk$.

Suppose $J=\{j_1,j_2,j_3\}$. Then without loss of generality we may assume that  the cohomology classes $\phi(\xi_{I_i})$ have the form:
\[\phi(\xi_{I_i})=a_{i,1}\{j_1\}^*+a_{i,2}\{j_2\}^*\in\w H^0(K_J'),\ a_{i,1}\neq a_{i,2}\in\kk,\ i=1,2.\]
Let $\omega'=\{j_1,j_2\}$ and $\omega=\phi_\MM^{-1}(\omega')$. We may assume that $\omega\in  MF(K_{I_1})$.
Apply the SCC on $K$ to $\omega$ and $I_1\setminus\omega$, and use the same notations as in the proof of Lemma \ref{lem:|I|=3}, i.e., $|K_U|\cong S^1$, $\omega\subset U$, $I_1\cap U=\omega$, $W=(I_1\cup U)\setminus\omega$, $\phi(c_U)=c_V'\in\w H^1(K'_V)$, $\xi_{V\setminus\omega'}':=p_{V\setminus\omega'}\circ \phi({\xi_{U\setminus\omega}})$ etc. Note that for the missing face $\omega_0\in MF(K_{I_1})\setminus\{\omega\}$, $\w\omega_0$ is not a factor of $c_U$, so $\w\omega_0'=\phi(\w\omega_0)$ is not a factor of $c_V'$ either.
Hence $\omega_0'\not\subset V$, and it follows that $J\cap (V\setminus\omega')=\emptyset$ since $\omega_0'\in MF(K_J')$ by Proposition \eqref{st:b}. Let $f':K_{\omega'}'\to K'_J$ and $h':K_V'\to K_{J\cup V}'$ be the inclusions. Then $(f')^*(\phi(\xi_{I_1}))=a\w\omega'$ for some $0\neq a\in\kk$ since $a_{1,1}\neq a_{1,2}$. It follows that $(h')^*(\phi(\xi_{I_1})\cdot\xi_{V\setminus\omega'}')=a\w\omega'\cdot\xi_{V\setminus\omega'}'\neq0$, and so
$p_{J\cup V}\circ\phi(\xi_{I_1}\cdot\xi_{U\setminus\omega})=\phi(\xi_{I_1})\cdot\xi_{V\setminus\omega'}'\neq0$. Similarly, $p_{J\cup V}\circ\phi(\xi_{I_2}\cdot\xi_{U\setminus\omega})\neq 0$.
Thus $\xi_{I_i}\cdot\xi_{U\setminus\omega}\neq0$, $I_i\cap(U\setminus\omega)=\emptyset$ for $i=1,2$.

Take $\xi_W\in \w H^0(K_W)$ to be an element such that $g^*(\xi_W)=\xi_{U\setminus\omega}$ for $g:K_{U\setminus\omega}\to K_W$. Note that $f^*(\xi_{I_1})=\w\omega$ for $f:K_\omega\to K_{I_1}$.
Thus we have \[\xi_{I_1}\cdot\xi_{U\setminus\omega}=\xi_{I_1}\cdot g^*(\xi_W)=f^*(\xi_{I_1})\cdot\xi_W=\w\omega\cdot\xi_W\]
by Lemma \ref{lem:hoch}. So
\[0\neq p_{J\cup V}\circ\phi(\xi_{I_1}\cdot\xi_{U\setminus\omega})=p_{J\cup V}\circ\phi(\w\omega\cdot\xi_W)=\w\omega'\cdot p_{(J\cup V)\setminus\omega'}\circ\phi(\xi_W).\]
It follows that $\w H^0(K'_{(J\cup V)\setminus\omega'})\neq0$. This means that $\{j_3,v\}\in MF(K)$ for any vertex $v$ of one of the two components of $K_{V\setminus\omega'}'$.
Let
\begin{align*}
\theta_J'&=(a_{2,1}-a_{2,2})\phi(\xi_{I_1})-(a_{1,1}-a_{1,2})\phi(\xi_{I_2})\\
&=(a_{1,2}a_{2,1}-a_{1,1}a_{2,2})(\{j_1\}^*+\{j_2\}^*)\sim (a_{1,1}a_{2,2}-a_{1,2}a_{2,1})\{j_3\}^*\in\w H^0(K'_J).
\end{align*}
It follows by an easy calculation that $\theta_J'\cdot\xi'_{V\setminus\omega'}=0$.
Since $I_1\neq I_2$ and $I_2\cap(U\setminus\omega)=\emptyset$, we have $I_2\not\subset I_1\cup U$, and then
\[p_{I_1\cup U}\circ\phi^{-1}(\theta_J'\cdot\xi_{V\setminus\omega'}')=(a_{2,1}-a_{2,2})\xi_{I_1}\cdot\xi_{U\setminus\omega}=0.\]
The first equality comes from the fact that $p_{U\setminus\omega}\circ\phi^{-1}(\xi_{V\setminus\omega'}')=\xi_{U\setminus\omega}$. But $a_{2,1}-a_{2,2}$ and $\xi_{I_1}\cdot\xi_{U\setminus\omega}$ are both nonzero, a contradiction. The proof is finished.
\end{proof}
Note that if $\w H^0(K_I)\neq0$ and $|I|=3$, then $K_I$ is either $\Delta^0\sqcup\Delta^1$ or $\Delta^0\sqcup\Delta^0\sqcup\Delta^0$, depending on the rank of $\w H^0(K_I)$ is one or two. Hence combining Lemma \ref{lem:|I|=3} and Lemma \ref{lem:inverse} gives Propsition \ref{prop:scc} \eqref{st:a} for  $|I|=3$, completing the proof of Propsition \ref{prop:scc}.

\section{Proof of Lemma \ref{lem:1-1} and Lemma \ref{lem:link}}\label{app:proof of two lemmata}
\begin{proof}[Proof of Lemma \ref{lem:1-1}]
It is clear that $\phi_\MM\mid_{MF(K_I)}$ is an injection, so it remains to prove that it is surjective.
Assuming $MF(K)=\{\omega_1,\dots,\omega_k\}$ and $\phi_\MM(\omega_i)=\omega_i'$ for $1\leqslant i\leqslant k$, first we claim that for any $\omega_{i}'\in\phi_\MM(MF(K_I))$,
if there is an $\omega_{j}'\in MF(K_J')$ ($\omega_j\neq\omega_i$) such that $\omega_{i}'\cap\omega_{j}'\neq\emptyset$,
then $\omega_{j}'\in\phi_\MM(MF(K_I))$.

Clearly, $\w\omega_i$ is a factor of $\xi_I\in \w H^p(K_I)=\kk$,
so $\w\omega_{i}'$ is a factor of $\xi_J'=\phi(\xi_I)$ and  $\xi_J'=\w\omega_{i}'\cdot\xi_{J\setminus\omega_i'}'$ for some $\xi_{J\setminus\omega_i'}'\in\w H^*(K_{J\setminus\omega_{i}'}')$.
Set $W=\omega_{i}'\cup\omega_{j}'$.
It is easy to see that there is an element $\xi_W'\in\w H^0(K_W')$ such that $f^*(\xi_W')=\w\omega_{i}'$ for $f:K_{\omega_i'}'\to K_W'$.
Let $U=J\setminus W$ and let $\xi_U'=g^*(\xi_{J\setminus\omega_i'}')$ for $g:K_U'\to  K_{J\setminus\omega_{i}'}'$.
By Lemma \ref{lem:hoch}, $\xi_W'\cdot \xi_U'=\w\omega_i'\cdot\xi_{J\setminus\omega_i'}'=\xi_J'$. Thus $\xi_I=\phi^{-1}(\xi_W')\cdot\phi^{-1}(\xi_U')$.
The assumption that $\omega_{i}'\neq\omega_{j}'$ and $\omega_{i}'\cap\omega_{j}'\neq\emptyset$ implies that $|W|=3$, so according to Proposition \ref{prop:scc} \eqref{st:a},
$\phi^{-1}(\xi_W')\in\w H^0(K_{I_0})$ for some $I_0\subset I$, $|I_0|=3$. Then by Proposition \ref{prop:scc} \eqref{st:a} and \eqref{st:b} we have \[\omega_{j}'\in\phi_\MM(MF(K_{I_0}))\subset\phi_\MM(MF(K_I)).\]
Hence the claim is true. As a consequence, if we define two subsets of $\VV(K')$ as
\[V_1:=\bigcup_{\omega_i'\in\phi_\MM(MF(K_I))}\omega_i',\quad \ V_2:=\bigcup_{\omega_i'\in MF(K_J')\setminus\phi_\MM(MF(K_I))}\omega_i',\]
then $V_1\cap V_2=\emptyset$, and $K'_J=K_{V_1}'*K_{V_2}'$ since $K'$ is flag and $K_J'=\mathrm{core}\,K_J'$.

Next we show that for any $\omega_i\in MF(K)$ such that $\omega_i'\in MF(K'_{V_2})$, we have $\omega_i\cap I=\emptyset$.
If not, $|\omega_i\cap I|=1$ since $\omega_i\not\in MF(K_I)$, and we may assume $\omega_i\cap I=\{i_1\}$.
Let $\omega_j\in MF(K_I)$ be such that $i_1\in\omega_j$ (such $\omega_j$ always exists since $K_I=\mathrm{core}\,K_I$). Then $|\omega_i\cup\omega_j|=3$ and $\w H^0(K_{\omega_i\cup\omega_j})\neq0$.
So by Corollary \ref{cor:intersect} we have $\omega_i'\cap\omega_j'\neq\emptyset$.
However the fact that $\omega_j'\subset V_1$, $\omega_i'\subset V_2$ and $V_1\cap V_2=\emptyset$ gives a contradiction.

Finally, we prove the lemma by showing that $V_2=\emptyset$.
Suppose on the contrary that there is an $\omega_i'\in MF(K'_{V_2})$.
Then the missing face $\omega_i\in MF(K)$ satisfies $\omega_i\cap I=\emptyset$ as we showed in the previous paragraph.
Suppose $\omega_i=\{i_1,i_2\}$, then $K_I\not\subset\mathrm{lk}_K\{i_1\}\cap\mathrm{lk}_K\{i_2\}$ by Proposition \ref{prop:nsc} since $\w H^p(K_I)\neq0$ and $p\geqslant n-2$.
Thus without loss of generality we may assume that $K_I\not\subset \mathrm{lk}_K\{i_1\}$.
This implies that there is a vertex $i_3\in I$ such that
$\omega_k:=\{i_1,i_3\}\in MF(K)$. Clealy $|\omega_i\cup\omega_k|=3$ and $\w H^0(K_{\omega_i\cup\omega_k})\neq0$.
Hence it follows by Proposition \ref{prop:scc} \eqref{st:b} that $|\omega_i'\cup\omega_k'|=3$.
This implies that $\emptyset\neq\omega_k'\cap\omega_i'\subset V_2$, and then $\omega_k'\cap V_1=\emptyset$ since $K'_J=K_{V_1}'*K_{V_2}'$.
Letting $\omega_l\in MF(K_I)$  be such that $i_3\in\omega_l$
and applying Proposition \ref{prop:scc} \eqref{st:b} again for $K_{\omega_k\cup\omega_l}$ we have $|\omega_k'\cup\omega_l'|=3$, i.e., $\omega_k'\cap\omega_l'\neq\emptyset$.
However this contradicts the fact that $\omega_l'\subset V_1$ and $\omega_k'\cap V_1=\emptyset$. Thus the proof is finished.
\end{proof}

\begin{proof}[Proof of Lemma \ref{lem:link}]
Suppose $\VV(K)=[m]$, $\dim K=n$, and suppose $L=\mathrm{lk}_K\{m\}$, $\VV(L)=[l]$.
$L$ is clearly a $\kk$-homology sphere of dimension $n-1$.
So from Theorem \ref{thm:union product} we have $[L]\in H^{l+n}(\ZZ_K)$.
Let $R=H^*(\ZZ_K)$, $R'=H^*(\ZZ_{K'})$.
For the first step, we will show that there are at most  $m-l-1$ linear independent elements $\xi_i$ in $ R^{l+n+1}\cap(R^+)^{*n}$ such that for every $\omega\in MF(L)$, $\w\omega$ is a factor of each $\xi_i$. Moreover, the upper bound $m-l-1$ can be reached.

Suppose $0\neq\xi\in R^{l+n+1}\cap (R^+)^{*n}$. If there is a subset $I\subset[m]$ such that $0\neq p_{I}(\xi)\in\w H^{p}(K_{I})$, then $p\geqslant n-1$ by Lemma \ref{lem:upper bound of nil}. But $\w H^n(K_I)\neq0$ if and only if $I=[m]$, so $p=n-1$ since $\w H^n(K)=(R^+)^{*(n+1)}$, hence $|I|=l+1$.
If in addition $\w\omega$ is a factor of $\xi$ for every $\omega\in MF(L)$, then $\VV(L)\subset I$.
Actually there are exactly $m-l-1$ subsets satisfying these conditions:
\[I_j=\VV(L)\cup\{j\},\ l+1\leqslant j\leqslant m-1.\]
It is easily verified that $L\cap \mathrm{lk}_K\{j\}=\mathrm{lk}_{K_{I_j}}\{j\}$ and $\{j,m\}\in MF(K)$ for $l+1\leqslant j\leqslant m-1$.
Since $\mathrm{lk}_{K_{I_j}}\{j\}\subset L$, $\w H^{n-1}(\mathrm{lk}_{K_{I_j}}\{j\})\neq 0$ only if $\mathrm{lk}_{K_{I_j}}\{j\}=L$, but this means that the subcomplex $\{\{j\},\{m\}\}*L\subset K$ is already an $n$-dimensional homology sphere, and then $K=\{\{j\},\{m\}\}*L$ would be a suspension.
Thus by Proposition \ref{prop:nsc}
\[\w H^{n-2}(\mathrm{lk}_{K_{I_j}}\{j\})=\w H^{n-1}(\mathrm{lk}_{K_{I_j}}\{j\})=0\quad \text{for}\quad l+1\leqslant j\leqslant m-1.\]
 From the Mayer-Vietoris sequence
\begin{align*}
\cdots\to\w H^{n-2}(\mathrm{lk}_{K_{I_j}}\{j\})\to\w H^{n-1}(K_{I_j})&\to\w H^{n-1}(\mathrm{st}_{K_{I_j}}\{j\})\oplus\w H^{n-1}(L)\\
&\to\w H^{n-1}(\mathrm{lk}_{K_{I_j}}\{j\})\to\cdots
\end{align*}
we immediately obtain that $\w H^{n-1}(K_{I_j})\cong\w H^{n-1}(L)=\kk$, and the upper bound follows since $\xi\in\bigoplus_{l+1\leqslant j\leqslant m-1} \w H^{n-1}(K_{I_j})$.

Take a nonzero element $\xi_{I_j}\in\w H^{n-1}(K_{I_j})$ for each $l+1\leqslant  j\leqslant m-1$.
We will show that each $\xi_{I_j}$ satisfies the condition in the first paragraph.
For any $\omega\in MF(L)$, let $U=[l]\setminus\omega$ and let $W_j=U\cup\{j\}$ for $l+1\leqslant j\leqslant m-1$.
Since $\mathrm{lk}_{K_{W_j}}\{j\}\subset\mathrm{lk}_K\{j\}\cap L$,
then by applying Proposition \ref{prop:nsc} again, we have $\w H^{n-2}(\mathrm{lk}_{K_{W_j}}\{j\})=0$.
Now from the  Mayer-Vietoris sequence
\[
\cdots\to\w H^{n-2}(K_{W_j})\to\w H^{n-2}(\mathrm{st}_{K_{W_j}}\{j\})\oplus\w H^{n-2}(K_{U})\to\w H^{n-2}(\mathrm{lk}_{K_{W_j}}\{j\})\to\cdots
\]
it follows that $\w H^{n-2}(K_{W_j})\xr{h^*} \w H^{n-2}(K_U)$ is a surjection for $h:K_U\ha K_{W_j}$.
Since $\w\omega$ is a factor of $[L]$, there is an element $\xi_U\in\w H^{n-2}(K_U)$ such that $\w\omega\cdot\xi_U=[L]$.
Letting $\xi_{W_j}\in\w H^{n-2}(K_{W_j})$ be such that $h^*(\xi_{W_j})=\xi_U$, we have $g^*(\w\omega\cdot\xi_{W_j})=\w\omega\cdot\xi_U=[L]$ for $g:L\ha K_{I_j}$.
So $0\neq\w\omega\cdot\xi_{W_j}\in\w H^{n-1}(K_{I_j})\cong\kk$, i.e., $\w\omega$ is a factor of $\xi_{I_j}$. It remains to show that $\xi_{I_j}\subset (R^+)^{*n}$. Since $L$ is Gorenstein* of dimension $n-1$,  $\w H^{n-2}(\mathrm{lk}_L\{i\})=\kk$ for any $i\in[l]$. So $\mathrm{lk}_L\{i\}\not\subset \mathrm{lk}_K\{j\}$ for any $l+1\leqslant j\leqslant m-1$ by Proposition \ref{prop:nsc} and the fact that $\mathrm{lk}_L\{i\}\subset \mathrm{lk}_K\{m\}$ is a full subcomplex of $K$. 
Hence there exits $\{k\}\in \mathrm{lk}_L\{i\}$ such that $\{j,k\}\in MF(K)$, because $K$ is flag. 
Take a facet $\sigma\in L$ containing $k$. Let $\tau=\sigma\setminus\{k\}$, and let $\omega$ be the missing face corresponding to $\mathrm{lk}_L\tau\cong S^0$. Then for $W=\{j\}\cup\omega$ and $q:K_\omega\ha K_W$, $q^*(\xi_W)=\w\omega$ for some $\xi_W\in\w H^0(K_W)$. 
From the proof of Lemma \ref{lem:nil of gorenstein}, we see that (by an induction on the dimension of $L$) for such $\omega$, there exist $\alpha_1,\dots,\alpha_{n-1}\in H^{+}(\ZZ_{L})$ such that $\w\omega\cdot\alpha_1\cdots\alpha_{n-1}=[L]$. Hence for $g:L\ha K_{I_j}$, we have $g^*(\xi_W\cdot\alpha_1\cdots\alpha_{n-1})=\w\omega\cdot\alpha_1\cdots\alpha_{n-1}=[L]$,
which means that $\xi_W\cdot\alpha_1\cdots\alpha_{n-1}\in\w H^{n-1}(K_{I_j})\subset (R^+)^{*n}$, and we conclude that the $m-l-1$ elements $\xi_{I_j}$ satisfy the required condition.

We have already seen from Corollary \ref{cor:goren} that $|\VV(K')|=|\VV(K)|=m$, and from Lemma \ref{lem:gorenstein subcomplex} that $\phi([L])\in\w H^{n-1}(K_J')$ for some $J\subset\VV(K')$, $|J|=l$.
We will show that if $K'_J$ is not the link of any vertex of $K'$, then for every
vertex $i\in\VV(K')\setminus J$, there exists a nonzero element $\xi_{J_i}'\in\w H^{n-1}(K'_{J_i})$ for $J_i=\{i\}\cup J$,
such that $\w\omega'$ is a factor of $\xi'_{J_i}$ for every $\omega'\in MF(K_J')=\phi_\MM(MF(L))$ (by Lemma \ref{lem:1-1}), and $\xi_{J_i}'\in (R'^+)^{*n}$.
This will give a contradiction because any isomorphism preserves algebraic properties.

To see this,
note that $K_J'$ is not a proper subcomplex of  $\mathrm{lk}_{K'}\{i\}$ for any $i\in \VV(K')\setminus J$ since $\w H^{n-1}(K'_J)\neq 0$. So for an arbitrary $i\in \VV(K')\setminus J$, if $K'_J\neq\mathrm{lk}_{K'}\{i\}$, then $J\setminus\VV(\mathrm{lk}_{K'}\{i\})\neq\emptyset$. Choose a vertex $j\in J\setminus\VV(\mathrm{lk}_{K'}\{i\})$.
Since $K'_J=\mathrm{core}\,K'_J$, we can choose an $\omega'\in MF(K'_J)$ such that $j\in\omega'$.
Let $W=\{i\}\cup\omega'$ and let $\xi_W'\in\w H^0(K'_W)$ be such that $q^*(\xi_W')=\w\omega'$ for $q:K'_{\omega'}\ha K'_W$. According to Proposition \ref{prop:scc} \eqref{st:a}, $\phi^{-1}(\xi'_W)=\xi_U\in\w H^0(K_U)$ for some $U\subset[m]$, $|U|=3$. Moreover, from Proposition \ref{prop:scc} \eqref{st:b} and Lemma \ref{lem:1-1} we  deduce that $U\cap[l]=\phi_\MM^{-1}(\omega')=:\omega$.
Since $\w\omega$ is a factor of $[L]$, there is an element $\xi_V\in\w H^{n-2}(K_V)$ for $V=[l]\setminus\omega$, such that $\w\omega\cdot\xi_V=[L]$, and it is easy to see that
\[p_{J_i}\circ\phi(\xi_U\cdot\xi_V)=p_{J_i}(\xi_W'\cdot\phi(\xi_V))=\xi_W'\cdot p_X\circ\phi(\xi_V),\]
in which $X=J_i\setminus W=J\setminus\omega'$. Let $f:K_J'\ha K'_{J_i}$. We also have
\[f^*(\xi_W'\cdot p_X\circ\phi(\xi_V))=\w\omega'\cdot p_X\circ\phi(\xi_V)=p_J\circ\phi(\w\omega\cdot\xi_V)=\xi'_J\neq0.\]
These formulae imply that $p_{J_i}\circ\phi(\xi_U\cdot\xi_V)\neq0$.

Suppose $U\setminus\omega=\{k\}$ (clearly $l<k<m$). Since $\xi_U\cdot\xi_V\in\w H^{n-1}(K_{I_k})$ (remember $I_k=\{k\}\cup[l]$), $\w\omega$ is a factor of $\xi_U\cdot\xi_V$ for every $\omega\in MF(L)$ as we have seen.
It follows that for every $\omega'\in \phi_\MM(MF(L))=MF(K'_J)$, $\w\omega'$ is a factor of $\phi(\xi_U\cdot\xi_V)$, and so it is also a factor of $p_{J_i}\circ\phi(\xi_U\cdot\xi_V)\in\w H^{n-1}(K'_{J_i})$. Thus we get the desired element $\xi_{J_i}':=p_{J_i}\circ\phi(\xi_U\cdot\xi_V)$, noting that $\xi_U\cdot\xi_V\in (R^+)^{*n}$.
The proof of Lemma \ref{lem:link} is finished.
\end{proof}

\section{Construction of flag spheres satisfying the SCC}\label{app:scc}
The aim of this appendix is to construct higher dimensional simplicial spheres, which are flag and satisfy the SCC, from lower dimensional simplicial spheres.
\begin{const}\label{const:SCC}
Given an $n\geqslant3$ dimensional simplicial polytope $P$ with $m$ vertices, let $P^*$ be the dual simple polytope, and for each $q$-face $\sigma$ of $P$, let $F_\sigma$ be the dual $n-q-1$ face of $P^*$. Suppose there is a triangulation $T$ of the $(n-2)$-skeleton of $P^*$ such that the vertex sets $\VV(T)=\VV(P^*)$.
For each face $\sigma$ of $P$ define a subcomplex $L_\sigma$ of $T$ to be
\[L_\sigma:=
\begin{cases}
\{\tau\in T: |T_\tau|\subset F_\sigma\}\quad &\text{if } \dim\sigma>0,\\
\{\tau\in T: |T_\tau|\subset \partial F_\sigma\}\quad &\text{if }\dim\sigma=0.
\end{cases}\]
In other words, $L_\sigma\subset T$ is the triangulation of $F_\sigma$ for $\dim\sigma>0$ and $L_i$ is the triangulation of $\partial F_i$ for each vertex $i\in[m]$.
Let $K,K'$ be two copies of $\partial P$, and let $\{v_i\}_{i=1}^m$ and $\{v_i'\}_{i=1}^m$ be their vertex sets respectively. Here $v_i$ and $v_i'$ correspond to the same vertex $i\in[m]$, and this correspondence gives a bijection $K\to K'$, $\sigma\mapsto\sigma'$.
Define a simplicial complex 
\[L:=(\bigcup_{i=1}^m 2^{\{v_i,v_i'\}}*L_i)\cup(\bigcup_{\sigma\in K}2^{\sigma}*L_{\sigma})\cup(\bigcup_{\sigma'\in K'}2^{\sigma'}*L_{\sigma}).\]
It can be shown that $L$ is a triangulation of $S^{n-1}\times D^1$, and the two components of the boundary $\partial L$ are $K$ and $K'$. To see this, we start with the cartesian product $\Gamma_0=K\times \Delta^1$ with a polyhedral face structure induced by the faces of $K$ and $\Delta^1$. Then for each facet $\sigma$ of $K$, we repace the polyhedron $2^\sigma\times\Delta^1\subset \Gamma_0$ with $\partial(2^\sigma\times\Delta^1)*F_\sigma$ to get $\Gamma_1$. It is easy to see that $|\Gamma_1|\cong |\Gamma_0|$, and $(2^\tau\times \Delta^1)*\partial L_\tau\subset \Gamma_1$ for each $\tau\in K$ of codimension one (note that $\partial L_\tau$ is the two vertices of the edge $F_\tau$). Inductively, if we construct $\Gamma_{i}$ ($i<n-1$), which means that $(2^\tau\times \Delta^1)*\partial L_\tau\subset \Gamma_i$ for each $\tau\in K$ of codimension $i$, then we replace these polyhedra  $(2^\tau\times \Delta^1)*\partial L_\tau$ with $\partial(2^\tau\times \Delta^1)*L_\tau$ to get $\Gamma_{i+1}$ (clearly $|\Gamma_{i+1}|\cong|\Gamma_i|$). Finally, we get the desired triangulation $\Gamma_{n-1}=L$ of $S^{n-1}\times D^1$. Now adding two more vertices $u$ and $u'$, and define a triangulation of $S^{n}$ by
\[\EE_P:=(\{u\}*K)\cup L\cup (\{u'\}*K').\]
\end{const}

\begin{prop}\label{prop:exmp}
In the notation of Construction \ref{const:SCC}, suppose the simplicial $(n-1)$-sphere $\partial P$ is flag and not a suspension, and suppose there is a flag triangulation $T$ of the $(n-2)$-skeleton of $P^*$ such that $\VV(T)=\VV(P^*)$. Then $\EE_P$ is flag and satisfies the SCC.
\end{prop}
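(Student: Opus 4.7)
The plan is to verify flagness and the SCC separately, both by case analyses keyed to the stratified vertex set $\VV(\EE_P)=\{u,u'\}\sqcup\VV(K)\sqcup\VV(K')\sqcup\VV(T)$. First I would enumerate the edges of $\EE_P$: $u$ is adjacent precisely to $\VV(K)$ and $u'$ to $\VV(K')$; $v_i$ and $v_j'$ are adjacent iff $i=j$; a $T$-vertex $w_F$ is adjacent to $v_i$ or $v_i'$ iff the facet $F$ of $\partial P$ contains the vertex $i$; and the restrictions to $\VV(K),\VV(K'),\VV(T)$ reproduce the edge sets of $\partial P,\partial P$, and $T$. To prove flagness I would show that any clique $C$ in the $1$-skeleton decomposes as a face $\sigma_K$ of $\partial P$, possibly a matching primed vertex, a simplex $\sigma_T$ of $T$ all of whose vertices lie in $F_{\sigma_K}$, and possibly one of $u,u'$. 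The nontrivial step is that $L_\sigma$ is the full subcomplex of $T$ on the vertex set of $F_\sigma$; this holds because any simplex of $T$ sits in the $(n-2)$-skeleton of $P^*$ and hence cannot cross the interior of a facet $F_i$, which has dimension $n-1$. Flagness of $\partial P$ and of $T$ then identify $C$ as a face of some $\sigma_K*L_{\sigma_K}$, $\Delta^1_i*L_i$, $\{u\}*K$, or $\{u'\}*K'$, all of which are simplices of $\EE_P$.

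For the SCC, I would enumerate the missing faces of $\EE_P$ by the types of their two vertices: (A) $\{u,u'\}$; (B) cone-to-other pairs $\{u,v_i'\},\{u',v_i\},\{u,w_F\},\{u',w_F\}$; (C) missing edges $\{v_i,v_j\}$ of $\partial P$ (and their primed copies); (D) cross pairs $\{v_i,v_j'\}$ with $i\neq j$; (E) $\{v_i,w_F\}$ and $\{v_i',w_F\}$ with $i\notin F$; (F) missing edges of $T$. The workhorse for cases (A), (C), (D), and the $v$-type subcases of (B) is the induced $6$-cycle
\[
\Gamma_{i,j}\;=\;(u,v_i,v_i',u',v_j',v_j)\qquad\text{for }\{i,j\}\in MF(\partial P),
\]
every potential chord of which is a non-edge by the edge list, so $\Gamma_{i,j}\cong S^1$. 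Removing any $\omega\subset\Gamma_{i,j}$ splits the four remaining $v$-vertices into two components $\{v_i,v_i'\}$ and $\{v_j,v_j'\}$, so the problem reduces to choosing $\{i,j\}$ that avoids $i_k$ and such that $i_k$ fails to bridge these two components.

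The choice of $\{i,j\}$ is automatic when $i_k=w_F$, since $\{i,j\}\not\subset F$ for any face $F$ and any missing face $\{i,j\}$. The delicate subcase is $i_k=v_{i_0}$ (or $v_{i_0}'$), where I need $\{i,j\}\in MF(\partial P)$ with $i_0\notin\{i,j\}$ and at least one of $i,j$ a non-neighbor of $i_0$. I would prove the following lemma: if $K_0$ is a flag simplicial sphere of dimension $\geq 2$ that is not a suspension, then every vertex $i_0$ admits such a missing face. Setting $M(i_0)=\{a:\{i_0,a\}\in MF(K_0)\}$, the cases $|M(i_0)|\in\{0,1\}$ force $K_0$ to be a cone (violating $\mathrm{core}\,K_0=K_0$) or a suspension; and when $|M(i_0)|\geq 2$, if every $a\in M(i_0)$ had $\{i_0,a\}$ as its only missing face, then any other $a'\in M(i_0)$ would be a cone point of $\mathrm{lk}_a K_0$---contradicting that this link is a sphere of dimension $\geq 1$.

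The main obstacle will be the remaining cases $\{u,w_F\}$, $\{v_i,w_F\}$, and $\{w_F,w_{F'}\}$, where the separating cycle must thread through $T$-vertices and rely on the internal structure of $T$ rather than on the suspension structure exploited by $\Gamma_{i,j}$. For these I would build cycles passing through an induced $1$-cycle in $L_i$---which exists because $L_i$ is a flag triangulation of the $(n-2)$-sphere $\partial F_i$, and such a triangulation contains induced $S^1$-subcomplexes for $n\geq 3$---joined to the rest of $\EE_P$ by matching pairs $\{v_i,v_i'\}$ and a cone vertex. Disconnection of $(\EE_P)_J$ is then a case-by-case check against the type of $i_k$; the argument requires only flagness of $T$ (not SCC of $T$), and the bookkeeping in these $T$-involving cases is the genuinely tedious part of the proof.
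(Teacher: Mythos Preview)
Your flagness argument and the $6$-cycle $\Gamma_{i,j}$ are in the spirit of the paper's proof, and your non-suspension lemma is precisely the fact the paper invokes without proof in its case (i)(a). But the SCC verification has gaps. First, $\Gamma_{i,j}$ always contains both $u$ and $u'$, so it cannot serve as $I$ whenever $i_k\in\{u,u'\}\setminus\omega$; this already hits your cases (B), (C), (D). Second, in case (D) the only $\Gamma_{a,b}$ containing both $v_i$ and $v_j'$ has $\{a,b\}=\{i,j\}$, and this is an induced $6$-cycle only when $\{i,j\}\in MF(\partial P)$; when $\{i,j\}$ is an edge of $\partial P$ the paper uses the induced square $\{v_i,v_j,v_j',v_i'\}$ instead, and when $i_k\in\{u,u'\}$ it builds $I$ from a geodesic in $K$ or $K'$.

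The more serious problem is your plan for the $T$-involving missing faces. A cycle threaded through an induced $S^1$ inside $L_i$ cannot then be joined to $v_i$ or $v_i'$ without producing chords, since $v_i$ (and $v_i'$) is adjacent to \emph{every} vertex of $L_i$; and a cycle carrying many $T$-vertices makes the disconnection check for $J$ unmanageable. The paper's constructions for $\omega$ of type $\{u,F_\sigma\}$, $\{v_i,F_\sigma\}$, $\{F_{\sigma_1},F_{\sigma_2}\}$ go in the opposite direction: $I$ contains only the one or two $T$-vertices already present in $\omega$ (occasionally one extra $F_\tau$), and the rest of the cycle is assembled from carefully chosen vertices $v_j\in\sigma$ and $v_s'\in\sigma'$ of the dual facets, the apices $u,u'$, and sometimes a short geodesic in $K'$. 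The key adjacency used is that $F_\sigma$ is joined to $v_j$ (or $v_j'$) exactly when $v_j\in\sigma$, which lets a single $T$-vertex be spliced between a $K$-segment and a $K'$-segment. This is where the real case work of the proof lives, and your sketch does not yet engage with it.
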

\begin{rem}
(a) Note that if $n=3$ in Proposition \ref{prop:exmp}, then the assumption that $T$ is flag is automatically satisfied. Since in this case the $1$-skeleton of $(P^*)$ is clearly a simplicial complex, and from the assumption that $\partial P$ is flag, we can readily deduce that there are no $3$-circuits in $T$.

(b) For $n=4$, there are also infinitely many examples satisfying the hypothesis of Proposition \ref{prop:exmp}.
For example, let $P_1$, $P_2$ be two polygons with $n_1$ and $n_2$ ($n_1,n_2\geqslant 5$) vertices, respectively, and let $P$ be the dual of $P_1\times P_2$. Then $\partial P=\partial P_1*\partial P_2$ is obviously flag and not a suspension.
There is an unique triangulation $K_1$ of $P_1$ (and similarly $K_2$ of $P_2$) determined by the following rules: $\VV(K_1)=\VV(P_1)=[n_1]$; $\{\{i,i+1\}:4\leqslant i\leqslant n_1\}\cup\{\{1,2\},\{1,3\},\{2,4\},\{3,n_1\}\}$ is the set of edges of $\partial K_1$; $\{\{1,i\}:4\leqslant i\leqslant n_1\}$ is the set of edges in $K_1\setminus\partial K_1$.

Let $K$  be the canonical triangulation of $K_1\times K_2$, i.e.,
\[\begin{split}
	K:=&\{\{(i_1,j_1),\dots,(i_k,j_k)\}:i_1\leqslant\cdots\leqslant i_k,\ j_1\leqslant\cdots\leqslant j_k,\\
&\{i_1,\dots,i_k\}\in K_1,\ \{j_1,\dots,j_k\}\in K_2\}.
\end{split}\]
Let $T$ be the restriction of $K$ to the $2$-skeleton of $P_1\times P_2$. 
We claim that $T$ is flag, hence by Proposition \ref{prop:exmp}, $\EE_P$, as a triangulation of $S^4$, is flag and satisfies the SCC. The claim can be proved as follows. Suppose on the contrary that  $\sigma=\{(i_1,j_1),\dots,(i_k,j_k)\}$, $k\geqslant 3$, is a missing face of $T$. From the definitions of $K$ and $\sigma$, one easily sees that any two-elements subset of $\{i_1,\dots,i_k\}$ or $\{j_1,\dots,j_k\}$ is in $K_1$ or $K_2$. Thus the flagness of $K_1$ and $K_2$ shows that $\sigma_1=\{i_1,\dots,i_k\}\in K_1$ and $\sigma_2=\{j_1,\dots,j_k\}\in K_2$. It follows that $|\sigma_1|,|\sigma_2|\neq 1$, since $\sigma\not\in T$. Furthermore, since any proper subset of $\sigma$ is a simplex of $T$, we may assume that $i_1\leqslant\cdots\leqslant i_k$, $j_1\leqslant\cdots\leqslant j_k$  by the construction rule of $K$.  Hence we have $\{i_1,i_k\}\in \partial K_1$ and $\{j_1,j_k\}\in\partial K_2$, since otherwise the edge $\{(i_1,j_1),(i_k,j_k)\}$ is not in the $2$-skeleton of $P_1\times P_2$ for $|\sigma_1|,|\sigma_2|\geqslant 2$. This implies that $\sigma_1\in\partial K_1$ and $\sigma_2\in\partial K_2$ by the construction  rule of $K_1$ and $K_2$, but then $\sigma\subset\sigma_1\times\sigma_2$ is a face of $T$, a contradiction. So $T$ is actually flag.

For a general flag simple $n$-polytope $P$ ($n>3$), we do not know whether there is a flag triangulation $T$ of the $(n-2)$-skeleton of $P$ such that $\VV(T)=\VV(P)$. Here we propose a more general question:
\begin{que}
If $P$ is a simple flag $n$-polytope, is there always a flag triangulation $T_k$ of the $k$-skeleton of $P$ such that
$\VV(T_k)=\VV(P)$ for all $n>3$ and $2\leqslant k\leqslant n-1$?
\end{que}

(c) From Proposition \ref{prop:4-circuit} and (a), (b) above, we see that for $n=2,3,4$, there are infinitely many simplicial $n$-spheres which are flag and satisfy the SCC. Hence by  Example \ref{exmp:join}, we can actually construct infinitely many simplicial $n$-spheres of this type for all $n\geqslant2$.
\end{rem}

\begin{proof}[Proof of Proposition \ref{prop:exmp}]
First we check that $\EE_P$ is flag. Suppose on the contrary that $\omega$ is a missing face of $\EE_P$ with $|\omega|\geqslant 3$.
It follows from the construction of $\EE_P$ that $u,u'\not\in\omega$. The assumption that $T$ is flag implies that $\omega\not\subset\VV(T)$.
So we may assume that $\omega\cap\VV(K)\neq\emptyset$, by the symmetry of $\EE_P$.
If $|\omega\cap\VV(K)|=1$, say $\omega\cap\VV(K)=\{v_i\}$, let $\sigma=\omega\setminus\{v_i\}$. Then $u\not\in\sigma\in \EE_P$, $|\sigma|\geqslant 2$ and $\sigma\cap\VV(K)=\emptyset$.  Since any proper face of $\sigma$ belongs to $\mathrm{lk}_{\EE_P}\{v_i\}$, and  $\VV(\mathrm{lk}_{\EE_P}\{v_i\})\setminus(\{u\}\cup\VV(K))=\{v_i'\}\cup\VV(L_i)$, it follows that $\sigma\in(\EE_P)_{\{v_i'\}\cup\VV(L_i)}\subset\mathrm{lk}_{\EE_P}\{v_i\}$.
But this gives that $\omega\in\EE_P$, a contradiction.
On the other hand, if $|\omega\cap \VV(K)|\geqslant2$, then the fact that $\{v_i,v_j'\}\in\EE_P$ if and only if $i=j$ implies that $\omega\cap\VV(K')=\emptyset$. Setting $\sigma=\omega\cap\VV(T)$ and $\tau=\omega\cap \VV(K)$ (clearly $\emptyset\neq\sigma\in\EE_P$ since $K$ is flag), we have $\sigma\in L_i$ for any $v_i\in\tau$.
Thus $\sigma\in L_{\tau}$, and then $\omega=\sigma\cup\tau\in\EE_P$ gives a contradiction again.

Next we verify that $\EE_P$ satisfies the SCC case by case. Suppose $\{i_1,i_2\}\in MF(\EE_P)$ and $i_k\in\VV(\EE_P)\setminus\{i_1,i_2\}$.
We need to find $I\subset\VV(\EE_P)$ such that $|(\EE_P)_I|\cong S^1$, $i_1,i_2\in I$, $i_k\notin I$ and $\w H_0((\EE_P)_{J})\neq0$ for $J=\{i_k\}\cup I\setminus\{i_1,i_2\}$.

\begin{enumerate}[(i)]
\setlength{\itemsep}{1em}

\item $\{i_1,i_2\}=\{u,u'\}$.

\begin{enumerate}[(\theenumi a)]\setlength{\itemsep}{1ex}
\item $i_k\in \VV(K)$ (or $\VV(K')$). Since $K$ is flag and not a suspension, there exists a vertex $v_i\in\VV(K)\setminus\VV(\mathrm{st}_K\{i_k\})$ such that $ \mathrm{lk}_K\{v_i\}\neq\mathrm{lk}_K\{i_k\}$.  Moreover, since $\mathrm{lk}_K\{v_i\}$ and $\mathrm{lk}_K\{i_k\}$ are full subcomplexes by the flagness of $K$, there exists a vertex $v_j\in\VV(\mathrm{lk}_K\{i_k\})\setminus\VV(\mathrm{lk}_K\{v_i\})$, using the fact that $\mathrm{lk}_K\{i_k\}$ is not a proper subcomplex of $\mathrm{lk}_K\{v_i\}$. Hence  $\{v_i,v_j\},\{v_i,i_k\}\in MF(K)$. It is easy to verify that $I=\{u,v_i,v_j,v_i',v_j',u'\}$ satisfies the above condition.

\item\label{case:1b}  $i_k\in\VV(T)$. Choose a missing face $\{v_i,v_j\}\in MF(K)$. Note that $L_i\cap L_j=\emptyset$, so $\{v_i,i_k\}, \{v_i',i_k\}\in MF(\EE_P)$ or $\{v_j,i_k\},\{v_j',i_k\}\in MF(\EE_P)$. Thus we can take $I=\{u,v_i,v_j,v_i',v_j',u'\}$.\vspace{10pt}
\end{enumerate}

\item $i_1=u$, $i_2=v_i'\in \VV(K')$.

\begin{enumerate}[(\theenumi a)]\setlength{\itemsep}{1ex}
\item $i_k=u'$. Choose a vertex $v_j'$ such that $\{v_i',v_j'\}\in MF(K')$.
Suppose $K'_U$ is a path connecting $v_i'$ and $v_j'$ in $K'$, i.e., $|K'_U|\cong D^1$ with end points $v_i',v_j'$.
Note that such a full subcomplex always exists, since if we assume every edge of $K'$ has length $1$, then the shortest path connecting $v_i'$ and $v_j'$ in $K'$ is a full subcomplex. Let $I=\{u,v_i,v_j\}\cup U$. It is easy to check that $I$ is the desired subset.
\item \label{case:2b} $i_k\in\VV(K')$. Since $K'$ is not a suspension, there exists a vertex $v_j'\neq i_k$, such that $\{v_i',v_j'\}\in MF(K')$. Then let $I=\{u,v_i,v_j,v_i',v_j',u'\}$.
\item $i_k\in\VV(T)$. As in \eqref{case:1b} we may choose a missing face $\{v_i,v_j\}\in MF(K)$ and take $I=\{u,v_i,v_j,v_i',v_j',u'\}$.
\item\label{case:2d} $i_k\in\VV(K)$.  First we consider the case $i_k\neq v_i$.  Since $K$ is not a suspension, we can take a vertex $v_j\neq v_i$ such that $\{v_j,i_k\}\not\in K$. If $\{v_i,v_j\}\not\in K$, then $I=\{u,v_i,v_j,v_i',v_j',u'\}$ is the desired subset. Otherwise, take $\sigma$ to be a facet of $K$ containing $\{v_i,v_j\}$, and take a vertex $v_s\neq i_k$ such that $\{v_j,v_s\}\not\in K$ (such a $v_s$ exists because $K$ is not a suspension). Recall that $F_\sigma\in \VV(T)$ is the vertex dual to $\sigma$, and note that $\{i_k,F_\sigma\}\not\in\EE_P$ since $i_k\not\in\sigma$. So we can take
    \[
    I=
    \begin{cases}
    \{u,v_j,v_s,F_\sigma, v_i',v_s'\}\quad&\text{if }\{v_i',v_s'\}\in K',\\
    \{u,v_j,v_s,F_\sigma,v_i',v_s',u'\}\quad&\text{if }\{v_i',v_s'\}\not\in K'.\\
    \end{cases}
    \]
    
On the other hand, if $i_k=v_i$, take $\sigma\in K$ to be a facet containing $v_i$, and let $\tau=\sigma\setminus\{v_i\}$. Since $K$ is flag and  not a suspension,
there exists a vertex $v_j\in\VV(K)\setminus\VV(\text{lk}_\tau K)$ such that $\{v_i,v_j\}\not\in K$. Furthermore, there must be a vertex $v_s\in\tau$ such that $\{v_j,v_s\}\not\in K$ as $\{v_j\}\not\in\text{lk}_\tau K$ and $K$ is flag.
Note that $\{F_\sigma,v_i'\},\{F_\sigma,v_s\}\in \EE_P$ and $\{F_\sigma,v_j\},\{F_\sigma,v_j'\}\notin \EE_P$. Then we can take $I=\{u,v_j,v_s,F_\sigma,v_i',v_j',u'\}$.
\end{enumerate}

\item $i_1=u$, $i_2\in \VV(T)$. Let $\sigma\in K$ and $\sigma'\in K'$ be the facets dual to $i_2$.

\begin{enumerate}[(\theenumi a)]\setlength{\itemsep}{1ex}
\item\label{case:3a} $i_k=u'$. Choose two vertices $v_i,v_j\in\sigma$. Take $v_s$ to be a vertex such that $\{v_i,v_s\}\in MF(K)$, and take $K'_U$ to be a shortest path connecting $v_j'$ and $v_s'$ with $U\cap\sigma'=\{v_j'\}$. Such $U$ exists since $K'_V$ is path-connected for $V=\VV(K')\setminus\{\sigma'\setminus\{v_j'\}\}$.
It is easily verified that $I=\{u,v_i,v_s,i_2\}\cup U$ is the desired subset.
 
\item\label{case:3b} $i_k\in \VV(K')$. Assume $i_k=v_k'$.
Choose two vertices $v_i,v_j\in\sigma$ such that $v_i,v_j\neq v_k$.
Since $K$ is not a suspension, there exists a vertex $v_s$ such that $\{v_i,v_s\}\in MF(K)$ and $v_s\neq v_k$. Let
\[
I=
\begin{cases}
\{u,v_i,v_s,i_2,v_j',v_s'\}\quad&\text{if }\{v_j',v_s'\}\in K',\\
\{u,v_i,v_s,i_2,v_j',v_s',u'\}\quad&\text{if }\{v_j',v_s'\}\not\in K'.\\
\end{cases}
\]

\item $i_k\in \VV(T)$. Let $\tau\in K$ be the facet dual to $i_k$.
Choose two vertices $v_i,v_j\in\sigma$ with $v_i\not\in\tau$, and take $v_s$ to be a vertex such that $\{v_i,v_s\}\in MF(K)$.
Then we can take
\[
I=
\begin{cases}
\{u,v_i,v_s,i_2,v_j',v_s'\}\quad&\text{if }\{v_j',v_s'\}\in K',\\
\{u,v_i,v_s,i_2,v_j',v_s',u'\}\quad&\text{if }\{v_j',v_s'\}\not\in K'.\\
\end{cases}
\]

\item $i_k\in \VV(K)$. If $i_k\not\in\sigma$, we can take a vertex $v_s\in\sigma$ such that $\{v_s,i_k\}\in MF(K)$,
and take a vertex $v_j\neq i_k$ such that $\{v_j,v_s\}\in MF(K)$, since $K$ is flag and not a suspension;
or if $i_k\in\sigma$, then by the same reasoning as in the second paragraph of \eqref{case:2d} there exist $v_j\not\in\sigma$ and $v_s\in\sigma\setminus\{i_k\}$ such that $\{i_k,v_j\},\{v_j,v_s\}\in MF(K)$.
In either case, choose a vertex $v_i\in\sigma\setminus\{i_k,v_s\}$ ($\dim\sigma\geqslant 2$), and let
\[
I=
\begin{cases}
\{u,v_j,v_s,i_2,v_i',v_j'\}\quad&\text{if }\{v_i',v_j'\}\in K',\\
\{u,v_j,v_s,i_2,v_i',v_j',u'\}\quad&\text{if }\{v_i',v_j'\}\not\in K'.\\
\end{cases}
\]
\end{enumerate}

\item $i_1\in \VV(K)$, $i_2\in \VV(K')$. Assume $i_1=v_i$, $i_2=v_j'$ (clearly $i\neq j$).

\begin{enumerate}[(\theenumi a)]\setlength{\itemsep}{1ex}
	
\item $i_k=u$. If $\{v_i,v_j\}\in MF(K)$, take $K_U$ to be a shortest path connecting $v_i$ and $v_j$, and let $I=\{u',v_i',v_j'\}\cup U$.
On the other hand, if $\{v_i,v_j\}\in K$, let $I=\{v_i,v_j,v_i',v_j'\}$.

\item $i_k\in\VV(K')$ (or $\VV(K)$). If $i_k\neq v_i'$, then let
\[
I=
\begin{cases}
\{v_i,v_j,v_i',v_j'\}\quad&\text{if }\{v_i,v_j\}\in K,\\
\{u,v_i,v_j,v_i',v_j',u'\}\quad&\text{if }\{v_i,v_j\}\not\in K.\\
\end{cases}
\]

For the case $i_k=v_i'$, take $\sigma\in K$ to be a facet such that $v_i\in\sigma$, $v_j\notin\sigma$.
Choose a vertex $v_s\in\sigma\setminus\{v_i\}$, and let
\[
I=
\begin{cases}
\{v_i,v_j,F_\sigma,v_j',v_s'\}\quad&\text{if }\{v_i,v_j\}\in K\text{ and }\{v_j',v_s'\}\in K',\\
\{u,v_i,v_j,F_\sigma,v_j',v_s'\}\quad&\text{if }\{v_i,v_j\}\not\in K\text{ and }\{v_j',v_s'\}\in K',\\
\{v_i,v_j,F_\sigma,v_j',v_s',u'\}\quad&\text{if }\{v_i,v_j\}\in K\text{ and }\{v_j',v_s'\}\not\in K',\\
\{u,v_i,v_j,F_\sigma,v_j',v_s',u'\}\quad&\text{if }\{v_i,v_j\}\not\in K\text{ and }\{v_j',v_s'\}\not\in K'.
\end{cases}
\]

\item $i_k\in \VV(T)$. If $\{v_i,v_j\}\not\in K$,
then either $\{v_i,i_k\}, \{v_i',i_k\}\in MF(\EE_P)$ or $\{v_j,i_k\},\{v_j',i_k\}\in MF(\EE_P)$.
Thus we can take
\[I=\{u,u',v_i,v_j,v_i',v_j'\}.\]

On the other hand, if $\{v_i,v_j\}\in K$, let $\tau\in K$ be the facet dual to $i_k$. Then if $v_i\not\in\tau$ or $v_j\not\in\tau$, we can just let $I=\{v_i,v_j,v_i',v_j'\}$.
Otherwise, take $v_s$ to be a vertex such that $\{v_i,v_s\}\in MF(K)$ and take
$\sigma\in K$ to be a facet such that $v_i,v_j\in\sigma$ and $F_\sigma\neq i_k$. Then we can take
 \[
I=
\begin{cases}
\{u,v_i,v_s,F_\sigma,v_j',v_s'\}\quad&\text{if }\{v_j',v_s'\}\in K',\\
\{u,v_i,v_s,F_\sigma,v_j',v_s',u'\}\quad&\text{if }\{v_j',v_s'\}\not\in K'.\\
\end{cases}
\]
\end{enumerate}

\item $i_1,i_2\in \VV(T)$.
Let $\sigma_1,\sigma_2\in K$ and $\sigma_1',\sigma_2'\in K'$ be the facets  dual to $i_1$ and $i_2$, respectively.
Assuming every edge of a connected simplicial complex $K$ has length $1$, define the \emph{distance}  $d_K(v_i,v_j)$ between two different vertices $v_i,v_j\in \VV(K)$ to be the length of a shortest path connecting $v_i$ and $v_j$ in $K$, and $d_K(v_i,v_i)=0$ for all $v_i\in\VV(K)$.
Furthermore, define the distance between two subcomplexes $\emptyset\neq K_1,K_2\subset K$ to be
\[d_K(K_1,K_2):=\text{min}\{d_K(v,u)\mid\{v\}\in K_1,\{u\}\in K_2\}.\]

\begin{enumerate}[(\theenumi a)]\setlength{\itemsep}{1ex}
\item\label{case:5a} $i_k=u'$ (or $u$).
If $d_{K'}(\sigma_1',\sigma_2')=0$, take a vertex $v_s'\in\sigma_1'\cap\sigma_2'$. Since $\sigma_1\neq\sigma_2$, we can choose two vertices $v_i\in\sigma_1$, $v_j\in\sigma_2$ such that $v_i\not\in\sigma_2$, $v_j\not\in\sigma_1$.
Hence $\{i_1,v_j\},\{i_2,v_i\}\in MF(\EE_P)$, and then we can take
\[
I=
\begin{cases}
\{v_i,v_j,i_1,i_2,v_s'\}\quad&\text{if }\{v_i,v_j\}\in K,\\
\{u,v_i,v_j,i_1,i_2,v_s'\}\quad&\text{if }\{v_i,v_j\}\not\in K.\\
\end{cases}
\]

On the other hand, if $d_{K'}(\sigma_1',\sigma_2')=l>0$, suppose $K'_U$ is a path of length $l$ with $U\cap\sigma_1'=v_s',\,U\cap\sigma_2'=v_t'$.
Choose vertices $v_i\in\sigma_1$, $v_j\in\sigma_2$ such that $v_i\neq v_s$ and $v_j\neq v_t$.
Then we can take
\[
I=
\begin{cases}
\{v_i,v_j,i_1,i_2\}\cup U\quad&\text{if }\{v_i,v_j\}\in K,\\
\{u,v_i,v_j,i_1,i_2\}\cup U\quad&\text{if }\{v_i,v_j\}\not\in K.\\
\end{cases}
\]

\item $i_k\in \VV(K')$ (or $\VV(K)$). Assume $i_k=v_k'$. This situation is similar to \eqref{case:5a}.
Set $V=\VV(K')\setminus\{i_k\}$, $\tau_1'=\sigma_1'\cap V$, $\tau_2'=\sigma_2'\cap V$.
Then $|K'_V|\cong D^{n-1}$. If $d_{K'_V}(\tau_1',\tau_2')=0$, take a vertex $v_s'\in\tau_1'\cap\tau_2'$.
Since $\{i_1,i_2\}\in MF(T)$, this means $|\sigma_1\cap\sigma_2|\leqslant |\sigma_1|-2$.
Hence there exist vertices $v_i\in\sigma_1\setminus\{v_k\}$, $v_j\in\sigma_2\setminus\{v_k\}$ such that $v_i\not\in\sigma_2$, $v_j\not\in\sigma_1$, and then we can take
\[
I=
\begin{cases}
\{v_i,v_j,i_1,i_2,v_s'\}\quad&\text{if }\{v_i,v_j\}\in K,\\
\{u,v_i,v_j,i_1,i_2,v_s'\}\quad&\text{if }\{v_i,v_j\}\not\in K.\\
\end{cases}
\]
On the other hand, if $d_{K'_V}(\tau_1',\tau_2')=l>0$, we have $|\sigma_1\cap\sigma_2|\leqslant1$. Suppose $K'_U\subset K_V'$ is a path of length $l$ with $U\cap\tau_1'=v_s'$ and $U\cap\tau_2'=v_t'$.
Since $|\sigma_1|=|\sigma_2|\geqslant3$ and $|\sigma_1\cap\sigma_2|\leqslant1$, we can take a vertex $v_i\in\sigma_1\setminus\{v_k,v_s\}$ and a vertex $v_j\in\sigma_2\setminus\{v_k,v_t\}$ such that $v_i\not\in\sigma_2$, $v_j\not\in\sigma_1$. Then let
\[
I=
\begin{cases}
\{v_i,v_j,i_1,i_2\}\cup U\quad&\text{if }\{v_i,v_j\}\in K,\\
\{u,v_i,v_j,i_1,i_2\}\cup U\quad&\text{if }\{v_i,v_j\}\not\in K.\\
\end{cases}
\]

\item $i_k\in \VV(T)$. Let $\sigma'\in K'$ be the facet dual to $i_k$, and let $V=\VV(K')\setminus\sigma'$, $\tau_1'=\sigma_1'\cap V$, $\tau_2'=\sigma_2'\cap V$.
Clearly $K'_V$ is path-connected.

If $d_{K'_V}(\tau_1',\tau_2')=0$,
take a vertex $v_s'\in\tau_1'\cap\tau_2'$, and vertices $v_i\in\sigma_1$, $v_j\in\sigma_2$ such that $v_i\not\in\sigma_2$, $v_j\not\in\sigma_1$.
It is obvious that $\{i_k,v_s'\}\in MF(\EE_P)$, so we can take
\[
I=
\begin{cases}
\{v_i,v_j,i_1,i_2,v_s'\}\quad&\text{if }\{v_i,v_j\}\in K,\\
\{u,v_i,v_j,i_1,i_2,v_s'\}\quad&\text{if }\{v_i,v_j\}\not\in K.\\
\end{cases}
\]

On the other hand, if $d_{K'_V}(\tau_1',\tau_2')=l>0$, suppose $K'_U\subset K_V'$ is a path of length $l$ and $U\cap\tau_1'=v_s'$, $U\cap\tau_2'=v_t'$.
Since $l>0$, $\tau_1'\cap\tau_2'=\emptyset$, so $v_s'\notin\sigma_2'$, $v_t'\not\in\sigma_1'$.
Choose a vertex $v_i\in\sigma_1\setminus\{v_s\}$ and a vertex $v_j\in\sigma_2\setminus\{v_t\}$ (requiring $v_i=v_j\in\sigma_1\cap\sigma_2$ whenever $\sigma_1\cap\sigma_2\neq\emptyset$), and let
\[
I=
\begin{cases}
\{v_i,i_1,i_2\}\cup U\quad&\text{if }v_i=v_j,\\
\{v_i,v_j,i_1,i_2\}\cup U\quad&\text{if }\{v_i,v_j\}\in K,\\
\{u,v_i,v_j,i_1,i_2\}\cup U\quad&\text{if }\{v_i,v_j\}\not\in K.\\
\end{cases}
\]
Since one of the two components of $(\EE_P)_{I\setminus\{i_1,i_2\}}$ is $K'_U$ and $\{i_k,v_p'\}\in MF(\EE_P)$ for any $v_p'\in U$, $I$ is the desired subset.
\end{enumerate}

\item $i_1\in\VV(K)$, $i_2\in \VV(T)$. Assume $i_1=v_i$, $i_2=F_\sigma=F_{\sigma'}$ for some facet $\sigma\in K$. Clearly, $v_i\not\in\sigma$ since $\{v_i,F_\sigma\}\not\in \EE_P$.

\begin{enumerate}[(\theenumi a)]\setlength{\itemsep}{1ex}
\item\label{case:via} $i_k=u$. Suppose $K_U$ is a path of length $d_K(v_i,\sigma)$ connecting $v_i$ and $\sigma$, $v_j=U\cap\sigma$. Then choose a vertex $v_s'\in\sigma'\setminus\{v_j'\}$, and let
\[
I=
\begin{cases}
U\cup \{F_\sigma,v_i',v_s'\}\quad&\text{if }\{v_i',v_s'\}\in K',\\
U\cup \{F_\sigma,v_i',v_s',u'\}\quad&\text{if }\{v_i',v_s'\}\not\in K'.\\
\end{cases}
\]

\item $i_k=u'$. Choose a vertex $v_j\in\sigma$, and a vertex $v_s'\in\sigma'\setminus\{v_j'\}$. Let $V=(\VV(K')\setminus\sigma')\cup\{v_s'\}$.
It is clear that $K_V'$ is path-connected. Take $K'_W$ to be a shortest path in $K_V'$ connecting $v_i'$ and $v_s'$.
Then let
\[
I=
\begin{cases}
\{v_i,v_j,F_\sigma,W\}\quad&\text{if }\{v_i,v_j\}\in K,\\
\{u,v_i,v_j,F_\sigma,W\}\quad&\text{if }\{v_i,v_j\}\not\in K.\\
\end{cases}
\]

\item\label{case:vic} $i_k\in \VV(K')$. Choose a vertex $v_j'\in\sigma'\setminus\{i_k\}$, and a vertex $v_s'\in\sigma'\setminus\{v_j',i_k\}$ ($\dim K\geqslant 2$). If $i_k\neq v_i'$, then let
\begin{equation}\label{eq:E.1}
I=
\begin{cases}
\{v_i,v_j,F_\sigma,v_i',v_s'\}\quad&\text{if }\{v_i,v_j\}\in K\text{ and }\{v_i',v_s'\}\in K',\\
\{u,v_i,v_j,F_\sigma,v_i',v_s'\}\quad&\text{if }\{v_i,v_j\}\not\in K\text{ and }\{v_i',v_s'\}\in K',\\
\{v_i,v_j,F_\sigma,v_i',v_s',u'\}\quad&\text{if }\{v_i,v_j\}\in K\text{ and }\{v_i',v_s'\}\not\in K',\\
\{u,v_i,v_j,F_\sigma,v_i',v_s',u'\}\quad&\text{if }\{v_i,v_j\}\not\in K\text{ and }\{v_i',v_s'\}\not\in K'.
\end{cases}
\end{equation}

For the case $i_k=v_i'$, there is a facet $\tau\in \mathrm{st}_K\{v_i\}$ satisfying $|\tau\cap\sigma|\leqslant n-2$, since otherwise $2^\sigma\cup\mathrm{st}_K\{v_i\}$ would be a simplicial $(n-1)$-sphere, which would imply that $K=\partial\Delta^n$. Hence we have $\{F_\tau,F_\sigma\}\in MF(\EE_P)$, $\tau\setminus(\sigma\cup \{v_i\})\neq\emptyset$, and we may assume that $v_j,v_s\in\sigma\setminus\tau$. Choose a vertex $v_t'\in \tau'\setminus(\sigma'\cup \{v_i'\})$. Then we can take
\[
I=
\begin{cases}
\{v_i,v_j,F_\tau,F_\sigma,v_t',v_s'\}\quad&\text{if }\{v_i,v_j\}\in K\text{ and }\{v_t',v_s'\}\in K',\\
\{u,v_i,v_j,F_\tau,F_\sigma,v_t',v_s'\}\quad&\text{if }\{v_i,v_j\}\not\in K\text{ and }\{v_t',v_s'\}\in K',\\
\{v_i,v_j,F_\tau,F_\sigma,v_t',v_s',u'\}\quad&\text{if }\{v_i,v_j\}\in K\text{ and }\{v_t',v_s'\}\not\in K',\\
\{u,v_i,v_j,F_\tau,F_\sigma,v_t',v_s',u'\}\quad&\text{if }\{v_i,v_j\}\not\in K\text{ and }\{v_t',v_s'\}\not\in K'.
\end{cases}
\]

\item $i_k\in \VV(T)$. Assume $i_k=F_\tau$ for some facet $\tau\in K$. Choose a vertex $v_j\in\sigma\setminus\tau$, and a vertex $v_s'\in\sigma'\setminus\{v_j'\}$. Then we can take $I$ to be the same as in \eqref{eq:E.1}.

\item $i_k\in \VV(K)$. Choose a vertex $v_j\in\sigma\setminus\{i_k\}$, and a vertex $v_s\in\sigma\setminus\{v_j,i_k\}$. Then we can also take $I$ to be the same as in \eqref{eq:E.1}.
\end{enumerate}

\item $i_1,i_2\in\VV(K)$. Assume $i_1=v_i$, $i_2=v_j$. Clearly, $i_k\neq u$.

\begin{enumerate}[(\theenumi a)]\setlength{\itemsep}{1ex}
\item $i_k\in\VV(K)$ or $\VV(T)$. We can take $I=\{u,v_i,v_j,v_{i'},v_{j'},u'\}$.

	\item $i_k\in \VV(K')$. If $i_k\neq v_i',v_j'$, then we can take $I$ as above. For the other case, say $i_k=v_j'$, choose a facet $\sigma'\in K'$ containing $v_j'$  and a vertex $v_s'\in \sigma'\setminus\{v_j'\}$. Then we can take 
	\[I=
	\begin{cases}
	\{u,v_i,v_j,F_{\sigma'},v_i',v_s'\}\quad &\text{ if } \{v_i',v_s'\}\in K',\\
	\{u,v_i,v_j,F_{\sigma'},v_i',v_s',u'\}\quad &\text{ if }\{v_i',v_s'\}\not\in K'.
	\end{cases}\]
\item $i_k=u'$. In this case, choose a shortest path $K'_U$ in $K'$ connecting $v_i'$ and $v_j'$, and take $I=\{u,v_i,v_j\}\cup U$.
\end{enumerate}

\end{enumerate}

It follows from the symmetry of $\EE(P)$ that these cases exhaust all possibilities, and then we finish the proof.
\end{proof}

\section{A generalization of Proposition \ref{prop:4-circuit}}\label{app:n=3}
The following proposition is a generalization of Proposition \ref{prop:4-circuit}, which is an essential ingredient in the second part of this paper.
This result was also obtained by Erokhovets \cite[Lemma 5.2]{Ero20} (see also \cite{Ero20b}) in the dual form.
\begin{prop}\label{prop:gen}
Let $K$ be a simplicial $2$-sphere from $\QQ$ (see Section \ref{subsec:the class Q} for the definition), $\VV(K)=[m]$. Suppose a missing face $\omega=\{i_1,i_2\}\in MF(K)$ and a vertex $i_s\in [m]\setminus\omega$ satisfy the following condition:
\[\mathrm{lk}_K\{i\}\ \text{is not a $4$-circuit for any}\  i\in\VV(\mathrm{lk}_K\{i_s\})\cap\omega.\]
Then there exists a subset $I\subset[m]\setminus\{i_s\}$ with $\omega\subset I$ such that $|K_I|\cong S^1$ and $\w H_0(K_J)\neq0$ for $J=\{i_s\}\cup I\setminus\omega$.
\end{prop}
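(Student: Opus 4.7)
Let $C_s := \mathrm{lk}_{i_s}K$, which is a cycle and, by flagness of $K$, a full subcomplex. Then $D := K_{[m]\setminus\{i_s\}}$ is a triangulated closed $2$-disk with boundary cycle $C_s$ (since $|K| \cong S^2$ and $|D|$ equals $|K|$ minus the open star of $i_s$). My aim is to construct a full subcomplex cycle $K_I \subset D$ with $\omega \subset I$ such that, after removing $\{i_1,i_2\}$, the cycle splits into two arcs, and $i_s$ is adjacent in $K$ to vertices of at most one of them; this immediately yields $\w H_0(K_J)\neq 0$. The construction proceeds by case analysis on $|\omega \cap \VV(C_s)|$.

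If $\omega \cap \VV(C_s) = \varnothing$, both $i_1,i_2$ lie in the interior of $D$, and one chooses $K_I \subset D$ disjoint from $C_s$ by connecting $\mathrm{lk}_{i_1}K$ to $\mathrm{lk}_{i_2}K$ through two internally disjoint interior paths; then $i_s$ becomes isolated in $K_J$. If exactly one of $i_1,i_2$, say $i_1$, lies on $C_s$, the hypothesis together with the fact that in a flag $2$-sphere $\mathrm{lk}_{i_1}K$ cannot be a $3$-circuit (a $3$-circuit would force a $3$-simplex by flagness) gives $|\mathrm{lk}_{i_1}K|\geq 5$. A second use of flagness shows $\mathrm{lk}_{i_1}K \cap \VV(C_s) = \{a,b\}$, where $a,b$ are the two neighbors of $i_s$ in the cycle $\mathrm{lk}_{i_1}K$; indeed any other common neighbor $v$ of $i_1$ and $i_s$ would, by flagness, complete a triangle $\{i_1,i_s,v\}$ and thus be adjacent to $i_s$ in $\mathrm{lk}_{i_1}K$. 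Consequently $\mathrm{lk}_{i_1}K$ has $\geq 2$ vertices strictly inside $D$, through which one detours to build $K_I$ whose arc opposite $i_s$ stays in $\mathrm{int}(D)$.

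The main case is $|\omega \cap \VV(C_s)| = 2$: both $i_1,i_2$ lie on $C_s$. The naive choice $I=\VV(C_s)$ fails, as $K_J$ would be a fan centered at $i_s$ and hence connected. Instead, both $\mathrm{lk}_{i_1}K$ and $\mathrm{lk}_{i_2}K$ are not $4$-circuits, so each has $\geq 5$ vertices, at least two of which (by the flagness argument above) lie in $\mathrm{int}(D)$. Choosing such interior vertices $v \in \mathrm{lk}_{i_1}K$ and $v' \in \mathrm{lk}_{i_2}K$ and replacing suitable arcs of $C_s$ near $i_1$ and $i_2$ by detours through $v$ and $v'$ produces a cycle $K_I$ containing $\omega$, one of whose arcs lies in $\mathrm{int}(D)$ and thus has no vertex adjacent to $i_s$ in $K$ --- witnessing the disconnection of $K_J$.

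The hardest step, in every case and especially Case 3, is to verify that the constructed $K_I$ is genuinely a full subcomplex (i.e., carries no chord). A chord of $K_I$ would produce a short full subcomplex cycle of $K$, which in the critical configurations is a $4$-circuit. Since $K\in\QQ$, any such $4$-circuit full subcomplex must equal $\mathrm{lk}_i K$ for some vertex $i$. Tracking the possible positions of the chord shows that the forced vertex $i$ must lie in $\omega \cap \VV(\mathrm{lk}_{i_s}K)$ --- precisely the vertices whose links the hypothesis forbids to be $4$-circuits. This matching of each chord scenario to its would-be vertex link is the bulk of the work, and the main obstacle; it is here that the exact form of the hypothesis enters decisively.
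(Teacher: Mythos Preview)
Your approach---direct case analysis on $|\omega\cap\VV(C_s)|$ followed by explicit construction of $K_I$---is quite different from the paper's. The paper instead starts from \emph{any} full-subcomplex circuit $K_{I_0}\subset\Gamma:=K_{[m]\setminus\{i_s\}}$ through $\omega$ (existence is a cited lemma) and, if $\w H_0(K_{J_0})=0$, iteratively deletes a vertex $l_k\in\Ss:=\VV(\mathrm{lk}_{i_s}K)$ lying on one arc, decomposes the resulting punctured disk into full-subcomplex pieces, shows $i_1,i_2$ stay in a common piece, and repeats inside that piece; the process terminates since $\Ss$ is finite. The key point is that a separating edge $\{j_1,j_2\}$ in the decomposition would make $\{i_s,l_k,j_1,j_2\}$ a full $4$-circuit, hence the link of some vertex; that vertex must be $i_1$ or $i_2$ (as they lie in different complementary components) and must be adjacent to $i_s$ (since $i_s$ is in its link), forcing it into $\omega\cap\Ss$ and contradicting the hypothesis.

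Your sketch has a genuine gap at exactly the step you flag as hardest. The claim that a chord of your candidate $K_I$ yields a $4$-circuit is unjustified: a chord at cycle-distance $d$ gives a $(d{+}1)$-circuit, and nothing in your construction bounds $d$. Worse, even when a $4$-circuit does arise, it lies entirely in $K_I\subset K_{[m]\setminus\{i_s\}}$ and so does \emph{not} contain $i_s$; hence the vertex whose link it is need not be adjacent to $i_s$, and there is no mechanism to place it in $\omega\cap\Ss$. In the paper's argument the $4$-circuit contains $i_s$ by construction, which is precisely why the hypothesis engages. Without an analogous device your chord-elimination cannot be completed along the lines you indicate, and the interior-arc constructions in all three cases (whose existence you also do not establish) remain unverified.
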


\begin{proof}
Since $K$ is flag, we can easily deduce that $\Gamma:=K_{[m]\setminus\{i_s\}}$ is a flag triangulation of $D^2$, and $\partial \Gamma=K_\Ss$, where $\Ss=\VV(\mathrm{lk}_K\{i_s\})$.
So by \cite[Lemma 6.1]{FW15} (see also \cite[Lemma A.2]{BEMPP17}), there exists $I_0\subset[m]\setminus\{i_s\}$ with $\omega\subset I_0$ such that $|K_{I_0}|\cong S^1$.
Note that $K_{I_0}$ bounds a simplicial disk $K_0\subset \Gamma$. Let $J_0=\{i_s\}\cup(I_0\setminus\omega)$.
If $\w H_0(K_{J_0})\neq0$, then $I=I_0$ is the desired subset. Otherwise, assign an orientation to $K_{I_0}$ such that $K_0$ is on our left when we are outside the sphere and go along $K_{I_0}$ in the positive orientation.
For the two components of $K_{I_0\setminus\omega}$, denote $L_0^+$ (resp. $L_0^-$) the one from $i_1$ to $i_2$ (resp. from $i_2$ to $i_1$), the words ``from'' and ``to'' referring to the given orientation of $K_{I_0}$.

Since $\w H_0(K_{J_0})=0$, $\Ss\cap\VV(L_0^+),\,\Ss\cap\VV(L_0^-)\neq\emptyset$.
Taking a vertex $l_0\in \Ss\cap\VV(L_0^+)$, the fact that $\partial\Gamma=K_\Ss$ implies that $\Gamma_0:=\Gamma-\{l_0\}=\{\sigma\in \Gamma:l_0\not\in\sigma\}$ is still a triangulation of $D^2$, but $\partial \Gamma_0$ may not be a full subcomplex of $K$. Let
\[\Aa_0=\{\sigma\in\Gamma_0:\dim\sigma=1,\,\sigma\not\in\partial \Gamma_0,\,\sigma\subset \VV(\partial\Gamma_0)\}.\]
Then we can uniquely write $\Gamma_0$ as
\[\Gamma_0=\bigcup_i \Gamma_{0,i},\quad \Gamma_{0,i}=K_{\VV(\Gamma_{0,i})},\]
where each $\Gamma_{0,i}$ is a triangulation of $D^2$ such that either $\Gamma_{0,i}=\Delta^2$ or $\partial \Gamma_{0,i}$ is a full subcomplex of $K$, and $\Gamma_{0,j}\cap \Gamma_{0,k}$ contains at most one edge from $\Aa_0$ for $j\neq k$ (see Figure \ref{fig:gamma0}).
\input{gamma0.TpX}

We claim that $\omega\in \Gamma_{0,i}$ for some $i$.
Since otherwise there would be an edge $\{j_1,j_2\}\in \Aa_0$ ($j_1,j_2\neq i_1,i_2$) separating $\Gamma_0$ into two components, such that $i_1$ and $i_2$ are in different ones.
By definition, $j_1,j_2\in\VV(\partial\Gamma_0)$, $\{j_1,j_2\}\notin\partial \Gamma_0$, so since $\partial\Gamma_0=\mathrm{lk}_\Gamma\{l_0\}\cup K_{\Ss\setminus\{l_0\}}$ and $\mathrm{lk}_\Gamma\{l_0\}$, $K_{\Ss\setminus\{l_0\}}$ are full subcomplexes of $K$, we may assume $j_1\in\VV(\mathrm{lk}_\Gamma\{l_0\})\setminus\Ss$ and $j_2\in\Ss\setminus\VV(\mathrm{st}_\Gamma\{l_0\})$.
Hence $K$ restricted to the subset $U:=\{i_s,l_0,j_1,j_2\}$ is a $4$-circuit, and then it is the link of a vertex.
This vertex is either $i_1$ or $i_2$, since $i_1$ and $i_2$ are in different components of $\Gamma_0-\{j_1,j_2\}$, as well as components of $K_{[m]\setminus U}$.
However this contradicts the assumption in the proposition.

Suppose $\omega\in \Gamma_{0,a_0}$. Clearly $\Gamma_{0,a_0}\neq\Delta^2$, so $\partial\Gamma_{0,a_0}=K_{\VV(\Gamma_{0,a_0})}$.
Hence by applying \cite[Lemma 6.1]{FW15} again we can find a subset $I_1\subset\VV(\Gamma_{0,a_0})$ with $\omega\subset I_1$ such that $|K_{I_1}|\cong S^1$. Note that $K_{I_1}$ bounds a simplicial disk $K_1\subset \Gamma_{0,a_0}$. Let $J_1=\{i_s\}\cup(I_1\setminus\omega)$. If $\w H_0(K_{J_1})\neq0$, then $I=I_1$ is the desired subset. Otherwise, assign to $K_{I_1}$ the same orientation as $K_{I_0}$, and let $L_1^+$ (resp. $L_1^-$) be the component of $K_{I_1\setminus\omega}$ from $i_1$ to $i_2$ (resp. from $i_2$ to $i_1$).
As before, taking a vertex $l_1\in \Ss\cap\VV(L_1^+)$, we can show that $\Gamma_1:=\Gamma_{0,a_0}-\{l_1\}$  is still a triangulation of $D^2$, and we can uniquely write $\Gamma_1$ as
\[\Gamma_1=\bigcup_i \Gamma_{1,i},\quad \Gamma_{1,i}=K_{\VV(\Gamma_{1,i})},\]
where each $\Gamma_{1,i}$ is a triangulation of $D^2$ such that  either $\Gamma_{1,i}=\Delta^2$ or $\partial \Gamma_{1,i}$ is a full subcomplex of $K$, and $\Gamma_{1,j}\cap \Gamma_{1,k}$ contains at most one edge from $\Aa_1=\{\sigma\in\Gamma_1:\dim\sigma=1,\,\sigma\not\in\partial \Gamma_1,\,\sigma\subset \VV(\partial\Gamma_1)\}$ for $j\neq k$.

If $\omega\in \Gamma_{1,a_1}$ for some $a_1$, then we can continue this process inductively.
Actually, we will show that if we have defined $I_k$, $K_k$, $L_k^+$, $L_k^-$, $l_k$, $\Gamma_k=\Gamma_{k-1,a_{k-1}}-\{l_k\}$, $\{\Gamma_{k,i}\}$, there must exist $\Gamma_{k,a_k}$ such that $\omega\in\Gamma_{k,a_k}$.
Then since in each step we eliminate one vertex $l_k$ from the finite set $\Ss$, this process eventually terminates, say, at the $n$th step, and we can take $I=I_n$ as the desired subset. The proof concludes by showing this fact.

Suppose on the contrary that for some $k>0$, there is an edge $\{j_1,j_2\}\in \Gamma_k$ separating $\Gamma_k$ into two components, such that $i_1$ and $i_2$ are in the different ones. Let $\Ss_0=\Ss$ and $\Ss_k=\VV(\partial\Gamma_{k-1,a_{k-1}})$ for $k>0$.  Then as before, we may assume $j_1\in\VV(\mathrm{lk}_K\{l_k\})\setminus \Ss_k$ and $j_2\in\Ss_k\setminus\VV(\mathrm{st}_K\{l_k\})$.
Clearly, we have $j_1\not\in\Ss$. Moreover, consider the full subcomplex on $\{i_s,l_k,j_1,j_2\}$, then the same argument as in the third paragraph of the proof shows that $j_2\not\in\Ss$.
We claim that $j_2\in\VV(\mathrm{lk}_K\{l_j\})$ for some $j<k$. To see this, note that
\[\Ss_k\subset\VV(\partial\Gamma_{k-1})=\VV(\mathrm{lk}_{\Gamma_{k-2,a_{k-2}}}\{l_{k-1}\})\cup\Ss_{k-1}\setminus\{l_{k-1}\},\]
so $j_2\in \VV(\mathrm{lk}_K\{l_{k-1}\})$ or $j_2\in\Ss_{k-1}$. The first case is what we want, and for the second case we can continue this process by an inductive argument, and eventually get the desired $l_j$ for some $j<k$, noting $j_2\not\in\Ss_0=\Ss$.

\input{gamma.TpX}
Since $l_j,l_k\in\Ss=\VV(\partial\Gamma)$, $j_1,j_2\not\in\Ss$, it is easy to see that the path consisting of three edges $\{l_k,j_1\}$, $\{j_1,j_2\}$, $\{j_2,l_j\}$ separates $\Gamma$ into two components such that $i_1$ and $i_2$ are in different ones.
Without loss of generality, we may assume $i_1$ is in the upper components and $i_2$ is in the lower one, and we first consider the case $l_j$ is on the right side, just as shown in Figure \ref{fig:gamma}.
From $|K_{I_j}|\cong S^1$ it follows that
\[\VV(L_j^+)\cap\{l_k,j_1,j_2,l_j\},\ \VV(L_j^-)\cap\{l_k,j_1,j_2,l_j\}\neq\emptyset.\]
This, together with the fact that $l_j\in \VV(L_j^+)$ implies that $\VV(L_j^+)\cap\{l_k,j_1,j_2,l_j\}=\{l_j\}$ or $\{j_2,l_j\}$. However in either case, from a geometric observation we can easily see that when we go from $i_1$ to $i_2$ along $L_j^+$,
$K_j$ would be on the right side of $L_j^+$, contradicting the given orientation.
For the case that $l_k$ is on the right side, we can apply the same argument to $L_k^+$.
\end{proof}


\begin{thebibliography}{100}
	
	\bibitem{And70}
	E.~M. Andreev, \emph{On convex polyhedra of finite volume in
		\text{L}oba\v{c}evski\v{i} space}, Math. USSR-Sbornik \textbf{12} (1970),
	no.~2, 255--259.
	
	\bibitem{AG71}
	L.~L. Avramov and E.~Golod, \emph{On the homology algebra of the \text{K}oszul
		complex of a local \text{G}orenstein ring}, Math. Notes \textbf{9} (1971),
	30--32.
	
	\bibitem{BBCG1}
	A.~Bahri, M.~Bendersky, F.~R. Cohen, and S.~Gitler, \emph{The polyhedral
		product functor: A method of decomposition for moment-angle complexes,
		arrangement and related spaces}, Adv. Math. \textbf{225} (2010), no.~3,
	1634--1668.
	
	\bibitem{Bar74}
	D.~Barnette, \emph{On generating planar graphs}, Discrete Math. \textbf{7}
	(1974), 199--208.
	
	\bibitem{B02}
	I.~V. Baskakov, \emph{Cohomology of \textrm{$K$}-powers of spaces and the
		combinatorics of simplicial divisions}, Uspekhi Mat. Nauk \textbf{57} (2002),
	no.~5, 147--148 (Russian); Russian Math. Surveys, \textbf{57}(2002), no. 5,
	898--990 (English translation).
	
	\bibitem{BBP04}
	I.~V. Baskakov, V.~M. Buchstaber, and T.~E. Panov, \emph{Cellular cochain
		algebras and torus actions}, Uspekhi Mat. Nauk \textbf{59} (2004), no.~3,
	159--160 (Russian); Russian Math. Surveys, \textbf{59}(2004) no. 3, 562--563
	(English translation).
	
	\bibitem{Bos14}
	F.~Bosio, \emph{Diffeomorphic moment-angle manifolds with different {B}etti
		numbers}, arXiv:1410.3304, 2014.
	
	\bibitem{Bos15}
	F.~Bosio, \emph{Combinatorially rigid simple polytopes with $d+3$ facets},
	arXiv:1511.09039, 2015.
	
	\bibitem{B17}
	F.~Bosio, \emph{Two transformations of simple polytopes preserving moment-angle
		manifolds}, arXiv:1708.00399, 2017.
	
	\bibitem{BM06}
	F.~Bosio and L.~Meersseman, \emph{Real quadrics in $\mathbb{C}^n$, complex
		manifolds and convex polytopes}, Acta Math. \textbf{197} (2006), no.~1,
	53--127.
	
	\bibitem{BG96}
	W.~Bruns and J.~Gubeladze, \emph{Combinatorial invariance of
		\text{Stanley-Reisner} rings}, Georgian Math. J. \textbf{3} (1996), no.~4,
	315--318.
	
	\bibitem{BH98}
	W.~Bruns and J.~Herzog, \emph{\textrm{Cohen-Macaulay Rings}}, revised ed.,
	Cambridge Studies in Adv. Math., vol.~39, Cambridge Univ. Press, Cambridge,
	1998.
	
	\bibitem{B08}
	V.~M. Buchstaber, \emph{Lectures on toric topology}, In \emph{Proceedings of
		Toric Topology Workshop KAIST 2008.} Information Center for Mathematical
	Sciences, KAIST. Trends in Math. \textbf{10} (2008), no.~1, 1--64.
	
	\bibitem{BE15}
	V.~M. Buchstaber and N.~Y. Erokhovets, \emph{Construction of fullerenes},
	arXiv:1510.02948.
	
	\bibitem{BEMPP17}
	V.~M. Buchstaber, N.~Y. Erokhovets, M.~Masuda, T.~E. Panov, and S.~Park,
	\emph{Cohomological rigidity of manifolds defined by 3-dimensional
		polytopes}, Uspekhi Mat. Nauk \textbf{72} (2017), no.~2, 3--66 (Russian);
	Russian Math. Surveys \textbf{72}(2017), no. 2 199--256 (English
	translation).
	
	\bibitem{BP00}
	V.~M. Buchstaber and T.~E. Panov, \emph{Torus actions, combinatorial topology
		and homological algebra}, Uspekhi Mat. Nauk \textbf{55} (2000), no.~5, 3--106
	(Russian); Russian Math. Surveys, \textbf{55}(2000), no. 5, 825--921 (English
	translation).
	
	\bibitem{BP15}
	V.~M. Buchstaber and T.~E. Panov, \emph{Toric topology}, Mathematical Surveys and Monographs, vol. 204,
	Amer. Math. Soc., Providence, RI, 2015.
	
	\bibitem{C17}
	L.~Cai, \emph{On products in a real moment-angle manifold}, J. Math. Soc. Japan
	\textbf{69} (2017), no.~2, 503--528.
	
	\bibitem{CE56}
	H.~Cartan and S.~Eilenberg, \emph{Homological algebra}, Princeton Univ. Press,
	Princeton, 1956.
	
	\bibitem{CFW20}
	L.~Chen, F.~Fan, and X.~Wang, \emph{The topology of moment-angle manifold-on a
		conjecture of \text{S. Gitler} and \text{S. L\'opez de Medrano}}, Sci. China
	Math. \textbf{63} (2020), no.~10, 2079--2088.
	
	\bibitem{Cho13}
	S.~Choi, \emph{Different moment-angle manifolds arising from two polytopes
		having the same bigraded {B}etti numbers}, Algebr. Geom. Topol. \textbf{13}
	(2013), no.~6, 3639--3649.
	
	\bibitem{Cho15}
	S.~Choi, \emph{Classification of {B}ott manifolds up to dimension 8}, Proc.
	Edinb. Math. Soc. \textbf{58} (2015), no.~3, 653--659.
	
	\bibitem{CK11}
	S.~Choi and J.~Kim, \emph{Combinatorial rigidity of $3$-dimensional simplicial
		polytopes}, Int. Math. Res. Not. \textbf{8} (2011), 1935--1951.
	
	\bibitem{CM12}
	S.~Choi and M.~Masuda, \emph{Classification of {$\mathbb{Q}$}-trivial {B}ott
		manifolds}, J. Symplectic Geom. \textbf{10} (2012), no.~3, 447--461.
	
	\bibitem{CMS10}
	S.~Choi, M.~Masuda, and D.~Y. Suh, \emph{Topological classification of
		generalized {B}ott towers}, Trans. Amer. Math. Soc. \textbf{362} (2010),
	no.~2, 1097--1112.
	
	\bibitem{CMS11}
	S.~Choi, M.~Masuda, and D.~Y. Suh, \emph{Rigidity problems in toric topology, a survey}, Proc. Steklov
	Inst. Math. \textbf{275} (2011), 177--190.
	
	\bibitem{CPS10}
	S.~Choi, T.~E. Panov, and D.~Y. Suh, \emph{Toric cohomological rigidity of
		simple convex polytopes}, J. London Math. Soc. \textbf{82} (2010), no.~2,
	343--360.
	
	\bibitem{CP19}
	S.~Choi and K.~Park, \emph{Example of \text{C}-rigid polytopes which are not
		\text{B}-rigid}, Math. Slovaca \textbf{69} (2019), no.~2, 437--448.
	
	\bibitem{CPS12}
	S.~Choi, S.~Park, and D.~Y. Suh, \emph{Topological classification of quasitoric
		manifolds with second \text{Betti} number $2$}, Pacific J. Math. \textbf{256}
	(2012), no.~1, 19--49.
	
	\bibitem{CLS11}
	D.~Cox, J.~Little, and H.~Schenck, \emph{Toric varieties}, Graduate Studies in
	Mathematics, vol. 124, American Mathematical Society, Providence, RI, 2011.
	
	\bibitem{DJ91}
	M.~W. Davis and T.~Januszkiewicz, \emph{Convex polytopes, \text{C}oxeter
		orbifolds and torus actions}, Duke Math. J. \textbf{62} (1991), no.~2,
	417--451.
	
	\bibitem{DO01}
	M.~W. Davis and B.~Okun, \emph{Vanishing theorems and conjectures for the
		$\ell^2$-homology of right-angled {C}oxeter groups}, Geom. Topol. \textbf{5}
	(2001), no.~1, 7--74.
	
	\bibitem{D05}
	C.~Delaunay, \emph{On hyperbolicity of toric real threefolds.}, Int. Math. Res.
	Not. (2005), no.~51, 3191--3201.
	
	\bibitem{Dos98}
	T.~Do\v sli\'c, \emph{On lower bounds of number of perfect matchings in
		fullerene graphs}, J. Math. Chem. \textbf{24} (1998), no.~4, 359--364.
	
	\bibitem{Dos03}
	T.~Do\v sli\'c, \emph{Cyclical edge-connectivity of fullerene graphs and $(k,
		6)$-cages}, J. Math. Chem. \textbf{33} (2003), no.~2, 103--112.
	
	\bibitem{Ero19}
	N.~Y. Erokhovets, \emph{Three-dimensional right-angled polytopes of finite
		volume in the \text{L}obachevsky space: combinatorics and constructions},
	Proc. Steklov Inst. Math. \textbf{305} (2019), 78--134.
	
	\bibitem{Ero20}
	N.~Y. Erokhovets, \emph{{$B$}-rigidity of the property to be an almost {P}ogorelov
		polytope}, In Topology, Geometry, and Dynamics: V. A. Rokhlin Memorial,
	Contemp. Math., vol. 772, Am. Math. Soc., Providence, RI, 2021, pp.~107--122.
	
	\bibitem{Ero20b}
	N.~Y. Erokhovets, \emph{Cohomological rigidity of families of manifolds associated with
		ideal right-angled hyperbolic 3-polytopes}, Tr. Mat. Inst. Steklova
	\textbf{318} (2022), 99--138 (Russian); Proc. Steklov Inst. Math.
	\textbf{318} (2022), no. 1, 90--125 (English translation).
	
	\bibitem{FCMW16}
	F.~Fan, L.~Chen, J.~Ma, and X.~Wang, \emph{Moment-angle manifolds and connected
		sums of sphere products}, Osaka J. Math. \textbf{53} (2016), 31--45.
	
	\bibitem{FMW16}
	F.~Fan, J.~Ma, and X.~Wang, \emph{Cohomological rigidity of manifolds with
		torus actions: \text{II}}, In preperation.
	
	\bibitem{FMW15}
	F.~Fan, J.~Ma, and X.~Wang, \emph{B-rigidity of flag 2-spheres without 4-belt}, arXiv:1511.03624,
	2015.
	
	\bibitem{FW15}
	F.~Fan and X.~Wang, \emph{On the cohomology of moment-angle complexes
		associated to \text{G}orenstein* complexes}, arXiv:1508.00159, 2015.
	
	\bibitem{FW21}
	F.~Fan and X.~Wang, \emph{Moment-angle manifolds and connected sums of simplicial
		spheres}, Sci. China Math. \textbf{64} (2021), no.~12, 2743--2758.
	
	\bibitem{M03}
	M.~Franz, \emph{On the integral cohomology of smooth toric varieties},
	arXiv:math/0308253v1.
	
	\bibitem{M06}
	M.~Franz, \emph{The integral cohomology of toric manifolds}, Proc. Steklov Inst.
	Math. \textbf{252} (2006), 53--62.
	
	\bibitem{Fra10}
	M.~Franz, \emph{Describing toric varieties and their equivariant cohomology},
	Colloq. Math. \textbf{121} (2010), no.~1, 1--16.
	
	\bibitem{Glu62}
	H.~Gluck, \emph{The embedding of two-spheres in the four-sphere}, Trans. Amer.
	Math. Soc. \textbf{104} (1962), no.~2, 308--333.
	
	\bibitem{GT07}
	J.~Grbi\'c and S.~Theriault, \emph{The homotopy type of the complement of a
		coordinate subspace arrangement}, Topology \textbf{46} (2007), 357--396.
	
	\bibitem{GT16}
	J.~Grbi\'c and S.~Theriault, \emph{Homotopy theory in toric topology}, Uspekhi Mat. Nauk
	\textbf{71} (2016), no.~2, 3--80; (Russian); Russian Math. Surveys,
	\textbf{71}(2016), no. 2, 185--251 (English translation).
	
	\bibitem{H75}
	M.~Hochster, \emph{Cohen-\text{M}acaulay rings, combinatorics, and simplicial
		complexes}, In: Ring Theory II (Proc. Second Oklahoma Conf.), Lect. Notes
	Pure Appl. Math., vol.~26, Dekker, New York, 1977, pp.~171--233.
	
	\bibitem{Iri16}
	K.~Iriye, \emph{On the moment-angle manifold constructed by {Fan, Chen, Ma and
			Wang}}, Osaka J. Math. \textbf{55} (2018), no.~4, 587--593.
	
	\bibitem{IFM13}
	H.~Ishida, Y.~Fukukawa, and M.~Masuda, \emph{Topological toric manifolds},
	Moscow Math. J. \textbf{13} (2013), no.~1, 57--98.
	
	\bibitem{Ka87}
	G.~Kalai, \emph{Rigidity and the lower bound theorem. \text{I}}, Inv. Math.
	\textbf{88} (1987), no.~1, 125--151.
	
	\bibitem{M08}
	M.~Masuda, \emph{Equivariant cohomology distinguishes toric manifolds}, Adv.
	Math. \textbf{218} (2008), no.~6, 2005--2012.
	
	\bibitem{MS08}
	M.~Masuda and D.~Y. Suh, \emph{Classification problems of toric manifolds via
		topology}, Toric topology, Contemp. Math., vol. 460, Amer. Math. Soc.,
	Providence, RI, 2008, pp.~273--286.
	
	\bibitem{M79}
	D.~McGavran, \emph{Adjacent connected sums and torus actions}, Trans. Amer.
	Math. Soc. \textbf{251} (1979), 235--254.
	
	\bibitem{MS05}
	E.~Miller and B.~Sturmfels, \emph{\text{Combinatorial Commutative Algebra}},
	Graduate Texts in Mathematics, vol. 227, Springer-Verlag, New York, 2005.
	
	\bibitem{OR70}
	P.~Orlik and F.~Raymond, \emph{Actions of the torus on 4-manifolds}, Trans.
	Amer. Math. Soc. \textbf{152} (1970), no.~2, 531--559.
	
	\bibitem{P08}
	T.~E. Panov, \emph{Cohomology of face rings, and torus actions}, Surveys in
	Contemporary Mathematics, London Math. Soc. Lecture Note Series, vol. 347,
	Cambridge Univ. Press, Cambridge, 2008, pp.~165--201.
	
	\bibitem{PU12}
	T.~E. Panov and Y.~Ustinovsky, \emph{Complex-analytic structures on
		moment-angle manifolds}, Moscow Math. J. \textbf{12} (2012), no.~1, 149--172.
	
	\bibitem{Rin75}
	W.~Rinow, \emph{Lehrbuch der topologie}, Deutscher Verlag der Wissenschaften,
	Berlin, 1975.
	
	\bibitem{Rot09}
	J.~J. Rotman, \emph{An introduction to homological algebra}, Universitext,
	Springer, New York, 2009.
	
	\bibitem{S96}
	R.~Stanley, \emph{\textrm{Combinatorics and Commutative Algebra}}, 2nd ed.,
	Progress in Math., vol.~41, Birkhauser, Boston, 1996.
	
	\bibitem{Suy15}
	Y.~Suyama, \emph{Simplicial 2-spheres obtained from non-singular complete
		fans}, Dal'nevost. Mat. Zh. \textbf{15} (2015), no.~2, 277--288.
	
	\bibitem{WZ13}
	X.~Wang and Q.~Zheng, \emph{The homology of simplicial complements and the
		cohomology of polyhedral products}, Forum Math. \textbf{27} (2015),
	2267--2292.
	
	\bibitem{Y99}
	S.~Yuzvinsky, \emph{Taylor and minimal resolutions of homogeneous polynomial
		ideals}, Math. Res. Lett. \textbf{6} (1999), no.~5-6, 779--793.
	
\end{thebibliography}
\end{document}